\newtheorem{teo}{Theorem}[section]
\newtheorem{df}{Definition}[section]
\newtheorem{prop}{Proposition}[section]
\newtheorem{lm}{Lemma}[section]
\newtheorem{rem}{Remark}[section]
\newtheorem{coroll}{Corollary}[section]
\newtheorem*{cond_}{Condition}
\def\N{\mathbb N}
\def\P{\mathbb P}
\def\R{\mathbb R}
\def\E{\mathbb E}
\def\H{\mathcal H}
\def\Fc{\mathcal F}
\def\At{\mathcal{A}_{t,T}^\Theta}
\def\A0{\mathcal{A}_{0,T}^\Theta}
\def\Hs{\mathsf H}
\def\Xs{\mathsf X}
\def\Ys{\mathsf Y}
\def\Us{\mathsf U}
\def\L{\mathbb L}
\def\Cp{\mathbf{C}_{\,p.Lip}}
\def\Cbp{\mathbf{C}_{\,b,\,p.Lip}}
\def\F{\mathbf F}
\def\ind{\mathbbm 1}
\def\fee{\varphi}
\def\barw{\overline{\phantom{::}}}
\def\absz{\phantom{.}}
\def\Tr{\mathrm{Tr}}
\def\Trunc{\mathrm{Trunc}}
\def\rk{\mathrm{rk}}
\newcommand{\Ar}{\ \ \Rightarrow\ \ }
\newcommand{\tdot}{\raisebox{2pt}{$\scriptstyle\times$}}
\newcommand{\pdot}{\phantom.\!\!\cdot\!\!\phantom.}
\newcommand{\dperp}{\perp\negthickspace\!\!\perp}
\title{G-expectations in infinite dimensional spaces and related PDEs}
\author{Anton Ibragimov \thanks{Universit\`{a} degli Studi di Milano-Bicocca, Dipartimento di matematica e applicazioni, via R.Cozzi,~53, Milan, Italy; ibrahimov.ag@gmail.com} 
}
\begin{document}
\maketitle

\textbf{Abstract.} 
In this paper, we extend the $G$-expectation theory to infinite dimensions. Such notions as a covariation set of $G$-normal distributed random variables, viscosity solution, a stochastic integral driven by $G$-Brownian motion are introduced and described in the given infinite dimensional case. We also give a probabilistic representation of the unique viscosity solution to the fully nonlinear parabolic PDE with unbounded first order term in Hilbert space in terms of $G$-expectation theory.

\textbf{Key words.} Hilbert space, $G$-expectation, upper expectation, $G$-Brownian motion, $G$-stochastic integral, $B$-continuity, viscosity solution, Itô's isometry inequality, BDG inequality, fully nonlinear PDE, Ornstein-Uhlenbeck process.

\section{Introduction}

This paper is based on the author's Ph.D. thesis (see \cite{ibragimov1}) and devoted to a study of the theory of $G$-expectations in infinite dimensions. Using $G$-expectations as a probabilistic tool, we also study  equations of the form:
\begin{equation}\label{eq_1}
\begin{cases}
\partial_t u+\langle Ax,D_x u\rangle+G(D^2_{xx} u)=0 \,, \ \ \ t\in[0,T)\,, \ x\in\Hs;\\
u(T,x)=f(x) \,.
\end{cases}
\end{equation}
We call this equation a $G$-PDE, because of the occurrence of the nonlinear coefficient $G$.  $G$ is a certain sublinear functional which is connected to a $G$-expectation $\E$ by the formula $G(\, \pdot\, )=\dfrac{1}{2}\E\big[\langle \, \pdot\,  X,X\rangle\big]$. The term $A$ in the PDE is a given generator of a $C_0$-semigroup $\big(e^{tA}\big)$: the occurrence of this unbounded, not everywhere defined  term is important for the applications, but it requires to face additional difficulties.
The solution to equation \eqref{eq_1}  will be understood  in the sense of viscosity solutions. But treating viscosity solutions in the infinite dimensions also requires to overcome special difficulties (see, i.e., \cite{crandall_lions2, crandall_lions3, crandall_lions4, crandall_lions5, crandall_lions6}). 

\'{S}wi\c{e}ch (see \cite{kelome_swiech1, swiech1}) was the first author to include the ``unbounded'' term $\langle Ax,D_x u\rangle$ in the second order PDE. Together with Kelome (see \cite{kelome1, kelome_swiech1}) he proved a comparison principle and existence and uniqueness results for a nonlinear second order PDE. We will make use of their results on uniqueness of the solution to equation \eqref{eq_1}. In order to prove existence we will use a probabilistic representation which is entirely different from the  method of Kelome and \'{S}wi\c{e}ch.
 The probabilistic representation of the solution of equation \eqref{eq_1} is formally analogous to the classical case. To this aim we consider an associated stochastic differential equation:
\begin{equation}\label{eq_2}
\begin{cases}
d X_\tau=AX_\tau+dB_\tau\,,\ \ \ \tau\in(t,T]\subset[0,T]\ ;\\
X_t=x,
\end{cases}
\end{equation}
where, however, $B_\tau$ is a so called  $G$-Brownian motion in the Hilbert space $\Hs$, i.e. a Brownian motion related to a $G$-expectation that we introduce in an appropriate way.

The solution of equation \eqref{eq_2} is the following process, formally analogue to the Ornstein-Uhlenbeck process:
$$X_\tau:=X_\tau^{t,x}=e^{(\tau-t)A}x+\int\limits_t^\tau e^{(\tau-\sigma)A}dB_\sigma.$$
We will see that the formula \ $u(t,x):=\E \big[f(X_T^{t,x})\big]$ gives the required representation of the unique viscosity solution of equation \eqref{eq_1}.

In the definition of $X_\tau$ we are naturally led to considering a stochastic integral, which can be of the more general form $$\int\limits_0^\tau \Phi(\sigma)\,dB_\sigma.$$
So, we need to define a stochastic integral with respect to a $G$-Brownian motion. Its investigation gets us related    properties and results. Special attention will be devoted to the identifying a suitable class of integrand processes $\Phi$, with values in an appropriate space of linear operators that will be introduced to this purpose.

\section{Sublinear functionals and distributions}

\subsection{G-functional}\label{subsec_G_funct}

The notion of $G$-functional we need in order to use some sublinear functionals in infinite dimensions. Mainly it applies to the second order term in the heat equation
\begin{equation}
\hspace{30mm}\begin{cases}
\partial_t u+G(D^2_{xx} u)=0 \,, \ \ \ t\in[0,T)\,, \ x\in\Hs;\\
u(T,x)=f(x) \,.
\end{cases}
\end{equation}
Also, in the $G$-expectation theory $G$-functional plays a very important role as a tool of characterization $G$-normal distributed random variables.
\\
So, let us consider $\Hs$ is a real separable Hilbert space and $\{e_i\,, i\geq1\}$ be an orthonormal basis  on it.

Keeping the standard notations, define the following sets in this way:

$L(\Hs):=\big\{A:\Hs\to\Hs\mid A\text{ -- linear, continuous in $L(\Hs)$-topology}\big\}$;

$K(\Hs):=\big\{A\in L(\Hs)\mid A\text{ -- compact}\big\}$;

$L_S(\Hs):=\big\{A\in L(\Hs)\mid A=A^*\big\}$\ \ \ and\ \ \ $K_S(\Hs):=\big\{A\in K(\Hs)\mid A=A^*\big\}$.

\begin{df}\label{df_g_func}
A monotone, sublinear, continuous (in the operator norm) functional \ \
$G:D\subset L_S(\Hs)\rightarrow\R$\ \ is said to be \textbf{$\mathbf{G}$-functional}.

That is, $G(\pdot)$ is required to satisfy the following conditions:\\
\hspace*{5mm}\parbox[c][\height]{16cm}{
1)$A\geq \bar{A} \ \Rightarrow\ G(A)\geq G(\bar{A})$.\\
2)$G(A+\bar{A})\leq G(A)+G(\bar{A})$;\\
3)$G(\lambda A)=\lambda G(A),\ \lambda\geq0$;\\
4)$G$ is $L(\Hs)$-continuous.
}
\end{df}
 
 \begin{teo}\label{th_sublinear_F_sup_linear_f}
Let $\Xs$ be a linear space.

$F:\Xs\to\R$ is a sublinear functional, i.e.

\hspace{5mm}1) $F(x+y)\leq F(x)+F(y)$;

\hspace{5mm}2) $F(\lambda x)=\lambda F(x)\ ,\ \ \lambda\geq0$.

Then there exists a family of linear functionals $\big\{f_\theta:\Xs\to\R,\ \theta\in\Theta\big\}$, such that:
\begin{equation}\label{eq_F_sup_f}
F(x)=\sup\limits_{\theta\in\Theta}f_\theta(x),\ x\in\Xs.
\end{equation}

Moreover, (a) If $F$ is continuous $\Ar f_\theta$ in \eqref{eq_F_sup_f} are continuous.
\\
\phantom{Moreover,} (b) \parbox[t][\height]{11cm}{If $F$ is a monotone, sublinear functional, that preserves constants
 (such functional we call \textbf{sublinear expectation})\\ 
$\Rightarrow\ \ \  f_\theta$ in \eqref{eq_F_sup_f} are linear expectations.}
\end{teo}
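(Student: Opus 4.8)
The plan is to recognize this statement as a Hahn--Banach representation theorem: a sublinear functional is exactly the kind of dominating gauge to which the analytic Hahn--Banach extension theorem applies, so for every fixed point I can produce a linear functional dominated by $F$ everywhere which touches $F$ at that point, and then take the supremum over all such functionals. Before the main construction I would record two elementary consequences of the hypotheses: putting $\lambda=0$ in condition 2) gives $F(0)=0$, and then subadditivity yields $0=F(0)\le F(x)+F(-x)$, i.e. $-F(-x)\le F(x)$ for every $x\in\Xs$. This last inequality is the decisive one.

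For the core existence statement, I would fix an arbitrary $x_0\in\Xs$ and define on the one-dimensional subspace $\R x_0$ the linear functional $g(tx_0):=tF(x_0)$. The key verification is that $g\le F$ on this line: for $t\ge0$ positive homogeneity gives $g(tx_0)=tF(x_0)=F(tx_0)$, while for $t=-s$ with $s>0$ the inequality $F(-x_0)\ge -F(x_0)$ from the previous step gives $g(tx_0)=-sF(x_0)\le sF(-x_0)=F(tx_0)$. Hahn--Banach then extends $g$ to a linear $f_{x_0}:\Xs\to\R$ with $f_{x_0}\le F$ everywhere and $f_{x_0}(x_0)=F(x_0)$. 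Taking $\Theta:=\Xs$ and $f_\theta:=f_{x_0}$ for $\theta=x_0$, I get $f_\theta(x)\le F(x)$ for all $\theta$ and $x$, so $\sup_\theta f_\theta(x)\le F(x)$, while $f_x(x)=F(x)$ shows the supremum is attained; this is exactly \eqref{eq_F_sup_f}.

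For the refinements, part (a) follows because from $f_\theta\le F$ and linearity one has $-F(-x)\le f_\theta(x)\le F(x)$, so continuity of $F$ (with $F(0)=0$, hence continuity at the origin) bounds $f_\theta$ on a neighbourhood of $0$, and a linear functional bounded near $0$ is continuous. For part (b), assuming $F$ monotone and constant-preserving, positivity of $f_\theta$ comes from: if $x\ge0$ then $-x\le0$, so $f_\theta(x)=-f_\theta(-x)\ge -F(-x)\ge -F(0)=0$ by monotonicity; and preservation of constants comes from $f_\theta(c)\le F(c)=c$ together with $f_\theta(-c)\le F(-c)=-c$, i.e. $f_\theta(c)\ge c$, forcing $f_\theta(c)=c$. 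Hence each $f_\theta$ is a linear expectation.

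The algebraic verifications are routine; the one genuinely load-bearing ingredient is the Hahn--Banach extension theorem, and the only delicate point is confirming the domination $g\le F$ in the \emph{negative} direction on the one-dimensional subspace $\R x_0$ $\Ar$ this is precisely the place where subadditivity, rather than mere positive homogeneity, must be used, via the inequality $-F(-x)\le F(x)$.
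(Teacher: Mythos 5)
Your proof is correct and follows essentially the same route as the paper's: the paper defers its proof to Peng's Theorem~2.1 and the author's thesis, and that argument is exactly the classical Hahn--Banach domination scheme you use (extend $t x_0\mapsto tF(x_0)$ from $\R x_0$ under the gauge $F$, using $-F(-x)\le F(x)$ for the negative direction, then take the supremum over the touching functionals, with parts (a) and (b) obtained from the two-sided bound $-F(-x)\le f_\theta(x)\le F(x)$ and from monotonicity/constant preservation just as you argue). Your treatment of continuity in (a) also correctly handles the point that, unlike the finite-dimensional case, continuity of the $f_\theta$ is not automatic in infinite dimensions, which is precisely the extra care the paper says the infinite-dimensional version requires.
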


\begin{proof}
 $\absz$\\
The original proof in a finite dimensional case you can see in \cite[Th.2.1]{peng10} but some parts in infinite dimensions requires additional passages, so that the full proof you can consult in the author's thesis \cite{ibragimov1}.

\end{proof}

\begin{df}\label{von_neumann-schatten_cl}
 The von Neumann-Schatten classes of operators are defined as follows: 
\\
$\begin{array}{l}
C_p(\Hs):=\big\{A\in L(\Hs)\mid\sum\limits_{j\geq1}\big|\langle A e_j, e_j\rangle\big|^p<\infty\big\}\ ,\ \  1\leq p<\infty;\\ 
C_\infty(\Hs):=L(\Hs).
\end{array}$
\end{df}
 
 Introducing a norm for $A\in C_p(\Hs), \ \ 1\leq p\leq\infty:\\
 \|A\|_{C_p}:=\Big[\Tr\big(A\!\cdot A^*\big)^{p/2}\Big]^{1/p},\ 1\leq p<\infty,\\
\|A\|_{C_\infty}:=\|A\|_{L(\Hs)}$;
\\
Also we know that $\big(C_p(\Hs),\|\pdot\|_{C_p}\big),\ 1\leq p\leq\infty$ is a Banach space 

\hfill (see \cite[2.1]{ringrose1}).

\begin{rem}
In the sequel we will call the classes $C_1(\Hs)$ and $C_2(\Hs)$ as \textbf{trace-class} and \textbf{Hilbert-Schmidt class} of operators respectively,
and denote them in this way:
\\
$\begin{array}{l}
C_1(\Hs)\equiv L_1(\Hs):=\big\{A\in L(\Hs)\mid \Tr\big[(A\cdot A^*)^{1/2}\big]<\infty\big\};\\
C_2(\Hs)\equiv L_2(\Hs):=\big\{A\in L(\Hs)\mid \Tr\big[A\cdot A^*\big]<\infty\big\}.
\end{array}$
\end{rem}

Now we would like to give the representation of the $G$-functional defined on the set of compact symmetric operators on $\Hs$. Actually, we can't give the same representation result in the general case with the domain of linear bounded operators. But afterwards we discuss about extension $G$-functional on $L_S(\Hs)$.

\begin{teo}\label{th_charact_G_func}
 Let  $G:K_S(\Hs)\rightarrow\R$ be a $G$-functional.

Then there exists a set $\Sigma$ such that:

\hspace*{5mm}1)$\Sigma\subset C_1(\Hs)$;\\
\hspace*{5mm}2)$\forall\,B\in\Sigma\ \Rightarrow\ B=B^*,\ B\geq0$;\\
\hspace*{5mm}3)$\Sigma$ is convex;\\
\hspace*{5mm}4)$\Sigma$ is closed subspace of $C_1(\Hs)$;\\
\hspace*{5mm}5) $G(A)=\dfrac{1}{2}\sup\limits_{B\in\Sigma}{\Tr}\big[A\cdot B],\ \  \forall\,A\in K_S(\Hs)$.
\end{teo}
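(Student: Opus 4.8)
The plan is to derive the representation by combining the abstract ``supremum of linear functionals'' decomposition of Theorem \ref{th_sublinear_F_sup_linear_f} with the classical duality $K(\Hs)^\ast\cong C_1(\Hs)$ furnished by the trace pairing. Since $G:K_S(\Hs)\to\R$ is sublinear and $L(\Hs)$-continuous, Theorem \ref{th_sublinear_F_sup_linear_f} together with its part (a) produces a family of continuous linear functionals $\{f_\theta\}$ with $G(A)=\sup_\theta f_\theta(A)$. I would then replace this family by the largest admissible one, namely $\mathcal F:=\{f\in K_S(\Hs)^\ast:\ f\leq G\}$. Enlarging the family leaves the supremum unchanged — every $f\in\mathcal F$ satisfies $f\le G$ so the new supremum is still $\le G$, while the original family already attains $G$ — and $\mathcal F$ carries the convexity and closedness we shall need for free.

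The next step is to turn functionals into operators. The topological dual of $\big(K(\Hs),\|\pdot\|_{L(\Hs)}\big)$ is isometrically $\big(C_1(\Hs),\|\pdot\|_{C_1}\big)$ via $B\mapsto\Tr[\pdot\, B]$; restricting to the symmetric subspace $K_S(\Hs)$, each continuous linear $f$ equals $f(A)=\Tr[AB]$ for a unique symmetric $B\in C_1(\Hs)$. Setting $\Sigma:=\big\{2B:\ \Tr[\pdot\,B]\in\mathcal F\big\}=\big\{B\in C_1(\Hs)\cap L_S(\Hs):\ \Tr[AB]\leq 2G(A)\ \ \forall A\in K_S(\Hs)\big\}$ then gives exactly $G(A)=\tfrac12\sup_{B\in\Sigma}\Tr[AB]$, which is property~5, and properties~1 and the symmetry in~2 hold by construction. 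Positivity in~2 follows from monotonicity of $G$: if $A\geq0$ then $-A\leq0$, so $G(-A)\leq G(0)=0$, and since each $f\in\mathcal F$ is linear with $f\le G$ we get $-f(A)=f(-A)\le G(-A)\le0$, hence $f(A)\geq0$; thus $\Tr[AB]\geq0$ for every $A\geq0$, and testing against rank-one projections forces $B\geq0$. Convexity in~3 is immediate, because $f_1,f_2\le G$ implies $\lambda f_1+(1-\lambda)f_2\le G$ by subadditivity and positive homogeneity of $G$.

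It remains to treat closedness in~4. The set $\Sigma$ is the intersection of the half-spaces $\{B:\Tr[AB]\leq 2G(A)\}$ indexed by $A\in K_S(\Hs)$; for each fixed $A\in K_S(\Hs)\subset K(\Hs)$ the map $B\mapsto\Tr[AB]$ is weak-$\ast$ continuous on $C_1(\Hs)=K(\Hs)^\ast$, so every such half-space, and hence $\Sigma$, is weak-$\ast$ closed. Since the weak-$\ast$ topology is coarser than the $\|\pdot\|_{C_1}$-norm topology, weak-$\ast$ closedness implies norm closedness, giving~4. Finally, $L(\Hs)$-continuity of $G$ yields a bound $G(A)\leq M\|A\|_{L(\Hs)}$, whence $|f(A)|\le M\|A\|_{L(\Hs)}$ for all $f\in\mathcal F$, i.e. $\|B\|_{C_1}\le 2M$; thus $\Sigma$ is norm-bounded and, being weak-$\ast$ closed, weak-$\ast$ compact by Banach--Alaoglu, so the supremum in~5 is actually attained.

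The step I expect to be the main obstacle is the duality identification: realizing every continuous linear functional on the real space $K_S(\Hs)$ of symmetric compact operators as $\Tr[\pdot\,B]$ with $B$ symmetric and trace-class, and verifying the isometry so that the operator-norm continuity of $G$ transfers to the trace-norm bound on $\Sigma$. The accompanying topological point — that the weak-$\ast$ closedness one obtains for free delivers the norm-closedness asserted in property~4 — is the other delicate issue, but it reduces to the elementary comparison of the two topologies.
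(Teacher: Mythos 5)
Your argument is correct, and while it shares the paper's skeleton (Theorem \ref{th_sublinear_F_sup_linear_f} plus the trace duality $K(\Hs)^*\cong C_1(\Hs)$), the second half genuinely diverges from the paper's proof. The paper builds the duality by hand: for each $f_\theta$ it forms the bilinear form $f(x\otimes y)$ on rank-one operators, gets $B_0\in L(\Hs)$ with $f(x\otimes y)=\Tr[(x\otimes y)B_0]$, extends over finite-rank operators, invokes Ringrose's Lemma \ref{lm_Cq_caracterization} to conclude $B_0\in C_1(\Hs)$, passes to $K(\Hs)$ by density of $\Fc$, and symmetrizes via $(B_0+B_0^*)/2$ --- exactly the ``main obstacle'' you flagged, and your sketch of it (Hahn--Banach extension from $K_S(\Hs)$, then symmetrization) matches; note only that the paper does spell out the symmetrization computation you compress into ``unique symmetric $B$''. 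Where you differ is in assembling $\Sigma$: the paper takes the family $\{B_\theta\}$ produced by the representation, proves positivity by a contradiction argument testing $\langle B_\theta e_i,e_i\rangle$ against basis vectors, and then \emph{forces} convexity and closedness by defining $\Sigma:=\overline{\mathrm{conv}(\Sigma')}$ over a set $\Sigma'$ of $\varepsilon$-maximizers, checking the supremum survives. You instead pass to the maximal dominated family $\mathcal F=\{f\le G\}$, so that $\Sigma$ becomes an intersection of half-spaces: convexity is then free, positivity follows from the clean monotonicity argument $f(A)=-f(-A)\ge -G(-A)\ge 0$ tested on rank-one projections (rather than the paper's contradiction), and closedness is upgraded --- weak-$*$ closed half-spaces give norm closedness, and the bound $\|B\|_{C_1}\le 2M$ from $L(\Hs)$-continuity combined with Banach--Alaoglu yields weak-$*$ compactness, hence attainment of the supremum in property 5, which the paper does not claim. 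Your route buys canonicity (your $\Sigma$ is visibly the largest admissible set, making the uniqueness the paper only asserts in passing transparent via support-function separation) and the compactness bonus; the paper's route is more self-contained at the operator-theoretic level, constructing the representing operators explicitly rather than quoting Schatten duality wholesale.
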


\begin{proof}
 $\absz$\\
For every sublinear continuous functional $F:K(\Hs)\to\R$ according to \textbf{Th.\ref{th_sublinear_F_sup_linear_f}}, there exist a family of linear continuous (in $L(\Hs)$-topology) functionals 
$\big\{f_\theta,\ \theta\in\Theta\big\}$,\ \ \ 
such that\ \ $F(A)=\sup\limits_{\theta\in\Theta}f_\theta(A)\,,\ \ \ A\in K(\Hs)$.

Let us fix $\theta$ and consider $f=f_\theta\in L\big(K(\Hs),\R\big)$.
\\
For \ \  $x,y\in\Hs$ \ \  we define a bounded linear operator $x\otimes y$ on $\Hs$ as follows:

$(x\otimes y)z:=\langle z, y\rangle\, x\,,\ z\in\Hs$.
\\
It is clear that $\rk (x\otimes y)=1$, if $x\neq0,\,y\neq0$, and
\\
$\Tr[x\otimes y]=\sum\limits_{j\geq1}\big\langle (x\otimes y)e_j,e_j\big\rangle
=\sum\limits_{j\geq1}\langle e_j,y\rangle \langle x,e_j\rangle=\langle x,y\rangle $, and

the norm \ \ $\|x\otimes y\|_{L(\Hs)}:=\|x\|_\Hs\cdot\| y\|_\Hs\,,\ \ \ 1\leq p\leq\infty$.
\\
The bilinear form $L(x,y):=f(x\otimes y)$ satisfies

$\big|L(x,y)\big|=\big|f(x\otimes y)\big|
\leq\|f\|_{L(L(\Hs),\Hs)}\cdot\|x\otimes y\|_{L(\Hs)}
\leq\|f\|_{L(L(\Hs),\Hs)}\cdot\|x\|_\Hs\cdot\| y\|_\Hs$,

So that, it 
is bounded.

But for every bounded bilinear form there exists a unique $B\in L(\Hs)$\ such that\ $L(x,y)=\langle Bx,y\rangle$ \
(see \cite[Th.1.7.1]{ringrose1}).
\\
It follows that \ $f(x\otimes y)=\langle Bx,y\rangle=\langle x,B^* y\rangle=\Tr[x\otimes B^* y]$

\hfill$=\Tr[(x\otimes y)B]$, 

because $(x\otimes B^* y)z=\langle z,B^* y\rangle\, x=\langle B z,y\rangle\, x=(x\otimes y)Bz$. 

So, we can conclude that \ $f(A)=\Tr\big[A B\big]\,,$\ \ where \ $ A:=x\otimes y$. {\makebox[20mm][r]{$(\,\ast\,)$}}
\\
Denote $\Fc$ to be the set of all operators of finite rank on $\Hs$.

But every $A\in\Fc$ can be represented in the form \ $A=\sum\limits_{j=1}^m \alpha_j \psi_j\otimes\fee_j$ 
\\
(see \cite[\ Th.1.9.3]{ringrose1}),
where\ \ $(\alpha_j)$ \ is uniquely determined sequence of the real elements, \ and \ $(\psi_j),(\fee_j)$ \ are two orthonormal systems in $\Hs$.

For this reason \ $(\,\ast\,)\ $ holds for every \ $A\in\Fc$.
\\
In order to show that \ $B\in C_1(\Hs)$ we use the following lemma:
 \begin{lm}[Ringrose, \cite{ringrose1}, Lm.2.3.7]\label{lm_Cq_caracterization}
$\absz$\\
 Let\ \ $1\leq p\leq \infty$\,,\  and $q$ be a \textbf{conjugate exponent} of $p$ 
(i.e. $\frac{1}{p}+\frac{1}{q}=1$).

Then $T\in C_q(\Hs)$ \ if and only if \ $\sup\Big\{\big|\,\Tr[ S T]\,\big|:\ S\in\Fc,\ \|S\|_{C_p}\leq 1\Big\}<\infty$.

And in such a case the value of the supremum is equal to $\|T\|_{C_q}$.

\end{lm}
 
According to this lemma we get that

\hfill$\sup\Big\{\big|\,\Tr[ A B]\,\big|:\ B\in\Fc,\ \|B\|_{C_\infty}\leq 1\Big\}\leq\|f\|<\infty$;
\\
So that for every \ $A\in \Fc$ \ we have \ $f(A)=\Tr\big[A B\big]\,,\ \ B\in C_1(\Hs)$.
\\
Both sides in this equality are $L(\Hs)$-continuous, 
and since the set $\Fc$ is dense 
in $K(\Hs)$, \ this implies that for every \ $A\in K(\Hs)$ \ we have also \ $f(A)=\Tr\big[A B\big]\,,\ \ B\in C_1(\Hs)$.
\\
In the case when \  $A\in K_S(\Hs)$ \ therefore there exists \ $ B_0\in C_1(\Hs)$, \ such that \ $f(A)=\Tr\big[A B_0\big]$.

Consider an operator \  $B:=\dfrac{B_0+B^*_0}{2}$,\ \ which is symmetric and  trace-class;
\\
Hence: 
\\
$\Tr\big[A B\big]=\Tr\bigg[\dfrac{A(B_0+B^*_0)}{2}\bigg]=
\dfrac{1}{2}\Tr\big[A B_0\big]+\dfrac{1}{2}\Tr\big[A B_0^*\big]\\
\hspace*{14mm}=
\dfrac{1}{2}\Tr\big[A B_0\big]+\dfrac{1}{2}\Tr\big[B_0 A^*\big]=
\dfrac{1}{2}\Tr\big[A B_0\big]+\dfrac{1}{2}\Tr\big[B_0 A\big]\\
\hspace*{14mm}=
\dfrac{1}{2}\Tr\big[A B_0\big]+\dfrac{1}{2}\Tr\big[A B_0\big]=\Tr\big[A B_0\big]=f(A)$.
\\
So we can conclude that for every \ $A\in K_S(\Hs)$ \ we have \ $f(A)=\Tr\big[A B\big]$, $B=B^*\in C_1(\Hs)$.
\\
Now let us take the following sublinear functional \ $F(A):=2\,G(A)$, defined on the set $K_S(\Hs)$. 

Applying \textbf{Th.\ref{th_sublinear_F_sup_linear_f}} we have
\\
$F(A)=\sup\limits_{\theta\in\Theta}f_\theta(A)=\sup\limits_{\theta\in\Theta}\Tr\big[A B_\theta\big]\,,\ \ B_\theta=B_\theta^*\in C_1(\Hs)\,,\ \theta\in\Theta$;

Hence there exists a set of operators $ \big\{B_\theta,\, \theta\in\Theta\big|\,B_\theta=B_\theta^*\in C_1(\Hs)\big\}$, such that
$G(A)=\dfrac{1}{2}\sup\limits_{\theta\in\Theta}\Tr\big[A B_\theta\big]\,,\ 
A\in K_S(\Hs)$.

$G$ is monotone. For this reason we can only take that $B_\theta$ which are positive:
if \ $x\in\Hs$ \ then \ $\langle B_\theta x,x\rangle\geq0$.

Indeed, by the definition of supremum we have that:

$\forall\, \varepsilon>0\ \ \ \exists\, \theta=\theta_\varepsilon\in\Theta:\ \ \ 
\dfrac{1}{2}\Tr\big[A B_\theta\big]\leq G(A)<\dfrac{1}{2}\Tr\big[A B_\theta\big]+\varepsilon$.{\makebox[30mm][r]{$(\,\#\,)$}}

Assume that there exists a basis vector \ $e_i$,\ such that \ $\langle B_\theta e_i,e_i\rangle:=\beta_\varepsilon<0$.

then consider an operator \ $A_0$ \ such that \ \ $\langle A_0 e_j,e_j\rangle=\ind_{\{j=i\}}\,,\ j\geq1$.

It is clear that $A_0$ is symmetric and compact.

Also if \ $A_0\geq0$ \ yields \ $G(A_0)\geq G(0)=0$.

From $(\,\#\,)$ \ we have \ $\dfrac{1}{2}\beta_\theta\leq G(A)<\dfrac{1}{2}\beta_\theta+\varepsilon$.

And passing to the limit\ \ $\varepsilon\to0$: \ gives \ $G(A)=\dfrac{1}{2}\beta_0<0$,\ \  a contradiction.

And finally it may be concluded that \ $G(A)=\dfrac{1}{2}\sup\limits_{\theta\in\Theta}\Tr\big[A B_\theta\big]\,, \ A\in K_S(\Hs)$.

 where operators  \ $\big\{B_\theta\,,\ \theta\in\Theta\big\}$ \ satisfy required properties:

\hfill$B_\theta=B_\theta^*\geq0\,,\ B_\theta\in C_1(\Hs)$.
\\
And now we are going to finish the proof by the construction of the set $\Sigma$:

Let us define the following sets:

$\widetilde{\Sigma}:=\big\{B\in C_1(\Hs)\mid B=B^*\geq0\big\}$;

$\Sigma^\prime=\Sigma^\prime_G:=\big\{B\in \widetilde{\Sigma}\mid\forall\, \varepsilon>0\ \ 
\dfrac{1}{2}\Tr\big[A B\big]\leq G(A)<\dfrac{1}{2}\Tr\big[A B\big]+\varepsilon\big\}$;

Take $\Sigma:=\overline{\mathrm{conv}(\Sigma^\prime)}$\ \ (closure in $C_1$-norm).

Since \ $\sup\limits_{\theta\in\Theta}\Tr\big[A B_\theta\big]=\sup\limits_{B\in\Sigma^\prime}\Tr\big[A B\big]=\sup\limits_{B\in\Sigma}\Tr\big[A B\big]$, \
and \ $\Sigma\subseteq\widetilde{\Sigma}\subset  C_1(\Hs)$,

we can set  \ $G(A):=\dfrac{1}{2}\sup\limits_{B\in\Sigma}{\Tr}\big[A\cdot B],\ \  \forall\,A\in K_S(\Hs)$.

Moreover, by a such construction one can see that $G$ defines the $\Sigma$-set uniquely.

\end{proof}

\begin{rem}
For every functional \ $\ f_B(A):L(\Hs)\to\R\,,\  \ B\in C_1(\Hs)$,

 such that \
$ f_B(A)=\Tr\big[A B\big]\ \ \
 \text{we have}\ \ \ f_B\in L\big(L(H),\R\big)$. 

Moreover the mapping \ $B\to f_B$ \ defines an isometric isomorphism from $C_1(\Hs)$ to $\big(K(\Hs)\big)^*$.
\end{rem}

\begin{proof}
 $\absz$\\
We have \ $AB\in C_1(\Hs)$ \ and \ $\Tr\big[A B\big]=\|AB\|_{C_1}\leq\|A\|_{C_\infty}\|B\|_{C_1}$ \ 

\hfill (see \cite[\ Th.2.3.10]{ringrose1}).

So that, $f_B$ is a linear continuous functional.
\\
Moreover, $\|f_B\|\leq\|B\|_{C_1}$;

But according to \textbf{Lm.\ref{lm_Cq_caracterization}}:
 
$\|f_B\|\geq\sup\Big\{\big|\,f_B(A)\,\big|:\ A\in\Fc,\ \|A\|_{C_\infty}\leq 1\Big\}$

\hfill$
=
\sup\Big\{\big|\,\Tr[ A B]\,\big|:\ A\in\Fc,\ \|A\|_{C_\infty}\leq 1\Big\}=\|B\|_{C_1}$;

whence it follows that\ \ \ $\|f_B\|=\|B\|_{C_1}$. 

\end{proof}

 It is easy to see that if \ $\Sigma$ \ is a convex, closed (in $C_1(\Hs)$-topology) set of symmetrical, non-negative, trace-class operators,
then functional\\
$G(A):=\dfrac{1}{2}\sup\limits_{B\in\Sigma}{\Tr}\big[A\cdot B]$\ \ is a $G$-functional, \ $A\in K_S(\Hs)$.

Furthermore, we see that between $G$ and $\Sigma$ is settled a mutual correspondence, such that from the one we can get another one:\ \ $G\leftrightarrow\Sigma$.

\begin{prop}\label{prop_sigma_ker_g}
For the $G$-functional \ $G(A)=\dfrac{1}{2}\sup\limits_{B\in\Sigma}{\Tr}\big[A\cdot B]$ \
the set $\Sigma\equiv\mathrm{ker}\,G^*$.
\end{prop}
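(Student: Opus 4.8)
The plan is to read $G^*$ as the Fenchel conjugate of $G$ with respect to the trace duality between $K_S(\Hs)$ and its dual $C_1(\Hs)$ exhibited in the Remark above, and to recognise $\ker G^*$ as the effective domain of that conjugate. Since the representation in Th.\ref{th_charact_G_func} carries the factor $\tfrac12$, the natural pairing to use is $\langle A,B\rangle:=\tfrac12\Tr[AB]$, and accordingly I would set
$$G^*(B):=\sup_{A\in K_S(\Hs)}\Big\{\tfrac12\Tr[AB]-G(A)\Big\},\qquad B=B^*\in C_1(\Hs).$$
With this normalisation $G$ is exactly the support function of $\Sigma$, and the assertion $\Sigma\equiv\ker G^*$ amounts to showing that $G^*$ is the indicator function of $\Sigma$, that is, it vanishes on $\Sigma$ and equals $+\infty$ off $\Sigma$.

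First I would establish the inclusion $\Sigma\subseteq\ker G^*$. For $B\in\Sigma$ the formula of Th.\ref{th_charact_G_func} gives $G(A)\ge\tfrac12\Tr[AB]$ for every $A\in K_S(\Hs)$, so the bracket $\tfrac12\Tr[AB]-G(A)$ is $\le 0$; as it equals $0$ at $A=0$, the supremum is $0$ and $B\in\ker G^*$. This direction uses nothing beyond the representation of $G$.

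For the reverse inclusion I would argue by contraposition. If $B\notin\Sigma$, then since $\Sigma$ is convex and closed (properties 3) and 4) of Th.\ref{th_charact_G_func}) I would separate $B$ from $\Sigma$ by a continuous linear functional, producing an operator $A_0$ and a scalar $\alpha$ with $\tfrac12\Tr[A_0B]>\alpha\ge\tfrac12\Tr[A_0C]$ for all $C\in\Sigma$; hence $\tfrac12\Tr[A_0B]-G(A_0)>0$. Replacing $A_0$ by $tA_0$ and letting $t\to+\infty$, the positive homogeneity of $G$ forces $G^*(B)=+\infty$, so $B\notin\ker G^*$. Combined with the uniqueness of the set $\Sigma$ attached to $G$ noted at the close of Th.\ref{th_charact_G_func}, this turns the correspondence $G\leftrightarrow\Sigma$ into the identity $\ker G^*=\Sigma$.

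The delicate point, which I expect to be the main obstacle, is the separation in infinite dimensions. Hahn--Banach in $C_1(\Hs)$ only produces a separating functional in $\big(C_1(\Hs)\big)^*=L(\Hs)$, i.e. some $A_0\in L_S(\Hs)$, whereas the pairing defining $G^*$ lives on $K_S(\Hs)$. To bridge this I would perform the separation in the weak-$\ast$ topology $\sigma\big(C_1(\Hs),K_S(\Hs)\big)$, whose continuous functionals are exactly the compact symmetric operators by the isomorphism $C_1(\Hs)\cong K(\Hs)^*$ of the Remark; this forces me to know that $\Sigma$ is weak-$\ast$ closed, not merely $C_1$-norm closed. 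Finiteness of $G$ on all of $K_S(\Hs)$ gives, via the uniform boundedness principle, that $\Sigma$ is norm-bounded, but in a dual space norm-boundedness together with norm-closedness does not by itself yield weak-$\ast$ closedness. I would therefore have to exploit the specific structure of $\Sigma$ --- its positivity and symmetry together with its construction as $\overline{\mathrm{conv}}(\Sigma')$ --- to show that its $C_1$-norm closure already coincides with its weak-$\ast$ closure, for instance by establishing weak-$\ast$ compactness of $\Sigma$; absent this, the identity $\ker G^*=\Sigma$ should be read with $\Sigma$ replaced by its weak-$\ast$ closure. Settling this topological question, rather than the Fenchel bookkeeping, is the true content of the proposition.
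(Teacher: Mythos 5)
Your proposal is correct in substance but takes a genuinely different route from the paper. The paper's proof is a one-step convex-duality argument: it introduces the indicator functional $f$ of $\Sigma$ (zero on $\Sigma$, $+\infty$ off it), observes that $2G=f^*$ by the representation of Theorem~\ref{th_charact_G_func}, and invokes the Fenchel--Moreau biconjugation theorem ($f^{**}=f$ for proper convex lower semicontinuous $f$) to conclude $2G^*=f$, whence $\ker G^*=\Sigma$. You instead unpack the biconjugation by hand: the inclusion $\Sigma\subseteq\ker G^*$ straight from the representation (together with $G^*\geq 0$, via $A=0$), and the converse by Hahn--Banach separation of a point $B\notin\Sigma$ from the closed convex set $\Sigma$, with positive homogeneity of $G$ upgrading strict positivity of the bracket to $G^*(B)=+\infty$. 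Your explicit choice of the pairing $\langle A,B\rangle=\tfrac12\Tr[AB]$ is also cleaner than the paper's bookkeeping: the paper's chain ``$2G=f^*$ hence $2G^*=f^{**}$'' silently conflates $(2G)^*$ with $2G^*$, which is harmless only because both conjugates are $\{0,+\infty\}$-valued; your normalisation makes $G$ literally the support function of $\Sigma$ and removes the issue.

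More importantly, the ``delicate point'' you single out is real, and it is not a defect of your route relative to the paper's --- it is present, unacknowledged, in the paper's proof as well. Fenchel--Moreau applied with the pairing between $C_1(\Hs)$ and $K_S(\Hs)$ requires $f=\iota_\Sigma$ to be lower semicontinuous for $\sigma\big(C_1(\Hs),K_S(\Hs)\big)$, i.e.\ $\Sigma$ must be weak-$\ast$ closed; the paper only records that $\Sigma$ is $C_1$-norm closed, which in a dual space does not suffice even for bounded convex sets (the rank-one projections $e_n\otimes e_n$ have $C_1$-norm $1$ yet converge to $0$ in $\sigma\big(C_1(\Hs),K(\Hs)\big)$, so, e.g., the norm-closed convex set of positive trace-one operators is not weak-$\ast$ closed). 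Boundedness of $\Sigma$ does follow, as you say, from finiteness and continuity of $G$ via uniform boundedness, and positivity and symmetry are weak-$\ast$ closed constraints (they are tested against the compact operators $x\otimes x$), so $\Sigma$ sits inside a weak-$\ast$ compact set by Banach--Alaoglu; but that alone does not close the gap, and your fallback reading --- that the identity holds with $\Sigma$ replaced by its $\sigma\big(C_1(\Hs),K_S(\Hs)\big)$-closure, which yields the same functional $G$ --- is the honest statement. In short: your argument reaches the same conclusion by a more elementary separation argument, and in doing so correctly isolates the topological hypothesis that the paper's appeal to Fenchel--Moreau takes for granted.
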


\begin{proof}
 $\absz$\\
Consider the indicator functional \ $f(B):=
\begin{cases}
0,\ \  B\in\Sigma\,;\\
\infty,\ \  B\in C_1(\Hs)\smallsetminus\Sigma \,.
\end{cases}$

According to the topological properties of $\Sigma$ (closed, convex and nonempty set) it follows that $\Sigma$ is a proper, convex and lower semicontinuous set.

For every \ $A\in K_S(\Hs)$ we have 

$
G(A)=\dfrac{1}{2}\sup\limits_{B\in\Sigma}\Tr\big[A B\big]=
\dfrac{1}{2}\sup\limits_{B\in C_1(\Hs)}\Big\{\Tr\big[A B\big]-f(B)\Big\}=\dfrac{1}{2}f^*(B)$,\\ 
where $f^*$ is the Legendre transform of $f$.

We thus get \ $2G=f^*$ \ and hence \ $2G^*=f^{**}=f$\ \ by the Fenchel-Moreau theorem.

Therefore \ $\Sigma=\big\{B\in\Sigma\big|\ G^*(B)=0\big\}=\mathrm{ker}\,G^*$.

\end{proof}

\subsection[Some remarks regarding the extension of the G-functional]{Some remarks regarding the extension of the\\ $G$-functional  to $L_S(\Hs)$}\label{subs_G_ext}

 In general case the extension of $G$ to the space $L_S(\Hs)$ (of linear, bounded and symmetric operators) is not unique. To see it we consider such a functional\ \ \ 
$\widetilde{G}(A):=\dfrac{1}{2}\sup\limits_{B\in\Sigma}{\Tr}\big[A\cdot B]+\rho(A)\,,\ \ \ A\in L_S(\Hs)$;

where $\rho(A)=\max\limits_{\lambda\in\sigma_{ess}(A)}\lambda$, \ the maximal point of essential spectrum $\sigma_{ess}(A)$, 
which is defined as\ \ 
$\sigma_{ess}(A)=\big\{\lambda\in\sigma(A)\mid\forall\varepsilon>0\ \ \ \exists\mu\in\sigma(A)\,:$

\hfill$ |\lambda-\mu|<\varepsilon\big\}$. 

That is $\sigma_{ess}(A)$ consists of all unisolated points of the spectrum $\sigma(A)$.
\\
If $A\in K_S(\Hs)$ \ yields that \ $\rho(A)=0$,

because in such a case every\ $\lambda\in\sigma(A)\smallsetminus \{0\}$\  is an isolated point, 

i.e. 
$\lambda\notin\sigma_{ess}(A)$ (see \cite[Claim 7.6]{conway1}).
\\
It follows that \ $\widetilde{G}(A)-G(A)=\rho(A)\cdot\ind_{\{A\in L_S(\Hs)\setminus K_S(\Hs)\}}$.

But in order to to be sure that $\widetilde{G}(A)$ is $G$-functional indeed we set the following lemma:
\begin{lm}
$\rho(A)$ is a continuous, sublinear and monotone functional.
\end{lm}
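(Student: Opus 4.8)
The plan is to deduce all three properties simultaneously from the Courant--Fischer--Weyl min--max characterisation of the top of the essential spectrum, which reduces every claim to an elementary manipulation of the quadratic form $x\mapsto\langle Ax,x\rangle$. For a bounded self-adjoint operator $A\in L_S(\Hs)$ and $n\geq1$ set
\begin{equation*}
\nu_n(A):=\inf_{\substack{M\subseteq\Hs\\ \dim M=n}}\ \sup_{\substack{x\in M^\perp\\ \|x\|_\Hs=1}}\langle Ax,x\rangle .
\end{equation*}
Enlarging $M$ shrinks $M^\perp$, so $\nu_n(A)$ is non-increasing in $n$ and bounded below by $-\|A\|_{L(\Hs)}$; hence $\lim_{n\to\infty}\nu_n(A)$ exists. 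The first and main step is to identify this limit with $\rho$:
\begin{equation*}
\rho(A)=\max\sigma_{ess}(A)=\lim_{n\to\infty}\nu_n(A),
\end{equation*}
where the $\max$ is attained because $\sigma_{ess}(A)\subseteq\sigma(A)\subseteq[-\|A\|,\|A\|]$ is closed and bounded.

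To establish this identity I would use the spectral theorem, working with the spectral projections $P_{(s,\infty)}$ of $A$. If $s>\max\sigma_{ess}(A)$, then $\sigma(A)\cap(s,\infty)$ consists of only finitely many isolated eigenvalues of finite multiplicity (an accumulation point or an infinite-multiplicity eigenvalue above $s$ would lie in $\sigma_{ess}$), so $\mathrm{ran}\,P_{(s,\infty)}$ is finite-dimensional; taking $M=\mathrm{ran}\,P_{(s,\infty)}$ gives $\langle Ax,x\rangle\leq s$ for every unit $x\in M^\perp$, whence $\nu_{\dim M}(A)\leq s$ and therefore $\lim_n\nu_n(A)\leq\max\sigma_{ess}(A)$. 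Conversely, if $s<\max\sigma_{ess}(A)$, then $\mathrm{ran}\,P_{(s,\infty)}$ is infinite-dimensional, so for any finite-dimensional $M$ the intersection $\mathrm{ran}\,P_{(s,\infty)}\cap M^\perp$ is nonzero; a unit vector $x$ there satisfies $\langle Ax,x\rangle>s$, giving $\nu_n(A)>s$ for all $n$ and hence $\lim_n\nu_n(A)\geq\max\sigma_{ess}(A)$. This min--max identification is the only delicate point, and I expect it to be the main obstacle; everything that follows is formal.

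Granting the characterisation, monotonicity and positive homogeneity are immediate from the corresponding properties of the form. If $A\geq\bar A$ then $\langle Ax,x\rangle\geq\langle\bar Ax,x\rangle$ for every $x$, so $\nu_n(A)\geq\nu_n(\bar A)$ termwise and $\rho(A)\geq\rho(\bar A)$ in the limit; and $\langle(\lambda A)x,x\rangle=\lambda\langle Ax,x\rangle$ for $\lambda\geq0$ gives $\nu_n(\lambda A)=\lambda\nu_n(A)$, hence $\rho(\lambda A)=\lambda\rho(A)$. For subadditivity I would fix $M_1,M_2$ with $\dim M_1=\dim M_2=n$ and choose $N\supseteq M_1+M_2$ with $\dim N=2n$; every unit $x\in N^\perp$ lies in $M_1^\perp\cap M_2^\perp$, so
\begin{equation*}
\langle(A+B)x,x\rangle=\langle Ax,x\rangle+\langle Bx,x\rangle\leq\sup_{y\in M_1^\perp,\,\|y\|_\Hs=1}\langle Ay,y\rangle+\sup_{z\in M_2^\perp,\,\|z\|_\Hs=1}\langle Bz,z\rangle .
\end{equation*}
Taking the supremum over $x\in N^\perp$ and then the infimum over $M_1,M_2$ yields $\nu_{2n}(A+B)\leq\nu_n(A)+\nu_n(B)$, and letting $n\to\infty$ gives $\rho(A+B)\leq\rho(A)+\rho(B)$.

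Finally, continuity follows from what precedes. Since $\sigma_{ess}(C)\subseteq[-\|C\|,\|C\|]$ one has $\rho(C)\leq\|C\|_{L(\Hs)}$ for every $C\in L_S(\Hs)$, so combining this bound with subadditivity,
\begin{equation*}
\rho(A)=\rho\big((A-\bar A)+\bar A\big)\leq\rho(A-\bar A)+\rho(\bar A)\leq\|A-\bar A\|_{L(\Hs)}+\rho(\bar A),
\end{equation*}
and exchanging $A$ with $\bar A$ gives $|\rho(A)-\rho(\bar A)|\leq\|A-\bar A\|_{L(\Hs)}$. Thus $\rho$ is $1$-Lipschitz, in particular $L(\Hs)$-continuous. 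In summary, once the min--max formula for $\max\sigma_{ess}$ is in place, sublinearity, monotonicity, and continuity are routine consequences of the additivity and order properties of the quadratic forms $x\mapsto\langle Ax,x\rangle$.
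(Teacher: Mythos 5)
The paper itself only refers to the author's thesis for this lemma, so there is no in-paper proof to compare against; your argument has to stand on its own, and in substance it does. The min--max route is sound: the identification $\rho(A)=\lim_{n\to\infty}\nu_n(A)$ via spectral projections is the classical Courant--Fischer/Weyl fact, and once it is in place your derivations of monotonicity, positive homogeneity, subadditivity (via $\nu_{2n}(A+B)\leq\nu_n(A)+\nu_n(B)$), and the resulting $1$-Lipschitz bound $|\rho(A)-\rho(\bar A)|\leq\|A-\bar A\|_{L(\Hs)}$ are all correct. Two small repairs: in the lower bound you only get $\nu_n(A)\geq s$ (the supremum over each fixed $M^\perp$ is strictly greater than $s$, but the infimum over $M$ need not be strict), which is all you use; and if $\dim(M_1+M_2)<2n$ you should enlarge $N$ to dimension exactly $2n$, which only shrinks $N^\perp$ and helps the estimate.

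The one genuine point of friction is definitional, and it sits exactly at the step you yourself flagged as the delicate one. The paper defines $\sigma_{ess}(A)$ as the set of \emph{unisolated} points of $\sigma(A)$, whereas your identification $\lim_n\nu_n(A)=\max\sigma_{ess}(A)$ --- in particular the parenthetical claim that an infinite-multiplicity eigenvalue above $s$ lies in $\sigma_{ess}(A)$ --- uses the standard Weyl essential spectrum, which additionally contains isolated eigenvalues of infinite multiplicity. These notions genuinely differ: for an orthogonal projection $P$ with infinite-dimensional range and kernel, the set of unisolated spectral points is empty (so $\rho(P)$ is not even well defined under the paper's literal definition), while $\lim_n\nu_n(P)=1$. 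So with the paper's definition your first step fails, but this is a defect of the definition rather than of your proof: sublinearity, and indeed well-definedness of $\rho$, break down for the unisolated-points version, while under the Weyl definition $\sigma_{ess}(A)$ is a nonempty compact subset of $[-\|A\|_{L(\Hs)},\|A\|_{L(\Hs)}]$ and everything you wrote goes through. The honest fix is one sentence: state that $\sigma_{ess}$ is taken in the Weyl sense (equivalently, that the paper's definition should be read as excluding only isolated eigenvalues of \emph{finite} multiplicity), and your proof is then complete.
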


\begin{proof}
 $\absz$\\
The proof you can find in the author's thesis \cite{ibragimov1}.

\end{proof}

Now let us call a \textbf{canonical extension} of $G$ the following $G$-functional on $L_S(\Hs)$:

According to  \textbf{Th.\ref{th_charact_G_func}} the given $G$ defines $\Sigma$;

For $ A\in L_S(\Hs)$ \ we can define \ $ 
\overline{G}(A)=\overline{G}_\Sigma(A):=\dfrac{1}{2}\sup\limits_{B\in\Sigma}{\Tr}\big[A\cdot B]$, the canonical extension of $G$.

\subsection{Sublinear expectation}\label{subsec_sublin_expect}

In this chapter we discuss the notion of sublinear expectation. The material mainly was taken from \cite{peng10}, which we have generalized to infinite dimensional case.

Let $(\Omega,{\cal F},P)$ is an ordinary probability space and $(\Xs,\|\pdot\|_{\Xs})$, $(\Ys,\|\pdot\|_{\Ys})$ are normed space.

Let us define the class of Lipschitz functions with polynomial growth as follows:

\parbox{16cm}{$\Cp\,(\Xs,\Ys):=\Big\{\fee:\Xs\to\Ys\ \Big|\ \|\fee(x)-\fee(y)\|_{\Ys}
 \leq C\cdot\big(1+\|x\|^m_{\Xs}+\|y\|^m_{\Xs}\big)
\cdot\|x-y\|_{\Xs}\Big\}$.}

$\Cp\,(\Xs):=\Cp\,(\Xs,\R)$.

\begin{df}
We define the class $\H^0$ to be the linear space that satisfies the following conditions:

\parbox[t][{1.1\height}]{15cm}{
1)\ If $c\in\R$ \ then \ $c\in\H^0$;\\
2)\ If $\xi: \Omega\to\R$ is a random variable on $(\Omega,{\cal F},\P)$ \ then \ $\xi\in\H^0$;\\
3)\ If $\xi_1, \xi_2,\ldots,\xi_n \in \H^0$ \ then \ $\fee(\xi_1, \xi_2,\ldots,\xi_n)\in \H^0
$\,,\  
for every \ $\fee\in \Cp(\R^n)$.}
\end{df}

\begin{df}\label{df_H_sp} Let us set
\\ $\H:=\big\{X: \Omega\to \Xs \text{ -- r.v. on } (\Omega,{\cal F},\P) \ |\ \psi(X)\in\H^0\ \  \forall \psi\in\Cp(\Xs)\big\}$.
\end{df}

\begin{rem}
 If $X\in\H$ \ then \ $\|X\|^m\in\H^0\,,\ m\geq1$

\end{rem}

%
%

\begin{df}\label{df_subl_exp}

A functional $\E: \H^0\to\R$ is called a \textbf{sublinear expectation} if it satisfies the following conditions:

1)\ \textbf{Monotonicity}:\ \ \ if $X\geq Y$ \ then \ $\E[X]\geq \E[Y]$;

2)\ \textbf{Constant preserving}:\ \ \ $c\in\R$ \ then \ $\E[c]=c$;

3)\  \textbf{Sub-additivity}:\ \ \ $\E[X+Y]\leq\E[X]+\E[Y]$; 

4)\ \textbf{Positive homogeneity}:\ \ \ $\lambda\geq 0$ \ then \ $\E[\lambda X]=\lambda\E[X]$.

\end{df}

The triple $(\Omega, \H,\E)$ we shall call a \textbf{sublinear expectation space}.

\begin{df}
 A functional $\F_X [\fee]:=\E [\fee(X)]$ is said to be a \textbf{distribution} of $X\in\H$, \ for \ $\fee \in \Cp\,(\Xs)$.
\end{df}

\begin{prop} (Some elementary properties of $\E$)\label{prop_elem_prop}
$ $\\
1) \ \ \ \! $\E[\alpha X] = \alpha^{+}\E[ X]+\alpha^{-}\E[ -X],\ \  \Big(\alpha:=\alpha^+-\alpha^-\Big)$;

2.a) \ $-\E[X]\leq\E[-X]$;

2.b) \ $\big|\E[X]\big|\leq\E[\big|X\big|]$;

2.c) \ $\E[Z]-\E[X]\leq\E[Z-X]$;

2.d) \ $\big|\E[Z]-\E[X]\big|\leq\big|\E[Z-X]\big|\leq\E[\big|Z-X\big|]$;

3.a) \ $X\geq0\Rightarrow\ \E[X]\geq0$,

3.b) \ $X\leq0\Rightarrow\ \E[X]\leq0$;

4) \ \ \ \! $\lambda\leq0\Ar \E[\lambda X]\geq\lambda\E[X]$;

5.a) \ $c\in\R\Ar \E[X+c]=\E[X]+c$; 

5.b) \ $Y: \E[Y]=\E[-Y]=0 \Ar \E[X+Y]=\E[X]$;

5.c) \ $Y: \E[Y]=\E[-Y] \Ar \E[X+\alpha Y]=\E[X]+\alpha\,\E[Y]$;

\end{prop}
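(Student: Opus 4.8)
The plan is to derive every item directly from the four defining axioms of Definition~\ref{df_subl_exp}, arranging the items so that later ones may cite earlier ones. The two recurring tools will be sub-additivity together with the identity $\E[0]=0$ (constant preserving at $c=0$), and the ``reverse'' estimate obtained by writing one variable as a sum, e.g. $X=(X+Y)+(-Y)$, and applying sub-additivity a second time. I would fix once and for all the order $(1),(2.\mathrm a),(2.\mathrm c),(3),(4),(2.\mathrm b),(2.\mathrm d),(5)$, so that each proof uses only what precedes it.

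First I would settle the homogeneity item $(1)$. For $\alpha\ge0$ it is exactly positive homogeneity, $\E[\alpha X]=\alpha\E[X]=\alpha^+\E[X]$; for $\alpha<0$ I write $\alpha X=\alpha^-(-X)$ with $\alpha^->0$ and apply positive homogeneity to get $\E[\alpha X]=\alpha^-\E[-X]$. In both cases the omitted summand vanishes, which is the stated formula. Item $(2.\mathrm a)$ is the single most-used fact: from $\E[0]=\E[X+(-X)]\le\E[X]+\E[-X]$ one reads off $-\E[X]\le\E[-X]$. Item $(2.\mathrm c)$ is the same idea applied to $\E[Z]=\E[(Z-X)+X]\le\E[Z-X]+\E[X]$. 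Items $(3.\mathrm a),(3.\mathrm b)$ are immediate from monotonicity against the constant $0$ and $\E[0]=0$, and item $(4)$ combines $(1)$ (for $\lambda\le0$ it gives $\E[\lambda X]=-\lambda\,\E[-X]$) with $(2.\mathrm a)$, multiplying the inequality $\E[-X]\ge-\E[X]$ by the nonnegative number $-\lambda$.

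For the absolute-value estimates $(2.\mathrm b)$ and $(2.\mathrm d)$ I would use monotonicity against $\pm X\le|X|$ followed by $(2.\mathrm a)$: the bounds $\E[X]\le\E[|X|]$ and $-\E[X]\le\E[-X]\le\E[|X|]$ together give $|\E[X]|\le\E[|X|]$, and $(2.\mathrm d)$ then follows by applying $(2.\mathrm c)$ with the two orderings of $Z$ and $X$ and invoking $(2.\mathrm b)$ for $Z-X$. The shift and cancellation items come last: $(5.\mathrm a)$ pairs the upper bound $\E[X+c]\le\E[X]+c$ with the reverse bound from $\E[X]=\E[(X+c)+(-c)]\le\E[X+c]-c$; $(5.\mathrm b)$ is the same argument with $c$ replaced by $Y$, the hypothesis $\E[Y]=\E[-Y]=0$ killing both correction terms; and $(5.\mathrm c)$ follows by rescaling through $(1)$ and then matching the two-sided bounds produced by the two applications of sub-additivity, using the hypothesis on $Y$ to force them to collapse.

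The individual steps are all short, so there is no single deep obstacle; the care required is purely in the bookkeeping. The two places I would watch most closely are the sign in $(4)$, where the multiplication by the nonnegative scalar $-\lambda$ must preserve the direction of the inequality coming from $(2.\mathrm a)$, and items $(2.\mathrm d)$ and $(5.\mathrm c)$, where I must verify that the correction terms generated by the upper and reverse applications of sub-additivity genuinely cancel under the stated hypotheses rather than merely bounding the target from both sides.
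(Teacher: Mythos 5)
The paper prints no argument for this proposition (it is one of the results whose proof is deferred to the author's thesis), so your proposal has to be judged on its own merits. For the bulk of the items it is correct and is exactly the standard axiomatic derivation à la Peng: item 1 by splitting on the sign of $\alpha$ and positive homogeneity, 2.a from $0=\E[X+(-X)]\le\E[X]+\E[-X]$, 2.c from $\E[Z]\le\E[Z-X]+\E[X]$, 3.a--3.b from monotonicity against $0$, 4 from item 1 plus 2.a multiplied by $-\lambda\ge0$, 2.b from monotonicity against $\pm X\le|X|$, and 5.a--5.b by the two-sided sub-additivity trick. All of these go through as you sketch them.

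The gap sits precisely at the two places you flagged for ``verification'': there the plan cannot be completed, because the printed statements are false as written. For 2.d, applying 2.c in both orders and 2.b to $Z-X$ yields only $|\E[Z]-\E[X]|\le\max\big(\E[Z-X],\E[X-Z]\big)\le\E\big[|Z-X|\big]$, \emph{not} the middle bound $|\E[Z]-\E[X]|\le|\E[Z-X]|$; and no proof exists: on a two-point $\Omega$ with $\E[\xi]=\max\big(\xi(1),\xi(2)\big)$, take $Z-X=(-1,0)$ and $X=(0,-10)$, so that $\E[Z-X]=0$, $\E[X]=0$, $\E[Z]=-1$, hence $|\E[Z]-\E[X]|=1>0=|\E[Z-X]|$ (the outer bound $\E[|Z-X|]=1$ still holds). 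For 5.c, under the printed hypothesis $\E[Y]=\E[-Y]$ your two applications of sub-additivity give, for $\alpha\ge0$, $\E[X]-\alpha\E[Y]\le\E[X+\alpha Y]\le\E[X]+\alpha\E[Y]$, which collapses only when $\E[Y]=0$; and the claim itself fails: with the same $\E$, $Y=(1,-1)$ satisfies $\E[Y]=\E[-Y]=1$, yet for $X=-Y$ and $\alpha=1$ one gets $\E[X+Y]=0\neq2=\E[X]+\E[Y]$. The correct hypothesis, as in Peng's finite-dimensional version, is $\E[-Y]=-\E[Y]$; under it your collapse argument does work, since then $\E[-\alpha Y]=\alpha\E[-Y]=-\alpha\E[Y]$ for $\alpha\ge0$ makes the lower and upper bounds coincide, and the case $\alpha<0$ follows by replacing $Y$ with $-Y$ via item 1. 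So rather than asserting the literal statements, you should prove the corrected versions ($|\E[Z]-\E[X]|\le\E[|Z-X|]$ in 2.d, and 5.c under $\E[-Y]=-\E[Y]$) and note the misprints.
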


%
%
%
%
%
%
%
%
%
%
%

\begin{rem}
Note the following false implications for a sublinear expectation:
 $$\E[X+Y]=\E[X]\ \ \nRightarrow\ \ \E[Y]=0\ \ \nRightarrow\ \ \E[-Y]=0.$$

\end{rem}

\begin{prop}[Cauchy–Bunyakovsky–Schwarz inequality]\label{prop_cb_ineq}
$\absz$\\
Let $X, Y\in\H^0$, then
$$\E \big[XY\big] \leq \Big( \E \big[X^2\big]\cdot\E \big[Y^2\big] \Big)^\frac{1}{2}.$$
\end{prop}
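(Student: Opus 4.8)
The plan is to run the classical variational proof of Cauchy–Schwarz, but to replace the linear identity $\E\big[(X-\lambda Y)^2\big]\geq 0$ (which is unavailable, since $\E$ is only sublinear) by a \emph{pointwise} Young-type inequality that survives being passed through $\E$. The sublinear structure is exactly enough: monotonicity lets me compare $XY$ with a majorizing nonnegative combination of $X^2$ and $Y^2$ at the level of random variables, while sub-additivity together with positive homogeneity lets me bound the expectation of that combination by the corresponding combination of $\E[X^2]$ and $\E[Y^2]$. A one-parameter optimization then produces the geometric mean.

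First I would note that $X^2,\,Y^2,\,XY\in\H^0$, so that all three expectations are well defined finite reals: indeed $x\mapsto x^2$ lies in $\Cp(\R)$ and $(x,y)\mapsto xy$ lies in $\Cp(\R^2)$, so closure of $\H^0$ under $\Cp$-composition applies. Since $X^2\geq 0$ and $Y^2\geq 0$ pointwise, Proposition~\ref{prop_elem_prop}(3.a) gives $\E[X^2]\geq 0$ and $\E[Y^2]\geq 0$. Next, for each fixed $a>0$ I use the elementary pointwise bound
$$XY\leq\frac{a}{2}X^2+\frac{1}{2a}Y^2,$$
which is merely the rearrangement of $\big(\sqrt{a}\,X-\tfrac{1}{\sqrt{a}}Y\big)^2\geq 0$. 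Applying monotonicity (Definition~\ref{df_subl_exp}, property 1) and then sub-additivity and positive homogeneity (Definition~\ref{df_subl_exp}, properties 3 and 4) yields, for every $a>0$,
$$\E[XY]\leq\E\Big[\frac{a}{2}X^2+\frac{1}{2a}Y^2\Big]\leq\frac{a}{2}\,\E[X^2]+\frac{1}{2a}\,\E[Y^2].$$

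Finally I would optimize over $a>0$. Writing $p:=\E[X^2]\geq 0$ and $q:=\E[Y^2]\geq 0$, the right-hand side is $\tfrac{a}{2}p+\tfrac{1}{2a}q$, whose infimum over $a>0$ equals $\sqrt{pq}$, attained at $a=\sqrt{q/p}$ when $p,q>0$; substituting gives exactly $\E[XY]\leq\big(\E[X^2]\,\E[Y^2]\big)^{1/2}$. The only step requiring genuine care — and the one I expect to be the main obstacle — is the degenerate case in which the optimizer is not interior, namely $p=0$ or $q=0$. If $p=\E[X^2]=0$, then letting $a\to+\infty$ in the displayed bound forces $\E[XY]\leq 0=\sqrt{pq}$; symmetrically, if $q=0$ one lets $a\to 0^{+}$. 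Thus the inequality holds in all cases, using nothing beyond the four defining axioms of a sublinear expectation and never invoking linearity of $\E$.
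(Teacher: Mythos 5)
Your proof is correct, but it takes a genuinely different route from the paper. The paper's argument first invokes the representation theorem (\textbf{Th.\ref{th_sublinear_F_sup_linear_f}}) to write $\E=\sup_{\theta\in\Theta}E_\theta$ over a family of linear expectations, applies the classical CBS inequality under each $E_\theta$, and then passes the supremum through the product: $\E[XY]=\sup_\theta E_\theta[XY]\leq\sup_\theta\big(E_\theta[X^2]E_\theta[Y^2]\big)^{1/2}\leq\big(\E[X^2]\,\E[Y^2]\big)^{1/2}$. You instead avoid the representation entirely and argue pointwise: the Young-type bound $XY\leq\frac{a}{2}X^2+\frac{1}{2a}Y^2$ combined with monotonicity, sub-additivity and positive homogeneity gives $\E[XY]\leq\frac{a}{2}\E[X^2]+\frac{1}{2a}\E[Y^2]$ for every $a>0$, and optimizing in $a$ (with the limiting cases $a\to\infty$, $a\to0^{+}$ covering $\E[X^2]=0$ or $\E[Y^2]=0$, where the infimum $\sqrt{pq}=0$ is not attained) yields the claim. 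Both proofs are complete; your preliminary check that $X^2$, $Y^2$, $XY$ lie in $\H^0$ via $\Cp$-closure is sound, as is the careful treatment of the degenerate cases. What each approach buys: the paper's reduction is a two-line corollary once the representation theorem is available, and the same mechanism transfers other classical inequalities (H\"older, Minkowski) wholesale; your argument is more elementary and more robust --- it uses nothing beyond the four axioms of Definition~\ref{df_subl_exp}, requires no continuity hypothesis and no Hahn--Banach-type machinery (recall the infinite-dimensional proof of \textbf{Th.\ref{th_sublinear_F_sup_linear_f}} is only deferred to the thesis), and therefore applies verbatim to any functional satisfying those axioms, not just those admitting a linear representation.
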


\begin{proof}
$\absz$\\
The proof is trivial and based on the classical CBS inequality and on representation theorem for sublinear functional \textbf{Th.\ref{th_sublinear_F_sup_linear_f}}.
%
%

\end{proof}

\subsection{$G$-normal distribution}

We again refer us to Peng \cite{peng10} in order to introduce a notion of the $G$-normal distribution. 
All the definitions can be carried just from the 1-dimensional case to the infinite dimensional case.

Note that in this chapter and later on we imply that all the used random variables are defined on the sublinear expectation space $(\Omega, \H,\E)$.

\begin{df}\label{df_idd_ind}
$\absz$\\
We say that random variables $X$ and $Y$ have \textbf{identical distribution} and denote $X\sim Y$ if their distributions coincide,

i.e., for every \ $\fee\in\Cp(\Hs)\ \ \ \E \big[\fee(X)\big]=\E \big[\fee(Y)\big]$.
\\
We say that random variables $Y$ is \textbf{independent} from random variable~$X$ and denote 
$Y \dperp X$ 
if they satisfy the following equality:

$\E\big[\fee(X,Y)\big]=\E\Big[\E\big[\fee(x,Y)\big]_{x=X}\Big]\,,\ \ \fee\in\Cp(\Hs\tdot\Hs)$.

\end{df}

\begin{rem}\label{rem_sum_ind}
Peng has already mentioned that $Y \dperp X$ does not imply that $X \dperp Y$ (see \cite[Rem.3.12]{peng10}).

In the our turn when $Y \dperp X $ we put some obvious properties which we shall use later:\\
\textbf{1)} $\E[\fee_1(X)+\fee_2(Y)]=\E[\fee_1(X)]+\E[\fee_2(Y)],\ \ \ \fee\in\Cp(\Hs)$.

\textbf{2)} $\E[XY]=\E\Big[\E[x,Y]_{x=X}\Big]=\E\Big[X^+\cdot\E[Y]+X^-\cdot\E[-Y]\Big]$

\hfill $=\E[X^+]\cdot\E[Y]^++\E[-X^+]\cdot\E[Y]^-+\E[X^-]\cdot\E[-Y]^++\E[-X^-]\cdot\E[-Y]^-$.
\end{rem}

\begin{df}\label{df_g_norm}
Random variable $X$ on the $(\Omega, \H,\E)$ is said to be \textbf{$\mathbf{G}$-normal distributed}
if for every \ $\bar{X}$ which is and independent copy of\  $X$ (i.e. has identical distribution):

$$aX+b\bar{X}\sim \sqrt{a^2+b^2}\ X, \text{ where }\ a,b>0 .$$
\end{df}

Let us consider a $G$-functional defined on the $K_S(\Hs)$ in such a way:

\begin{equation}\label{eq_G_funct_exp}
G(A):=\dfrac{1}{2}\,\E\big[\langle  AX,X \rangle\big]
\end{equation}

In fact, it is clear that the sublinear expectation provides the fulfillment of all $G$-functional's properties.

Assume that $X$ is a $G$-normal distributed random variable with respect to the sublinear expectation $\E$, and $G(\pdot)$ is defined in $\eqref{eq_G_funct_exp}$.

Later (\textbf{Th.\ref{th_exist_gd}} ) we will see that for every \ $G$-functional $\widetilde{G}(\pdot)$ \ there exists \ $G$-normal distributed random variable $\widetilde{X}$ and a sublinear expectation $\widetilde{\E}$, such that 
$\widetilde{G}(A)=\dfrac{1}{2}\,\widetilde{\E}\big[\langle  A\widetilde{X},\widetilde{X} \rangle\big].$

In view of such fact we shortly make a following notation
$$X\sim N_G(0,\Sigma).$$

According to \textbf{Prop.\ref{prop_sigma_ker_g}} $G$ defines a $\Sigma$-set, which we will call a \textbf{covariance set}, about what we will discuss more precisely in the chapter \ref{subsec_covariance_set}.
 
\begin{rem}\label{rem_eq_rv_gf}
If random variables have the same distribution then their $G$-functionals coincide, i.e.: \ 
if \ $X\sim Y\sim N_G(0,\Sigma)$ \ then \ $G_X=G_Y$.
\end{rem}

%

\begin{prop}\label{prop_moments}
For a $G$-normal distributed random variable \ \mbox{$X\sim N_G( 0,\Sigma )$} \ we have the following estimation of the moments for $m\geq1$: 

$$c_m\cdot\sup\limits_{Q\in\Sigma}\Tr\big[Q^m\big]\leq\E\big[\|X\|^{2m}_\Hs\big]\leq C_m\cdot\sup\limits_{Q\in\Sigma}\Big(\Tr\big[Q\big]\Big)^m\,,$$
where $c_m$ are real constants dependent just of $m$.

In particular, if \ $m=1$ \ then \ $c_m=1$ \ and \ $\E\big[\|X\|^2_\Hs\big]=\sup\limits_{Q\in\Sigma}\Tr\big[Q\big]$.
\end{prop}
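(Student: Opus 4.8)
The plan is to handle the exact case $m=1$ first, by a finite-rank truncation, and then to obtain the two-sided estimate for general $m$ by reducing everything to classical Gaussian moment inequalities through the covariance-set representation of $N_G(0,\Sigma)$, finally taking the supremum over $\Sigma$.

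\textbf{The case $m=1$.} For $N\ge 1$ set $A_N:=\sum_{j=1}^N e_j\otimes e_j\in K_S(\Hs)$, the orthogonal projection onto $\mathrm{span}(e_1,\dots,e_N)$; it is symmetric, non-negative and of finite rank, and $\langle A_N x,x\rangle=\sum_{j=1}^N\langle x,e_j\rangle^2\uparrow\|x\|_\Hs^2$ as $N\to\infty$. By \eqref{eq_G_funct_exp} and \textbf{Th.\ref{th_charact_G_func}},
\[
\E\big[\langle A_N X,X\rangle\big]=2G(A_N)=\sup_{B\in\Sigma}\Tr[A_N B]=\sup_{B\in\Sigma}\sum_{j=1}^N\langle Be_j,e_j\rangle .
\]
Letting $N\to\infty$, the right-hand side increases to $\sup_{B\in\Sigma}\Tr[B]$ (the terms $\langle Be_j,e_j\rangle$ are non-negative, so the supremum over $\Sigma$ and the monotone limit in $N$ may be interchanged). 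Monotonicity already gives $\sup_{B\in\Sigma}\Tr[B]\le\E[\|X\|_\Hs^2]$; the reverse inequality, and hence the claimed equality for $m=1$ with $c_1=1$, follows from continuity from below of $\E$ (monotone convergence for the sublinear expectation), which controls the tail $\E\big[\sum_{j>N}\langle X,e_j\rangle^2\big]\to 0$.

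\textbf{General $m$, reduction to Gaussians.} Here I would use the representation $\E[\psi(X)]=\sup_{Q\in\Sigma}\E_{N(0,Q)}[\psi(X)]$ of the $G$-normal law, with $\Sigma$ the covariance set of \textbf{Prop.\ref{prop_sigma_ker_g}} (cf.\ \textbf{Th.\ref{th_exist_gd}}), applied to $\psi(x)=\|x\|_\Hs^{2m}$. For a fixed centred Gaussian $X$ with trace-class covariance $Q=\sum_k\lambda_k\,f_k\otimes f_k$ one has $\|X\|_\Hs^2=\sum_k\lambda_k g_k^2$ with $g_k$ i.i.d.\ standard normal. The elementary superadditivity $\big(\sum_k\lambda_k g_k^2\big)^m\ge\sum_k\lambda_k^m g_k^{2m}$ and $\E[g_k^{2m}]=(2m-1)!!$ give the pointwise-in-$Q$ lower bound $\E_{N(0,Q)}[\|X\|_\Hs^{2m}]\ge(2m-1)!!\,\Tr[Q^m]$, while Gaussian hypercontractivity (equivalence of the $L^p$-norms on the first two Wiener chaoses, applied to $S=\sum_k\lambda_k g_k^2\ge 0$ with $\E S=\Tr[Q]$) gives $\E_{N(0,Q)}[\|X\|_\Hs^{2m}]\le C_m\big(\Tr[Q]\big)^m$. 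Taking the supremum over $Q\in\Sigma$ on each side yields
\[
(2m-1)!!\,\sup_{Q\in\Sigma}\Tr[Q^m]\ \le\ \E\big[\|X\|_\Hs^{2m}\big]\ \le\ C_m\Big(\sup_{Q\in\Sigma}\Tr[Q]\Big)^m,
\]
which is the assertion with $c_m=(2m-1)!!$; for $m=1$ both bounds collapse to the previous step.

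The main obstacle is the reduction step: the whole estimate rests on the covariance-set representation, i.e.\ on knowing that $\E$, restricted to functions of $X$, is the upper envelope of the Gaussian expectations with covariances ranging over all of $\Sigma$, and that each $Q\in\Sigma$ is actually attained. Making this rigorous requires \textbf{Th.\ref{th_exist_gd}}, together with an approximation of the unbounded $\psi(x)=\|x\|_\Hs^{2m}\notin\Cp(\Hs)$ by admissible functions and a uniform-integrability argument to push the moment bounds through the supremum. If one wishes to bypass the representation, the same bounds follow by projecting onto $\mathrm{span}(e_1,\dots,e_N)$ via $A_N$, invoking the finite-dimensional moment estimates for $G$-normal laws (see \cite{peng10}) for $A_NX$, and then letting $N\to\infty$; the delicate points remain the limit interchanges and the control of $\E$ from below, since sub-additivity supplies no lower bound for free and makes the lower estimate the genuinely hard direction.
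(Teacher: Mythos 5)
Your proof is correct, and its overall skeleton is the same as the paper's: both arguments stand or fall on the reduction $\E\big[\|X\|^{2m}_\Hs\big]=\sup_{Q\in\Sigma}E_{N_Q}\big[\|X\|^{2m}_\Hs\big]$, which the paper simply asserts by invoking \textbf{Th.\ref{th_sublinear_F_sup_linear_f}} (with the Gaussian identification of the linear components left implicit), and which you rightly flag as the genuine crux, pointing to \textbf{Th.\ref{th_exist_gd}}. Where you differ is in how the fixed-$Q$ Gaussian moments are estimated. The paper works with the exponential moment $F(\varepsilon)=\int_\Hs e^{\frac{\varepsilon}{2}|x|^2}\,N_Q(dx)=\big[\det(I-\varepsilon Q)\big]^{-1/2}$, derives the recursion $F^{(m)}(0)=\frac{(m-1)!}{2}\sum_{j=0}^{m-1}\frac{F^{(j)}(0)}{j!}\,\Tr\big[Q^{m-j}\big]$, and closes by induction, with $F'(0)=\frac{1}{2}\Tr[Q]$ delivering the $m=1$ equality; the constants $c_m$ remain implicit. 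You instead diagonalize $Q$, write $\|X\|^2_\Hs=\sum_k\lambda_k g_k^2$ with i.i.d.\ standard normals, and get the lower bound from the elementary superadditivity $(\sum_k a_k)^m\geq\sum_k a_k^m$ with the explicit constant $c_m=(2m-1)!!$, and the upper bound from moment equivalence on the second chaos — though hypercontractivity is more than you need, since H\"older gives $E\big[g_{k_1}^2\cdots g_{k_m}^2\big]\leq(2m-1)!!$ and hence $E[S^m]\leq(2m-1)!!\,(\Tr Q)^m$ directly. Your route buys explicit constants and avoids justifying term-by-term differentiation of the infinite product and the identity $J_m=2^mF^{(m)}(0)$; the paper's route treats all $m$ (including $m=1$) in one uniform computation. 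Your separate finite-rank treatment of $m=1$ via $A_N=\sum_{j\leq N}e_j\otimes e_j$ and $2G(A_N)=\sup_{B\in\Sigma}\Tr[A_NB]$ is a nice self-contained addition not present in the paper.

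Two minor corrections to your meta-discussion. First, $\psi(x)=\|x\|^{2m}_\Hs$ \emph{does} belong to $\Cp(\Hs)$: that class requires only a Lipschitz bound with polynomially growing factor, which $\|x\|^{2m}$ satisfies (cf.\ the remark after \textbf{Df.\ref{df_H_sp}} that $\|X\|^m\in\H^0$), so no truncation or uniform-integrability step is needed to make $\E[\psi(X)]$ meaningful. Second, in your $m=1$ argument the parenthetical claim that continuity from below ``controls the tail $\E\big[\sum_{j>N}\langle X,e_j\rangle^2\big]\to0$'' is not accurate in general: for $\Sigma$ the closed convex hull of $\{e_j\otimes e_j\}_{j\geq1}$ one has $\sup_{B\in\Sigma}\sum_{j>N}\langle Be_j,e_j\rangle=1$ for every $N$, so the tail expectation need not vanish; what actually closes the argument is continuity from below of the upper envelope itself, i.e.\ the interchange $\sup_N\sup_{B\in\Sigma}=\sup_{B\in\Sigma}\sup_N$ applied to $\E[\langle A_NX,X\rangle]\uparrow\E[\|X\|^2_\Hs]$, which is exactly the representation you are already assuming.
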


\begin{proof}
$\absz$\\
Fix \ $Q\in\Sigma$ \ and let \ $\{\lambda_i\,,\ i\geq1\}$ \ be eigenvalues of $Q$.

Consider a gaussian measure $\mu:=N_Q$.
\\
Then according to \textbf{Th.\ref{th_sublinear_F_sup_linear_f}} we have
\\
$\E\big[\|X\|^{2m}_\Hs\big]=\sup\limits_{Q\in\Sigma}E_Q\big[\|X\|^{2m}_\Hs\big]=\sup\limits_{Q\in\Sigma}\int\limits_\Hs |x|^{2m}\mu(dx)$.\hfill $(\,\ast\,)$
\\
Define $J_m:=\int\limits_\Hs |x|^{2m}\mu(dx)$
\ \  and\ \  $F(\varepsilon):=\int\limits_\Hs e^{\frac{\varepsilon}{2}|x|^2}\mu(dx)$.

Then \ $F^{(m)}(\varepsilon)=\int\limits_\Hs e^{\frac{\varepsilon}{2}|x|^2}\cdot\frac{|x|^{2m}}{2^m\phantom{x}}\mu(dx)$.

And therefore \ $J_m=2^m F^{(m)}(0)$.\hfill $(\,\ast\ast\,)$
\\
Take $\varepsilon$ small, namely $\varepsilon<\min\limits_k\frac{1}{\lambda_k}$, so we have:

$F(\varepsilon)=\int\limits_\Hs e^{\frac{\varepsilon}{2}|x|^2}\mu(dx)
=\prod\limits_{k\geq1}\int\limits_{-\infty}^{+\infty}e^{\frac{\varepsilon}{2}\,x_k^2}\, N_{\lambda_k}(dx)
=\prod\limits_{k\geq1}\int\limits_{-\infty}^{+\infty}e^{\frac{\varepsilon}{2}\,x_k^2}\frac{1}{\sqrt{2\pi\lambda_k}} 
\,e^{\frac{-x_k^2}{2\lambda_k}}\,dx\\
=\prod\limits_{k\geq1}\int\limits_{-\infty}^{+\infty}\frac{1}{\sqrt{2\pi\lambda_k}}e^{-x_k^2\big/ \big(\frac{2\lambda_k}{1-\varepsilon_k\lambda_k}\big)}\, dx
\overset{\text{Euler-Poisson integral}}{=}\prod\limits_{k\geq1}\frac{1}{\sqrt{2\pi\lambda_k}}\,\sqrt{\frac{2\lambda_k}{1-\varepsilon_k\lambda_k}}\,\sqrt{\pi}\\
=\prod\limits_{k\geq1}(1-\varepsilon_k\lambda_k)^{-\frac{1}{2}}
=\Big(\prod\limits_{k\geq1}(1-\varepsilon_k\lambda_k)\Big)^{-\frac{1}{2}}
=\Big[\det(1-\varepsilon Q)\Big]^{-\frac{1}{2}}
$.
\\
Recall the following elementary formula:

 $\Big(\prod\limits_{k\geq1}f_k\Big)^{\prime}
=f_1^{\, '} \prod\limits_{k\geq2}f_k+f_1 \Big(\prod\limits_{k\geq2}f_k\Big)^{\prime}
=\dfrac{f_1^{\, '}}{f_1} \prod\limits_{k\geq1}f_k+f_1 f_2^{\, '} \prod\limits_{k\geq3}f_k+f_1 f_2 \Big(\prod\limits_{k\geq1}f_k\Big)^{\prime}\\
=\dfrac{f_1^{\, '}}{f_1} \prod\limits_{k\geq1}f_k+\dfrac{f_2^{\, '}}{f_2} \prod\limits_{k\geq1}f_k
+\dfrac{f_3^{\, '}}{f_3} \prod\limits_{k\geq1}f_k+\ldots
=\prod\limits_{k\geq1}f_k\cdot \sum\limits_{k\geq1}\dfrac{f_k^{\, '}}{f_k}$.
\\
Therefore
$F'(\varepsilon)=\Bigg(\Big(\prod\limits_{k\geq1}(1-\varepsilon_k\lambda_k)\Big)^{-\frac{1}{2}}\Bigg)^{\prime}$

\hfill$
=-\dfrac{1}{2}\Big(\prod\limits_{k\geq1}(1-\varepsilon_k\lambda_k)\Big)^{-\frac{3}{2}}\cdot \prod\limits_{k\geq1}(1-\varepsilon_k\lambda_k)\cdot
\sum\limits_{k\geq1}\dfrac{-\lambda_k}{1-\varepsilon_k\lambda_k}\phantom{xxll}$

\hfill$
=\dfrac{1}{2}\Big(\prod\limits_{k\geq1}(1-\varepsilon_k\lambda_k)\Big)^{-\frac{1}{2}}\cdot
\sum\limits_{k\geq1}\underbrace{\dfrac{\lambda_k}{1-\varepsilon_k\lambda_k}}_{\phantom{xxxxxx}=:g_k(\varepsilon)}
=\dfrac{1}{2}\sum\limits_{k\geq1}F(\varepsilon)g_k(\varepsilon)
$.

We define \ $G(\varepsilon):=\sum\limits_{k\geq1}F(\varepsilon)g_k(\varepsilon)$. 

Then let us compute the $ m$-th derivative of $G$, we have

$G^{(m)}(\varepsilon):=\sum\limits_{k\geq1}\big(F(\varepsilon)g_k(\varepsilon)\big)^{(m)}
=\sum\limits_{k\geq1}\sum\limits_{j=0}^m{m \choose j}F^{(j)}(\varepsilon)g_k^{(m-j)}(\varepsilon)\\
$

\hfill$=\sum\limits_{k\geq1}\sum\limits_{j=0}^m\dfrac{m!}{(m-j)!j!}\,(m-j)!\,F^{(j)}(\varepsilon)\,g_k^{m+1-j}(\varepsilon)
$,

since \ \ $g_k^{(p)}(\varepsilon)=\Big(\dfrac{\lambda_k}{1-\varepsilon_k\lambda_k}\Big)^{(p)}
=\Big(\lambda_k \cdot\dfrac{\lambda_k}{(1-\varepsilon_k\lambda_k)^2}\Big)^{(p-1)}\\
=\Big(2\lambda_k \cdot\dfrac{\lambda_k^2}{(1-\varepsilon_k\lambda_k)^3}\Big)^{(p-2)}
=\ldots
=p!\cdot\dfrac{\lambda_k^{p+1}}{(1-\varepsilon_k\lambda_k)^{p+1}}
=p!\cdot g_k^{p+1}(\varepsilon)
$,

then \ \ $F^{(m)}(0)=\dfrac{1}{2}\sum\limits_{k\geq1}\sum\limits_{j=0}^{m-1}\dfrac{(m-1)!}{j!}\,F^{(j)}(0)\,g_k^{m-j}(0)$

\hfill$
=\dfrac{(m-1)!}{2}\cdot\sum\limits_{j=0}^{m-1}\dfrac{F^{(j)}(0)}{j!}\cdot\sum\limits_{k\geq1}\lambda_k^{m-j}
=\dfrac{(m-1)!}{2}\cdot\sum\limits_{j=0}^{m-1}\dfrac{F^{(j)}(0)}{j!}\cdot\Tr\big[Q^{m-j}\big]
$.

Since \ $F(0)=1$ \ it follows that \ $F^{(1)}(0)=\dfrac{1}{2}\cdot \Tr\big[Q\big]$.

Hence \ $c_1\cdot\Tr\big[Q^1\big]\leq F^{(1)}(0)\leq c_1\cdot\Tr\big[Q\big]^1$.
\\
To finish the proof we use the induction method:

Let for all \ $j<m$ \ required estimation \ $c_j\cdot\Tr\big[Q^j\big]\leq F^{(1)}(0)\leq c_j\cdot\Tr\big[Q\big]^j$ \ holds.
\\
Since \ $\Tr\big[Q^m\big]
\leq\Tr\big[Q^j\big]\Tr\big[Q^{m-j}\big]
\leq\Tr\big[Q\big]^j\Tr\big[Q^{m-j}\big]
\leq\Tr\big[Q\big]^m$,

we have \ \ $$c_m\cdot\Tr\big[Q^m\big]\leq F^{(m)}(0)\leq c_m\cdot\Tr\big[Q\big]^m.$$

But according to \  $(\,\ast\,)$ \ and \  $(\,\ast\ast\,)$ \ this gives us what we need.

\end{proof}

\begin{rem}
If we consider a canonical extension on $L_S(\Hs)$ for a such particular $G(\pdot)=\dfrac{1}{2}\,\E\big[\langle  \pdot X,X \rangle\big]$,
there emerges a question if we are able to show that such $G$ satisfies the representation given in \textbf{Th.\ref{th_charact_G_func}},

i.e. $\exists$ certain set $\Sigma$, s.t.\ \ \  $G(A)=\dfrac{1}{2}\sup\limits_{B\in\Sigma}{\Tr}\big[A\cdot B\big]\,,\ \ \ A\in L_S(\Hs)$?

\end{rem}

According to (\textbf{\ref{subs_G_ext}}) in the general case we do not have uniqueness.

But, it is clear that for every \ $A\in L_S(\Hs)$ \ there exist $A_n\in K_S(\Hs)$ such that \ \ 
$\Big|\langle  AX,X \rangle-\langle  A_nX,X \rangle\Big|\xrightarrow[n\to\infty]{}0$.{\makebox[80mm][r]{$(\,\ast\,)$}}

To show this we 
take a projection operator \ 
$P_n x:=\sum\limits_{i=1}^n \langle x,e_i\rangle e_i$,\ 

where \ $x\in \Hs$ \ and \  $(e_i)$ \ is a basis of \ $\Hs$.

Taking \  $A_n:=\dfrac{A P_n+P_n A}{2}$  which is actually compact (since has a finite range) and symmetric.
\\
And $(\,\ast\,)$ may be concluded from the following convergence and Cauchy-Bunyakovsky-Schwarz inequality:

$\|Ax-A_n x\|_\Hs
\leq \dfrac{1}{2}\Big(\|Ax-A P_n x\|_\Hs+\|Ax-P_n A x\|_\Hs\Big)\\
\leq\dfrac{1}{2}\Big(\|A\|_{L(\Hs)}\cdot\|x-P_n x\|_\Hs+\|Ax-P_n A x\|_\Hs\Big)\\
\leq\dfrac{1}{2}\Big(\|A\|_{L(\Hs)}\cdot\|\underbrace{\sum\limits_{i=n+1}^\infty \langle x,e_i\rangle e_i}_{\searrow0}\|_\Hs
+\|\underbrace{\sum\limits_{i=n+1}^\infty \langle Ax,e_i\rangle e_i}_{\searrow0}\|_\Hs\Big)
\xrightarrow[n\to\infty]{}0$.
\\
So, the question remains just to understand under which conditions the following expression tends to zero:
\\
$\Big|\E\big[\langle AX,X\rangle\big] - \sup\limits_{B\in\Sigma}{\Tr}\big[A_n\cdot B\big]\Big|
=\Big|\E\big[\langle AX,X\rangle\big] - \E\big[\langle A_n X,X\rangle\big] \Big|$

\hfill$\leq\E\Big[\Big|\langle AX,X\rangle-\langle A_n X,X\rangle\Big| \Big]
\xrightarrow[n\to\infty]{?}0$.
\\
\subsection{Covariance set under sublinear expectation}\label{subsec_covariance_set}

Let us describe the notion of the covariance operators for a random variable $X\sim N_G(0,\Sigma)$ under the sublinear expectation $\E$. Actually, we will see that $\Sigma$ is a set of operators.
\\
If $\E$ is linear (denote it as $E$), then:\\
$Cov(X)=Q$, where $Q$ is defined as: $\left\langle Q h,k \right\rangle=E[\left\langle X,h \right\rangle\left\langle X,k \right\rangle]$.

Now we fix the family of linear functionals $\Big\{E_{\theta},\enskip \theta\in\Theta\Big\}$ for given sublinear functional $\E$, and let $\{e_i,i\geq1\}$ be a basis in $\Hs$. Then we have:
$$\E\big[\langle A X,X \rangle\big]=\E\Big[ \sum_{i\geq1}\langle A X,e_i \rangle\langle X,e_i \rangle\Big]=
\sup\limits_{\theta\in\,\Theta}\,E_\theta\Big[ \sum_{i\geq1}\langle A X, e_i \rangle\langle X,e_i \rangle\Big]
$$

We can change the order of integration in the last term. For check it we formulate the following lemma.

\begin{lm}\label{lm_change_integration}
 For $A$ a linear bounded operator and square-integrable r.v.~$X$ under the linear expectation $E$, i.e. 
$E\big[\|X\|^2_\Hs\big]<\infty$, the following equality holds:

$$E\Big[ \sum_{i\geq1}\langle A X, e_i \rangle\langle X,e_i \rangle\Big]
= \sum_{i\geq1}E\Big[\langle A X, e_i \rangle\langle X,e_i \rangle\Big],$$
where $\{e_i,i\geq1\}$ is a basis in Hilbert space $\Hs$.

\end{lm}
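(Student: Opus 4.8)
The plan is to reduce the statement to the classical dominated convergence theorem (equivalently Tonelli--Fubini), using that $E$ is here an ordinary linear expectation on a bona fide probability space $(\Omega,\mathcal F,\P)$, so all the standard measure-theoretic machinery is available. First I would recognize the series inside the expectation as the partial sums of $\langle AX,X\rangle$ expanded in the basis $\{e_i\}$: writing $P_n x:=\sum_{i=1}^n\langle x,e_i\rangle e_i$ for the orthogonal projection onto $\mathrm{span}(e_1,\dots,e_n)$, one has $\sum_{i=1}^n\langle AX,e_i\rangle\langle X,e_i\rangle=\langle AX,P_nX\rangle$, and since $P_nX\to X$ in $\Hs$ and $A$ is continuous, $\langle AX,P_nX\rangle\to\langle AX,X\rangle$ pointwise in $\omega$. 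Thus the series converges (to $\langle AX,X\rangle$) for each fixed $\omega$, so the left-hand side is well defined.

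The key estimate is a uniform integrable bound on the partial sums. By the Cauchy--Schwarz inequality in $\ell^2$ applied to the Fourier-coefficient sequences $(\langle AX,e_i\rangle)_i$ and $(\langle X,e_i\rangle)_i$ of $AX$ and $X$, I would bound, for every $n$ and every $\omega$,
$$\Big|\sum_{i=1}^n\langle AX,e_i\rangle\langle X,e_i\rangle\Big|\le\Big(\sum_{i\ge1}\langle AX,e_i\rangle^2\Big)^{1/2}\Big(\sum_{i\ge1}\langle X,e_i\rangle^2\Big)^{1/2}=\|AX\|_\Hs\,\|X\|_\Hs\le\|A\|_{L(\Hs)}\,\|X\|^2_\Hs.$$
The dominating function $\|A\|_{L(\Hs)}\,\|X\|^2_\Hs$ is integrable precisely by the hypothesis $E[\|X\|^2_\Hs]<\infty$.

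With the pointwise convergence and the integrable dominating function in hand, the dominated convergence theorem gives $E\big[\sum_{i=1}^n\langle AX,e_i\rangle\langle X,e_i\rangle\big]\to E\big[\sum_{i\ge1}\langle AX,e_i\rangle\langle X,e_i\rangle\big]$. On the other hand, each partial sum is a finite sum, so linearity of $E$ yields $E\big[\sum_{i=1}^n\langle AX,e_i\rangle\langle X,e_i\rangle\big]=\sum_{i=1}^n E\big[\langle AX,e_i\rangle\langle X,e_i\rangle\big]$; letting $n\to\infty$ identifies the right-hand side of the claimed identity as the same limit. Equivalently, the very same bound shows $\sum_i E\big[\,|\langle AX,e_i\rangle\langle X,e_i\rangle|\,\big]\le\|A\|_{L(\Hs)}\,E[\|X\|^2_\Hs]<\infty$, so Tonelli--Fubini applies directly to interchange sum and expectation. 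There is no serious obstacle here; the only point requiring care is the $\ell^2$ Cauchy--Schwarz step, which produces a single integrable majorant independent of $n$, together with the verification that the relevant expectations are finite — both following immediately from the boundedness of $A$ and the square-integrability hypothesis.
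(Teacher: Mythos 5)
Your proof is correct, and its engine coincides with the paper's: both of you identify the partial sums with a projection pairing (the paper writes $S_N=\langle P_N AX,X\rangle$, you write $\langle AX,P_nX\rangle$ — the same thing since $P_n$ is self-adjoint), dominate all partial sums by the single integrable majorant $\|A\|_{L(\Hs)}\|X\|^2_\Hs$, and apply dominated convergence to treat the left-hand side. Where you genuinely diverge is the right-hand side. The paper's part (b) establishes convergence of $\sum_{i}E\big[\langle AX,e_i\rangle\langle X,e_i\rangle\big]$ by introducing the covariance operator $Q$ of $X$, rewriting each term as $\langle QA^*e_i,e_i\rangle$, and invoking that $QA^*$ is trace-class, so the series converges absolutely to $\Tr[QA^*]$. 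You avoid this machinery: since each partial sum is finite, linearity gives $E[S_n]=\sum_{i=1}^n E\big[\langle AX,e_i\rangle\langle X,e_i\rangle\big]$, and the DCT limit already computed forces the right-hand series to converge to the same value; your Tonelli estimate $\sum_i E\big[|\langle AX,e_i\rangle\langle X,e_i\rangle|\big]\le\|A\|_{L(\Hs)}E[\|X\|^2_\Hs]$ even yields absolute convergence directly, which the linearity argument alone does not. Your route is more elementary and self-contained, using only boundedness of $A$ and square-integrability of $X$, whereas the paper's detour presupposes the identification of second moments with a trace-class covariance operator (true here, but an extra ingredient); what the paper's version buys in exchange is the explicit identification of the common value as $\Tr[QA^*]$, which is precisely what the surrounding discussion of covariance sets goes on to use.
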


\begin{proof}
 $\absz$\\
We know that we can change the finite sums with a linear expectation, i.e.

$E\Big[ \sum\limits_{i=1}^N\langle A X, e_i \rangle\langle X,e_i \rangle\Big]
= \sum\limits_{i=1}^N E\Big[\langle A X, e_i \rangle\langle X,e_i \rangle\Big]
$.

So, we need to take just limits when $N\to\infty$ from the left and right part respectively, and see that they coincide.
\\
\textbf{(a)}\ \ Define $S_N$ as a partial sum of the given series.

$S_N:=\sum\limits_{i=1}^N\langle A X, e_i \rangle\langle X,e_i \rangle$.

We have that for every $\omega\in\Omega\ \ \ S_N\xrightarrow[N\to\infty]{}\sum\limits_{i=1}^\infty\langle A X, e_i \rangle\langle X,e_i \rangle
=\langle AX,X\rangle$.

Take the projection operator $P_N:=Proj(e_1,..,e_N)$, that is it is noting else but \ 
$P_N=P^*_N=P_N^2$\ \  and\ \ $\|P_N\|_\Hs=1$.
\\
Then we have

$S_N
=\sum\limits_{i=1}^N\langle A X, e_i \rangle\langle X,e_i \rangle
=\sum\limits_{i=1}^N\langle A X, P_N e_i \rangle\langle X,P_N e_i \rangle 
=\sum\limits_{i=1}^\infty\langle A X, P_N e_i \rangle\langle X,P_N e_i \rangle\\
=\sum\limits_{i=1}^\infty\langle P_N A X,  e_i \rangle\langle P_N X, e_i \rangle
=\langle P_N A X, P_N X \rangle
=\langle P^2_N A X, X \rangle
=\langle P_N A X, X \rangle
$.
\\
And we get that \ \ 
$|S_N|\leq\|P_N\|_{L(\Hs)}\cdot\|A\|_{L(\Hs)}\cdot\|X\|^2_\Hs
=\|A\|_{L(\Hs)}\cdot\|X\|^2_\Hs\in L^1(P)
$,

 where $P$ is a probability for the integral that $E$.

And it follows by the dominated convergence theorem that 
$$E\big[S_N]\xrightarrow[N\to\infty]{}E\big[\langle AX,X\rangle].$$
\\
\textbf{(b)}\ \ Let $Q$ be a covariance set of operators of $X$ ($Q$ is a trace-class operator), then\ \ \
$\sum\limits_{i=1}^N E\Big[\langle A X, e_i \rangle\langle X,e_i \rangle\Big]
=\sum\limits_{i=1}^N \langle Q A^* e_i, e_i\rangle
$.

The last term has a limit\ \ \ $\sum\limits_{i=1}^\infty \langle Q A^* e_i, e_i\rangle\equiv\Tr\big[QA^*]$, 

because 
$\sum\limits_{i=1}^\infty \big|\langle Q A^* e_i, e_i\rangle\big|<\infty$\,,\ \ since\ \ $QA^*$\ \ is also a trace-class operator.

So that, 

\hfill$\sum\limits_{i=1}^N E\Big[\langle A X, e_i \rangle\langle X,e_i \rangle\Big]\xrightarrow[N\to\infty]{}
\sum\limits_{i=1}^\infty \langle Q A^* e_i, e_i\rangle
=\sum\limits_{i=1}^\infty E\Big[\langle A X, e_i \rangle\langle X,e_i \rangle\Big]
$.

\end{proof}

Surely, we can use \textbf{Lm.\ref{lm_change_integration}} in our case, because by \textbf{Prop.\ref{prop_moments}} and \textbf{Th.\ref{th_sublinear_F_sup_linear_f}} we get that 
$$\infty>\E\big[\|X\|^2_\Hs\big]=\sup\limits_{\theta\in\,\Theta}E_\theta\big[\|X\|^2_\Hs\big].$$

It means that, 

$\E\big[\langle A X,X \rangle\big]
=\sup\limits_{\theta\in\,\Theta} \sum\limits_{i\geq1}E_\theta\Big[\langle AX, e_i \rangle\langle X,e_i \rangle\Big]=
\sup\limits_{\theta\in\,\Theta} \sum\limits_{i\geq1}\langle Q_\theta A e_i,e_i \rangle$

\hfill$=
\sup\limits_{\theta\in\,\Theta}\ \Tr[Q_\theta\cdot A].
$

In the same manner we get that
\\
$\E\big[\langle X,h \rangle\langle X,k \rangle\big]=
\E\Big[ \sum\limits_{i\geq1}\langle X,e_i \rangle\langle h,e_i \rangle\sum\limits_{j\geq1}\langle X,e_j \rangle\langle k,e_j \rangle\Big]$

\hfill$=\sup\limits_{\theta\in\Theta}\,E_\theta\Big[ \sum\limits_{i,j\geq1}\langle X,e_i \rangle\langle h,e_i \rangle\langle X,e_j \rangle\langle k,e_j \rangle\Big]$

Using the same idea as in the proof of \textbf{Lm.\ref{lm_change_integration}} permits us also to change the integration sums and we obtain that
\\
$
\E\big[\langle X,h \rangle\langle X,k \rangle\big]=\sup\limits_{\theta\in\Theta}\sum\limits_{i,j\geq1}\,E_\theta\Big[ \langle X,e_i \rangle\langle X,e_j \rangle\Big]\langle h,e_i \rangle\langle k,e_j \rangle$

\hfill$=
\sup\limits_{\theta\in\Theta}\sum\limits_{i,j\geq1}\,\langle Q_\theta e_i,e_j \rangle\langle h,e_i \rangle\langle k,e_j \rangle=
\sup\limits_{\theta\in\Theta}\,\langle Q_\theta h,k\rangle.
$

So, we have that
\begin{equation}
\Sigma:=\{Q_\theta\mbox{ -- covariation of $X$ under $E_\theta$}\,,\ \theta\in\Theta\}=Cov(X) 
\end{equation}
\begin{equation}\label{eq_cov_gnd}
\boxed{\E\big[\langle X,h \rangle\langle X,k \rangle\big]=\sup_{Q\in\Sigma}\,\langle Q h,k\rangle\!\!\!\!\!\phantom{\dfrac{1}{2}}}
\end{equation}
 
\begin{rem} In order to better understand the nature of the covariance set of operators under sublinear expectation, we list the following obvious properties:
 $\phantom{.}$\\
\textbf{1)}$\E\big[-\langle A X,X \rangle\big]=\sup\limits_{Q\in\Sigma}\Tr[-AQ]
=\sup\limits_{Q\in\Sigma^{\barw}}\Tr[AQ]$,

\hfill where $\Sigma^{\,\barw}:=\Big\{-A \bigl|\ A\in\Sigma\Bigr.\Big\}$.

\textbf{2)} $\E\big[-\langle A X,X \rangle\big]\geq-\E\big[\langle A X,X \rangle\big]$.
\\
\textbf{3)} $\E\big[-\langle X,h \rangle\langle X,k \rangle\big]=\sup\limits_{Q\in\Sigma}\langle -Qh,k \rangle
=\sup\limits_{Q\in\Sigma^{\barw}}\langle Qh,k \rangle$.

\textbf{4)} $X_1\sim N_G(0,\Sigma),\ X_2\sim N_G(0,\Sigma^{\,\barw})\ \ \Rightarrow\ \ 
G_{X_1}(A)=G_{X_2}(-A)$.
\end{rem}

Also we can show that one-dimensional projection of the $G$-normal distributed random variable in the Hilbert space is also $G$-normal distributed.
 \begin{prop}\label{prop_proj_gn}
 Let $X$ be a $G$-normal distributed random variable in the Hilbert space $\Hs$, then for every \ \ $h\in\Hs$\ \ \ 
$\left\langle X,h\right\rangle$ \ is $G_h$-normal~distributed, 

where
\parbox[t]{10cm}{
$G_h(\alpha)=\dfrac{1}{2}\big(\alpha^+\,\overline{\sigma}^2(h)-\alpha^-\,\underline{\sigma}^2(h)\big);$\\
$\overline{\sigma}^2(h)=\E[\left\langle  X,h \right\rangle^2]=2 G(h\cdot h^T),\\ \underline{\sigma}^2(h)=-\E[-\left\langle  X,h \right\rangle^2]=-2 G(-h\cdot h^T)$.}

So, we keep the notation and can write that $X\sim N_G( 0,[\underline{\sigma}^2(h), \overline{\sigma}^2(h)] )$.
\end{prop}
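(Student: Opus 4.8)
The plan is to reduce the infinite-dimensional statement to the one-dimensional $G$-normal framework applied to the scalar random variable $Y:=\langle X,h\rangle$. A scalar $Y$ is $G_h$-normal distributed precisely when (i) it satisfies the self-similarity relation $aY+b\bar Y\sim\sqrt{a^2+b^2}\,Y$ for every independent copy $\bar Y$ of $Y$ and all $a,b>0$, and (ii) its associated $G$-functional equals $G_h$. So I would first transport the $G$-normality of $X$ through the projection $\langle\,\cdot\,,h\rangle$ to obtain (i), and then compute the $G$-functional of $Y$ directly to obtain (ii).

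For (i), the first thing to check is that $\bar Y:=\langle\bar X,h\rangle$ is again an independent copy of $Y$ whenever $\bar X$ is an independent copy of $X$. Identical distribution is immediate: for $\fee\in\Cp(\R)$ the map $\psi(x):=\fee(\langle x,h\rangle)$ lies in $\Cp(\Hs)$ (the functional $\langle\,\cdot\,,h\rangle$ is linear and $\|h\|_\Hs$-Lipschitz, so composition preserves the polynomial-Lipschitz class), whence $\E[\fee(Y)]=\E[\psi(X)]=\E[\psi(\bar X)]=\E[\fee(\bar Y)]$. Independence is handled the same way: for $\fee\in\Cp(\Hs\tdot\Hs)$ put $\Phi(x,\bar x):=\fee(\langle x,h\rangle,\langle\bar x,h\rangle)\in\Cp(\Hs\tdot\Hs)$; applying $\bar X\dperp X$ and noting that $\E[\Phi(x,\bar X)]=g(\langle x,h\rangle)$ with $g(y):=\E[\fee(y,\bar Y)]$ gives $\E[\fee(Y,\bar Y)]=\E[g(Y)]=\E\big[\E[\fee(y,\bar Y)]_{y=Y}\big]$, which is exactly $\bar Y\dperp Y$. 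Once this is in place the self-similarity is formal: since $aY+b\bar Y=\langle aX+b\bar X,h\rangle$ and $aX+b\bar X\sim\sqrt{a^2+b^2}\,X$, testing against any $\fee\in\Cp(\R)$ composed with $\langle\,\cdot\,,h\rangle$ yields $aY+b\bar Y\sim\sqrt{a^2+b^2}\,Y$.

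For (ii), I would use the one-dimensional defining relation $G_Y(\alpha)=\tfrac12\E[\alpha Y^2]$ together with \textbf{Prop.\ref{prop_elem_prop}}(1), namely $\E[\alpha Z]=\alpha^+\E[Z]+\alpha^-\E[-Z]$, applied to $Z=Y^2$. This gives $G_Y(\alpha)=\tfrac12\big(\alpha^+\E[Y^2]+\alpha^-\E[-Y^2]\big)=\tfrac12\big(\alpha^+\overline{\sigma}^2(h)-\alpha^-\underline{\sigma}^2(h)\big)=G_h(\alpha)$, where the identifications $\overline{\sigma}^2(h)=\E[Y^2]$ and $\underline{\sigma}^2(h)=-\E[-Y^2]$ are just the definitions in the statement. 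To match the announced formulas $\overline{\sigma}^2(h)=2G(h\cdot h^T)$ and $\underline{\sigma}^2(h)=-2G(-h\cdot h^T)$, I would identify the rank-one operator $h\cdot h^T$ with $h\otimes h$ and compute $\langle(h\otimes h)X,X\rangle=\langle X,h\rangle^2=Y^2$; then the defining formula \eqref{eq_G_funct_exp} gives $2G(h\otimes h)=\E[Y^2]=\overline{\sigma}^2(h)$, and with $-h\otimes h$ it gives $2G(-h\otimes h)=\E[-Y^2]=-\underline{\sigma}^2(h)$.

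The only genuinely delicate point is the preservation of independence under the projection; everything else is bookkeeping. Here one must be careful that the partial evaluation $\E[\,\cdot\,]_{x=X}$ of \textbf{Def.\ref{df_idd_ind}} commutes correctly with the substitution $x\mapsto\langle x,h\rangle$, i.e. that the conditional functional built from $\Phi$ really factors through the scalar $y=\langle x,h\rangle$; this is what makes $g(y)=\E[\fee(y,\bar Y)]$ well defined and reduces $\bar X\dperp X$ to $\bar Y\dperp Y$. I would also record in passing that $G_h$ is indeed a one-dimensional $G$-functional, with covariance set $[\underline{\sigma}^2(h),\overline{\sigma}^2(h)]$, so that the notation $X\sim N_G(0,[\underline{\sigma}^2(h),\overline{\sigma}^2(h)])$ is justified.
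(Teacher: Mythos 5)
Your proposal is correct: the transfer of identical distribution and independence through the projection $\langle\,\cdot\,,h\rangle$ (via composition with $\Cp$-functions, which is the only delicate point and which you handle properly), the verification of the self-similarity in \textbf{Def.\ref{df_g_norm}}, and the identification $G_h(\alpha)=\tfrac12\E[\alpha Y^2]=\tfrac12\big(\alpha^+\overline{\sigma}^2(h)-\alpha^-\underline{\sigma}^2(h)\big)$ via \textbf{Prop.\ref{prop_elem_prop}}(1) and the rank-one operator $h\otimes h\in K_S(\Hs)$ together constitute a complete argument. The paper defers its own proof to the author's thesis \cite{ibragimov1}, so no line-by-line comparison is possible, but your route is exactly the one the paper's framework (\textbf{Def.\ref{df_idd_ind}}, \eqref{eq_G_funct_exp}, \textbf{Cor.\ref{cor_Bth_bm}}) points to, and I see no gap.
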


\begin{proof}
$\absz$\\
The proof you can find in the author's thesis \cite{ibragimov1}.
%
%
%
%

\end{proof}
 
\begin{rem} We also recall the following obvious fact settled in the finite-dimensions:
\\
if\ \ $X\in\mathcal{H}^0$ \ \ then\ \ $\E[X^2]:=\overline{\sigma}^2\geq\underline{\sigma}^2=:-\E[-X^2]\geq0$.
\end{rem}
 
Some algebraical properties of the $G$-normal distributed random variables are listed below.

\begin{prop}\label{prop_ar_op_gnd}
$\absz$\\
1) Let \ $X\sim N_G( 0,\Sigma )$ \ then \ $a X\sim N_G( 0,a^2\Sigma )\,,\ a\in\R$;

2) Let \ $Y=X_1+X_2$\,, \ where \ $X_i\sim N_G( 0,\Sigma_i )$ \ are reciprocally independent,

then \ $Y\sim N_G( 0,\Sigma_Y)$ \ with a covariance set \ 

\hfill$\Sigma_Y:=\big\{Q_1+Q_2\big|\, Q_i\in\Sigma_i\,,\ i=\overline{1,2}\big\}$;

3) Let \ $Z=SX\,,\ S\in L(\Us,\Hs)\,,\ Z\in\Hs\,,\ X\in\Us\,,\ X\sim N_G( 0,\Sigma )$ \ 
then \ $Z\sim N_G( 0,\Sigma_Z)$ \ with a covariance set $\Sigma_Z:=\big\{SQS^*\big|\, Q\in\Sigma\big\}.\phantom{////}$
\end{prop}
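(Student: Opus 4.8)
The plan is to handle all three parts by the same two-step template: first check that the transformed variable again satisfies the self-similarity of Definition \ref{df_g_norm}, so that it is genuinely $G$-normal and the notation $N_G(0,\,\cdot\,)$ applies, and then pin down its covariance set by evaluating the bilinear form in \eqref{eq_cov_gnd}. I would note at the outset that item 1) is the special case $\Us=\Hs$, $S=a\,\mathrm{Id}$ of item 3), since then $S^*=a\,\mathrm{Id}$ and $SQS^*=a^2Q$; so I would prove 3) and read off 1), treating the sign of $a$ by the elementary fact that $-X$ is again $G$-normal (apply $U\sim V\Rightarrow -U\sim -V$ to the defining relation for $X$).

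For item 3), the $G$-normality of $Z=SX$ is inherited from $X$: if $\bar X$ is an independent copy of $X$, then $S\bar X$ is an independent copy of $Z$, because precomposition with the bounded operator $S$ sends a test function $\psi\in\Cp(\Hs)$ to $\psi\circ S\in\Cp(\Us)$, and hence preserves both identical distribution and the independence relation of Definition \ref{df_idd_ind} (one rewrites $\E[\psi(SX,S\bar X)]$ as $\E[\varphi(X,\bar X)]$ with $\varphi=\psi\circ(S\times S)$). Applying $S$ to $aX+b\bar X\sim\sqrt{a^2+b^2}\,X$ then yields $aZ+bS\bar X\sim\sqrt{a^2+b^2}\,Z$, the required self-similarity. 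The covariance set comes straight out of \eqref{eq_cov_gnd}: since $\langle Z,h\rangle=\langle X,S^*h\rangle$,
\[
\E\big[\langle Z,h\rangle\langle Z,k\rangle\big]=\sup_{Q\in\Sigma}\langle QS^*h,S^*k\rangle=\sup_{Q\in\Sigma}\langle SQS^*h,k\rangle,
\]
so $\Sigma_Z=\{SQS^*:Q\in\Sigma\}$, as claimed.

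Item 2) is the substantial one. Writing $\bar Y=\bar X_1+\bar X_2$ with independent copies, one decomposes $aY+b\bar Y=(aX_1+b\bar X_1)+(aX_2+b\bar X_2)$, and each summand is $\sim\sqrt{a^2+b^2}\,X_i$ by $G$-normality of $X_i$. To conclude $G$-normality of $Y$ I would isolate a small lemma: if $U_1\sim V_1$, $U_2\sim V_2$ with $U_2\dperp U_1$ and $V_2\dperp V_1$, then $U_1+U_2\sim V_1+V_2$; this follows by conditioning through Definition \ref{df_idd_ind}, replacing the inner expectation by $U_2\sim V_2$ and the outer one by $U_1\sim V_1$. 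For the covariance set I would compute $\E[\langle AY,Y\rangle]$ by conditioning on $X_1$ via $X_2\dperp X_1$: expanding $\langle A(X_1+X_2),X_1+X_2\rangle$ gives the diagonal terms $\langle AX_i,X_i\rangle$ plus two cross terms which, for frozen $X_1=x_1$, collapse to the single linear functional $\langle X_2,(A+A^*)x_1\rangle$. Such a linear functional of a $G$-normal variable is centered in both directions — a fact I would extract from the self-similarity with $a=b=1$ together with property \textbf{1)} of Remark \ref{rem_sum_ind} — so by properties 5.a) and 5.b) of Proposition \ref{prop_elem_prop} the inner expectation reduces to $\langle Ax_1,x_1\rangle+\E[\langle AX_2,X_2\rangle]$; a second application of 5.a) then gives $\E[\langle AY,Y\rangle]=\E[\langle AX_1,X_1\rangle]+\E[\langle AX_2,X_2\rangle]$. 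Through \eqref{eq_cov_gnd} this reads $\sup_{Q_1\in\Sigma_1}\Tr[AQ_1]+\sup_{Q_2\in\Sigma_2}\Tr[AQ_2]=\sup_{Q\in\Sigma_Y}\Tr[AQ]$ with $\Sigma_Y=\{Q_1+Q_2\}$, the Minkowski sum.

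The main obstacle is the non-additivity of $\E$ in item 2): one cannot split the expectation of the expanded quadratic form. The argument turns on the two-sided vanishing $\E[\langle X_2,w\rangle]=\E[-\langle X_2,w\rangle]=0$, which is exactly what lets the linear cross terms be absorbed at no cost via property 5.b), and on the correct bookkeeping of the joint independence of the copies $(X_1,\bar X_1)$ against $(X_2,\bar X_2)$ needed to apply the sum lemma — this is precisely where the hypothesis of reciprocal independence enters.
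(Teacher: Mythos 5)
You cannot really be compared against the paper's own argument here: the paper defers the proof of Proposition \ref{prop_ar_op_gnd} to the thesis \cite{ibragimov1}, so I assess your proposal on its own terms. Parts 1) and 3) are essentially sound: pushing forward both identical distribution and independence through $S$ (via $\psi\mapsto\psi\circ S$, which maps $\Cp(\Hs)$ into $\Cp(\Us)$) does give the self-similarity of $Z=SX$, and verifying it for the single copy $S\bar X$ suffices, since the joint distribution of a variable with \emph{any} independent copy is determined by its marginal through Definition \ref{df_idd_ind}. However, your identification of $\Sigma_Z$ is incomplete as written: the identity $\E\big[\langle Z,h\rangle\langle Z,k\rangle\big]=\sup_{Q\in\Sigma}\langle SQS^*h,k\rangle$ only determines the support function of the covariance set on symmetrized rank-one operators, and this does \emph{not} pin down a closed convex subset of $C_1(\Hs)$: a constraint such as $\Tr[Q]\le c$ has the (definite) direction $I$ as its normal, which is not a symmetrized rank-one operator, so two different closed convex sets can share all the data in \eqref{eq_cov_gnd}. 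The repair is a one-liner of the same kind: compute the full $G$-functional, $G_Z(A)=\tfrac12\,\E\big[\langle ASX,SX\rangle\big]=G_X(S^*AS)=\tfrac12\sup_{Q\in\Sigma}\Tr\big[A\,SQS^*\big]$ for \emph{every} $A\in K_S(\Hs)$ (note $S^*AS\in K_S(\Us)$), and then invoke the uniqueness of $\Sigma$ given $G$ from Theorem \ref{th_charact_G_func}. Your part 2) covariance computation, by contrast, is done correctly at the level of general $A$, and the two-sided centering $\E[\pm\langle X_2,w\rangle]=0$ via Remark \ref{rem_sum_ind} and Proposition \ref{prop_elem_prop} 5.a)--5.b) is fine.

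The genuine gap is in part 2), at exactly the point you defer to ``bookkeeping''. Your sum lemma is correct, but its hypotheses cannot be arranged. For any independent copy $\bar Y$ of $Y$, the joint distribution of $(Y,\bar Y)$ is canonical and is computed by nested expectations in the order $X_1,X_2,\bar X_1,\bar X_2$; your decomposition $aY+b\bar Y=(aX_1+b\bar X_1)+(aX_2+b\bar X_2)$ needs $(aX_2+b\bar X_2)\dperp(aX_1+b\bar X_1)$, i.e.\ the \emph{interleaved} grouping $(X_2,\bar X_2)\dperp(X_1,\bar X_1)$, and this amounts to commuting the two middle iterated sublinear expectations. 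Such a commutation is false in general: already in one dimension, for $\fee(x,y)=xy^2$ and $X\sim Y\sim N_G\big(0,[\underline{\sigma}^2,\overline{\sigma}^2]\big)$ with $\underline{\sigma}<\overline{\sigma}$, the order with $Y\dperp X$ gives $\E[XY^2]=\E\big[X^+\overline{\sigma}^2-X^-\underline{\sigma}^2\big]>0$, while the order with $X\dperp Y$ gives $\E\big[Y^2\,\E[X]\big]=0$. Nor can you save the argument by reading ``reciprocally independent'' as independence in both directions: the same test function shows that two-sided independence of nondegenerate $G$-normal variables forces $\underline{\sigma}=\overline{\sigma}$, so the hypothesis would collapse to the classical linear case. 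Finally, even restricted to the composite test functions your proof actually needs (functions of $aX_2+b\bar X_1$), the required exchange is equivalent to the order-independence of the distribution of a sum of independent $G$-normal variables --- that is, to an instance of part 2) itself, so the reduction is circular. What survives of your argument is precisely $G_Y=G_{X_1}+G_{X_2}$, hence the Minkowski-sum form of $\Sigma_Y$ \emph{once $Y$ is known to be $G$-normal}; for the normality itself the workable route within this paper's toolkit is the PDE one: use $X_2\dperp X_1$ to write $u(t,x):=\E\big[f(x+\sqrt{T-t}\,Y)\big]$ as a composition of the two solution operators supplied by Theorem \ref{th_exist_vs_p0}, show $u$ is the viscosity solution of the heat equation with nonlinearity $G_{X_1}+G_{X_2}$, and conclude by the criterion of Theorem \ref{th_criterion_gd} --- the same characterization the paper deploys for the analogous Theorem \ref{th_dist_stoch_integr}.
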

 
\begin{proof}
$\absz$\\
The proof you can find in the author's thesis \cite{ibragimov1}.

\end{proof}

\newpage
\section{Viscosity solutions}

In this chapter we describe the notion of the viscosity solution for a fully nonlinear infinite-dimensional parabolic PDEs. Mainly the material (definitions and results) was taken from Kelome \cite{kelome1}. In infinite-dimensions for a viscosity solutions Kelome uses a particular notion of $B$-continuity which we also describe below. We apply his results of comparison principle and uniqueness of viscosity solution to our theory where in the following chapters we will solve parabolic PDEs in infinite dimensions with a probabilistic tools of sublinear expectation. 

\subsection{B-continuity}

Consider a fully nonlinear infinite-dimensional parabolic PDE:
\begin{equation}\tag{P}\label{eq_p}
\hspace{20mm}\begin{cases}
\partial_t u+\langle Ax,D_x u\rangle+G(D^2_{xx} u)=0 \,, \ \ \ t\in[0,T)\,, \ x\in\Hs;\\
u(T,x)=f(x) \,.
\end{cases}
\end{equation}

$u:[0,T]\times\Hs\to\R$;

$f\in\Cp(\Hs)$;

$G:L_S(\Hs)\rightarrow\R$ \ is a canonical extension of a $G$-functional defined 
on $K_S(\Hs)$ and denoted by the same symbol $G$;

$A:D(A)\to\Hs$ is a generator of $C_0$-semigroup $\big(e^{tA}\big)$.
\\
Recall that \ $\Cp$ \ is a space of Lipschitz functions with polynomial growth (see \ref{subsec_sublin_expect}), and \ $K_S(\Hs)$ \ is a space of compact symmetric operators (see \ref{subsec_G_funct}).
\\
We also impose the following condition on the operator $A$:
\begin{cond_}
There exists \ $B\in L_S(\Hs)$ such that:

\hspace{5mm}1)$B>0$;

\hspace{5mm}2)$A^*B\in L(\Hs)$;

\hspace{5mm}2)$-A^*B+c_0 B\geq I$,\ \ for some $c_0>0$.
\end{cond_}

\begin{rem}
The  $im ( B)$ should belong to the set $D(A^*)$. If it happens that $D(A^*)\subset\Hs$ compactly, then it is necessarily 
$B$ be a compact operator.
\end{rem}

\begin{proof}
 $\absz$\\
In fact, let $\{x_n\,,\ n\geq1\}\subset\Hs$ and $\|x_n\|_{\Hs}\leq c\ \ \forall n\geq1$.

So then \ $\|A^*Bx_n\|_{\Hs}\leq\|A^*B\|_{L(\Hs)}\cdot\|x_n\|_{\Hs}\leq c\cdot\|A^*B\|_{L(\Hs)}$.

Since we assume that $D(A^*)$ is a compact embedding in $\Hs$, we have

$\{Bx_n\,,\ n\geq\}$ is bounded in $D(A^*)$.

Thus there exists a subsequence \ $\{x_{n_k}\,,\ k\geq1\}$, \ such that \ 
$\{Bx_{n_k}\,,\ n\geq1\}$ is convergent in $\Hs$.

And we conclude that \ $ B\in K(\Hs)$.

\end{proof}

\begin{rem}
If $A$ is self-adjoint, maximal dissipative operator then we can take $B:=(I-A)^{-1}$ with $c_0:=1$ which satisfies the condition imposed above.

Usually, in applications $A=\Delta$, so such condition for finding the correspondent $B$ is not too much strict.
\end{rem}

Later we need a space $\Hs_{-1}$ which is defined to be a completion of $\Hs$ under the norm \
 $\|x\|_{-1}^2:=\langle Bx,x\rangle=\langle B^{\frac{1}{2}}x,B^{\frac{1}{2}}x\rangle=\|B^{\frac{1}{2}}x\|_{\Hs}^2$.

Fix \ $\{\widetilde{e}_j\,,\ j\geq1\}$ to be a basis of $\Hs_{-1}$ made of elements of $\Hs$.

(Hence in such a  case \ $\{B^{\frac{1}{2}}\widetilde{e}_j\,,\ j\geq1\}$ \ is a basis of $\Hs$).

Define $\Hs_N:=span\{\widetilde{e}_1,..,\widetilde{e}_N\}\,,\ N\geq1$.

And let \ $P_N$ \ be an orthonormal projection $\Hs_{-1}$ onto $\Hs:$

$P_N x:=\sum\limits_{j=1}^N\widetilde{e}_j\langle x,\widetilde{e}_j\rangle_{-1}\,, \ x\in\Hs_{-1}$.

Also we define the following operator \ $Q_N:=I-P_N$.

\begin{df}
 Let $u,\,v:[0,T]\times\Hs\to\R$.

$u$ is said to be $B$-l.s.c. ($B$-lower semicontinuous) 

\hspace{20mm}if \ $u(t,x)\leq\varliminf\limits_{n\to\infty}u(t_n,x_n)$;

And $v$ is said to be $B$-u.s.c. ($B$-upper semicontinuous) 

\hspace{20mm}if \ $u(t,x)\geq\varlimsup\limits_{n\to\infty}u(t_n,x_n)$,

\hfill whenever $x_n\xrightarrow{w}x\,,\ t_n\to t\,,\ Bx_n\xrightarrow{s}Bx$.
\end{df}

\begin{df}
 The function which is $B$-l.s.c. and $B$-u.s.c. simultaneously is called $B$-continuous.
\end{df}

\begin{rem}
 Note that $B$-continuity means that function $u(t,x)$ is continuous on the bounded sets  of $[0,T]\times\Hs$ for the $[0,T]\times\Hs_{-1}$-topology.
\end{rem}

\begin{df}
The function $u(t,x)$ is locally uniformly $B$-continuous if it is uniformly continuous on the bounded sets  of $[0,T]\times\Hs$ for the $[0,T]\times\Hs_{-1}$-topology.
\end{df}

\begin{rem}
In some cases $B$ is a compact operator.

If it is so then from the convergence \ $x_n\xrightarrow{w}x$ \ it follows that \ $Bx_n\xrightarrow{s}Bx$.

And notions ``$B$-continuity``, ``locally uniformly $B$-continuity`` and ``weak continuity'' are the same.
\end{rem}

\subsection{Test functions and viscosity solutions}

\begin{df}
 The function $\psi:(0,T)\times\Hs\to\R$ is said to be a test function if it admits the representation $\psi=\fee+\chi$, such that:

\textbf{1)} $\fee\in C^{1,\,2}\Big((0,T)\times\Hs\to\R\Big)$;

\hspace{6mm}$\fee$ is $B$-continuous;

\hspace{6mm}$\Big\{\partial_t\fee,\,A^*D_x\fee,\,D_x\fee,\,D^2_{xx}\fee\Big\}$ are locally uniformly continuous on 

\hfill$(0,T)\times\Hs$;

\textbf{2)} $\chi:(0,T)\times\Hs\to\R$ \ and has the following representation 
 
\hspace{6mm}$\chi(t,x)=\xi(t)\cdot\eta(x)$, such that:
\\
\hspace*{6mm}$\xi\in C^1\Big((0,T)\to(0,\,+\infty)\Big)$;

\hspace{6mm}$\eta\uparrow,\ \eta\in C_p^2(\Hs)$ -- i.e. the derivatives have polynomial growth: 

\hfill$\|D\eta\|_\Hs, \|D^2\eta\|_{L(\Hs)}\leq C\big(1+|x|^m\big)$;

\hspace{6mm}$\eta(x)=\eta(y)\ \ \text{ whenever } |x|=|y|$;

\hspace{6mm}$\Big\{D\eta,\,D^2\eta\Big\}$ are locally uniformly continuous on $(0,T)\times\Hs$ and have 

\hfill polynomial growth.

\end{df}

\begin{df}\label{df_test_func}
 Let $u,\,v:[0,T]\times\Hs\to\R$.

$u$ is said to be a \textbf{viscosity subsolution} of \eqref{eq_p} at the point $(t_0,x_0)$ if:

\hspace{5mm}1) $u$ is $B$-u.s.c. on $[0,T]\times\Hs$;

\hspace{5mm}2) for every test function $\psi$:

\hspace{10mm}$u\leq\psi$;

\hspace{10mm}$u(t_0,x_0)=\psi(t_0,x_0)$;

\hspace{10mm}$\Big[\partial_t\psi+\langle x,A^*D_x\fee\rangle+G(D^{2}_{xx}\psi)\Big](t_0,x_0)\geq0$;

\hspace{10mm}$u(T,x)\leq f(x)$.
\\
Analogously, $v$ is said to be a \textbf{viscosity supersolution} of \eqref{eq_p} at the point $(t_0,x_0)$ if:

\hspace{5mm}1) $u$ is $B$-\textcolor{Blue}{l.}s.c. on $[0,T]\times\Hs$;

\hspace{5mm}2) $\forall$test function $\psi$:

\hspace{10mm}$v\textcolor{Blue}{\geq}\psi$;

\hspace{10mm}$v(t_0,x_0)=\psi(t_0,x_0)$;

\hspace{10mm}$\Big[\partial_t\psi+\langle x,A^*D_x\fee\rangle+G(D^{2}_{xx}\psi)\Big](t_0,x_0)\textcolor{Blue}{\leq}0$;

\hspace{10mm}$v(T,x)\textcolor{Blue}{\geq} f(x)$.
\end{df}

\begin{df}
 The function which at the point $(t_0,x_0)$ is viscosity sub- and supersolution simultaneously is called viscosity solution.
\end{df}

\begin{rem}\label{rem_add_cond_test_func}
Note that in the definition we imply that $D_x\fee\in D(A^*)$.

\end{rem}

\begin{rem} Actually the functional $G$ in equation \eqref{eq_p} can be considered only on a compact set of operators. Because when we solve this equation the
compactness of the operator $D^2_{xx}\psi$ of test function is constrained only by such a thing that we are looking for only such functions as solutions which have compact second Fréchet derivative, i.e. on the domain of $G$-functional. 
This fact is subjected to only the above described requirement. In fact, the functions $\fee$ and $\chi$ are built in the following way (see \cite[p.14]{kelome1}):
\\
we take a test function \ $\widetilde{\psi}=\widetilde{\fee}+ \widetilde{\chi}$ defined on a \   $(0,T)\times\widetilde{\Hs}_N$ \ and \ $\widetilde{\fee}\,,\ \widetilde{\chi}$ \ are bounded.
$\widetilde{\Hs}_N$ is defined as a space $\Hs_N$ with $\Hs_{-1}$-topology.
Note that $dim \Hs_N<\infty$.
And for a test function we take 
\parbox[t]{90mm}{$\fee(t,x):=\widetilde{\fee}(t, P_Nx);\\
\phantom{.}\chi(t,x):=\widetilde{\chi}(t, P_Nx)$.}

It is clear that such $\fee$ and $\chi$ have a compact second derivative, so a test function \ $\psi=\fee+\chi$ \ satisfies required condition.

\end{rem}

\subsection{Comparison principle}

The following result is called a comparison principle which is obtained by Kelome and \'{S}wi\c{e}ch (original formulation one can see in \cite[Th.3.1]{kelome1}).

\begin{teo}[Comparison principle]\label{th_comp_princ}
$\absz$\\
 Let $u$ and $v$ be respectively sub- and super- viscosity solutions of the \eqref{eq_p}, such that:

\hspace{3mm} 1) there exists a positive $M$, such that 

\hspace{9mm} $u\leq M$,

\hspace{9mm} $v\geq -M$;

\hspace{3mm} 2) $f$ is bounded, locally uniformly $B$-continuous;

\hspace{3mm} 3) $G$satisfies the following conditions:

\hspace{9mm} (i) if \ $A_1\geq A_2$ \ then \ $G(A_1)\geq G(A_2)$;

\hspace{9mm} (ii) there exists a radial, increasing, linearly growing function 

\hspace{17mm} $\mu\in C^2(\Hs\to\R)$  with bounded first and second derivatives, such 

\hspace{17mm} that for all \ $\alpha>0\,:\ \ \ \Big|G\big(A+\alpha D^2\mu(x)\big)-G\big(A\big)\Big|\leq C \big(1+|x|)\cdot \alpha$;
\\
\hspace*{9mm} (iii) $\sup\Big\{\,\Big|G\big(A+\lambda B Q_N\big)-G\big(A\big)\Big|:
\ \|A\|_{L(\Hs)}<p,\,|\lambda|<p,$

\hfill$ A=P^*_N A P_N\,,\ p\in\R\Big\}\xrightarrow[N\to\infty]{}0$.
\\
Then $u\leq v$.
\end{teo}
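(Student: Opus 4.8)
The plan is to argue by contradiction via the doubling-of-variables method, within the $B$-continuity framework of Crandall--Lions and \'{S}wi\c{e}ch, so that the unbounded drift is tamed by the operator $B$ and the lack of compactness of bounded sets in $\Hs$ is circumvented by the finite-dimensional projections $P_N$ together with the radial penalizer $\mu$. First I would suppose, for contradiction, that $m:=\sup_{[0,T]\times\Hs}(u-v)>0$. Since $u\le M$, $v\ge-M$, and by condition 2) $u(T,\cdot)\le f\le v(T,\cdot)$, the terminal slice cannot realize $m$, so the gap must occur for $t<T$. To localize the (possibly unattained) supremum I would introduce the doubled, penalized functional
\[
\Phi(t,x,s,y)=u(t,x)-v(s,y)-\tfrac{1}{2\epsilon}\|x-y\|_{-1}^2-\tfrac{(t-s)^2}{2\beta}-\gamma\big(\mu(x)+\mu(y)\big)-\tfrac{\lambda}{2}\langle BQ_Nx,x\rangle,
\]
with small parameters $\epsilon,\beta,\gamma,\lambda>0$ and $N\ge1$. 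The squared $\|\cdot\|_{-1}$-penalty is $B$-continuous and smooth, hence an admissible ingredient of a test function; the radial, linearly growing $\mu$ of condition 3)(ii) both restores coercivity at infinity (so that a near-maximum point exists) and matches the $\xi\eta$-structure of the admissible $\chi$; and the term $\langle BQ_Nx,x\rangle$ is the device that, via condition 3)(iii), lets us return from the projected problem to the full space.

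Using that $u$ is $B$-u.s.c.\ and $-v$ is $B$-u.s.c., together with a variational principle of Borwein--Preiss / Ekeland type (or the finite-dimensional maximum principle after projecting the state variables by $P_N$, as in Kelome), I would produce a near-maximum point $(\hat t,\hat x,\hat s,\hat y)$ at which admissible test functions touch $u$ from above and $v$ from below. A Hilbert-space maximum principle for semicontinuous functions then supplies operators $X,Y\in L_S(\Hs)$ playing the role of $D^2_{xx}u$ and $D^2_{xx}v$ and satisfying a block inequality of the form
\[
\begin{pmatrix} X & 0\\ 0 & -Y\end{pmatrix}\ \le\ \tfrac{1}{\epsilon}\begin{pmatrix} B & -B\\ -B & B\end{pmatrix}+\text{(terms of order }\gamma,\lambda).
\]
Subtracting the supersolution inequality at $(\hat s,\hat y)$ from the subsolution inequality at $(\hat t,\hat x)$, the first-order contribution is dominated by the gradient of the penalty, $\tfrac1\epsilon B(\hat x-\hat y)$, and here the hypotheses on $B$ enter decisively:
\[
\tfrac1\epsilon\langle A(\hat x-\hat y),B(\hat x-\hat y)\rangle=\tfrac1\epsilon\langle \hat x-\hat y,A^*B(\hat x-\hat y)\rangle\ \le\ \tfrac{c_0}{\epsilon}\|\hat x-\hat y\|_{-1}^2-\tfrac1\epsilon\|\hat x-\hat y\|_\Hs^2,
\]
using $A^*B\le c_0B-I$. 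The negative term $-\tfrac1\epsilon\|\hat x-\hat y\|_\Hs^2$ absorbs the remaining drift contributions from $A^*D_x\fee$ (finite and locally uniformly continuous because $D_x\fee\in D(A^*)$ in the test-function class), which is exactly how the unbounded term $\langle Ax,D_xu\rangle$ is controlled.

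For the second-order difference $G(X)-G(Y)$ I would first reduce to the finite-dimensional block $A=P_N^*AP_N$ and absorb the complementary piece $BQ_N$ using condition 3)(iii), then invoke monotonicity 3)(i) and the block inequality above, handling the $\gamma\mu$-perturbation by condition 3)(ii); this bounds $G(X)-G(Y)$ by a modulus $\omega(\epsilon,\beta,\gamma,\lambda,N)$. Sending the parameters to their limits in the order $N\to\infty$ (by (iii)), then $\lambda,\gamma\to0$ (by (ii) and the linear growth of $\mu$), then $\beta\to0$ and $\epsilon\to0$, every penalization term vanishes while the strict gap $m>0$ survives, yielding $0<m\le0$, a contradiction; hence $u\le v$.

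The hard part will be the two genuinely infinite-dimensional steps: establishing the maximum principle for semicontinuous functions compatibly with $B$-continuity so as to obtain $X,Y$ with the stated block inequality, and coordinating the projection estimates of conditions 3)(ii)--(iii) with the $B$-operator bound for the drift so that all error terms really do vanish in the iterated limit. These are precisely the points where the technical apparatus of \'{S}wi\c{e}ch and Kelome is indispensable, and I would import it rather than reprove it.
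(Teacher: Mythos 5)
Your proposal is correct in outline and takes essentially the same route as the paper, which does not prove the theorem itself but invokes it from Kelome and \'{S}wi\c{e}ch (\cite[Th.3.1]{kelome1}): the doubling of variables with the $\frac{1}{2\epsilon}\|x-y\|_{-1}^2$ penalization, the radial perturbation $\mu$ from 3)(ii), the finite-dimensional reduction via $P_N$ with the $BQ_N$ correction controlled by 3)(iii), the drift estimate from $-A^*B+c_0B\geq I$, and the semicontinuous maximum principle are exactly the ingredients of that cited argument. Since you import the same technical apparatus the paper itself defers to, there is nothing to add.
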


\begin{rem}
 If $G$ is a $G$-functional then condition \textbf{3)} of \textbf{Th.\ref{th_comp_princ}} holds.
\end{rem}

\begin{proof}
 $\absz$\\
The proof you can find in the author's thesis \cite{ibragimov1}.

\end{proof}

The following proposition is a comparison principle for the functions with polynomial growth. In fact, we will use this result applied to the functions of the $\Cp$ class.

\begin{prop}\label{prop_comp_princ_polyn_growth}
 Let $u$ is a sub- and $v$ is a super- viscosity solution of the PDE  \eqref{eq_p}, such that for every $t\in[0,T]$ and for every $x\in\Hs$ there satisfy:

\ \ \ \parbox[c]{120mm}{
$\phantom{-}u(t,x)\leq C(1+|x|^m);\\
-v(t,x)\leq C(1+|x|^m);\\
\phantom{-,}|f(x)|\leq C(1+|x|^m).
$
}$(\,\#\,)$
\\
And the following conditions hold:

\hspace{10mm} (i) there exists continuous \ $\delta_\varepsilon:[0,T]\to[0,\infty)$, such that:

\hspace{17mm} at the every point $t\in[0,T]$: \ $\delta_\varepsilon(t)\xrightarrow[\varepsilon\downarrow0]{}0$;

\hspace{10mm} (ii) there exists a radial, increasing, function \ $g_0$, such 

\hspace{19mm} that:

\hspace{19mm}  $g_0, Dg_0, D^2 g_0$ are locally uniformly continuous,

\hspace{18mm} $\varliminf\limits_{|x|\to\infty}\dfrac{g_0(x)}{|x|^{m+1}}>0$;

\hspace{10mm} (iii) $\delta^\prime_\varepsilon(t)\cdot g_0(x)+\Big|G\big(A+\delta_\varepsilon(t)\cdot D^2g_0(x)\big)-G\big(A\big)\Big|\leq0$.

Then the functions

\hspace{5mm}$u_\varepsilon(t,x):=u(t,x)-\delta_\varepsilon(t)\cdot g_0(x)$;

\hspace{5mm}$v_\varepsilon(s,y):=v(s,y)+\delta_\varepsilon(s)\cdot g_0(y)$;

are respectively sub- and super- viscosity solutions of the PDE  \eqref{eq_p}, such that $u_\varepsilon\leq v_\varepsilon$. 
\\
Moreover, if \ $\varepsilon\to0 $ \ then \ $ u\leq v$. 
\end{prop}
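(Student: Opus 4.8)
The plan is to reduce Proposition~\ref{prop_comp_princ_polyn_growth} to the bounded-case comparison principle (Theorem~\ref{th_comp_princ}) by a penalization/truncation argument using the auxiliary function $g_0$. The role of $g_0$ is to absorb the polynomial growth of $u$ and $v$: by hypothesis (ii) it grows faster than $|x|^{m+1}$, so subtracting $\delta_\varepsilon(t)g_0(x)$ from $u$ (and adding it to $v$) should turn the merely polynomially bounded sub/supersolutions into functions that are bounded above/below, at least after a further truncation, and thereby fall within the scope of Theorem~\ref{th_comp_princ}.

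First I would verify that $u_\varepsilon(t,x):=u(t,x)-\delta_\varepsilon(t)g_0(x)$ is again a viscosity subsolution of \eqref{eq_p}, and symmetrically that $v_\varepsilon$ is a supersolution. The natural way is to take any admissible test function $\psi$ touching $u_\varepsilon$ from above at $(t_0,x_0)$ and observe that $\psi+\delta_\varepsilon g_0$ touches $u$ from above at the same point; since $g_0$ is radial with $g_0,Dg_0,D^2g_0$ locally uniformly continuous, the function $\delta_\varepsilon(t)g_0(x)$ has exactly the structure of the $\chi$-part of a test function, so $\psi+\delta_\varepsilon g_0$ is itself an admissible test function for $u$. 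Plugging it into the subsolution inequality for $u$ and subtracting off the $g_0$-terms, the residual is
\[
\delta'_\varepsilon(t_0)g_0(x_0)+\Big[G\big(D^2_{xx}\psi+\delta_\varepsilon(t_0)D^2g_0(x_0)\big)-G\big(D^2_{xx}\psi\big)\Big],
\]
which is $\le 0$ precisely by hypothesis (iii). Hence the subsolution inequality is preserved for $u_\varepsilon$; the terminal condition $u_\varepsilon(T,x)\le f(x)$ follows from $\delta_\varepsilon(T)\ge0$ (and for the limit one uses (i)). The supersolution computation for $v_\varepsilon$ is the mirror image with the opposite signs.

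Next I would check that $u_\varepsilon$ and $v_\varepsilon$ satisfy the boundedness and regularity hypotheses of Theorem~\ref{th_comp_princ}. Using $(\#)$ together with (ii), the combination $u(t,x)-\delta_\varepsilon(t)g_0(x)$ is bounded above because the negative $g_0$-term dominates the $C(1+|x|^m)$ growth for large $|x|$; similarly $v_\varepsilon$ is bounded below. One must also confirm that $f$ is bounded and locally uniformly $B$-continuous as required (here a truncation of $f$, or the structural assumptions on $f\in\Cp$, enters). Granting these, Theorem~\ref{th_comp_princ} applies to the pair $(u_\varepsilon,v_\varepsilon)$ and yields $u_\varepsilon\le v_\varepsilon$ on $[0,T]\times\Hs$, i.e.
\[
u(t,x)-\delta_\varepsilon(t)g_0(x)\le v(t,x)+\delta_\varepsilon(t)g_0(x).
\]
Finally, letting $\varepsilon\downarrow0$ and invoking (i), which gives $\delta_\varepsilon(t)\to0$ pointwise, the two $g_0$-terms vanish at each fixed $(t,x)$, and we conclude $u(t,x)\le v(t,x)$.

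The main obstacle I expect is the first step: the rigorous proof that $u_\varepsilon$ remains a viscosity subsolution in the \emph{infinite-dimensional} $B$-continuous sense. One must be careful that $\psi+\delta_\varepsilon g_0$ genuinely qualifies as a test function in the precise sense of Definition~\ref{df_test_func} — in particular that $D_x(\psi+\delta_\varepsilon g_0)\in D(A^*)$ (Remark~\ref{rem_add_cond_test_func}), that the required locally uniform continuity of the derivatives survives the addition of $\delta_\varepsilon g_0$, and that the $B$-continuity and polynomial-growth conditions on the $\eta$-factor are met by $g_0$. Matching $g_0$ exactly to the admissible $\chi$-structure, and ensuring hypothesis (iii) lines up with $G$ evaluated on $D^2_{xx}\psi$ rather than on an arbitrary operator $A$, is the delicate point; once the test-function bookkeeping is settled, the boundedness estimates and the $\varepsilon\downarrow0$ passage are routine.
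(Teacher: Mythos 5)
Your proposal is essentially the intended argument: the paper gives no written proof of Proposition~\ref{prop_comp_princ_polyn_growth} (it defers to the thesis \cite{ibragimov1}), but the statement itself encodes exactly the route you take — absorb the polynomial growth through the penalization $\delta_\varepsilon(t)\,g_0(x)$, use hypothesis (iii) to preserve the sub/supersolution inequalities after shifting the test function by $\pm\delta_\varepsilon g_0$, apply the bounded comparison principle (Theorem~\ref{th_comp_princ}) to the pair $(u_\varepsilon,v_\varepsilon)$, and let $\varepsilon\downarrow0$ via (i). The two technical points you flag are indeed the only delicate ones: that $\psi+\delta_\varepsilon g_0$ (resp.\ $\psi-\delta_\varepsilon g_0$) must be admissible in the sense of Definition~\ref{df_test_func}, with $\delta_\varepsilon g_0$ playing the role of the radial $\chi$-part, and that the unbounded terminal datum $f$ must be truncated (possible because $\delta_\varepsilon(T)\,g_0$ dominates $C(1+|x|^m)$ at infinity, by (ii)) before Theorem~\ref{th_comp_princ}, whose hypothesis 2) requires $f$ bounded, can be invoked.
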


\begin{proof}
 $\absz$\\
The proof you can find in the author's thesis \cite{ibragimov1}.

\end{proof}

\subsection{Uniqueness of viscosity solution}

\begin{teo} \label{th_uniq_vs}
If PDE  \eqref{eq_p} has a $B$-continuous viscosity solution on~$[0,T]\tdot\Hs$ that has a polynomial growth, then such a solution is unique.
\end{teo}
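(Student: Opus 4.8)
The plan is to derive uniqueness from the comparison principle of Proposition \ref{prop_comp_princ_polyn_growth}, exploiting that a viscosity solution is by definition simultaneously a sub- and a supersolution. Suppose $u_1$ and $u_2$ are two $B$-continuous viscosity solutions of \eqref{eq_p}, both of polynomial growth. Then $u_1$ is in particular a viscosity subsolution and $u_2$ a viscosity supersolution; symmetrically, $u_2$ is a subsolution and $u_1$ a supersolution. If Proposition \ref{prop_comp_princ_polyn_growth} can be applied to each of these two pairs, we obtain $u_1\leq u_2$ and $u_2\leq u_1$, whence $u_1=u_2$ on $[0,T]\times\Hs$, which is exactly the assertion.

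It therefore remains to check that the hypotheses of Proposition \ref{prop_comp_princ_polyn_growth} are met for the pair $(u_1,u_2)$ (the reversed pair being handled identically). The growth bounds $(\#)$ are immediate: $u_1(t,x)\leq C(1+|x|^m)$ and $-u_2(t,x)\leq C(1+|x|^m)$ hold because both solutions were assumed to have polynomial growth, and $|f(x)|\leq C(1+|x|^m)$ holds since $f\in\Cp(\Hs)$. Thus the only substantial work is to exhibit functions $\delta_\varepsilon$ and $g_0$ satisfying conditions (i)--(iii) of the proposition.

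For the construction I would take $g_0$ to be a smooth radial increasing function behaving like $|x|^{m+1}$ at infinity (for instance $g_0(x)=(1+|x|^2)^{(m+1)/2}$), so that $\varliminf_{|x|\to\infty} g_0(x)/|x|^{m+1}>0$ and $g_0,Dg_0,D^2g_0$ are locally uniformly continuous of polynomial growth, giving (ii). For (iii) I would use that $G$ is a $G$-functional: since $G$ is sublinear and monotone, subadditivity and positive homogeneity yield
\begin{equation*}
\big|G\big(A+\delta_\varepsilon(t)\,D^2g_0(x)\big)-G(A)\big|\leq \delta_\varepsilon(t)\cdot\max\big\{G\big(D^2g_0(x)\big),\,G\big(-D^2g_0(x)\big)\big\}.
\end{equation*}
Using the representation $G(\,\pdot\,)=\tfrac12\sup_{B\in\Sigma}\Tr[\,\pdot\,B]$ from \textbf{Th.\ref{th_charact_G_func}}, together with $\sup_{B\in\Sigma}\|B\|_{C_1}=\sup_{B\in\Sigma}\Tr[B]<\infty$ (finite by \textbf{Prop.\ref{prop_moments}}) and $\|D^2g_0(x)\|_{L(\Hs)}\leq C(1+|x|^m)$, the right-hand side is bounded by $\delta_\varepsilon(t)\cdot C(1+|x|^m)$. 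Condition (iii) then reduces to the differential inequality $\delta'_\varepsilon(t)\,g_0(x)+\delta_\varepsilon(t)\,C(1+|x|^m)\leq0$; since $g_0(x)$ grows one power of $|x|$ faster than $(1+|x|^m)$, this can be arranged by choosing $\delta_\varepsilon$ positive, strictly decreasing in $t$, with $\delta_\varepsilon(t)\downarrow0$ as $\varepsilon\downarrow0$, which simultaneously supplies (i).

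The main obstacle is precisely the verification of (iii): one must ensure the differential inequality holds for \emph{all} $x\in\Hs$ and \emph{all} $t\in[0,T]$ at once, not merely for large $|x|$, so the coupling between $\delta_\varepsilon$ and $g_0$ has to be tuned so that the extra power of $|x|$ carried by $g_0$ dominates the $G$-perturbation uniformly on $\Hs$. Once this pair is secured, Proposition \ref{prop_comp_princ_polyn_growth} applies in both directions and the equality $u_1=u_2$ follows, establishing uniqueness.
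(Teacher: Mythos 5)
Your proposal is correct and follows exactly the route the paper intends: the paper defers the proof to the thesis but explicitly states that \textbf{Prop.\ref{prop_comp_princ_polyn_growth}} is the comparison principle to be applied to $\Cp$-class functions, and your symmetric application to the pairs $(u_1,u_2)$ and $(u_2,u_1)$, with $g_0(x)=(1+|x|^2)^{(m+1)/2}$ and $\delta_\varepsilon(t)=\varepsilon e^{-Kt}$ for $K\geq\sup_x C(1+|x|^m)/g_0(x)<\infty$, verifies its hypotheses (note only that finiteness of $\sup_{B\in\Sigma}\Tr[B]$ follows already from the $L(\Hs)$-continuity of $G$ on $K_S(\Hs)$ together with \textbf{Lm.\ref{lm_Cq_caracterization}}, rather than from \textbf{Prop.\ref{prop_moments}}).
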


\begin{proof}
 $\absz$\\
The proof you can find in the author's thesis \cite{ibragimov1}.

\end{proof}

\newpage
\section{G-expectations}

In this chapter we describe a main notion of the theory, the notion of a $G$-expectation, the special case of a sublinear expectation. It was introduced by Peng \cite{peng10} and we by analogy carry it to the infinite-dimensions. Also we touch upon such objects as $G$-Brownian motion, capacity and upper expectation.

\subsection{G-Brownian motion}

\begin{df}
 $X_t:\Omega\tdot\R_+\to\Xs$ is called a stochastic process 
if $X_t$ is random variable on a sublinear expectation space $(\Omega,\H,\E)$ for every nonnegative $t$.
\end{df}

\begin{df}
 Stochastic process is called a G-Brownian motion if:\\

1) \ $B_0=0$;\\
2) \ for all \ $t,s\geq0 \ \ \ ( B_{t+s}-B_t)\sim N_G( 0,s\Sigma  )$;\\
3) \ for all \ $t,s\geq0 \ \ \ ( B_{t+s}-B_t)$ \ is independent from \ $(B_{t_1},..,B_{t_n} )$ 

\hfill for every \ $n\in \N,\ 0\leq t_1\leq \ldots\leq t_n\leq t$ . 

\end{df}
 
\begin{rem}\label{rem_bm_ind}
 $\big(B_{t_{k+1}}-B_{t_k}\big)$ is independent from 
$\fee\big(B_{t_1},..,B_{t_k}\big)$,

\hfill $0\leq t_1\leq \ldots\leq t_k\leq t_{k+1}\,,\ \fee\in \Cp\,(\Xs^k\rightarrow\R).\phantom{........}$
\end{rem}

%
%
%

\begin{prop}\label{prop_gbm}
 Let $B_t$ be a stochastic process and:

\hspace{30pt}$B_t\sim\sqrt{t}B_1$;

\hspace{30pt}$B_1$ is $G$-norm. distributed.

Then $B_1$ is $G$-Brownian motion
\end{prop}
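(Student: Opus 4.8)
The plan is to verify, for the process $(B_t)$, the three defining properties of a $G$-Brownian motion directly from the two hypotheses, namely the scaling relation $B_t\sim\sqrt t\,B_1$ and the $G$-normality $B_1\sim N_G(0,\Sigma)$. First I would dispose of property (1): putting $t=0$ into $B_t\sim\sqrt t\,B_1$ gives $B_0\sim 0\cdot B_1$, i.e. $\E[\varphi(B_0)]=\varphi(0)$ for every $\varphi\in\Cp(\Hs)$, so that $B_0=0$ in distribution, as required.

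For property (2) I would first record the one-dimensional marginals. Applying \textbf{Prop.\ref{prop_ar_op_gnd}}(1) with $a=\sqrt t$ to $B_1\sim N_G(0,\Sigma)$ yields $\sqrt t\,B_1\sim N_G(0,t\Sigma)$, and the scaling hypothesis transports this to $B_t\sim N_G(0,t\Sigma)$. The real content is passing from the law of $B_t$ to the law of the increment $B_{t+s}-B_t$. Here I would use the self-similarity built into the definition of the $G$-normal law (\textbf{Def.\ref{df_g_norm}}): taking $a=\sqrt t$, $b=\sqrt s$ and an independent copy $\bar B_1$ of $B_1$, one has $\sqrt t\,B_1+\sqrt s\,\bar B_1\sim\sqrt{t+s}\,B_1\sim B_{t+s}$. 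Reading $\sqrt t\,B_1$ as a copy of $B_t$ and the independent summand $\sqrt s\,\bar B_1\sim N_G(0,s\Sigma)$ as the part accrued on $[t,t+s]$, and invoking the independence of this increment from the past, one identifies $B_{t+s}-B_t\sim\sqrt s\,\bar B_1\sim N_G(0,s\Sigma)$, which is exactly property (2). This can be corroborated through the sum rule \textbf{Prop.\ref{prop_ar_op_gnd}}(2): once the increment is known to be independent of $B_t$ and $G$-normal, the decomposition $B_{t+s}=B_t+(B_{t+s}-B_t)$ forces the covariance set of the increment to be $\Sigma'$ with $t\Sigma+\Sigma'=(t+s)\Sigma$; since $\Sigma$ is convex (\textbf{Th.\ref{th_charact_G_func}}) one has $t\Sigma+s\Sigma=(t+s)\Sigma$, consistently giving $\Sigma'=s\Sigma$.

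Property (3), the independence of $B_{t+s}-B_t$ from $(B_{t_1},\dots,B_{t_n})$ for $0\le t_1\le\cdots\le t_n\le t$, then follows from the same independent-increment structure of the process, exactly as recorded in \textbf{Rem.\ref{rem_bm_ind}}, once the increment has been identified as above.

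The hard part will be that the two stated hypotheses constrain only the one-dimensional marginals of $(B_t)$, whereas properties (2) and (3) are statements about increments and joint laws; the scaling identity by itself cannot produce them. The step that carries the proof is therefore the identification of the increment's law, which rests on the self-decomposition $\sqrt{t+s}\,B_1\sim\sqrt t\,B_1+\sqrt s\,\bar B_1$ of the $G$-normal law coupled with the independence of the increment from the past. Some care is needed because independence in the sublinear setting is directional ($Y\dperp X$ does not yield $X\dperp Y$, cf. \textbf{Rem.\ref{rem_sum_ind}}), so the summands must be ordered in accordance with the time direction when the sum rule of \textbf{Prop.\ref{prop_ar_op_gnd}}(2) is applied.
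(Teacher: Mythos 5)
Your proposal is correct and follows essentially the same route as the paper's own proof: property (1) from the scaling at $t=0$, property (2) by identifying $B_{t+s}-B_t\sim\sqrt{t+s}\,B_1-\sqrt{t}\,B_1\sim\sqrt{s}\,B_1\sim N_G(0,s\Sigma)$ via the defining identity of \textbf{Def.\ref{df_g_norm}}, and property (3) by representing $(B_{t_1},\ldots,B_{t_n})$ through independent copies of $B_1$. Your added consistency check via \textbf{Prop.\ref{prop_ar_op_gnd}}(2) with the convexity of $\Sigma$, and your explicit acknowledgement that the stated hypotheses constrain only one-dimensional marginals (a point the paper's proof passes over silently, making the same implicit appeal to the independent-increment structure), are refinements rather than departures.
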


\begin{proof}
 $\absz$\\
\textbf{1)}$B_0=0$;

\textbf{2)}$B_{t+s}-B_t\sim\sqrt{t+s}B_1-\sqrt{t}B_1\sim\sqrt{s}B_1\sim N_G\Big(0,\ s\Sigma\Big)$;

\textbf{3)}$B_{t+s}-B_t\sim\sqrt{s}B_1$;

\hspace{12pt}$\big(B_{t_1},\ldots,B_{t_n}\big)\sim\big(\sqrt{t_1}B_{1}^{(1)},\ldots,\sqrt{t_n}B_{1}^{(n)}\big)$, such that $B_1,\, B_{1}^{(1)},\ldots,B_{1}^{(n)}$ are independent copies.

So that \ $ B_1 \text{ is independent form }\big(B_{1}^{(1)},\ldots,B_{1}^{(n)}\big)$.

Therefore \ $ \big(B_{t+s}-B_t\big) \text{ is independent form }\big(B_{t_1},\ldots,B_{t_n}\big)$.

\end{proof}

\begin{coroll}\label{cor_Bth_bm}
 $B_t^h:=\langle B_t, h\rangle$ is a 1-dimensional G-B.m.
\end{coroll}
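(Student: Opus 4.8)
The plan is to deduce the statement from \textbf{Prop.\ref{prop_gbm}}, which asserts that a process $Y_t$ is a $1$-dimensional $G$-Brownian motion as soon as $Y_t\sim\sqrt{t}\,Y_1$ and $Y_1$ is $G$-normal distributed. Applied to $Y_t:=B_t^h$, the corollary will follow once I verify the two hypotheses: (i) $B_1^h=\langle B_1,h\rangle$ is ($1$-dimensional) $G$-normal distributed, and (ii) $B_t^h\sim\sqrt{t}\,B_1^h$ for every $t\geq0$.

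For (i): since $B_t$ is a $G$-Brownian motion, the increment $B_1=B_1-B_0\sim N_G(0,\Sigma)$ is $G$-normal distributed in $\Hs$. \textbf{Prop.\ref{prop_proj_gn}} states precisely that the one-dimensional projection $\langle B_1,h\rangle$ is $G_h$-normal distributed, with the explicit variance interval $[\underline{\sigma}^2(h),\overline{\sigma}^2(h)]$; this is hypothesis (i).

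For (ii): first note $B_t=B_t-B_0\sim N_G(0,t\Sigma)$, while by the scaling rule \textbf{Prop.\ref{prop_ar_op_gnd}}(1) one has $\sqrt{t}\,B_1\sim N_G(0,t\Sigma)$, so $B_t\sim\sqrt{t}\,B_1$ already in $\Hs$. The functional $\fee(\pdot):=\langle\,\pdot\,,h\rangle$ lies in $\Cp(\Hs)$ (it is globally Lipschitz, with $m=0$ and constant $\|h\|_\Hs$), and the composition of such a functional with any $\psi\in\Cp(\R)$ again belongs to $\Cp(\Hs)$. Hence identical distribution is preserved under $\fee$: for every $\psi\in\Cp(\R)$ one has $\E[\psi(\langle B_t,h\rangle)]=\E[(\psi\circ\fee)(B_t)]=\E[(\psi\circ\fee)(\sqrt{t}\,B_1)]=\E[\psi(\sqrt{t}\,\langle B_1,h\rangle)]$, which is exactly $B_t^h\sim\sqrt{t}\,B_1^h$. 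Applying \textbf{Prop.\ref{prop_gbm}} to $B_t^h$ then concludes the proof.

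The delicate point is the independence of increments, which is the one axiom not directly visible in hypotheses (i)--(ii) but is supplied internally by the proof of \textbf{Prop.\ref{prop_gbm}} through its scaling representation. If one instead wished to check property 3) of a $G$-Brownian motion directly for $B_t^h$, one would observe that $B_{t+s}^h-B_t^h=\langle B_{t+s}-B_t,h\rangle$ is a $\Cp$-image of $B_{t+s}-B_t$, which is independent from $(B_{t_1},\dots,B_{t_n})$; since the defining identity of independence is phrased through test functions in $\Cp$ and these stay in $\Cp$ after composition with the projections $\langle\,\pdot\,,h\rangle$, the scalar independence $\big(B_{t+s}^h-B_t^h\big)\dperp\big(B_{t_1}^h,\dots,B_{t_n}^h\big)$ is inherited from the Hilbert-space independence already built into the definition of $B_t$. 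This reduction of $\Hs$-valued independence to its one-dimensional projections is the only step that requires genuine care.
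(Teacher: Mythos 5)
Your proposal is correct and follows essentially the same route as the paper, whose entire proof is the citation of \textbf{Prop.\ref{prop_gbm}} together with \textbf{Prop.\ref{prop_proj_gn}}; you simply make explicit the two hypotheses (the $G_h$-normality of $B_1^h$ via \textbf{Prop.\ref{prop_proj_gn}}, and the scaling $B_t^h\sim\sqrt{t}\,B_1^h$ via \textbf{Prop.\ref{prop_ar_op_gnd}}(1) and the stability of $\Cp$ under composition with $\langle\,\pdot\,,h\rangle$). Your closing remark on inheriting independence of increments through one-dimensional projections is a careful addition, but it does not change the argument, which matches the paper's.
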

\begin{proof}
 $\absz$\\
 It follows from \textbf{Prop.\ref{prop_gbm}} and \textbf{Prop.\ref{prop_proj_gn}}.

\end{proof}

\begin{rem}\label{rem_lin_gf_gbm}
If $B_t$ is a $G$-Brownian motion then \ $G(A)=\dfrac{1}{2}\,\E\big[\langle  A B_1,B_1 \rangle\big]$.

Moreover:

\hspace{2cm}\textbf{1)} $\dfrac{1}{2}\,\E\big[\langle  A B_t,B_t \rangle\big]=t\,G(A)$.

\hspace{2cm}\textbf{2)} for every \ $k,r \ \ \ \E\big[B_t^k\cdot B_t^r\big]=t\,\E\big[B_1^k\cdot B_1^r\big]$.
\end{rem}

\subsection{Capacity and upper expectation}

The notion of capacity, i.e. a supremum measure, was introduced by Choquet \cite{choquet1}. Since in our framework there are no fixed probability measures, so we will use exactly this object. The classical notion ``almost surely'' becomes ``quasi surely''.

So, let $\big(\Omega,\mathcal{B}(\Omega)\big)$ be a complete separable metric space with Borel \mbox{$\sigma$-algebra} on it.
$\mathcal{M}$ is the collection of all probability measures on $\big(\Omega,\mathcal{B}(\Omega)\big)$.
Take a fixed $\mathcal{P}\subseteq\mathcal{M}$.
\begin{df}
Let us define capacity to be
\begin{equation}
 c(A)=c_{\mathcal{P}}(A):=\sup\limits_{P\in\mathcal{P}}P(A),\ \  A\in\mathcal{B}(\Omega).
\end{equation}
\end{df}

\begin{rem}
It it obvious that $c(A)$ is a Choquet capacity (see \cite{choquet1}), i.e.

\hspace{10mm}\parbox{150mm}{
\begin{enumerate}[(1)]
  \item $c(A)\in[0,1],\ \  A\in\Omega$.
  \item if \ $A\subset B$ \ then \ $c(A)\leq c(B)$.
  \item if \ $\big\{A_n,\ n\geq1\big\}\subset\mathcal{B}(\Omega)$ \ then \ $c(\bigcup\limits_{n\geq1}A_n)\leq \sum\limits_{n\geq1}c(A_n)$.
  \item if \ $\big\{A_n,\ n\geq1\big\}\subset\mathcal{B}(\Omega):\ A_n\uparrow A=\bigcup\limits_{n\geq1}A_n$\,, \ then  \ $c(\bigcup\limits_{n\geq1}A_n)=\lim\limits_{n\to\infty}c(A_n)$.
\end{enumerate}}
\end{rem}

\begin{df}
 The set $A$ is called polar if $c(A)=0$.

The property holds \textbf{quasi surely} (q.s.) if it holds outside a polar set.
\end{df}

\begin{df} We define an \textbf{upper expectation} as follows:
\begin{equation*}
 \bar{\E}[\pdot]:=\sup\limits_{P\in\mathcal{P}}E_P[\pdot].
\end{equation*}
\end{df}

\begin{lm}\label{lm_X0_qs}
If \ $\bar{\E}\big[\|X\|^p\big]=0$ \ then \ $X=0$ quasi surely.
\end{lm}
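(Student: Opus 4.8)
The plan is to prove that the event $\{X\neq 0\}$ is polar, since by the definition of quasi sure behavior $X=0$ holds q.s. exactly when $c(\{X\neq 0\})=0$. The engine of the argument is a Markov--Chebyshev estimate applied simultaneously to every measure in the family $\mathcal{P}$.

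First I would slice the event into a countable increasing union of sets on which $\|X\|$ is bounded away from zero. Setting $A_n:=\{\|X\|>1/n\}$, we have $A_n\uparrow\{\|X\|>0\}=\{X\neq 0\}$. Because each $E_P$ appearing in the definition $\bar{\E}=\sup_{P\in\mathcal{P}}E_P$ is an ordinary linear expectation and $\|X\|^p$ is a nonnegative random variable, the classical Markov inequality is available under each individual $P$.

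Second, for a fixed $n$ and an arbitrary $P\in\mathcal{P}$, Markov's inequality gives $P(A_n)=P(\|X\|^p>n^{-p})\leq n^p\,E_P[\|X\|^p]$. Taking the supremum over $P\in\mathcal{P}$ and using that the factor $n^p$ does not depend on $P$, I obtain
$$c(A_n)=\sup_{P\in\mathcal{P}}P(A_n)\leq n^p\sup_{P\in\mathcal{P}}E_P[\|X\|^p]=n^p\,\bar{\E}[\|X\|^p]=0,$$
so that each $A_n$ is polar.

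Finally, I would pass from the pieces to the whole event by the countable subadditivity of the capacity (property (3) in the remark following the definition of $c$), or equivalently by its upward continuity (property (4)) along the increasing family $A_n$, to conclude $c(\{X\neq 0\})\leq\sum_{n\geq 1}c(A_n)=0$. Hence $\{X\neq 0\}$ is polar and $X=0$ quasi surely. The argument is essentially routine; the only point deserving care is the interchange of the supremum over $\mathcal{P}$ with the Markov bound, which is legitimate precisely because the prefactor $n^p$ is independent of $P$, so that the $P$-dependent bounds $n^p\,E_P[\|X\|^p]$ have supremum $n^p\,\bar{\E}[\|X\|^p]$.
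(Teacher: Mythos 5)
Your proposal is correct and is essentially the paper's own argument: both proofs reduce the sublinear statement to the individual linear expectations, the paper directly invoking that $E_P\big[\|X\|^p\big]=0$ forces $X=0$ $P$-a.s.\ for every $P\in\mathcal{P}$, while you merely unpack that standard fact via Markov's inequality on the slices $A_n=\{\|X\|>1/n\}$ and finish with countable subadditivity of $c$. If anything your write-up is the more careful one, since the paper's closing line (asserting $c\big(\{X=0\}\big)=1>0$) garbles what should be the conclusion $c\big(\{X\neq 0\}\big)=\sup_{P\in\mathcal{P}}P\big(X\neq 0\big)=0$.
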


\begin{proof}
$\absz$\\
 $\bar{\E}\big[\|X\|^p\big]=0$ \ that is for every \ $P\in\mathcal{P} \ \  \ E_P\big[\|X\|^p\big]=0$ \ hence \ \mbox{$  X=0\ \  P$-a.s.} \ for every $P\in\mathcal{P}$.

And finally \ $c\big(\{X=0\}\big)=\sup\limits_{P\in\mathcal{P}}P\big(\{X=0\}\big)=1>0$.

\end{proof}

\subsection{Solving a fully nonlinear heat equation}

Let $B_t$ is $G$-Brownian motion with correspondent $G$-functional $G(\pdot)$, such that \ \ 
 $G(A)=\dfrac{1}{2t}\,\E\Big[\langle A B_t, B_t\rangle\Big]$.

Consider equation \eqref{eq_p} with $A=0$, i.e. the following parabolic PDE:
\begin{equation}\tag{P0}\label{eq_p0}
\hspace{30mm}\begin{cases}
\partial_t u+G(D^2_{xx} u)=0 \,, \ \ \ t\in[0,T)\,, \ x\in\Hs;\\
u(T,x)=f(x) \,.
\end{cases}
\end{equation}
In order to proof the existence of a viscosity solution of equation \eqref{eq_p0} we need Taylor's expansion for the function $u(t,x)$. Such a trivial statement will show us a following lemma:
\begin{lm}[Taylor's formula]\label{lm_taylor_formula}
$\absz$\\
Let $\psi\in C^2(\R\tdot\Hs\rightarrow\R)$\,, \ $(\delta,\Delta x)$ and
$(t, x)$ are in $(\R, \Hs)$, such that $(t+\delta,x+\Delta x)\in D(\psi)$.

Then:

$\psi\bigl(t+\delta,x+\Delta x\bigr)=\psi(t,x)+\delta\cdot\partial_t\psi(t,x)+\langle D_x\psi(t,x),\Delta x\rangle
+\dfrac{1}{2}\,\delta^{\,2}\cdot\partial^2_{tt}\,\psi(t,x)$

\hfill$
+\delta\cdot\partial_t\bigl[\langle D_x\psi(t,x),\Delta x\rangle\bigr]
+\dfrac{1}{2}\langle D_{xx}^{\,2}\psi(t,x)\Delta x,\Delta x\rangle+o(\delta^{\,2}+\|\Delta x^{\,2}\|)$.
\end{lm}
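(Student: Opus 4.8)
The plan is to establish this as a direct consequence of the standard multivariable Taylor expansion applied to the function $\psi$ viewed as a function on the product space $\R\times\Hs$. Since $\psi\in C^2(\R\times\Hs\to\R)$, I would treat the increment $(\delta,\Delta x)\in\R\times\Hs$ as a single perturbation of the base point $(t,x)$ and expand to second order, keeping careful track of which second-order terms arise from the ``time-time'', ``time-space'', and ``space-space'' blocks of the full Hessian.

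First I would write the second-order Taylor formula with remainder for $\psi$ on the Hilbert space $\R\times\Hs$:
\begin{equation*}
\psi(t+\delta,x+\Delta x)=\psi(t,x)+L(t,x)[(\delta,\Delta x)]+\tfrac{1}{2}\,Q(t,x)[(\delta,\Delta x),(\delta,\Delta x)]+o\big(\delta^2+\|\Delta x\|^2\big),
\end{equation*}
where $L(t,x)$ is the first Fr\'echet derivative and $Q(t,x)$ the second. Then I would identify the pieces explicitly. The linear part $L(t,x)[(\delta,\Delta x)]$ splits into the temporal contribution $\delta\cdot\partial_t\psi(t,x)$ and the spatial contribution $\langle D_x\psi(t,x),\Delta x\rangle$. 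The quadratic form $Q(t,x)$ decomposes into three blocks: the pure time-time term $\delta^2\cdot\partial^2_{tt}\psi(t,x)$, the mixed term which appears twice and gives $2\delta\cdot\partial_t\big[\langle D_x\psi(t,x),\Delta x\rangle\big]$, and the pure space-space term $\langle D^2_{xx}\psi(t,x)\Delta x,\Delta x\rangle$. Collecting these with the overall factor $\tfrac{1}{2}$ reproduces exactly the six terms in the stated formula, including the coefficient $\tfrac{1}{2}$ on $\delta^2\cdot\partial^2_{tt}\psi$ and on $\langle D^2_{xx}\psi\,\Delta x,\Delta x\rangle$, and the coefficient $1$ on the mixed term (since the cross block is counted twice in the symmetric bilinear form).

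The remainder estimate follows from the $C^2$-regularity of $\psi$ together with the standard integral form of the Taylor remainder, applied along the segment joining $(t,x)$ to $(t+\delta,x+\Delta x)$; continuity of the second derivatives gives that the remainder is $o\big(\delta^2+\|\Delta x\|^2\big)$ as $(\delta,\Delta x)\to(0,0)$. The main point to get right is the bookkeeping of the mixed second-order term: because $D^2\psi$ is symmetric, the off-diagonal time-space block contributes with multiplicity two, which is precisely why the mixed term carries coefficient $1$ rather than $\tfrac{1}{2}$ after the global factor $\tfrac{1}{2}$ is distributed. I do not anticipate a genuine obstacle here, as this is a routine reformulation of the classical Taylor theorem on a Hilbert space; the only care needed is matching the notation $\partial_t\big[\langle D_x\psi(t,x),\Delta x\rangle\big]$ to the mixed partial derivative $\langle \partial_t D_x\psi(t,x),\Delta x\rangle$, which agrees since $\Delta x$ is held fixed while differentiating in $t$.
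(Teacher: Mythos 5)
Your proof is correct: the block decomposition of the second Fr\'echet derivative on $\R\tdot\Hs$ into time-time, time-space and space-space parts, the factor-of-two bookkeeping for the symmetric cross term (which is what yields coefficient $1$ on $\delta\cdot\partial_t\bigl[\langle D_x\psi,\Delta x\rangle\bigr]$ after distributing the global $\tfrac{1}{2}$), and the $o\big(\delta^2+\|\Delta x\|^2\big)$ remainder from $C^2$-regularity via the integral form along the segment constitute exactly the standard argument. The paper supplies no proof of this lemma at all (it is introduced as a ``trivial statement''), so your write-up is precisely the routine verification the paper takes for granted; the only discrepancy is that the paper's remainder $o(\delta^{\,2}+\|\Delta x^{\,2}\|)$ is evidently a typo for $o(\delta^{\,2}+\|\Delta x\|^{2})$, which your version states correctly.
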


%
%
%
%
%
%
%

\begin{teo} \label{th_exist_vs_p0}
Let $f$ is a $B$-continuous of \ $\Cp(\Hs)$-class real function. Then $u(t,x):=\E \big[f(x+B_{T-t})\big]$ is a unique viscosity solution of equation \eqref{eq_p0}:
\begin{equation}\tag{P0}\label{eq_ppp0}
\hspace{30mm}\begin{cases}
\partial_t u+G(D^2_{xx} u)=0 \,, \ \ \ t\in[0,T)\,, \ x\in\Hs;\\
u(T,x)=f(x) \,,
\end{cases}
\end{equation}
where $B_t$ is a $G$-Brownian motion with a correspondent $G$-functional $G(\pdot)$.

\end{teo}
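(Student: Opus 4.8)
The plan is to show that $u(t,x):=\E\big[f(x+B_{T-t})\big]$ is simultaneously a viscosity sub- and supersolution of \eqref{eq_p0} which is $B$-continuous and of polynomial growth, and then to read off uniqueness from \textbf{Th.\ref{th_uniq_vs}}. A simplifying feature here is that $A=0$, so the unbounded term $\langle x,A^*D_x\fee\rangle$ in \textbf{Df.\ref{df_test_func}} is absent and I only have to produce the inequality involving $\partial_t\psi+G(D^2_{xx}\psi)$. I would first record the elementary facts. The terminal condition is immediate, since $u(T,x)=\E[f(x+B_0)]=\E[f(x)]=f(x)$. Polynomial growth of $u$ follows from $f\in\Cp(\Hs)$ together with the moment bound of \textbf{Prop.\ref{prop_moments}}, because $\E[\|B_{T-t}\|^{2m}_\Hs]<\infty$ gives $|u(t,x)|\le\E[|f(x+B_{T-t})|]\le C\big(1+\|x\|^m_\Hs+\E[\|B_{T-t}\|^m_\Hs]\big)$. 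The $B$-continuity of $u$ is then inherited from the $B$-continuity of $f$ and these polynomial moment bounds, which supply the uniform integrability needed to pass to the limit along sequences $x_n\xrightarrow{w}x$, $t_n\to t$, $Bx_n\xrightarrow{s}Bx$.

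The core of the argument is the dynamic programming (semigroup) identity
$$u(t,x)=\E\big[u(t+\delta,\,x+B_\delta)\big],\qquad 0\le t<t+\delta\le T .$$
To obtain it I would split $B_{T-t}=\zeta+\eta$ with $\zeta:=B_\delta\sim N_G(0,\delta\Sigma)$ and $\eta:=B_{T-t}-B_\delta\sim N_G(0,(T-t-\delta)\Sigma)$. By the definition of $G$-Brownian motion the later increment $\eta$ is independent from the earlier value $\zeta$, and by \textbf{Prop.\ref{prop_ar_op_gnd}} their sum is again $N_G(0,(T-t)\Sigma)\sim B_{T-t}$. Applying the independence formula of \textbf{Df.\ref{df_idd_ind}} with outer variable $\zeta$ gives $u(t,x)=\E\big[\E[f(x+a+\eta)]_{a=\zeta}\big]$, and since $\eta\sim B_{T-t-\delta}$ the inner expectation equals $u(t+\delta,x+\zeta)$, which is the claimed identity.

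With the semigroup identity in hand, fix a test function $\psi$ touching $u$ from above at $(t_0,x_0)$, i.e. $u\le\psi$ and $u(t_0,x_0)=\psi(t_0,x_0)$. Monotonicity of $\E$ yields $\psi(t_0,x_0)=u(t_0,x_0)=\E[u(t_0+\delta,x_0+B_\delta)]\le\E[\psi(t_0+\delta,x_0+B_\delta)]$, so the increment $\E[\psi(t_0+\delta,x_0+B_\delta)]-\psi(t_0,x_0)$ is nonnegative. Now I would expand $\psi$ by \textbf{Lm.\ref{lm_taylor_formula}} with $\Delta x=B_\delta$ and apply $\E$, exploiting three facts: every term linear in $B_\delta$ (namely $\langle D_x\psi,B_\delta\rangle$ and the cross term $\delta\,\partial_t[\langle D_x\psi,B_\delta\rangle]$) satisfies $\E[\pm\,\cdot\,]=0$, hence can be discarded by \textbf{Prop.\ref{prop_elem_prop}} (5.b); the quadratic term contributes $\tfrac12\E[\langle D^2_{xx}\psi\,B_\delta,B_\delta\rangle]=\delta\,G(D^2_{xx}\psi)$ by \textbf{Rem.\ref{rem_lin_gf_gbm}}; and the remainder is $o(\delta)$ in $\E$-norm by the moment estimates $\E[\|B_\delta\|^{2m}_\Hs]=O(\delta^m)$ of \textbf{Prop.\ref{prop_moments}}. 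Dividing by $\delta>0$ and letting $\delta\downarrow0$ produces $\partial_t\psi(t_0,x_0)+G(D^2_{xx}\psi(t_0,x_0))\ge0$, the subsolution inequality; touching $u$ from below and using the reverse expectation gives the supersolution inequality symmetrically. Since $u$ is then a $B$-continuous viscosity solution of polynomial growth, uniqueness follows directly from \textbf{Th.\ref{th_uniq_vs}}.

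I expect the principal obstacle to be the limit passage in the viscosity inequality: because $\E$ is only sublinear, the Taylor terms cannot be separated additively, so the estimate must be organized so that the mean-zero linear terms are absorbed by \textbf{Prop.\ref{prop_elem_prop}} (5.b) while the quadratic term and the remainder are handled together via $|\E[Z]-\E[X]|\le\E[|Z-X|]$. Controlling the remainder $o(\delta^2+\|B_\delta\|^2_\Hs)$ uniformly — splitting on $\{\|B_\delta\|_\Hs\le\rho\}$ and its complement and using the local uniform continuity of $D^2_{xx}\psi$ together with the polynomial moment bounds — is the delicate quantitative step. A secondary technical difficulty is the $B$-continuity of $u$ in the infinite-dimensional setting, where weak convergence of $x_n$ must be reconciled with the $B$-continuity and polynomial growth of $f$ under the sublinear expectation.
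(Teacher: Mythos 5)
Your proposal is correct and follows essentially the same route as the paper's proof: the same splitting $B_{T-t}=B_\delta+(B_{T-t}-B_\delta)$ with independence of increments to get the dynamic programming identity $u(t,x)=\E[u(t+\delta,x+B_\delta)]$, the same Taylor expansion via \textbf{Lm.\ref{lm_taylor_formula}} with the mean-zero linear terms discarded by \textbf{Prop.\ref{prop_elem_prop}}, the quadratic term yielding $\delta\,G(D^2_{xx}\psi)$, and the final appeal to \textbf{Th.\ref{th_uniq_vs}} for uniqueness. Your treatment is in fact slightly more explicit than the paper's on two points it leaves implicit, namely the absorption of the $o(\delta^2+\|B_\delta\|^2_\Hs)$ remainder using $\E[\|B_\delta\|^2_\Hs]=O(\delta)$ and the verification of $B$-continuity and polynomial growth of $u$.
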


\begin{proof}
$\absz$\\
Let $\psi$ be a test function, and for every fixed point $(t,x)\in[0,T]\tdot\Hs$ we have:  
\parbox[t][\height]{5cm}{$u\leq \psi\,;\\ 
\phantom{|}\!u(t,x)=\psi(t,x)\,.$}
\\
Taking a small enough $\delta$ yields:
\\
$\psi(t,x)=u(t,x)
=\E[ f(x+B_{T-t})]
=\E[ f(x+\underbrace{B_\delta+B_{T-(t+\delta)}}_{\text{indep.}})]\\
=
\E\Bigg[\E\Big[ f(x+\beta+B_{T-(t+\delta)})\Big]_{\beta=B_\delta}\Bigg]=
\E\Big[u(t+\delta,x+\beta)\Big|_{\beta=B_\delta}\Big]$

\hfill$
=\E\big[u(t+\delta,x+B_\delta)\big]
\leq\E\big[\psi(t+\delta,x+B_\delta)\big]$.
\\
Using the Taylor formula (\textbf{Lm.\ref{lm_taylor_formula}}):
\\
$\psi(t+\delta,x+B_\delta)=\psi(t,x)+\delta\cdot\partial_t\psi(t,x)+\left\langle D_x\psi(t,x),B_\delta\right\rangle
+\dfrac{1}{2}\,\delta^{\,2}\cdot\partial^{\,2}_{tt}\psi(t,x)$

\hfill$+\delta\cdot\partial_t\big[\left\langle D_x\psi(t,x),B_\delta\right\rangle\big]+\dfrac{1}{2}\,\left\langle D^{2}_{xx}\psi(t,x)B_\delta,B_\delta\right\rangle
+o\big(\delta^2+\|B_\delta\|^2\big)$.
\\
Then we have \ 

$0\leq\E\Big[\psi(t+\delta,x+B_\delta)\Big]-\psi(t,x)
\leq\E\Big[\psi(t+\delta,x+B_\delta)-\psi(t,x)\Big]$

\hfill$
=\delta\,\partial_t\psi(t,x)+\dfrac{1}{2}\,\delta^{\,2}\,\partial^{\,2}_{tt}\psi(t,x)+\delta\cdot G(D^{2}_{xx}\psi)(t,x)
+o\big(\delta^2+\|B_\delta\|^2\big)$.
\\
Letting $\delta\to0$ \ yields \ $\Big[\partial_t\psi+ G(D^{2}_{xx}\psi)\Big](t,x)\geq0$.
\\
Note, that $u$ is continuous at $(t,x)\in[0,T]\tdot\Hs$.

In fact, let us show that $\ \  \E\big[f(x+B_s)\big]\xrightarrow[s\to t]{}\E\big[f(x+B_t)\big]\ ,\ \ \ t\in[0,T]\ :$

$0\leq\Big|\E\big[f(x+B_s)\big]-\E\big[f(x+B_t)\big]\Big|
\leq\E\Big[\big|f(x+B_s)-f(x+B_t)\big|\Big]\\
\leq\E\Big[\big(1+\|x+B_s\|^m+\|x+B_t\|^m\big)\cdot\|B_s-B_t\|\Big]\\
\leq\Bigg(\E\Big[\big(1+\|x+B_s\|^m+\|x+B_t\|^m\big)^2\Big]\cdot\E\Big[\|B_s-B_t\|^2\Big]\Bigg)^{\frac{1}{2}}\\
\leq\Bigg(\underbrace{\E\Big[\big(1+2m\|x\|^m+m\cdot s^{\frac{m}{2}}\|\bar{X}\|^m+m\cdot t^{\frac{m}{2}}\|\bar{\bar{X}}\|^m\big)^2\Big]}
_{\text{bdd. by \textbf{Prop.\ref{prop_moments}}}}
\cdot(s-t)\underbrace{\E\Big[\|X\|^2\Big]}_{\text{bdd.}}\Bigg)^{\frac{1}{2}}$

\hfill$
\phantom{x}_{s\to t}\!\!\!\searrow0,
$
\\
\hfill where $X\,\ \bar{X}\,,\ \bar{\bar{X}}$ are independent copies of $B_1$.
\\
$u(T,x)=\E\big[f(x)\big]=f(x)\leq f(x)$.
\\
So we see that $u$ is a viscosity subsolution of equation \eqref{eq_ppp0}.

In the same way one can prove that $u$ is a viscosity supersolution, and the existence is proved.
\\
It is clear that if $f$ is $B$-continuous and has a polynomial growth that $u$ is also $B$-continuous and has a polynomial growth, because a sublinear expectation $\E$ does not influence on it. So we can conclude that $u$ is a unique viscosity solution by \textbf{Th.\ref{th_uniq_vs}}.

\end {proof}

\subsection{Basic space constructions}

Let $\Us, \Hs$ be Hilbert spaces, and $\{e_i\}$ and $\{f_j\}$ are systems of orthonormal bases on them respectively. We consider $G$-Brownian motion $B_t$ with values in $\Us$. Assume that random variable $B_1\sim N_G\big(0,\Sigma\big)$ with a covariance set $\Sigma$. Recall that $\Sigma$ is a convex set of linear bounded non-negative symmetric trace-class operators.

Now we are going to define a Banach space of operators $\Phi$ with values in $\Hs$ and defined in an appropriate space of $\Us$, 
such that $dom\,\Phi\subset\Us$,
endowed with the norm $\|\Phi\|^2_{L_2^\Sigma}:=\sup\limits_{Q\in\,\Sigma}{\Tr}\big[\Phi Q \Phi^*\big]
=\sup\limits_{Q\in\,\Sigma}\|\Phi Q^{1/2}\|^2_{L_2(\Us,\Hs)}$.


\begin{prop}
$\|\pdot\|^2_{L_2^\Sigma}$ is a norm.
\end{prop}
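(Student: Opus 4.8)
The plan is to recognize $\|\cdot\|_{L_2^\Sigma}$ as a pointwise supremum of a family of genuine seminorms and then to check the three norm axioms one at a time, with definiteness being the only nontrivial point. First I would rewrite the quantity in the form that is actually convenient: since $t\mapsto\sqrt t$ is increasing on $[0,\infty)$, taking square roots commutes with the supremum, so
\begin{equation*}
\|\Phi\|_{L_2^\Sigma}=\Big(\sup_{Q\in\Sigma}\|\Phi Q^{1/2}\|^2_{L_2(\Us,\Hs)}\Big)^{1/2}=\sup_{Q\in\Sigma}\|\Phi Q^{1/2}\|_{L_2(\Us,\Hs)}.
\end{equation*}
Here each $Q\in\Sigma$ is symmetric and non-negative, so it admits a positive symmetric square root $Q^{1/2}$ with $(Q^{1/2})^*Q^{1/2}=Q$; this is exactly the identity $\Tr[\Phi Q\Phi^*]=\|\Phi Q^{1/2}\|^2_{L_2(\Us,\Hs)}$ already quoted in the definition, and finiteness of $\|\Phi\|_{L_2^\Sigma}$ guarantees $\Phi Q^{1/2}\in L_2(\Us,\Hs)$ for every $Q$.

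For each fixed $Q\in\Sigma$ the map $\Phi\mapsto\Phi Q^{1/2}$ is linear, and $\|\cdot\|_{L_2(\Us,\Hs)}$ is a genuine norm; hence $p_Q(\Phi):=\|\Phi Q^{1/2}\|_{L_2(\Us,\Hs)}$ is a seminorm. Positive homogeneity is then immediate, $p_Q(\lambda\Phi)=|\lambda|\,p_Q(\Phi)$ for $\lambda\in\R$, and passing to the supremum over $Q$ gives $\|\lambda\Phi\|_{L_2^\Sigma}=|\lambda|\,\|\Phi\|_{L_2^\Sigma}$. For the triangle inequality I would fix $Q$ and combine subadditivity of $p_Q$ with the trivial bound by the supremum:
\begin{equation*}
\|(\Phi+\Psi)Q^{1/2}\|_{L_2}\leq\|\Phi Q^{1/2}\|_{L_2}+\|\Psi Q^{1/2}\|_{L_2}\leq\|\Phi\|_{L_2^\Sigma}+\|\Psi\|_{L_2^\Sigma}.
\end{equation*}
Since the right-hand side no longer depends on $Q$, taking the supremum over $Q\in\Sigma$ on the left yields $\|\Phi+\Psi\|_{L_2^\Sigma}\leq\|\Phi\|_{L_2^\Sigma}+\|\Psi\|_{L_2^\Sigma}$. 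In short, a pointwise supremum of seminorms is again a seminorm, which settles everything except definiteness.

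The remaining, and genuinely delicate, step is to show $\|\Phi\|_{L_2^\Sigma}=0\Rightarrow\Phi=0$. From $\sup_{Q\in\Sigma}\|\Phi Q^{1/2}\|_{L_2}=0$ one gets $\Phi Q^{1/2}=0$, hence $\Phi$ vanishes on $\im Q^{1/2}$, for every $Q\in\Sigma$. The key observation is that an operator of this space is, by construction, determined by its action on the appropriate domain, namely on $\bigcup_{Q\in\Sigma}\im Q^{1/2}$ — precisely the set of vectors on which $\Phi$ is tested by the family $p_Q$. Consequently $\Phi$ annihilates its whole domain and $\Phi=0$ as an element of $L_2^\Sigma$. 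I expect this definiteness step to be the main obstacle, since it is the only place where the precise convention for $dom\,\Phi$ (and the attendant identification of operators that agree on all the ranges $\im Q^{1/2}$) must be invoked; once that convention is fixed the implication is immediate, while the homogeneity and triangle inequality are essentially automatic from the reduction above.
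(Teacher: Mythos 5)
Your proof is correct, and it is consistent with how the paper sets things up: the paper itself omits the proof of this proposition (deferring, as with many statements, to the thesis \cite{ibragimov1}), but the surrounding construction --- the requirement $dom\,\Phi\supset\bigcup_{Q\in\Sigma}Q^{1/2}(\Us)$, the space $\Us_\Sigma$, and the embedding $\|\Phi\|_{L(\Us_\Sigma,\Hs)}\leq\|\Phi\|_{L_2^\Sigma}$ of \textbf{Lm.\ref{lm_L_Sigma_2_L_U_Sigma_H_cont_embed}} --- is exactly the mechanism you invoke. The seminorm part (homogeneity and triangle inequality via the supremum of the seminorms $p_Q(\Phi)=\|\Phi Q^{1/2}\|_{L_2(\Us,\Hs)}$, after commuting the square root with the supremum) is complete as written. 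The only step you should make explicit is the passage, in the definiteness argument, from ``$\Phi$ vanishes on $\bigcup_{Q\in\Sigma}\im Q^{1/2}$'' to ``$\Phi$ vanishes on its whole domain'': the union is a proper subset of $\Us_\Sigma$, whose general element is only a $\Us_\Sigma$-convergent series $u=\sum_i u_i$ with $u_i\in Q_i^{1/2}(\Us)$ (\textbf{Rem.\ref{rem_conv_U_U_Sigma_norm}}). Since elements of $L_2^\Sigma$ are by definition in $L(\Us_\Sigma,\Hs)$, boundedness gives $\Phi u=\sum_i\Phi u_i=0$, closing the gap in one line; equivalently, $\|\Phi\|_{L_2^\Sigma}=0$ forces $\|\Phi\|_{L(\Us_\Sigma,\Hs)}=0$ directly by \textbf{Lm.\ref{lm_L_Sigma_2_L_U_Sigma_H_cont_embed}}. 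So your reduction is sound, but ``determined by its action on $\bigcup_{Q\in\Sigma}\im Q^{1/2}$'' is a consequence of continuity on $\Us_\Sigma$, not of the definition of the domain alone, and that continuity deserves to be named.
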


%
%
%
%
%
%

It needs that \ \ $ dom\,\Phi\supset\bigcup\limits_{Q\in\Sigma}Q^{1/2}(\Us)$.

Take \ $ dom\,\Phi=\Us_\Sigma
:=\big\{u\in\Us\ \big|\ \exists u_i\in Q_i^{1/2}(\Us),\ Q_i\in \Sigma:\ u\!\!\!\!\overset{_{\phantom{U}}\|\, \cdot\, \|_{\Us}}{=}\!\!\sum\limits_{i=1}^\infty u_i,$

\hfill$  \sum\limits_{i=1}^\infty \|u_i\|_{Q_i^{1/2}(\Us)}<\infty \big\}$.

and $\|u\|_{\Us_\Sigma}:=\inf\big\{\sum\limits_{i=1}^m \|u_i\|_{Q_i^{1/2}(\Us)}\ \big|\ 
u\!\!\!\!\overset{_{\phantom{U}}\|\, \cdot\, \|_{\Us}}{=}\!\!\sum\limits_{i=1}^\infty u_i,\  u_i\in Q_i^{1/2}(\Us),\ Q_i\in \Sigma\big\}$.
\\
Note that $\|u_i\|_{Q_i^{1/2}(\Us)}=\|Q_i^{-1/2}u_i\|_\Us,\ u_i\in Q_i^{1/2}(\Us),\ Q_i\in\Sigma$;

and it is known that 
$\Big(Q_i^{1/2}(\Us),\,\|\pdot\|_{Q_i^{1/2}(\Us)}\Big)$ is a Banach space (see \cite[4.2]{da_prato_zabcz3}).

\begin{rem}\label{rem_conv_U_U_Sigma_norm}
 In the definition of $\Us_\Sigma$: \

from \ $u\!\!\!\!\overset{_{\phantom{U}}\|\, \cdot\, \|_{\Us}}{=}\!\!\sum\limits_{i=1}^\infty u_i$ \ \ follows \ \ $u\!\!\!\!\overset{_{\phantom{U}}\|\, \cdot\, \|_{\Us_\Sigma}}{=}\!\!\sum\limits_{i=1}^\infty u_i$.
\end{rem}

\begin{proof}
  $\absz$\\
$\|u-\sum\limits_{i=1}^K u^i\|_{U_\Sigma}
=\|\sum\limits_{i=K+1}^\infty u^i\|_{U_\Sigma}
\leq\sum\limits_{i=K+1}^\infty  \|u_i\|_{Q_i^{1/2}(\Us)}
\xrightarrow[K\to\infty]{}0$.

\end{proof}

\begin{lm}\label{lm_U_Sigma_cont_embed}
 $\Us_\Sigma\hookrightarrow\Us$ ( $\Us_\Sigma$ is a continuously embedded in $\Us$), i.e.:

\hspace{27mm}$\Us_\Sigma\subset\Us$;

\hspace{27mm}$\|\pdot\|_\Us\leq C\cdot\|\pdot\|_{\Us_\Sigma}$.
  
In particular $\forall Q\in\Sigma,\ \ Q^{1/2}(\Us)\hookrightarrow\Us$.
\end{lm}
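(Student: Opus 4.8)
The plan is to reduce the statement to the one-operator embedding $Q^{1/2}(\Us)\hookrightarrow\Us$ and then assemble the general case via the triangle inequality, the essential ingredient being a bound on the embedding constants that is \emph{uniform} over $\Sigma$. The inclusion $\Us_\Sigma\subset\Us$ itself requires no argument: by construction every $u\in\Us_\Sigma$ is already an element of $\Us$, since its defining decomposition converges in $\|\pdot\|_\Us$. So only the norm inequality has to be proved.

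For a fixed $Q\in\Sigma$, I would recall that $\|v\|_{Q^{1/2}(\Us)}=\|Q^{-1/2}v\|_\Us$, with $Q^{-1/2}$ the pseudo-inverse on $\im Q^{1/2}$. Since $Q^{1/2}\big(Q^{-1/2}v\big)=v$ for $v\in Q^{1/2}(\Us)$, one reads off at once
$$\|v\|_\Us=\big\|Q^{1/2}\big(Q^{-1/2}v\big)\big\|_\Us\leq\|Q^{1/2}\|_{L(\Us)}\,\|Q^{-1/2}v\|_\Us=\|Q^{1/2}\|_{L(\Us)}\,\|v\|_{Q^{1/2}(\Us)}.$$
This already yields the ``in particular'' claim, with constant $\|Q^{1/2}\|_{L(\Us)}=\sqrt{\|Q\|_{L(\Us)}}$.

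The decisive point is to make this constant independent of $Q$. Every $Q\in\Sigma$ is non-negative, symmetric and trace-class, so its operator norm equals its largest eigenvalue and is therefore dominated by the sum of all eigenvalues: $\|Q\|_{L(\Us)}\leq\Tr[Q]$. By \textbf{Prop.\ref{prop_moments}} the quantity $M:=\sup_{Q\in\Sigma}\Tr[Q]=\E\big[\|B_1\|^2_\Us\big]$ is finite, whence $\sup_{Q\in\Sigma}\|Q^{1/2}\|_{L(\Us)}\leq\sqrt{M}$.

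Finally, for an arbitrary $u\in\Us_\Sigma$ I would take any admissible representation $u=\sum_{i\geq1}u_i$ with $u_i\in Q_i^{1/2}(\Us)$, $Q_i\in\Sigma$, and combine the single-operator estimate with the uniform bound:
$$\|u\|_\Us\leq\sum_{i\geq1}\|u_i\|_\Us\leq\sqrt{M}\sum_{i\geq1}\|u_i\|_{Q_i^{1/2}(\Us)}.$$
Passing to the infimum over all such representations gives $\|u\|_\Us\leq\sqrt{M}\,\|u\|_{\Us_\Sigma}$, i.e. the claim with $C=\sqrt{\sup_{Q\in\Sigma}\Tr[Q]}$. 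The one genuine obstacle is precisely this uniformity: the per-operator bound is immediate, but the patching step collapses unless $\sup_{Q\in\Sigma}\Tr[Q]<\infty$, which is exactly what the second-moment estimate of \textbf{Prop.\ref{prop_moments}} supplies.
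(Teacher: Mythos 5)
Your proposal is correct and follows essentially the same route as the paper's proof: the single-operator estimate $\|v\|_\Us=\|Q^{1/2}Q^{-1/2}v\|_\Us\leq\|Q^{1/2}\|_{L(\Us)}\|v\|_{Q^{1/2}(\Us)}$, a bound on $\|Q^{1/2}\|_{L(\Us)}$ uniform over $\Sigma$, then the triangle inequality over an admissible decomposition and the infimum over representations. The only (cosmetic) difference is that the paper simply posits the uniform bound $\|Q_i\|_{L(\Us)}\leq C^2$ for all $Q_i\in\Sigma$, whereas you justify it explicitly via $\|Q\|_{L(\Us)}\leq\Tr[Q]$ and the finiteness of $\sup_{Q\in\Sigma}\Tr[Q]=\E\big[\|B_1\|^2_\Us\big]$ from \textbf{Prop.\ref{prop_moments}}, which is a slightly more careful rendering of the same step.
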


\begin{proof}
 $\absz$\\
Let $Q\in\Sigma$.

It is well known that $Q^{1/2}\in L_2(\Us)$(see \cite[Prop.2.3.4]{prevot_roeckner1}).

Therefore \ $Q^{1/2}(\Us)\subset\Us$.

So that for all \ $u\in Q^{1/2}\,:\ \ \|u\|_\Us
=\|Q^{1/2}Q^{-1/2}u\|_\Us
\leq\|Q^{1/2}\|_\Us\cdot\|u\|_{Q^{1/2}(\Us)}$.{\makebox[10mm][r]{$(\,\ast\,)$}}
\\
Every $ Q_i\in\Sigma$ is bounded in $L(\Us)$ \ and let \ $\|Q_i\|_\Us\leq C^2$.
\\
Also we have that $C^2\geq\|Q_i\|_\Us=\|Q_i^{1/2}Q_i^{1/2}\|_\Us=\|Q_i^{1/2}\|^2_\Us$.

From $(\,\ast\,)$ it follows that \ $\|u\|_\Us\leq C \cdot\|u\|_{Q_i^{1/2}(\Us)}$

Therefore \ $Q^{1/2}(\Us)\hookrightarrow\Us$, \ $\forall Q_i\in\Sigma$.\hfill$(\,\#\,)$
\\
Also it is clear that $\Us_\Sigma\subset\Us$.

Hence for all \ $u\in\Us_\Sigma,\ u=\sum\limits_{i\geq1} u_i \ :$

\hfill$
\|u\|_\Us
=\|\sum\limits_{i\geq1} u_i\|_\Us
\leq \sum\limits_{i\geq1} \|u_i\|_\Us
\overset{(\,\#\,)}{\leq}C\cdot\sum\limits_{i\geq1} \|u_i\|_{Q_i^{1/2}(\Us)}
$.

Taking $\inf$ \ we obtain \ $\|u\|_\Us\leq C\cdot\|u\|_{\Us_\Sigma}$.
\\
So we have \ $\Us_\Sigma\hookrightarrow\Us$.

\end{proof}

\begin{lm}
  $L(\Us,\Sigma)\hookrightarrow L(\Us_\Sigma,\Hs)$.
\end{lm}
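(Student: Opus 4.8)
The plan is to show that any operator $\Phi$ with finite $\|\Phi\|_{L_2^\Sigma}$-norm is automatically a bounded linear map from $\big(\Us_\Sigma,\|\pdot\|_{\Us_\Sigma}\big)$ into $\Hs$, with the quantitative bound $\|\Phi\|_{L(\Us_\Sigma,\Hs)}\le\|\Phi\|_{L_2^\Sigma}$; this inequality is precisely the asserted continuous embedding. The estimate will be assembled from a single-summand bound, obtained by passing from the defining $L_2^\Sigma$-norm down to the operator norm via the elementary inequality $\|\pdot\|_{L(\Us,\Hs)}\le\|\pdot\|_{L_2(\Us,\Hs)}$.

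First I would fix $\Phi$ and an element $u\in\Us_\Sigma$ together with one of its admissible decompositions $u=\sum_{i\ge1}u_i$, where $u_i\in Q_i^{1/2}(\Us)$, $Q_i\in\Sigma$, and $\sum_i\|u_i\|_{Q_i^{1/2}(\Us)}<\infty$. Setting $v_i:=Q_i^{-1/2}u_i$, so that $u_i=Q_i^{1/2}v_i$ and $\|v_i\|_\Us=\|u_i\|_{Q_i^{1/2}(\Us)}$ (the identity recorded just before this lemma), I would estimate each summand by $\|\Phi u_i\|_\Hs=\|\Phi Q_i^{1/2}v_i\|_\Hs\le\|\Phi Q_i^{1/2}\|_{L(\Us,\Hs)}\,\|v_i\|_\Us$. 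The crucial step is then to dominate the operator norm by the Hilbert--Schmidt norm and to recognise the latter inside the defining supremum: since $\|\Phi Q_i^{1/2}\|_{L(\Us,\Hs)}\le\|\Phi Q_i^{1/2}\|_{L_2(\Us,\Hs)}$ and, using $Q_i=Q_i^*$, $\|\Phi Q_i^{1/2}\|_{L_2(\Us,\Hs)}^2=\Tr\big[\Phi Q_i\Phi^*\big]\le\sup_{Q\in\Sigma}\Tr\big[\Phi Q\Phi^*\big]=\|\Phi\|_{L_2^\Sigma}^2$, each summand obeys $\|\Phi u_i\|_\Hs\le\|\Phi\|_{L_2^\Sigma}\,\|u_i\|_{Q_i^{1/2}(\Us)}$.

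Summing over $i$ then gives absolute convergence of $\sum_i\Phi u_i$ in $\Hs$ and the bound $\|\Phi u\|_\Hs\le\|\Phi\|_{L_2^\Sigma}\sum_i\|u_i\|_{Q_i^{1/2}(\Us)}$; taking the infimum over all admissible decompositions of $u$ yields $\|\Phi u\|_\Hs\le\|\Phi\|_{L_2^\Sigma}\,\|u\|_{\Us_\Sigma}$, which is exactly the continuous embedding claimed. The main obstacle I anticipate is the interchange $\Phi\big(\sum_iu_i\big)=\sum_i\Phi u_i$: before continuity of $\Phi$ on $\Us_\Sigma$ has been established one cannot simply pull $\Phi$ through the infinite series without circularity. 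I would circumvent this by first treating finite partial sums $s_m:=\sum_{i=1}^m u_i$, for which linearity of $\Phi$ is unproblematic and the estimate above gives $\|\Phi s_m\|_\Hs\le\|\Phi\|_{L_2^\Sigma}\sum_{i=1}^m\|u_i\|_{Q_i^{1/2}(\Us)}$. Since $s_m\to u$ in $\Us_\Sigma$ by Remark~\ref{rem_conv_U_U_Sigma_norm} and the images $\Phi s_m$ form a Cauchy, hence convergent, sequence in $\Hs$, one identifies the limit with $\Phi u$ — this identification being the point where one uses that $\Phi$ is a well-defined (closed) linear operator on $\Us_\Sigma$, a property enjoyed by the operator integrands under consideration — and then passes to the limit in the inequality. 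The residual verifications, that the resulting bound is independent of the chosen decomposition and that the map so described is linear, are routine and would be dispatched without further comment.
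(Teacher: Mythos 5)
You have proved the wrong statement. Despite the misprint in the lemma (the ``$\Sigma$'' in $L(\Us,\Sigma)$ is a slip for $\Hs$), the intended claim --- as the paper's own proof makes unambiguous --- is that every $\Phi\in L(\Us,\Hs)$ restricts to a bounded operator on $\Us_\Sigma$, with $\|\Phi\|_{L(\Us_\Sigma,\Hs)}\leq C\,\|\Phi\|_{L(\Us,\Hs)}$. The paper's argument is a one-liner: for $u\in\Us_\Sigma\subset\Us$ one has $\|\Phi u\|_\Hs\leq\|\Phi\|_{L(\Us,\Hs)}\|u\|_\Us\leq C\,\|\Phi\|_{L(\Us,\Hs)}\|u\|_{\Us_\Sigma}$, the second inequality being exactly the continuous embedding $\Us_\Sigma\hookrightarrow\Us$ of \textbf{Lm.\ref{lm_U_Sigma_cont_embed}}. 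What you establish instead is the bound $\|\Phi\|_{L(\Us_\Sigma,\Hs)}\leq\|\Phi\|_{L_2^\Sigma}$ for operators of finite $L_2^\Sigma$-norm --- which is the paper's \emph{next} result, \textbf{Lm.\ref{lm_L_Sigma_2_L_U_Sigma_H_cont_embed}}, and your decomposition argument (factor $u_i=Q_i^{1/2}v_i$, estimate $\|\Phi Q_i^{1/2}\|_{L_2(\Us,\Hs)}\leq\|\Phi\|_{L_2^\Sigma}$, sum, take the infimum) essentially reproduces the paper's proof of that lemma, not of this one. Your result does not subsume the stated one: an operator in $L(\Us,\Hs)$ is not given with finite $L_2^\Sigma$-norm, and to get $\|\Phi\|_{L_2^\Sigma}\leq\|\Phi\|_{L(\Us,\Hs)}\big(\sup_{Q\in\Sigma}\Tr Q\big)^{1/2}$ you would additionally need the trace-norm boundedness of $\Sigma$, a fact you never invoke; so as written the proposal leaves the asserted embedding unproven.

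There is also a soft spot internal to your own argument. You correctly identify the circularity in pulling $\Phi$ through the series $u=\sum_i u_i$ before continuity on $\Us_\Sigma$ is known, but your fix --- asserting that $\Phi$ is a ``closed'' operator on $\Us_\Sigma$, ``a property enjoyed by the operator integrands under consideration'' --- is not justified and is not available at this stage of the development. In the paper this issue does not arise: the space $L_2^\Sigma$ is \emph{defined} (after these lemmas) as $\{\Phi\in L(\Us_\Sigma,\Hs)\mid\|\Phi\|_{L_2^\Sigma}<\infty\}$, so continuity of $\Phi$ on $\Us_\Sigma$ is an assumption and the interchange is licensed by \textbf{Rem.\ref{rem_conv_U_U_Sigma_norm}} (the series converges to $u$ in $\Us_\Sigma$-norm as well); the norm comparison is then all that needs proving. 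If you want boundedness to follow from finiteness of the $L_2^\Sigma$-norm alone, you must prove well-definedness and continuity of the additive extension across decompositions, which is genuinely the harder part (compare step \textbf{(b)} in the paper's completeness proof for $L_2^\Sigma$, where continuity of the approximants is used).
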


\begin{proof}
$\absz$\\
Since $\|\Phi u\|_\Hs\leq \|\Phi \|_{L(\Us,\Hs)}\cdot\|u\|_\Us
\overset{\textbf{Lm.\ref{lm_U_Sigma_cont_embed}}}{\leq} C\cdot \|\Phi \|_{L(\Us,\Hs)}\cdot \|u\|_{\Us_\Sigma}$\ \ \ and\ \ \ 
$\Us_\Sigma\subset\Us$,
\\
then we have \ $L(\Us,\Sigma)\subset L(\Us_\Sigma,\Hs)$\ \ \ and\ \ \ 
$\|\Phi \|_{L(\Us_\Sigma,\Hs)}\leq C\cdot \|\Phi \|_{L(\Us,\Hs)}$.

\end{proof}

Below we will use a criterion of completeness of a normed space (see \cite[Lm.2.2.1]{berg_loefstroem1}) which we have formulated in the following lemma 
\begin{lm}\label{lm_cauch_sq_berg_loefstr}
$\absz$\\
Normed space $A$ is complete if and only if \ 
when from \ $\sum\limits_{n\geq1}\!\|a_n\|_A<\infty$ \ it follows that there exists an element \ $ a\in A$ \ such that \ \ $
a\!\!\!\!\overset{_{\phantom{U}}\|\, \cdot\, \|_A}{=}\!\!\sum\limits_{n\geq1} a_n$.
\end{lm}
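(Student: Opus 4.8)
The plan is to prove the stated equivalence by establishing the two implications separately, both via elementary estimates on partial sums; no machinery beyond the triangle inequality and the definition of a Cauchy sequence is needed.

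First I would treat the forward direction (completeness implies the series property). Assuming $A$ is complete, and given a sequence $(a_n)$ with $\sum_{n\geq 1}\|a_n\|_A<\infty$, I would consider the partial sums $s_N:=\sum_{n=1}^N a_n$ and show they form a Cauchy sequence in $A$. For $M>N$ the triangle inequality gives $\|s_M-s_N\|_A\leq\sum_{n=N+1}^M\|a_n\|_A$, which is bounded by the tail $\sum_{n>N}\|a_n\|_A$; since the numerical series $\sum\|a_n\|_A$ converges, this tail tends to $0$, so $(s_N)$ is Cauchy. By completeness it converges to some $a\in A$, and this $a$ is exactly the element whose existence is claimed.

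Second I would treat the reverse direction (the series property implies completeness), which is the less routine half. Assuming every absolutely convergent series in $A$ converges, let $(x_k)$ be an arbitrary Cauchy sequence; the goal is to produce a limit. The key step is a geometric subsequence extraction: using the Cauchy condition I would choose indices $k_1<k_2<\cdots$ with $\|x_{k_{j+1}}-x_{k_j}\|_A\leq 2^{-j}$. Setting $a_1:=x_{k_1}$ and $a_j:=x_{k_j}-x_{k_{j-1}}$ for $j\geq 2$, the series $\sum_j a_j$ is absolutely convergent, as $\sum_{j\geq 2}\|a_j\|_A\leq\sum_{j\geq 1}2^{-j}=1<\infty$. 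By hypothesis it converges to some $a\in A$, while its partial sums telescope, $\sum_{j=1}^N a_j=x_{k_N}$, so the subsequence satisfies $x_{k_N}\to a$.

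To finish I would invoke the standard fact that a Cauchy sequence with a convergent subsequence converges to the same limit: given $\varepsilon>0$, one combines the Cauchy bound $\|x_k-x_{k_N}\|_A<\varepsilon$ (for $k,k_N$ past the threshold) with $\|x_{k_N}-a\|_A<\varepsilon$ (for large $N$) to get $\|x_k-a\|_A<2\varepsilon$. Hence $x_k\to a$ and $A$ is complete. I do not expect a genuine obstacle; the only point requiring care is the subsequence extraction, where the Cauchy condition must be applied iteratively to secure the $2^{-j}$ bound, together with the telescoping identity that links the abstract absolutely convergent series back to the original sequence.
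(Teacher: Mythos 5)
Your proof is correct and complete: the forward direction via Cauchy partial sums and the reverse direction via the geometric subsequence extraction ($\|x_{k_{j+1}}-x_{k_j}\|_A\leq 2^{-j}$), telescoping, and the standard fact that a Cauchy sequence with a convergent subsequence converges, is exactly the classical argument. The paper itself gives no proof of this lemma --- it simply cites Bergh--L\"ofstr\"om (Lemma 2.2.1 of the reference on interpolation spaces) --- and your write-up is precisely the standard proof one finds there, so there is nothing to compare beyond noting that you have filled in the omitted details correctly.
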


\begin{prop}
$\big(\Us_\Sigma,\|\pdot\|_{\Us_\Sigma}\big)$ is a Banach space.
\end{prop}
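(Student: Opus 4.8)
The plan is to verify the completeness criterion of \textbf{Lm.\ref{lm_cauch_sq_berg_loefstr}}: it suffices to show that whenever a sequence $(a_n)_{n\geq1}\subset\Us_\Sigma$ satisfies $\sum_{n\geq1}\|a_n\|_{\Us_\Sigma}<\infty$, the series $\sum_n a_n$ converges in the $\Us_\Sigma$-norm to some element $a\in\Us_\Sigma$. First I would unfold the infimum definition of the norm. For each $n$, since $\|a_n\|_{\Us_\Sigma}$ is defined as an infimum, I can select a representation $a_n\overset{\|\cdot\|_\Us}{=}\sum_{i\geq1}u_i^{(n)}$ with $u_i^{(n)}\in (Q_i^{(n)})^{1/2}(\Us)$, $Q_i^{(n)}\in\Sigma$, such that
$$\sum_{i\geq1}\big\|u_i^{(n)}\big\|_{(Q_i^{(n)})^{1/2}(\Us)}\leq\|a_n\|_{\Us_\Sigma}+2^{-n}.$$
Summing over $n$ then gives the finite total
$$\sum_{n,i}\big\|u_i^{(n)}\big\|_{(Q_i^{(n)})^{1/2}(\Us)}\leq\sum_{n\geq1}\|a_n\|_{\Us_\Sigma}+1<\infty.$$

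Next I would build the candidate limit $a$ inside the ambient space $\Us$. By the embedding estimate marked $(\,\#\,)$ in \textbf{Lm.\ref{lm_U_Sigma_cont_embed}}, i.e. $\|u\|_\Us\leq C\|u\|_{Q^{1/2}(\Us)}$ for $u\in Q^{1/2}(\Us)$, $Q\in\Sigma$, the double family $\{u_i^{(n)}\}$ is absolutely summable in $\Us$, since $\sum_{n,i}\|u_i^{(n)}\|_\Us\leq C\sum_{n,i}\|u_i^{(n)}\|_{(Q_i^{(n)})^{1/2}(\Us)}<\infty$. As $\Us$ is complete, this series converges unconditionally, so after fixing any bijective enumeration of $\N\times\N$ it defines an element $a\in\Us$ with $a\overset{\|\cdot\|_\Us}{=}\sum_{n,i}u_i^{(n)}$. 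This single-index representation is exactly of the form required in the definition of $\Us_\Sigma$, whence $a\in\Us_\Sigma$ and $\|a\|_{\Us_\Sigma}\leq\sum_{n,i}\|u_i^{(n)}\|_{(Q_i^{(n)})^{1/2}(\Us)}<\infty$.

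Finally I would check that $a$ is the $\Us_\Sigma$-limit of the partial sums. Because the finite partial sum is $\sum_{n=1}^N a_n=\sum_{n=1}^N\sum_i u_i^{(n)}$ and the whole double series converges in $\Us$, the remainder is $a-\sum_{n=1}^N a_n\overset{\|\cdot\|_\Us}{=}\sum_{n>N,\,i}u_i^{(n)}$; by \textbf{Rem.\ref{rem_conv_U_U_Sigma_norm}} this tail also converges in $\Us_\Sigma$ and admits the displayed representation, so estimating its $\Us_\Sigma$-norm by the sum of component norms yields
$$\Big\|a-\sum_{n=1}^N a_n\Big\|_{\Us_\Sigma}\leq\sum_{n>N}\big(\|a_n\|_{\Us_\Sigma}+2^{-n}\big)\xrightarrow[N\to\infty]{}0.$$
Hence $\sum_n a_n=a$ in $\Us_\Sigma$, and the criterion of \textbf{Lm.\ref{lm_cauch_sq_berg_loefstr}} delivers completeness, so $\big(\Us_\Sigma,\|\cdot\|_{\Us_\Sigma}\big)$ is a Banach space.

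The main obstacle I anticipate is bookkeeping rather than conceptual: justifying that the double-indexed sum $\sum_{n,i}u_i^{(n)}$ may legitimately be treated as a single-index representation in the sense of the definition of $\Us_\Sigma$. This rests on the unconditional convergence of the series in $\Us$, which is precisely why the continuous embedding $\Us_\Sigma\hookrightarrow\Us$ of \textbf{Lm.\ref{lm_U_Sigma_cont_embed}} is indispensable, and on the fact that the infimum-type norm interacts correctly with the tail estimate. One must also note that each $u_i^{(n)}$ retains its membership in $(Q_i^{(n)})^{1/2}(\Us)$ after re-indexing, which is automatic since re-indexing merely permutes the terms.
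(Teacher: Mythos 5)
Your proof is correct, but it takes a genuinely different route from the paper's. Both arguments start the same way: invoke the completeness criterion of \textbf{Lm.\ref{lm_cauch_sq_berg_loefstr}}, pick near-optimal representations with summable slack ($2^{-n}$ for you, $\varepsilon_n$ in the paper), and use the uniform embedding constant of \textbf{Lm.\ref{lm_U_Sigma_cont_embed}}. The divergence is in how the limit is assembled. The paper first reduces to a common countable family $\{Q_i\}\subset\Sigma$ (re-indexing all representations over it, padding with zeros), then for each fixed $i$ sums over $n$ \emph{inside} the Banach space $Q_i^{1/2}(\Us)$ -- applying the Berg--L\"{o}fstr\"{o}m criterion a second time, and relying on the completeness of each $Q_i^{1/2}(\Us)$ (cited from Da Prato--Zabczyk) -- to produce components $v_i\in Q_i^{1/2}(\Us)$, and finally sets $v=\sum_i v_i$, with Fatou/monotone-convergence interchanges to control the tails. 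You skip the grouping entirely: since the definition of $\Us_\Sigma$ places no distinctness or ordering constraints on the operators $Q_i$, you flatten the absolutely summable double array $\{u_i^{(n)}\}$ into a single-index representation via an arbitrary enumeration of $\N\times\N$, needing only completeness of $\Us$ (through the embedding) and the standard fact that absolute convergence gives unconditional convergence and permits iterated summation; the tail estimate then falls straight out of the infimum definition of $\|\pdot\|_{\Us_\Sigma}$. Your version is shorter and uses fewer ingredients (one application of the criterion instead of two, no completeness of the spaces $Q^{1/2}(\Us)$, no countability reduction); the paper's version buys a structured representation of the limit, with one component $v_i$ per operator $Q_i$ of a fixed countable family, at the cost of the extra bookkeeping. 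Both are sound.
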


\begin{proof}
$\absz$\\
Let a sequence $\big\{u^n\big\}\subset\Us_\Sigma$ such that $\sum\limits_{n=1}^\infty \|u^n\|_{\Us_\Sigma}<\infty$.

For every element  $u^n$  there exists a subsequence \ $u^n_{i_n}\in Q_{i_n}^{1/2}(\Us)$, where \ $ Q_{i_n}\in \Sigma$, such that \ $  u^n=\sum\limits_{i_n=1}^\infty u^n_{i_n}$.

Since $\Sigma$ is no countable, but we can always reduce to the case with a countable number of $Q\in\Sigma$:

Taking $\Sigma':=\bigcup\limits_{n\geq1}\big\{Q_{i_n}\big\}=\big\{Q_i\big\}_{i\geq1}$, we can see that it is countable.

So we can use here the set $\Sigma'$ instead of $\Sigma$, and 
in the same time we can suppose that $u^n=\sum\limits_{i=1}^\infty u^n_i\,,\ u^n_i\in Q_i^{1/2}(\Us)$,
where $u_i^n=u_{i_n}^n\cdot\ind_{\{i=i_n\}}$, i.e. 
we renumber the elements and putting some zeros. It means that for every \ $u^n$ \ there exists \ $u^n_i\in Q_i^{1/2}(\Us)$, where \ $ Q_i\in \Sigma$ \ such that \ $u^n=\sum\limits_{i=1}^\infty u^n_i$.
\\
So that, there exists a sequence $\big\{\varepsilon_n\big\}$ \ with \ $\sum\limits_{n\geq1}\varepsilon_n <\infty$, such that:

$\varepsilon_n+\|u^n\|_{\Us_\Sigma}\geq\sum\limits_{i=1}^\infty\|u^n_i\|_{Q_i^{1/2}(\Us)}\geq\|u^n_i\|_{Q_i^{1/2}(\Us)}$ .
\hfill$(\,\ast\,)$

Therefore \ $\sum\limits_{n=1}^\infty\|u^n_i\|_{Q_i^{1/2}(\Us)}<\infty\,,\ u_i^n\in Q_i^{1/2}(\Us)$ -- Banach.

Then by \textbf{Lm.\ref{lm_cauch_sq_berg_loefstr}} it follows that there exists \ $ v_i\in Q_i^\frac{1}{2}(\Us)$, such that  

$ 
v_i\!\!\!\overset{_{\phantom{U}}\|\, \cdot\, \|_{Q_i^{1/2}(\Us)}}{=}\sum\limits_{n=1}^\infty u^n_i$, \ \ for every $i$.

So that \ $v_i\!\!\!\!\overset{_{\phantom{U}}\|\, \cdot\, \|_\Us}{=}\!\!\sum\limits_{n=1}^\infty u^n_i$.

Taking $v\!\!\!\overset{_{\phantom{U}}\|\, \cdot\, \|_{\Us}}{:\,=}\!\sum\limits_{i=1}^\infty v^i$, let us show that this series really converges in the $\Us$-norm:

From $(\,\ast\,)$ we have: \ \ 

$
\infty>\sum\limits_{n=1}^\infty\sum\limits_{i=1}^\infty\|u^n_i\|_{Q_i^{1/2}(\Us)}
=\sum\limits_{i=1}^\infty\sum\limits_{n=1}^\infty\|u^n_i\|_{Q_i^{1/2}(\Us)}
\overset{\textbf{Lm.\ref{lm_U_Sigma_cont_embed}}}{\geq}C\cdot\sum\limits_{i=1}^\infty\sum\limits_{n=1}^\infty\|u^n_i\|_\Us$

\hfill$
\geq C\cdot\sum\limits_{i=1}^\infty\|v_i\|_\Us
$.

By \textbf{Lm.\ref{lm_cauch_sq_berg_loefstr}} we deduce that \ $\sum\limits_{i=1}^\infty v^i$ converges in $\|\pdot\|_\Us$.

Also we can show that \ \ $\sum\limits_{i=1}^\infty \|v^i\|_{Q_i^{1/2}(\Us)}$:

In fact, from $(\,\ast\,)$ follows:

$
\infty>\sum\limits_{n=1}^\infty\sum\limits_{i=1}^\infty\|u^n_i\|_{Q_i^{1/2}(\Us)}
=\sum\limits_{i=1}^\infty\sum\limits_{n=1}^\infty\|u^n_i\|_{Q_i^{1/2}(\Us)}
\geq \sum\limits_{i=1}^\infty\|v_i\|_{Q_i^{1/2}(\Us)}
$.

Therefore \ $v\in\Us_\Sigma$.
\\
Consider\ \ \  $\|v-\sum\limits_{n=1}^K u^n\|_{U_\Sigma}
=\|\sum\limits_{i=1}^\infty v_i-\sum\limits_{i=1}^\infty\sum\limits_{n=1}^K u^n_i\|_{U_\Sigma}
\leq\sum\limits_{i=1}^\infty\|\sum\limits_{n=K+1}^\infty u_i^n\|_{Q_i^{1/2}(\Us)}\\
\leq\sum\limits_{i=1}^\infty\sum\limits_{n=K+1}^\infty \|u_i^n\|_{Q_i^{1/2}(\Us)}
\xrightarrow[K\to\infty]{}0$, \ \ \ by the monotone convergence theorem.

Hence \ $v\!\!\!\overset{_{\phantom{U}}\|\, \cdot\, \|_{\Us_\Sigma}}{=}\!\sum\limits_{n=1}^\infty u^n$.

According to \textbf{Lm.\ref{lm_cauch_sq_berg_loefstr}} we can conclude that \ $  \big(\Us_\Sigma,\|\pdot\|_{\Us_\Sigma}\big)$ is complete.

\end{proof}

In the very beginning of this chapter we would like to describe a Banach space with a norm $\|\pdot\|^2_{L_2^\Sigma}$. So, we are ready to define it. This space is closely related to the construction of stochastic integral in the following chapter \ref{sec_stoch_int}.
\begin{df}
$L_2^\Sigma=\big\{\Phi\in L(\Us_\Sigma, \Hs)\ \big|\ \|\Phi\|_{L_2^\Sigma}<\infty\big\}$.
\end{df}

\begin{rem} From the definition above follows that if  \ 
$Q\in\Sigma$ \ then \ 

$\Phi Q^{1/2} \in L_2(\Us,\Hs) $
\end{rem}

\begin{lm}\label{lm_L_Sigma_2_L_U_Sigma_H_cont_embed}
 $L^\Sigma_2\hookrightarrow L(\Us_\Sigma,\Hs)$.

Moreover \ $\|\pdot\|_{L(\Us_\Sigma,\Hs)}\leq \|\pdot\|_{L^\Sigma_2}$ (note that here a constant $C=1$).
\end{lm}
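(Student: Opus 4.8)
The inclusion $L_2^\Sigma \subset L(\Us_\Sigma,\Hs)$ is immediate, since membership in $L(\Us_\Sigma,\Hs)$ is built into the very definition of $L_2^\Sigma$; the whole content of the statement is the norm estimate $\|\Phi\|_{L(\Us_\Sigma,\Hs)} \le \|\Phi\|_{L_2^\Sigma}$ with constant $1$. The plan is to prove, for every $\Phi \in L_2^\Sigma$ and every $u \in \Us_\Sigma$, the pointwise bound $\|\Phi u\|_\Hs \le \|\Phi\|_{L_2^\Sigma}\cdot\|u\|_{\Us_\Sigma}$, and then to take the supremum over the unit ball $\{\|u\|_{\Us_\Sigma}\le 1\}$.

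First I would treat a single building block: let $Q \in \Sigma$ and $u \in Q^{1/2}(\Us)$. Writing $v := Q^{-1/2}u$, so that $u = Q^{1/2}v$ and $\|v\|_\Us = \|Q^{-1/2}u\|_\Us = \|u\|_{Q^{1/2}(\Us)}$, I compute $\Phi u = (\Phi Q^{1/2})v$ and estimate $\|\Phi u\|_\Hs \le \|\Phi Q^{1/2}\|_{L(\Us,\Hs)}\cdot\|v\|_\Us$. The key inequality is that the operator norm is dominated by the Hilbert–Schmidt norm, $\|\Phi Q^{1/2}\|_{L(\Us,\Hs)} \le \|\Phi Q^{1/2}\|_{L_2(\Us,\Hs)}$, and the latter equals $\big(\Tr[\Phi Q \Phi^*]\big)^{1/2} \le \big(\sup_{Q'\in\Sigma}\Tr[\Phi Q'\Phi^*]\big)^{1/2} = \|\Phi\|_{L_2^\Sigma}$ by the definition of the $L_2^\Sigma$-norm. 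That $\Phi Q^{1/2} \in L_2(\Us,\Hs)$, so that this Hilbert–Schmidt quantity is finite, is exactly the remark following the definition of $L_2^\Sigma$. This yields $\|\Phi u\|_\Hs \le \|\Phi\|_{L_2^\Sigma}\cdot\|u\|_{Q^{1/2}(\Us)}$ for each single-component vector.

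Next I would pass to a general $u \in \Us_\Sigma$. By definition $u$ admits a decomposition $u = \sum_{i\ge1} u_i$ with $u_i \in Q_i^{1/2}(\Us)$, $Q_i \in \Sigma$, and $\sum_i \|u_i\|_{Q_i^{1/2}(\Us)} < \infty$. By \textbf{Rem.\ref{rem_conv_U_U_Sigma_norm}} this series also converges in the $\Us_\Sigma$-norm, so since $\Phi \in L(\Us_\Sigma,\Hs)$ is continuous I may pass it through the series to obtain $\Phi u = \sum_{i\ge1}\Phi u_i$ in $\Hs$. The triangle inequality in $\Hs$ together with the single-component bound then gives $\|\Phi u\|_\Hs \le \sum_i \|\Phi u_i\|_\Hs \le \|\Phi\|_{L_2^\Sigma}\sum_i \|u_i\|_{Q_i^{1/2}(\Us)}$. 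Taking the infimum over all admissible decompositions of $u$ turns the right-hand sum into $\|\Phi\|_{L_2^\Sigma}\cdot\|u\|_{\Us_\Sigma}$, which is the desired pointwise bound; the supremum over the unit ball of $\Us_\Sigma$ finally delivers $\|\Phi\|_{L(\Us_\Sigma,\Hs)} \le \|\Phi\|_{L_2^\Sigma}$.

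The containment and the triangle-inequality summation are routine. The step that deserves the most care is the single-component estimate: the well-definedness of $v = Q^{-1/2}u$ on the range $Q^{1/2}(\Us)$, together with the inequality $\|\Phi Q^{1/2}\|_{L(\Us,\Hs)} \le \|\Phi Q^{1/2}\|_{L_2(\Us,\Hs)}$, is what connects the operator acting on $\Us_\Sigma$ to the Hilbert–Schmidt quantity defining $\|\Phi\|_{L_2^\Sigma}$. The only genuinely delicate point is justifying the interchange $\Phi\big(\sum_i u_i\big) = \sum_i \Phi u_i$, which rests on the $\Us_\Sigma$-convergence of the decomposition supplied by \textbf{Rem.\ref{rem_conv_U_U_Sigma_norm}} and on the a priori continuity of $\Phi$ on $\Us_\Sigma$.
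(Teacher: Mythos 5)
Your proof is correct and follows essentially the same route as the paper's: decompose $u=\sum_i u_i$ with $u_i\in Q_i^{1/2}(\Us)$, insert $Q_i^{1/2}Q_i^{-1/2}$, bound $\|\Phi Q_i^{1/2}\|_{L(\Us,\Hs)}$ by the Hilbert--Schmidt norm and hence by $\|\Phi\|_{L_2^\Sigma}$, sum, and take the infimum over decompositions. The one place you go beyond the paper is in explicitly justifying the interchange $\Phi\big(\sum_i u_i\big)=\sum_i\Phi u_i$ via the $\Us_\Sigma$-convergence of \textbf{Rem.\ref{rem_conv_U_U_Sigma_norm}} and the continuity of $\Phi$ on $\Us_\Sigma$, a step the paper's proof uses silently.
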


\begin{proof}
$\absz$\\
Let $\Phi\in L^\Sigma_2$ \ then \ $\Phi\in L(\Us_\Sigma,\Hs)$.

$u\in\Us_\Sigma$ \ it means that \ $  
u\!\!\!\!\overset{_{\phantom{U}}\|\, \cdot\, \|_{\Us}}{=}\!\!\sum\limits_{i=1}^\infty u_i,\  u_i\in  Q_i^{1/2}(\Us),\ Q_i\in \Sigma$.

Note also that $\|\Phi\|_{L_2^\Sigma}\geq c_i:=\|\Phi Q_i^{1/2}\|_{L_2(\Us,\Hs)}$.

Therefore \ $\|\Phi u\|_\Hs=\|\Phi (\sum\limits_{i=1} ^\infty u_i)\|_\Hs=\|\sum\limits_{i=1} ^\infty \Phi u_i\|_\Hs=
\|\sum\limits_{i=1} ^\infty \Phi Q_i^{1/2} Q_i^{-1/2}u_i\|_\Hs\\
\leq
\sum\limits_{i=1} ^\infty \|\Phi Q_i^{1/2}\|_{L_2(\Us,\Hs)} \| Q_i^{-1/2}u_i\|_\Us=
\sum\limits_{i=1} ^\infty c_i \| u_i\|_{Q_i^{1/2}(\Us)}\leq
\sum\limits_{i=1} ^\infty \|\Phi\|_{L_2^\Sigma} \| u_i\|_{Q_i^{1/2}(\Us)}$.
\\
Taking \  $\inf$ \ for all such $u_i$ we get \ 
$\|\Phi u\|_\Hs\leq\|\Phi\|_{L_2^\Sigma} \| u\|_{\Us_\Sigma}$.
\\
Hence \ $ \|\Phi\|_{L(\Us_\Sigma,\Hs)}\leq \|\Phi\|_{L^\Sigma_2}$.
\\
So we can deduce that \ $L^\Sigma_2\hookrightarrow L(\Us_\Sigma,\Hs)$.

\end{proof}

\begin{prop}
$\big(L_2^\Sigma,\|\pdot\|_{L_2^\Sigma}\big)$ is a Banach space.
\end{prop}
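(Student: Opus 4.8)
The plan is to verify completeness via the absolute-convergence criterion of \textbf{Lm.\ref{lm_cauch_sq_berg_loefstr}}: I take a sequence $\{\Phi_n\}\subset L_2^\Sigma$ with $\sum_{n\geq1}\|\Phi_n\|_{L_2^\Sigma}<\infty$ and construct a limit operator $\Phi\in L_2^\Sigma$ satisfying $\Phi=\sum_{n\geq1}\Phi_n$ in the $\|\pdot\|_{L_2^\Sigma}$-norm. The construction proceeds in two layers: first recover $\Phi$ as an operator on $\Us_\Sigma$, then upgrade the convergence to the stronger norm.

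First I would produce the candidate limit. By \textbf{Lm.\ref{lm_L_Sigma_2_L_U_Sigma_H_cont_embed}} we have $\|\Phi_n\|_{L(\Us_\Sigma,\Hs)}\leq\|\Phi_n\|_{L_2^\Sigma}$, hence $\sum_n\|\Phi_n\|_{L(\Us_\Sigma,\Hs)}<\infty$; since $L(\Us_\Sigma,\Hs)$ is a Banach space (bounded operators valued in the Banach space $\Hs$), the partial sums $\sum_{n\le N}\Phi_n$ converge in operator norm to some $\Phi\in L(\Us_\Sigma,\Hs)$, which is my candidate.

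Next I would show $\Phi\in L_2^\Sigma$ together with the uniform bound. Fix $Q\in\Sigma$. Then $\sum_n\|\Phi_n Q^{1/2}\|_{L_2(\Us,\Hs)}\leq\sum_n\|\Phi_n\|_{L_2^\Sigma}<\infty$, and since $L_2(\Us,\Hs)$ is complete the series $\sum_n\Phi_n Q^{1/2}$ converges in Hilbert--Schmidt norm to some $\Psi_Q\in L_2(\Us,\Hs)$. The crucial step is identifying $\Psi_Q=\Phi Q^{1/2}$: because $Q^{1/2}(\Us)\subset\Us_\Sigma$ (\textbf{Lm.\ref{lm_U_Sigma_cont_embed}}), for every $u\in\Us$ the vector $Q^{1/2}u$ is a legitimate argument of $\Phi$, so the operator-norm convergence in $L(\Us_\Sigma,\Hs)$ gives $\big(\sum_{n\le N}\Phi_n\big)Q^{1/2}u\to\Phi Q^{1/2}u$ in $\Hs$, while the Hilbert--Schmidt convergence (which implies operator-norm and hence pointwise convergence) gives the same partial sums tending to $\Psi_Q u$; uniqueness of limits forces $\Psi_Q=\Phi Q^{1/2}$. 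Consequently $\|\Phi Q^{1/2}\|_{L_2(\Us,\Hs)}\leq\sum_n\|\Phi_n\|_{L_2^\Sigma}$, a bound independent of $Q$, and taking the supremum over $\Sigma$ yields $\|\Phi\|_{L_2^\Sigma}\leq\sum_n\|\Phi_n\|_{L_2^\Sigma}<\infty$, so $\Phi\in L_2^\Sigma$.

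Finally, I would apply the identical estimate to each tail series $\sum_{n>N}\Phi_n$, whose $L_2^\Sigma$-norms are also summable: this shows $\sum_{n>N}\Phi_n$ defines an element of $L_2^\Sigma$ that agrees with $\Phi-\sum_{n\le N}\Phi_n$ in $L(\Us_\Sigma,\Hs)$, whence $\|\Phi-\sum_{n\le N}\Phi_n\|_{L_2^\Sigma}\leq\sum_{n>N}\|\Phi_n\|_{L_2^\Sigma}\to0$. This is convergence in $\|\pdot\|_{L_2^\Sigma}$, so \textbf{Lm.\ref{lm_cauch_sq_berg_loefstr}} yields completeness. I expect the main obstacle to be precisely the identification $\Psi_Q=\Phi Q^{1/2}$: one must reconcile two a priori unrelated limits taken in different norms (the operator norm on $L(\Us_\Sigma,\Hs)$ and the Hilbert--Schmidt norm on $L_2(\Us,\Hs)$), and it is exactly the continuous embedding $Q^{1/2}(\Us)\hookrightarrow\Us_\Sigma$ together with the passage to pointwise evaluation that makes this reconciliation legitimate.
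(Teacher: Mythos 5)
Your proof is correct, but it takes a genuinely different route from the paper's. The paper argues directly with a Cauchy sequence: for each fixed $Q\in\Sigma$ it extracts a limit $\Phi^Q$ in the classical space $L_2\big(Q^{1/2}(\Us),\Hs\big)$, and then must \emph{assemble} the limit operator on $\Us_\Sigma$ by setting $\Phi u:=\sum_i \Phi^{Q_i}u_i$ for a series representation $u=\sum_i u_i$; this obliges it to verify convergence of that series with the bound $\|\Phi u\|_\Hs\le C_0\|u\|_{\Us_\Sigma}$, independence of the chosen representation (its step \textbf{(b)}, the delicate point, which itself invokes \textbf{Lm.\ref{lm_L_Sigma_2_L_U_Sigma_H_cont_embed}}), and the consistency $\Phi\big|_{Q^{1/2}(\Us)}=\Phi^Q$, before finally passing to the limit inside the supremum over $Q$. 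You instead apply the series criterion of \textbf{Lm.\ref{lm_cauch_sq_berg_loefstr}} (which the paper deploys only for the completeness of $\Us_\Sigma$) and obtain the candidate limit in one stroke inside the Banach space $L(\Us_\Sigma,\Hs)$, using the norm-one embedding of \textbf{Lm.\ref{lm_L_Sigma_2_L_U_Sigma_H_cont_embed}}; since your $\Phi$ is a single operator on all of $\Us_\Sigma$ from the outset, the paper's gluing and well-definedness checks simply disappear, replaced by the pointwise identification $\Psi_Q=\Phi Q^{1/2}$, which you carry out correctly by reconciling the two limits through evaluation at vectors $Q^{1/2}u$. Your route is shorter and additionally yields the quantitative bound $\|\Phi\|_{L_2^\Sigma}\le\sum_n\|\Phi_n\|_{L_2^\Sigma}$, while the paper's route is more explicit about how the limit acts on arbitrary series representations of elements of $\Us_\Sigma$. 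One small repair: the inclusion $Q^{1/2}(\Us)\subset\Us_\Sigma$ is not what \textbf{Lm.\ref{lm_U_Sigma_cont_embed}} states (that lemma gives $Q^{1/2}(\Us)\hookrightarrow\Us$); it follows instead directly from the definition of $\Us_\Sigma$ by taking the one-term representation $u_1=u$, $u_i=0$ for $i\ge2$, which also gives $\|u\|_{\Us_\Sigma}\le\|u\|_{Q^{1/2}(\Us)}$ --- a trivially filled, but misattributed, step.
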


\begin{proof}
$\absz$\\
Let $\big\{\Phi_n\big\}\subset L_2^\Sigma$ be a Cauchy sequence: \ 
$\|\Phi_n-\Phi_m\|_{L_2^\Sigma}\xrightarrow[n,\,m\to\infty]{}0$.

Or we can rewrite it as \ 
$\sup\limits_{Q\in\,\Sigma}\|\Phi_n Q^{1/2}-\Phi_m Q^{1/2}\|^2_{L_2(\Us,\Hs)}\xrightarrow[n,\,m\to\infty]{}0$.

Let $Q\in\Sigma$ be an arbitrary fixed.

In the classical case the space $L^0_2=L_2\big(Q^{1/2}(\Us),\,\Hs\big)$ \ 
with the norm 
 
$\|\Phi\|^2_{L_2^0}:=\Tr\big[\Phi Q \Phi^*\big]=\|\Phi Q^{1/2}\|^2_{L_2(\Us,\Hs)}$
is Banach (see \cite[4.2]{da_prato_zabcz3}).

It implies that \ $\Phi_n\xrightarrow[n\to\infty]{\|\, \cdot\, \|_{L_2^0}}\Phi^Q$,\ \ i.e.:\ \  
$\|\Phi_n Q^{1/2}-\Phi^Q Q^{1/2}\|_{L_2(\Us,\Hs)}\xrightarrow[n\to\infty]{}0$.

Therefore \ $\sup\limits_{Q\in\Sigma}\|\Phi^Q Q^{1/2}\|_{L_2(\Us,\Hs)}\leq C_0$,

and also \ \ $\sup\limits_{Q\in\Sigma}\|\Phi_n Q^{1/2}\|_{L_2(\Us,\Hs)}\leq C_0$.
\hfill$(\,\ast\,)$

If $u\in\Us_\Sigma$ \ then by the definition we have:

$u\!\!\!\!\overset{_{\phantom{U}}\|\, \cdot\, \|_{\Us}}{=}\!\!\sum\limits_{i=1}^\infty u_i\ ,\ \ u_i\in Q_i^{1/2}(\Us),Q_i\in \Sigma, \ \ \sum\limits_{i=1}^\infty \|u_i\|_{Q_i^{1/2}(\Us)}<\infty$. 

It follows that \ $\Phi_n u_i\xrightarrow[n\to\infty]{\|\, \cdot\, \|_\Hs}\Phi^{Q_i} u_i$, and we can get that
 
\hfill$\|\Phi_n u_i\|_\Hs\xrightarrow[n\to\infty]{}\|\Phi^{Q_i} u_i\|_\Hs$.
\\
Define $\Phi u\!\!\!\overset{_{\phantom{U}}\|\, \cdot\, \|_{\Hs}}{:\,=}\sum\limits_{i=1}^\infty \Phi^{Q_i} u_i$. 

For this reason we need to show that the series converges in the $\Hs$-norm and that $\Phi u$ is well defined:

\textbf{(a)} Consider $\sum\limits_{i=1}^\infty \|\Phi^{Q_i} u_i\|_\Hs
\overset{\textbf{Lm. Fatou}}{\leq}\varliminf\limits_{n\to\infty}\sum\limits_{i=1}^\infty \|\Phi_n u_i\|_\Hs\\
=\varliminf\limits_{n\to\infty}\sum\limits_{i=1}^\infty \|\Phi_n Q_i^{1/2} Q_i^{-1/2}u_i\|_\Hs
\leq\varliminf\limits_{n\to\infty}\sum\limits_{i=1}^\infty \|\Phi_n Q_i^{1/2}\|_{L_2(\Us,\Hs)} \cdot\|Q_i^{-1/2}u_i\|_\Us$

\hfill$
\overset{(\,\ast\,)}{\leq} C_0\cdot \sum\limits_{i=1}^\infty\|u_i\|_{Q_i^{1/2}(\Us)}<\infty
$.

Taking $\inf$ we get that \ $\|\Phi u\|_\Hs\leq C_0\cdot\|u\|_{\Us_\Sigma}$.
\\
\textbf{(b)} Let  $ u\!\!\!\!\overset{_{\phantom{U}}\|\, \cdot\, \|_{\Us}}{=}\!\!\sum\limits_{i=1}^\infty v_i$ is another representation.

Note that $\sum\limits_{i=1}^\infty (u_i-v_i)=0$ in the $\Us$-norm, 

and by \textbf{Rem.\ref{rem_conv_U_U_Sigma_norm}} the series also converges to zero in the $\Us_\Sigma$-norm.

It means that \ $\|\sum\limits_{i=1}^n (u_i-v_i)\|_{\Us_\Sigma}=\|\sum\limits_{i=n+1}^\infty (u_i-v_i)\|_{\Us_\Sigma}\xrightarrow[n\to\infty]{}0$.
\hfill$(\,\#\,)$

Consider \ $\|\sum\limits_{i=1}^\infty \Phi^{Q_i} u_i-\sum\limits_{i=1}^\infty \Phi^{Q_i} v_i\|_\Hs
=\|\sum\limits_{i=1}^\infty \Phi^{Q_i} (u_i- v_i)\|_\Hs\\
\leq\underbrace{\sum\limits_{i=n+1}^\infty \|\Phi^{Q_i} (u_i- v_i)\|_\Hs}_{\phantom{xxxx}\textbf{(a)}\,:\ \ \searrow0,\ n\to\infty}
+\|\sum\limits_{i=1}^n \Phi^{Q_i} (u_i- v_i)\|_\Hs
\leq \varepsilon+\lim\limits_{m\to\infty}\|\sum\limits_{i=1}^n \Phi_m (u_i- v_i)\|_\Hs\\
\leq \varepsilon+\sup\limits_{m\geq1}\|\Phi_m\|_{L(\Us_\Sigma,\Hs)}\cdot\|\sum\limits_{i=1}^n  (u_i- v_i)\|_{\Us_\Sigma}
\overset{\textbf{Lm.\ref{lm_L_Sigma_2_L_U_Sigma_H_cont_embed}},\ (\,\#\,)}{\leq} \varepsilon
+\sup\limits_{m\geq1}\|\Phi_m\|_{L^\Sigma_2}\cdot\varepsilon
\leq\varepsilon+C\cdot\varepsilon
$.

So we have \ $\sum\limits_{i=1}^\infty \Phi^{Q_i} u_i=\sum\limits_{i=1}^\infty \Phi^{Q_i} v_i$, \ and \ $\Phi u$ does not depend on the representation.
\\
\textbf{(c)} Also \ \ $\Phi\big|_{Q^{1/2}(\Us)}=\Phi^Q$. 

Because, if $v\in  Q_1^{1/2}(\Us)\cap Q_2^{1/2}(\Us)\,,\ Q_1,\,Q_2\in\Sigma$ \ then \ 

\hfill$
\Phi^{Q_1}v=\lim\limits_{n\to\infty}\Phi_n v=\Phi^{Q_2}v$.

So we can conclude that for every \ $u_i\in  Q_i^{1/2}(\Us)\,:\ \Phi^{Q_i}u_i=\Phi\, u_i$.
\\
And we have that $\Phi $  is defined correctly.

It is also clear that $\Phi$ is linear, and boundedness follows from \textbf{(a)}. 
\\
Now let us turn back to the Cauchy sequence $\big\{\Phi_n\big\}$, so that:\

$\forall\varepsilon>0 \ \ \exists N \ \ \forall n,m>N\,: \ \ \|\Phi_n Q^{1/2}-\Phi_m Q^{1/2}\|_{L_2(\Us,\Hs)}<\varepsilon \ \  \forall Q\in\Sigma$.

Letting \ 
$m\to\infty$ \ yields \ $\|\Phi_n Q^{1/2}-\Phi^Q Q^{1/2}\|_{L_2(\Us,\Hs)}\leq\varepsilon\ \  \forall Q\in\Sigma$.

Or we can rewrite and obtain that:

$\forall\varepsilon>0\ \exists N\ \forall n,m>N\,:\ 
\sup\limits_{Q\in\Sigma}\|\Phi_n Q^{1/2}-\Phi^Q Q^{1/2}\|_{L_2(\Us,\Hs)}\leq\varepsilon\ \ $.

What implies that:

$\forall\varepsilon>0\ \exists N\ \forall n,m>N\,:\ 
\sup\limits_{Q\in\Sigma}\|\Phi_n Q^{1/2}-\Phi Q^{1/2}\|_{L_2(\Us,\Hs)}\leq\varepsilon\ \ $.

And it is the same as \ $\|\Phi_n-\Phi\|_{L_2^\Sigma}\xrightarrow[n\to\infty]{}0$.

\end{proof}

\subsection{Existence of G-normal distribution}

Consider the $G$-PDE \eqref{eq_p0}. For the following calculations we change a time $t\longrightarrow T-t$ and obtain the analogous $G$-PDE. 
\begin{equation}\tag{P$'$0}\label{eq_pp0}
\hspace{30mm}\begin{cases}
\partial_t u-G(D^2_{xx} u)=0 \,, \ \ \ t\in(0,T]\,, \ x\in\Hs;\\
u(0,x)=f(x) \,.
\end{cases}
\end{equation}
Note that all results for viscosity solution of the $G$-PDE \eqref{eq_p0} are also the same as for the \eqref{eq_pp0}. So, 
we can fix $u=u^f(t,x)$ as a unique viscosity solution of \eqref{eq_pp0}.

\begin{lm}[Some properties of the viscosity solutions of \eqref{eq_pp0}]\label{lm_prop_vs}{\mbox{ }}\\
\hspace{10mm}\parbox[t][{1\height}]{15cm}{1)\ $u^{\!\!\!\!{\phantom{\frac{1}{1}} u}^{\scriptscriptstyle f}(s,\,\cdot)}(t,x)=u^{ f}(t+s,x)$;\\
2)\ if \ $ f\equiv C$ \ then \ $u^{ f}\equiv C$;\\
3)\ $u^{ f (\bar{x}+\sqrt{\lambda}\,\circ)}(1,0)=u^{ f\,(\sqrt{\lambda}\,\circ)}(1,\frac{\bar{x}}{\sqrt{\lambda}})=u^{ f\,(\circ)}(\lambda,\bar{x})\,,\ \ \lambda\geq0$;\\
4)\ $u^{\lambda  f}=\lambda u^{ f}\,,\ \ \lambda\geq0$;\\
5)\ if \ $ f\leq g$ \ then \ $u^{ f}\leq u^{ g}$;\\
6)\ $u^{ f+ g}\leq u^{ f}+u^{ g}$.}
\end{lm}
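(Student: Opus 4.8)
The plan is to reduce every item to the probabilistic representation of the solution. After the time change $t\to T-t$, \textbf{Th.\ref{th_exist_vs_p0}} yields that the unique viscosity solution of \eqref{eq_pp0} with datum $f$ is
$$u^{f}(t,x)=\E\big[f(x+B_t)\big],$$
where $B_t$ is the given $G$-Brownian motion. Once this is available, each of the six properties is an immediate consequence of a single structural feature of the sublinear expectation $\E$ (\textbf{Def.\ref{df_subl_exp}}) or of the law of $B_t$, the only genuinely delicate item being the flow identity \textbf{1)}.

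For \textbf{2)}, \textbf{4)}, \textbf{5)}, \textbf{6)} I would simply substitute the representation into the axioms of $\E$. Constant preservation gives $u^{C}(t,x)=\E[C]=C$; positive homogeneity gives $u^{\lambda f}(t,x)=\E[\lambda f(x+B_t)]=\lambda\E[f(x+B_t)]=\lambda u^{f}(t,x)$ for $\lambda\geq0$; monotonicity of $\E$ applied to the pointwise inequality $f(x+B_t)\leq g(x+B_t)$ gives \textbf{5)}; and sub-additivity gives $u^{f+g}(t,x)=\E[f(x+B_t)+g(x+B_t)]\leq\E[f(x+B_t)]+\E[g(x+B_t)]=u^{f}(t,x)+u^{g}(t,x)$. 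These are one-line verifications requiring no further ingredients.

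Property \textbf{3)} I would obtain from the self-similarity $B_\lambda\sim\sqrt{\lambda}\,B_1$ established in \textbf{Prop.\ref{prop_gbm}}. Indeed $u^{f}(\lambda,\bar{x})=\E[f(\bar{x}+B_\lambda)]=\E[f(\bar{x}+\sqrt{\lambda}\,B_1)]$, while (for $\lambda>0$) $u^{f(\sqrt{\lambda}\,\circ)}(1,\tfrac{\bar{x}}{\sqrt{\lambda}})=\E[f(\sqrt{\lambda}(\tfrac{\bar{x}}{\sqrt{\lambda}}+B_1))]=\E[f(\bar{x}+\sqrt{\lambda}\,B_1)]$ and $u^{f(\bar{x}+\sqrt{\lambda}\,\circ)}(1,0)=\E[f(\bar{x}+\sqrt{\lambda}\,B_1)]$; thus all three coincide. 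This is a direct substitution together with the single scaling identity, the case $\lambda=0$ being covered by \textbf{2)}.

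The main work — and the step I expect to be the obstacle — is the flow identity \textbf{1)}. Here I would reproduce the computation already used in the proof of \textbf{Th.\ref{th_exist_vs_p0}}. Writing $B_{t+s}=B_t+(B_{t+s}-B_t)$, the increment $B_{t+s}-B_t$ is independent of $B_t$ (\textbf{Rem.\ref{rem_bm_ind}}) and is distributed as $B_s$ (it is $N_G(0,s\Sigma)$). Applying the tower identity of \textbf{Def.\ref{df_idd_ind}} with $\varphi(a,b)=f(x+a+b)$, $X=B_t$, $Y=B_{t+s}-B_t$, and using $\E[f((x+a)+(B_{t+s}-B_t))]=\E[f((x+a)+B_s)]=u^{f}(s,x+a)$, I would obtain
\begin{align*}
u^{f}(t+s,x) &= \E[f(x+B_{t+s})]
= \E\big[\E[f(x+a+(B_{t+s}-B_t))]_{a=B_t}\big]\\
&= \E[u^{f}(s,x+B_t)] = u^{u^{f}(s,\,\cdot)}(t,x).
\end{align*}
The one point that must be checked for this chain to be meaningful is that the inner datum $y\mapsto u^{f}(s,y)$ again lies in $\Cp(\Hs)$ and is $B$-continuous, so that $u^{u^{f}(s,\,\cdot)}$ is itself a well-defined solution; this is precisely the regularity preservation noted at the end of the proof of \textbf{Th.\ref{th_exist_vs_p0}} (the sublinear expectation spoils neither polynomial growth nor $B$-continuity), which I would invoke to close the argument.
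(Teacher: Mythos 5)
Your proposal is correct, but for the one item the paper actually proves — the flow identity \textbf{1)} — you take a genuinely different route. The paper argues at the PDE level: since equation \eqref{eq_pp0} is autonomous, the function $v(t,x):=u^{f}(t+s,x)$ is again a viscosity solution, now with initial datum $v(0,x)=u^{f}(s,x)$; by uniqueness (\textbf{Th.\ref{th_uniq_vs}}) it must coincide with $u^{\!\!\!\!{\phantom{\frac{1}{1}} u}^{\scriptscriptstyle f}(s,\,\cdot)}(t,x)$, which is the claim. You instead prove the identity probabilistically, decomposing $B_{t+s}=B_t+(B_{t+s}-B_t)$ and applying the tower identity from \textbf{Def.\ref{df_idd_ind}} together with $(B_{t+s}-B_t)\sim B_s$ — in effect re-running the dynamic-programming computation that already appears inside the proof of \textbf{Th.\ref{th_exist_vs_p0}}. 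The trade-off: the paper's argument is two lines but leans on the uniqueness theorem and on the implicit verification that the time-shifted function is again an admissible solution; yours is longer but works directly from the defining properties of the $G$-Brownian motion and the sublinear expectation, and it correctly isolates the one real obligation common to both routes, namely that $u^{f}(s,\cdot)$ remains $B$-continuous and of $\Cp$-class so that it is a legitimate datum (the paper leaves this tacit, relying on the regularity remark at the end of \textbf{Th.\ref{th_exist_vs_p0}}). For items \textbf{2)}--\textbf{6)} the paper simply says they are straightforwardly checked from the representation $u^{f}(t,x)=\E\big[f(x+B_t)\big]$, and your one-line verifications via constant preservation, positive homogeneity, monotonicity, sub-additivity, and the scaling $B_\lambda\sim\sqrt{\lambda}\,B_1$ are exactly what is meant there, with the $\lambda=0$ case of \textbf{3)} properly reduced to \textbf{2)}.
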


\begin{proof}
$\absz$\\
We will prove just the {\bf {1)}}, because the rest items can be straightforwardly checked implying that 
$u^ f (t,x)=\E[(x+B_t)]$.

Let us consider a boundary condition \ $u(0,x)=u^ f (s,x)$.

We can carry initial point $''0''$ to $''t''$ so that $u(t,x)=u^ f (s+t,x)$.
On the other hand \ $u(t,x)=u^{\!\!\!\!{\phantom{\frac{1}{1}} u}^{\scriptscriptstyle f}(s,\,\cdot)}(t,x)$.

What implies that 
$u^{\!\!\!\!{\phantom{\frac{1}{1}} u}^{\scriptscriptstyle f}(s,\,\cdot)}(t,x)=u^{ f}(t+s,x)$.

\end{proof}

\begin{teo}\label{th_exist_gd}
 Let $G(\pdot)$ is a given $G$-functional.

Then:

\hspace{5mm}1) There exists \ $ \xi\sim N_G(0,\Sigma)$;

\hspace{5mm}2) There exist \ a sequence $\{\xi_i\,,\ i\geq1\}$ \ such that for every $i\ \ \ $

\hfill $\xi_i\sim N_G(0,\Sigma)$ \ and \ $  \xi_{i+1}\dperp(\xi_1,\ldots,\xi_i)$.
\end{teo}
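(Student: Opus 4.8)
The plan is to manufacture both the single random variable and the independent sequence out of the viscosity-solution operator supplied by \textbf{Th.\ref{th_exist_vs_p0}}. Write $u^f$ for the unique viscosity solution of \eqref{eq_pp0} with datum $f$, and define a functional on $\Cp(\Hs)$ by $\Lambda[f]:=u^f(1,0)$. The four sublinear-expectation axioms for $\Lambda$ are read off directly from \textbf{Lm.\ref{lm_prop_vs}}: constant preservation from item 2, positive homogeneity from item 4, monotonicity from item 5 and sub-additivity from item 6. The crucial analytic input is the scaling identity of item 3, which, taking $f=\fee$, $\bar x=x$ and $\lambda=t$, yields the representation
\[
\Lambda\big[\fee(x+\sqrt t\,\circ)\big]=u^{\fee(x+\sqrt t\,\circ)}(1,0)=u^\fee(t,x),
\]
valid for every $\fee\in\Cp(\Hs)$, $t\geq0$ and $x\in\Hs$. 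This formula is the engine of the whole argument.

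Next I would realise these data probabilistically on the countable product $\Omega:=\prod_{i\geq1}\Hs$ with coordinate maps $\xi_i(\omega):=\omega_i$, and define $\E$ on cylinder functionals by nested application of $\Lambda$: for $\fee\in\Cp(\Hs^n)$ set $\fee_n:=\fee$ and recursively $\fee_{k-1}(x_1,\dots,x_{k-1}):=\Lambda\big[\fee_k(x_1,\dots,x_{k-1},\circ)\big]$, finally putting $\E[\fee(\xi_1,\dots,\xi_n)]:=\fee_0$. By this very construction $\E[f(\xi_i)]=\Lambda[f]$ for every $i$ (the coordinates above $i$ are integrated against constants by item 2, and the coordinates below $i$ receive a constant), so each $\xi_i\sim N_G(0,\Sigma)$; and the defining equality of \textbf{Df.\ref{df_idd_ind}} holds with $X=(\xi_1,\dots,\xi_i)$ and $Y=\xi_{i+1}$ precisely because integrating the top coordinate first is the inner expectation $\E[\fee(x,\xi_{i+1})]_{x=X}$. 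Hence $\xi_{i+1}\dperp(\xi_1,\dots,\xi_i)$, and part 2) is settled once the distribution is pinned down.

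It remains to check that $\xi_1$ (hence every $\xi_i$) satisfies the defining relation of \textbf{Df.\ref{df_g_norm}}, using $\xi_2$ as an independent copy. For $a,b>0$ and $\fee\in\Cp(\Hs)$ I compute, peeling off $\xi_2$ first by independence and then applying the representation twice together with the semigroup property (item 1 of \textbf{Lm.\ref{lm_prop_vs}}):
\[
\E\big[\fee(a\xi_1+b\xi_2)\big]
=\E\big[u^\fee(b^2,a\xi_1)\big]
=u^{\,u^\fee(b^2,\circ)}(a^2,0)
=u^\fee(a^2+b^2,0)
=\E\big[\fee(\sqrt{a^2+b^2}\,\xi_1)\big],
\]
where the second equality uses $\E[\phi(a\xi_1)]=u^\phi(a^2,0)$ with $\phi=u^\fee(b^2,\circ)$. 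Thus $a\xi_1+b\xi_2\sim\sqrt{a^2+b^2}\,\xi_1$, so $\xi_1\sim N_G(0,\Sigma)$, which also delivers part 1).

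The main obstacle is not the formal computation but the well-posedness of the nested expectation: one must verify that each intermediate function $x\mapsto\Lambda[\fee_k(x,\circ)]=u^{\fee_k(x,\circ)}(1,0)$ again belongs to the admissible class $\Cp(\Hs)$ and retains the $B$-continuity demanded by \textbf{Th.\ref{th_exist_vs_p0}}, so that $\Lambda$ can be reapplied and the recursion closes; this requires Lipschitz-and-polynomial-growth estimates on $u^f$ in its datum (monotone in $f$ by item 5, Lipschitz via item 6 together with monotonicity) that are uniform enough to propagate through the $n$ steps. Equally, one must confirm that $\E$ so defined extends consistently to the space $\H$ of \textbf{Df.\ref{df_H_sp}} and genuinely satisfies all axioms of \textbf{Df.\ref{df_subl_exp}} on the whole product, the only delicate point being sub-additivity across cylinder functionals of differing depth.
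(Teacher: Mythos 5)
Your construction is precisely the one the paper points to for this theorem (the paper defers to Peng's finite-dimensional proof in \cite[II.2]{peng10} and the thesis): define $\Lambda[f]:=u^f(1,0)$, check the sublinear-expectation axioms from \textbf{Lm.\ref{lm_prop_vs}}, build $\E$ on the countable product by nested application of $\Lambda$, and verify the defining identity of \textbf{Df.\ref{df_g_norm}} through the chain $\E\big[\fee(a\xi_1+b\xi_2)\big]=\E\big[u^\fee(b^2,a\xi_1)\big]=u^{\,u^\fee(b^2,\,\cdot\,)}(a^2,0)=u^\fee(a^2+b^2,0)$, where the scaling identity (item 3) and the flow property (item 1) do exactly the work you assign them. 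That computation is correct, and your final paragraph rightly identifies the remaining well-posedness burden: the solution map must preserve the class $\Cp(\Hs)$ and $B$-continuity so the recursion closes, which follows from items 5 and 6 via $u^f-u^g\leq u^{f-g}$ together with the moment bounds of \textbf{Prop.\ref{prop_moments}}.

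The one genuine flaw is where you source the solution operator. \textbf{Th.\ref{th_exist_vs_p0}} produces $u^f$ as $\E\big[f(x+B_{T-t})\big]$ for an \emph{already given} $G$-Brownian motion with functional $G$ --- that is, its existence part presupposes a $G$-normal distributed random variable for the given $G$, which is exactly what \textbf{Th.\ref{th_exist_gd}} is supposed to deliver. Invoking it (and its $B$-continuity hypotheses) to supply $u^f$ makes your argument circular. The construction only closes if the existence of a viscosity solution of \eqref{eq_pp0} is obtained by purely PDE-theoretic means (Perron-type arguments or finite-dimensional approximation in the Kelome--\'Swi\c{e}ch framework, with uniqueness coming from the comparison machinery behind \textbf{Th.\ref{th_uniq_vs}}), and if the items of \textbf{Lm.\ref{lm_prop_vs}} are themselves derived from the comparison principle alone rather than from the probabilistic formula $u^f(t,x)=\E[f(x+B_t)]$ that the paper's sketch of that lemma alludes to. With the existence input decoupled from the probabilistic representation, the rest of your proposal is a faithful reconstruction of the cited proof.
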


\begin{proof}
 $\absz$\\
You can see the original proof in finite dimensions in \cite[II.2]{peng10} or the full proof for the infinite dimensional case in the author's thesis \cite{ibragimov1}.

\end{proof}

\begin{teo}\label{th_criterion_gd}
 If $G$ -- is a given $G$-functional and 

$u(t,x):=\E f(x+\sqrt{T-t} X)$ is a viscosity solution of equation \eqref{eq_p0}.

Then $X\sim N_G(0,\Sigma_G)$.
\end{teo}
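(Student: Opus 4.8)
The plan is to combine the existence result \textbf{Th.\ref{th_exist_vs_p0}} with the uniqueness result \textbf{Th.\ref{th_uniq_vs}}. By \textbf{Th.\ref{th_exist_gd}} the given $G$-functional produces a random variable $\xi\sim N_G(0,\Sigma_G)$ together with an independent sequence of copies, so by \textbf{Prop.\ref{prop_gbm}} we may construct a $G$-Brownian motion $B_t$ with $B_1\sim\xi$ and $B_t\sim\sqrt{t}\,\xi$. For every $B$-continuous $f\in\Cp(\Hs)$, \textbf{Th.\ref{th_exist_vs_p0}} then tells us that $v^f(t,x):=\E[f(x+B_{T-t})]=\E[f(x+\sqrt{T-t}\,\xi)]$ is \emph{the} (unique) viscosity solution of \eqref{eq_p0}.

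First I would observe that, exactly as in the proof of \textbf{Th.\ref{th_exist_vs_p0}}, the candidate $u^f(t,x)=\E[f(x+\sqrt{T-t}\,X)]$ is $B$-continuous and of polynomial growth, since the sublinear expectation preserves these properties. Hence both $u^f$ and $v^f$ are $B$-continuous, polynomially growing viscosity solutions of \eqref{eq_p0}, and \textbf{Th.\ref{th_uniq_vs}} forces $u^f\equiv v^f$, i.e.
$$\E\big[f(x+\sqrt{T-t}\,X)\big]=\E\big[f(x+\sqrt{T-t}\,\xi)\big]\qquad\text{for all }x\in\Hs,\ t\in[0,T).$$
Fixing any $t_0\in[0,T)$ and writing $s_0:=\sqrt{T-t_0}>0$, I would then apply this identity at $x=0$ to the function $f=g(\pdot/s_0)$, which is again $B$-continuous and in $\Cp(\Hs)$, for an arbitrary $B$-continuous $g\in\Cp(\Hs)$. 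This yields $\E[g(X)]=\E[g(\xi)]$ for every such $g$, that is, $X\sim\xi$ (identical distribution).

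It remains to transfer $G$-normality across identical distribution. The key remark is that $G$-normality is a property of the law alone: if $\bar X\dperp X$ is an independent copy, then by the definition of independence $\E[\fee(aX+b\bar X)]=\E\big[\E[\fee(ax+b\bar X)]_{x=X}\big]$, and the inner expectation depends only on the common law of $X$ and $\bar X$; hence the law of $aX+b\bar X$ is determined by the law of $X$. Consequently, from $X\sim\xi$ together with $a\xi+b\bar\xi\sim\sqrt{a^2+b^2}\,\xi$ we obtain $aX+b\bar X\sim\sqrt{a^2+b^2}\,X$, so $X$ is $G$-normal distributed; and since the covariance set is read off from the law through \eqref{eq_cov_gnd}, it coincides with that of $\xi$, namely $\Sigma_G$. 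Thus $X\sim N_G(0,\Sigma_G)$.

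The main obstacle is the clean application of uniqueness: one must verify that $u^f$ genuinely satisfies all the hypotheses of \textbf{Th.\ref{th_uniq_vs}} (the $B$-continuity and the polynomial-growth bounds of \textbf{Prop.\ref{prop_comp_princ_polyn_growth}}) and that the identity $u^f\equiv v^f$ holds for a family of $f$ rich enough to separate distributions. The transfer step also relies essentially on the asymmetry of the independence notion (\textbf{Rem.\ref{rem_sum_ind}}), so some care is needed to invoke independence in the correct order.
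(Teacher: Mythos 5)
Your proposal is correct and follows essentially the same route as the paper: the paper's (very terse) proof likewise invokes the uniqueness of the viscosity solution of \eqref{eq_p0} (via \textbf{Th.\ref{th_exist_vs_p0}}) to identify $u(t,x)=\E\big[f(x+\sqrt{T-t}\,X)\big]$ with $\E\big[f(x+B_{T-t})\big]$, concluding $\sqrt{T-t}\,X\sim B_{T-t}$ and hence $X\sim B_1\sim N_G(0,\Sigma_G)$. You simply make explicit the steps the paper glosses over --- the construction of $B_t$ from \textbf{Th.\ref{th_exist_gd}} and \textbf{Prop.\ref{prop_gbm}}, the scaling argument giving $\E[g(X)]=\E[g(\xi)]$ for all admissible $g$, and the transfer of $G$-normality across identical distribution via the definition of independence --- all of which are sound.
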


\begin{proof}
  $\absz$\\
Since $u$ is a unique viscosity solution of equation \eqref{eq_p0} by \textbf{Th.\ref{th_exist_vs_p0}} then
$u(t,x)=\E f(x+B_{T-t})=\E f(x+\sqrt{T-t} X)$, \ 
where $B_t$ is a $G$-Brownian motion with a covariation set $\Sigma_G$. Note that  \ $\sqrt{T-t} X\sim B_{T-t}$, \ where \ $X\sim B_1\sim N_G(0,\Sigma_G)$.

\end{proof}

\begin{rem}
$\absz$\\
Let $X\sim N_G\big(0,\Sigma\big)$, then $(-X)\sim X$.
\end{rem}

%
%

\subsection{Existence of G-Brownian motion and notion of G-expectation}

Let $\Us, \Hs$ will be Banach spaces. Recall (see \ref{subsec_sublin_expect}) that 

$\Cp\,(\Us,\Hs):=\Big\{\fee:\Us\to\Hs\ \Big|\ \|\fee(x)-\fee(y)\|_\Hs
$

\hfill$\leq C\cdot\big(1+\|x\|^m_{\Us}+\|y\|^m_{\Us}\big)
\cdot\|x-y\|_{\Us}\Big\}$.

Later on we will need a bounded subspace: 

$\Cbp\,(\Us,\Hs):=\Cp\,(\Us,\Hs)\cap{\mathbf{C}_{\,b}}\,(\Us,\Hs)$.
\\
Let $\Omega=\big\{\omega_t:[0,+\infty)\rightarrow\Us\mid\omega_0=0,\ \omega_t {\mbox{--continuous}} \big\} $;\ \ 
\\
$G(\pdot):\, K_S(\Hs)\to\R$ is a $G$-functional.

$B_t(\omega_s):=\bigl.\omega_s\bigr|_{s=t}$ is a canonical process;

$Lip( \Omega_t ):=\Big\{ \fee(B_{t_1},..,B_{t_n})\mid n\geq1,\ 
t_1,..,t_n\in[0,t],\  \fee\in \Cp\,(\Us^n, L_2^\Sigma)\Big\} $;

$Lip( \Omega ):=\bigcup\limits_{n\geq1} Lip( \Omega_t )$.

\begin{rem}\label{rem_finite_norm_LpGH}
 If \  $X\in Lip( \Omega )$ \ then \ $ \E\big[\|X\|^p_{L_2^\Sigma}\big]<\infty\,,\ \ p>0$.
\end{rem}

\begin{proof}
 $\absz$\\
We will use such a trivial fact:

$\forall a>0\ \ \ \forall m>0\ \ \ \exists M>m\ \ \ \exists C>0\ :\ \ \ a(1+a^m)\leq C(1+a^M)$.
\\
It means if \ $\fee\in\Cp\,(\Us^n, L_2^\Sigma)$ \ then

$
\|\fee(x_1,..,x_n)\|_{L_2^\Sigma}
\leq C\cdot\big(1+\|(x_1,..,x_n)\|^M_{\Us^n}\big)
= C\cdot\Big(1+\big(\sum\limits_{k=1}^n\|x_k\|^2_{\Us}\big)^{M/2}\Big)$

\hfill$
\leq C\cdot\Big(1+\sum\limits_{k=1}^n\|x_k\|^M_{\Us}\Big)$.

So that, $\fee$ is a polynomial growth function.
\\
We also have 

 $X=\fee(B_{t_1},..,B_{t_n})
=\fee(\sqrt{t_1}X^{(1)},\ldots,\sqrt{t_n}X^{(n)}\big)
=\widetilde{\fee}(X^{(1)},\ldots,X^{(n)}\big)$,

\hfill where $B_1\sim X^{(k)}\sim N_G(0,\Sigma)$.
\\
Hence \ $\|X\|^p_{L_2^\Sigma}=\|\widetilde{\fee}\|^p_{L_2^\Sigma}(X^{(1)},\ldots,X^{(n)}\big)$,

and \ \ \ $\|\widetilde{\fee}\|^p_{L_2^\Sigma}:\Us^n\to\R$\ \ \ is a polynomial growth function.
\\
Therefore \ $\E\big[\|X\|^p_{L_2^\Sigma}\big]
\leq C\cdot\E\Big[\big(1+\sum\limits_{k=1}^n\|X^{(k)}\|^M_{\Us}\big)\Big]
\leq C\cdot\big(1+\sum\limits_{k=1}^n\E\big[\|X^{(k)}\|^M_{\Us}\big]\big)$

\hfill$
\overset{\textbf{Prop.\ref{prop_moments}}}{<}\infty
$.

\end{proof}

\begin{prop} Under settled above conditions there exists a sublinear expectation 
$\E\big[\psi(\,\pdot\,)\big]=\F_\cdot\big[\psi\big]:Lip( \Omega )\rightarrow\R$,
such that $B_t$ is a $G$-Brownian motion on $\big( \Omega,\ Lip( \Omega),\ \E\big)$,
where $\psi\in\Cp\,(L^\Sigma_2)$ .
\end{prop}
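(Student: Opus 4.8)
The plan is to transport the independent sequence of $G$-normal variables furnished by Theorem~\ref{th_exist_gd} onto the canonical path space $\Omega$, and then to read the three $G$-Brownian motion axioms off the construction. Concretely, I would first invoke Theorem~\ref{th_exist_gd} to fix, on an auxiliary sublinear expectation space $(\widetilde{\Omega},\widetilde{\H},\widetilde{\E})$, a sequence $\{\xi_i,\ i\geq1\}$ with $\xi_i\sim N_G(0,\Sigma)$ and $\xi_{i+1}\dperp(\xi_1,\ldots,\xi_i)$ for every $i$. For a generic $X=\fee(B_{t_1},\ldots,B_{t_n})\in Lip(\Omega)$, after reordering so that $0=t_0\leq t_1\leq\cdots\leq t_n$, I set, for $\psi\in\Cp\,(L_2^\Sigma)$,
\begin{equation*}
\E\big[\psi(X)\big]:=\widetilde{\E}\Big[(\psi\circ\fee)\big(S_1,\ldots,S_n\big)\Big],\qquad S_k:=\sum_{j=1}^k\sqrt{t_j-t_{j-1}}\,\xi_j,
\end{equation*}
so that each increment $B_{t_k}-B_{t_{k-1}}$ is matched with the fresh summand $\sqrt{t_k-t_{k-1}}\,\xi_k$, which is $N_G\big(0,(t_k-t_{k-1})\Sigma\big)$-distributed by Proposition~\ref{prop_ar_op_gnd}. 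The composition $\psi\circ\fee$ lies in $\Cp\,(\Us^n)$, so the right-hand side is a genuine scalar sublinear expectation, finite by Remark~\ref{rem_finite_norm_LpGH}.

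The heart of the argument is well-definedness, since one $X$ admits many representations over different finite grids. Because any increasing family of prescribed values is interpolated by a continuous path starting at $0$, the coordinate map onto $\Us^m$ is surjective for each grid, hence two representations of the same $X$ lift to the same function on the common refinement of their grids; well-definedness thus reduces to invariance under inserting one redundant instant $t'$, say $t_k<t'<t_{k+1}$, on which $\fee$ does not depend. After insertion the summand $\sqrt{t_{k+1}-t_k}\,\xi_{k+1}$ is replaced by $\sqrt{t'-t_k}\,\xi+\sqrt{t_{k+1}-t'}\,\xi'$ with $\xi,\xi'$ independent copies; the defining stability of the $G$-normal law (Definition~\ref{df_g_norm}) gives $\sqrt{t'-t_k}\,\xi+\sqrt{t_{k+1}-t'}\,\xi'\sim\sqrt{t_{k+1}-t_k}\,\xi_{k+1}$, and since these variables are independent of the remaining summands I integrate out the inserted coordinate using the independence relation (Remark~\ref{rem_sum_ind}) and constant preservation, recovering the original value. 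This consistency is the probabilistic shadow of the semigroup identity of Lemma~\ref{lm_prop_vs}(1); making the unwinding precise while respecting the non-symmetric order in which $\dperp$ is applied is the step I expect to be the main obstacle.

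Granting well-definedness, the four axioms of Definition~\ref{df_subl_exp} — monotonicity, constant preservation, sub-additivity and positive homogeneity — are inherited termwise from the corresponding properties of $\widetilde{\E}$ applied to $\psi\circ\fee$. For the $G$-Brownian motion properties I would argue as follows: $B_0=0$ holds by the definition of $\Omega$; the increment law $B_{t+s}-B_t\sim N_G(0,s\Sigma)$ follows from the correspondence $B_{t+s}-B_t\leftrightarrow\sqrt{s}\,\xi$ together with Proposition~\ref{prop_ar_op_gnd}; and the independence of $B_{t+s}-B_t$ from $(B_{t_1},\ldots,B_{t_n})$ with $t_n\leq t$ follows because, in a refined grid containing $t$ and $t+s$, the summand encoding $B_{t+s}-B_t$ is a fresh $\xi$ independent of all earlier summands, so the identity of Definition~\ref{df_idd_ind} holds once the construction is unfolded.
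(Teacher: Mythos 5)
Your proposal is correct and is essentially the construction the paper relies on: the paper defers to Peng's proof (\cite[III.2]{peng10}, extended in \cite{ibragimov1}), which likewise transports the independent sequence $\{\xi_i\}$ of Theorem~\ref{th_exist_gd} onto the canonical space by matching each increment $B_{t_k}-B_{t_{k-1}}$ with $\sqrt{t_k-t_{k-1}}\,\xi_k$ and setting $\E[\psi(X)]:=\widetilde{\E}\big[(\psi\circ\fee)(\cdot)\big]$, with well-definedness resolved exactly by the stability property of Definition~\ref{df_g_norm} together with iterated use of the (order-sensitive) independence relation, integrating out the last coordinate first. Your surjectivity observation for the coordinate map and the reduction to inserting a single redundant instant are the right ingredients, so the step you flag as the main obstacle unwinds precisely as you expect.
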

 
\begin{proof}
$\absz$\\
 You can see the original proof in finite dimensions in \cite[III.2]{peng10} or the full proof for the infinite dimensional case in the author's thesis \cite{ibragimov1}.

\end{proof}
 \begin{df}
 Such sublinear expectation $\E$ will be called $G$-expectation.
\end{df}

 In the same manner we can define related conditional $G$-expectation with respect to $\Omega_t$:

For $X\in Lip( \Omega )$ \ which can be represented in the following form

$X=\fee(B_{t_1}-B_{t_0},..,B_{t_n}-B_{t_{n-1}})$, \ \ where\ \ $\fee\in \Cp\,(\Us^n,L^\Sigma_2)\,,$

\hfill $0=t_0\leq t_1\leq \ldots\leq t_n<\infty$,

we have the next definition:

\begin{df}
 Conditional $G$-expectation $\E[\pdot\mid \Omega_{t_j}]$ is defined as
$$\E[\psi(X)\mid \Omega_{t_j}]:=\chi(B_{t_1}-B_{t_0},..,B_{t_j}-B_{t_{j-1}}),$$
where $\chi(x_1,..,x_j)=\tilde{\E}\big[\psi\circ\fee(x_1,..,x_j,\sqrt{t_{j+1}-t_j}\,\xi_j,..,\sqrt{t_n-t_{n-1}}\,\xi_n)\big]$,

\hfill $\psi\in\Cp\,(L^\Sigma_2)$. 
\end{df}

\begin{rem}
Since $\fee$ and $\psi$ are Lipschitz with a polynomial growth then $\chi$ is also Lipschitz with a polynomial growth.

Hence \ $\chi(B_{t_1}-B_{t_0},..,B_{t_j}-B_{t_{j-1}})\in\H_0$.

Let us define a space $L^1_G(\Omega)$ to be a completion of $\H$ under the norm 

$\|\pdot\|=\E\big[\pdot\big]$.

Thus we will consider an unconditional $G$-expectation with values in the complete space:
$\E[\psi(X)\mid \Omega_{t_j}]:Lip(\Omega)\to L^1_G(\Omega)$.

\end{rem}

\begin{rem}
Actually, we apply the definitions of conditional and unconditional $G$-expectation only when $\psi=\|\cdot\|^p_{L_2^\Sigma}$.
\end{rem}

Let us define the following space: 

$Lip( \Omega^t ):=\Big\{ \fee(B_{t_2}-B_{t_1},..,B_{t_{n+1}}-B_{t_n})\mid n\geq1,\ 
t_1,..,t_n\in\R\!\overset{+}{\phantom :},$

\hfill$\fee\in \Cp\,(\Us^n, L_2^\Sigma)\Big\} $.
\\
Since a conditional $G$-expectation $\E[\,\cdot\,\mid \Omega_{t}]$ relates to a sublinear expectation $\tilde{\E}[\pdot]$, we can conclude that
$\E[\,\cdot\,\mid \Omega_{t}]$ is also a sublinear expectation. 

The following properties are trivial consequences of the definition:
 
\begin{prop}  \label{prop_propert_cond_exp}
Let $X\in Lip( \Omega ),\ \  Z_1\in Lip( \Omega_t ),\ \  Z_2\in Lip( \Omega^t )$,

\hfill$\fee\in \Cp\,(\Us^n,L^\Sigma_2)$.

Then the following equalities hold:
 \begin{enumerate}[(1)]
\item $\E\big[\E[\fee(X)\mid \Omega_{t}]\mid \Omega_{s}\big]=\E[\fee(X)\mid \Omega_{t \wedge s}]$;

$\E\big[\E[\fee(X)\mid \Omega_{t}]\big]=\E[\fee(X)]$;
\item $\E[\fee(Z_1)\mid \Omega_{t}]=\fee(Z_1)$;
\item $\E[\fee(Z_2)\mid \Omega_{t}]=\E[\fee(Z_2)]$.
 \end{enumerate}
\end{prop}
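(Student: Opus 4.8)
The plan is to reduce every identity to the defining formula for the conditional $G$-expectation together with the independence of the i.i.d.\ sequence $\{\xi_i\}$ furnished by \textbf{Th.\ref{th_exist_gd}}. First I would fix a single partition $0=t_0\leq t_1\leq\cdots\leq t_n$ fine enough that all times occurring in $X$, $Z_1$, $Z_2$ and the conditioning times $s,t$ appear among the $t_j$; this is harmless, since refining a partition does not change $\fee(X)$ (one simply regards $\fee$ as not depending on the inserted increments). Writing $\eta_k:=B_{t_k}-B_{t_{k-1}}$, the increments satisfy $\eta_k\sim\sqrt{t_k-t_{k-1}}\,\xi_k$ with $\xi_{k+1}\dperp(\xi_1,\ldots,\xi_k)$, and the definition reads $\E[\fee(X)\mid\Omega_{t_j}]=\chi_j(\eta_1,\ldots,\eta_j)$, where $\chi_j$ integrates out, via $\tilde{\E}$, exactly the future copies $\xi_{j+1},\ldots,\xi_n$.

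Properties \textbf{(2)} and \textbf{(3)} are then immediate from this formula. If $Z_1\in Lip(\Omega_t)$ with $t=t_j$, then $\fee(Z_1)$ is a function of $\eta_1,\ldots,\eta_j$ alone, so $\chi_j$ carries no random argument to integrate and, by constant preservation of $\tilde{\E}$, $\E[\fee(Z_1)\mid\Omega_t]=\fee(Z_1)$. Symmetrically, if $Z_2\in Lip(\Omega^t)$ with $t=t_j$, then $\fee(Z_2)$ depends only on the future increments $\eta_{j+1},\ldots,\eta_n$; the defining formula integrates out precisely these, leaving no dependence on $\eta_1,\ldots,\eta_j$, so $\chi_j$ is the constant $\tilde{\E}[\fee(Z_2)]=\E[\fee(Z_2)]$.

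For \textbf{(1)}, conditioning at $t_0=0$ is the unconditional $G$-expectation: with $j=0$ the function $\chi_0$ has no arguments and equals $\tilde{\E}[\fee(\sqrt{t_1}\,\xi_1,\ldots,\sqrt{t_n-t_{n-1}}\,\xi_n)]=\E[\fee(X)]$, so $\E[\,\cdot\mid\Omega_0]=\E[\,\cdot\,]$; the second identity of \textbf{(1)} is therefore the tower identity taken at $s=0$. For the tower identity itself I would distinguish two cases. If $s\geq t$, then $\E[\fee(X)\mid\Omega_t]\in Lip(\Omega_t)\subseteq Lip(\Omega_s)$, so \textbf{(2)} gives $\E[Y\mid\Omega_s]=Y$ for this $Y$, whence the left-hand side equals $\E[\fee(X)\mid\Omega_t]=\E[\fee(X)\mid\Omega_{t\wedge s}]$. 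If $s<t$, say $s=t_l<t=t_k$, I would substitute the inner formula $\E[\fee(X)\mid\Omega_{t_k}]=\chi_k(\eta_1,\ldots,\eta_k)$ into the outer one; the result is a doubly nested $\tilde{\E}$, the inner over $\xi_{k+1},\ldots,\xi_n$ and the outer over $\xi_{l+1},\ldots,\xi_k$. Because $(\xi_{k+1},\ldots,\xi_n)\dperp(\xi_{l+1},\ldots,\xi_k)$, this nested expectation collapses, by \textbf{Df.\ref{df_idd_ind}} applied in the i.i.d.\ copy space, to a single $\tilde{\E}$ over $\xi_{l+1},\ldots,\xi_n$, which is exactly $\chi_l(\eta_1,\ldots,\eta_l)=\E[\fee(X)\mid\Omega_{t\wedge s}]$.

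The only genuinely nontrivial step, and the one I expect to be the main obstacle, is this collapse of the nested sublinear expectation in the tower identity: since $G$-independence is not symmetric (\textbf{Rem.\ref{rem_sum_ind}}), one must verify that the order of integration dictated by the two successive conditionings — future copies first, intermediate copies second — is precisely the order in which the relation $\xi_{i+1}\dperp(\xi_1,\ldots,\xi_i)$ permits replacing $\tilde{\E}[\tilde{\E}[\,\cdot\,]]$ by a single $\tilde{\E}$. Everything else is bookkeeping with the defining formula.
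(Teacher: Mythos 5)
Your proposal is correct and follows exactly the route the paper intends: the paper offers no written proof, asserting these identities as ``trivial consequences of the definition,'' and your argument is precisely that unwinding --- reduce to the defining formula $\E[\,\cdot\mid\Omega_{t_j}]=\chi_j(\eta_1,\ldots,\eta_j)$ on a common refinement, read off (2) and (3), and obtain the tower property by collapsing the nested $\tilde{\E}$ via the ordered independence $\xi_{i+1}\dperp(\xi_1,\ldots,\xi_i)$, which is indeed the one step requiring care since $G$-independence is asymmetric. Your handling of that step (future copies integrated first, matching the direction in which \textbf{Th.\ref{th_exist_gd}} provides independence, together with $\chi\in\Cp$ so that $\E[\fee(X)\mid\Omega_t]\in Lip(\Omega_t)$ for the case $s\geq t$) is exactly what the paper's definition presupposes.
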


\begin{rem}
 Due to this  \textbf{Prop.\ref{prop_propert_cond_exp}} and its properties $(2)$ and $(3)$ respectively we will call the variable $Z_1$ as \textbf{$\Omega_t$-measurable}, and $Z_2$ as \textbf{independent from $\Omega_t$}.
\end{rem}

Now we are going to describe an extension of the $G$-expectation on the completion of the space $Lip( \Omega )$. We will make such completion under the norm
$\|X\|^p_{\Sigma,\,p}=\E\Big[\|X\|^p_{L_2^\Sigma}\Big]$, since this norm on $Lip( \Omega )$ is finite by \textbf{Rem.\ref{rem_finite_norm_LpGH}}. 

Denote ${^\Hs\! L_G^p\big( \Omega \big)}=\mathrm{compl}\Big( Lip( \Omega ),\,\E\big[\|\pdot\|^p_{L_2^\Sigma}\big]\Big)$.

\begin{prop}\label{prop_extens_G_exp}
$G$-expectation
$\E\big[\|\,\pdot\,\|^p_{L_2^\Sigma}\big]:Lip(\Omega)\to\R$
as well as conditional one $\E\big[\|\cdot\|^p_{L_2^\Sigma}\mid \Omega_t\big]:Lip(\Omega)\to L_G^1(\Omega)$
can be continuously extended to the space
 ${^\Hs\! L_G^p\big( \Omega \big)}$.
\end{prop}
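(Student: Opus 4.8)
The plan is to extend both functionals by the standard procedure of continuous extension from a dense subspace to its completion. By construction ${}^\Hs\! L_G^p(\Omega)=\mathrm{compl}\big(Lip(\Omega),\E[\|\pdot\|^p_{L_2^\Sigma}]\big)$, so $Lip(\Omega)$ is dense in it and the space is complete; the codomains $\R$ and $L^1_G(\Omega)$ are complete as well. Hence it suffices to show that $X\mapsto\E[\|X\|^p_{L_2^\Sigma}]$ and $X\mapsto\E[\|X\|^p_{L_2^\Sigma}\mid\Omega_t]$ are uniformly continuous on each bounded subset of $\big(Lip(\Omega),\|\pdot\|_{\Sigma,p}\big)$. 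Everything rests on a single estimate: for $X,\bar X\in Lip(\Omega)$ with $\|X\|_{\Sigma,p},\|\bar X\|_{\Sigma,p}\le R$,
$$\E\big[\big|\,\|X\|^p_{L_2^\Sigma}-\|\bar X\|^p_{L_2^\Sigma}\,\big|\big]\le C(R)\cdot\|X-\bar X\|_{\Sigma,p}.$$

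To prove this estimate (for $p\ge1$), I would start from the elementary inequality $|a^p-b^p|\le p(a^{p-1}+b^{p-1})|a-b|$ for $a,b\ge0$, applied pointwise with $a=\|X\|_{L_2^\Sigma}$ and $b=\|\bar X\|_{L_2^\Sigma}$. Using monotonicity and subadditivity of $\E$ and then a H\"older inequality with conjugate exponents $\tfrac{p}{p-1}$ and $p$, the left-hand side is bounded by $p\,\E\big[(\|X\|^{p-1}_{L_2^\Sigma}+\|\bar X\|^{p-1}_{L_2^\Sigma})^{p/(p-1)}\big]^{(p-1)/p}\cdot\E[\|X-\bar X\|^p_{L_2^\Sigma}]^{1/p}$. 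The second factor is exactly $\|X-\bar X\|_{\Sigma,p}$, and the first factor is the constant $C(R)$: it is finite by \textbf{Rem.\ref{rem_finite_norm_LpGH}} and, after a Minkowski step, is controlled by $R^{p-1}$. Both the H\"older and the Minkowski inequalities used here are the sublinear analogues of the classical ones; they follow from the representation $\E=\sup_\theta E_\theta$ of \textbf{Th.\ref{th_sublinear_F_sup_linear_f}} by applying the corresponding classical inequality to each linear $E_\theta$ and passing to the supremum (so \textbf{Prop.\ref{prop_cb_ineq}} is the case $p=2$). The case $0<p<1$ is easier: subadditivity $|a^p-b^p|\le|a-b|^p$ gives directly $\E[|\,\|X\|^p-\|\bar X\|^p\,|]\le\|X-\bar X\|_{\Sigma,p}^p$, with no moment factor.

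Granting the estimate, the unconditional map is handled by property (2.d) of \textbf{Prop.\ref{prop_elem_prop}}, which yields
$$\big|\E[\|X\|^p_{L_2^\Sigma}]-\E[\|\bar X\|^p_{L_2^\Sigma}]\big|\le\E\big[\big|\,\|X\|^p_{L_2^\Sigma}-\|\bar X\|^p_{L_2^\Sigma}\,\big|\big]\le C(R)\,\|X-\bar X\|_{\Sigma,p}.$$
For the conditional map I would use that $\E[\pdot\mid\Omega_t]$ is itself a sublinear expectation, so its own version of (2.d) gives $\big|\E[\|X\|^p\mid\Omega_t]-\E[\|\bar X\|^p\mid\Omega_t]\big|\le\E\big[\,|\,\|X\|^p-\|\bar X\|^p\,|\mid\Omega_t\big]$; taking $\E$ on both sides and invoking the tower property of \textbf{Prop.\ref{prop_propert_cond_exp}} reduces the codomain norm to the same quantity:
$$\big\|\E[\|X\|^p\mid\Omega_t]-\E[\|\bar X\|^p\mid\Omega_t]\big\|_{L^1_G}\le\E\big[\big|\,\|X\|^p-\|\bar X\|^p\,\big|\big]\le C(R)\,\|X-\bar X\|_{\Sigma,p}.$$
Thus both maps are Lipschitz on every ball of radius $R$.

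Finally I would run the completion argument. Given $\xi\in{}^\Hs\! L_G^p(\Omega)$, choose $X_n\in Lip(\Omega)$ with $X_n\to\xi$; this sequence is Cauchy, hence bounded by some $R$, and the two displayed estimates show that $\{\E[\|X_n\|^p_{L_2^\Sigma}]\}$ is Cauchy in $\R$ and $\{\E[\|X_n\|^p_{L_2^\Sigma}\mid\Omega_t]\}$ is Cauchy in $L^1_G(\Omega)$, so both converge in their complete target; I define the extensions to be these limits. Applying the same estimate to an interleaving of two approximating sequences shows the limits do not depend on the chosen sequence, so the extensions are well defined and continuous. The main obstacle I expect is the key estimate itself --- concretely, justifying the H\"older-type inequality for the sublinear $\E$ via the representation theorem and keeping the moment factor $C(R)$ bounded uniformly on bounded sets; once that is secured, the unconditional and conditional extensions both follow from this one inequality together with the tower property.
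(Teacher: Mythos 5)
Your proposal is correct and follows essentially the same route as the paper's proof: the same elementary bound $\big|a^p-b^p\big|\leq p\,(a^{p-1}+b^{p-1})\,|a-b|$ combined with a H\"older/CBS step to isolate the factor $\|X-\bar X\|_{\Sigma,\,p}$ against a moment term bounded on balls, then for the conditional expectation the same reduction via its sublinearity and the tower property of \textbf{Prop.\ref{prop_propert_cond_exp}}, followed by the standard Cauchy-sequence extension with the interleaving check of well-definedness (the paper's operator $T$). The one genuine difference is small but advantageous: you apply H\"older directly with conjugate exponents $\tfrac{p}{p-1}$ and $p$, whereas the paper uses the $p=2$ inequality of \textbf{Prop.\ref{prop_cb_ineq}} and then passes from the $2$-norm to the $p$-norm, which forces its restriction to $p\geq2$ (with $p=1$ treated as trivial); your version handles all $p\geq1$, including $1<p<2$, uniformly.
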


\begin{proof}
 $\absz$\\
If\ \ $X\in{^\Hs\! L_G^p\big( \Omega \big)}$ \ then there exist \ $X_n\in Lip( \Omega )\,, $ such that\\ 
$\E\Big[\|X-X_n\|^p_{L_2^\Sigma}\Big]\xrightarrow[n\to\infty]{}0$.

Here we take $p\geq2$ because the case $p=1$ is trivial. 
\\
\textbf{1)} Firstly we show the following convergence (which we use for the extension):
\\
$\Big|\E\Big[\|X\|^p_{L_2^\Sigma}\Big]-\E\Big[\|X_n\|^p_{L_2^\Sigma}\Big]\Big|
\leq \E\Big[\Big|\|X\|^p_{L_2^\Sigma}-\|X_n\|^p_{L_2^\Sigma}\Big|\Big]$

\hfill$
\leq \E\Big[\|X-X_n\|_{L_2^\Sigma}\cdot\big(\|X\|^{p-1}_{L_2^\Sigma}+\ldots+\|X_n\|^{p-1}_{L_2^\Sigma}\big)\Big]$

\hfill$
\leq \Bigg(\E\Big[\|X-X_n\|^2_{L_2^\Sigma}\Big]\Bigg)^{1/2}
\cdot C\cdot\Bigg(\E\Big[\big(\|X\|^{2(p-1)}_{L_2^\Sigma}+\|X_n\|^{2(p-1)}_{L_2^\Sigma}\big)\Big]\Bigg)^{1/2}$

\hfill$
\leq \widetilde{C}\cdot\Bigg(\E\Big[\|X-X_n\|^p_{L_2^\Sigma}\Big]\Bigg)^{1/p}
\xrightarrow[n\to\infty]{}0$.

\textbf{2)} Now it is enough to only take the more general case with a conditional $G$-expectation.
We consider an operator $T$ on the space $Lip( \Omega )$, such that\ \
$TX_n=\E\big[\|X_n\|^p_{L_2^\Sigma}\mid \Omega_t\big]$. 

Let us define $TX:=\lim\limits_{n\to\infty}TX_n$ \ if \ $\E\Big[\big|TX-TX_n\big|\Big]\xrightarrow[n\to\infty]{}0$.

Now we are going to show that the sequence $\big(TX_n\big)$ is Cauchy in $L_G^1(\Omega)$.

Consider \ \ $\E\Big[\Big|\E\big[\|X_n\|^p_{L_2^\Sigma}\mid \Omega_t\big]-\E\big[\|X_m\|^p_{L_2^\Sigma}\mid \Omega_t\big]\Big|\Big]\\
\leq \E\Bigg[\E\Big[\big|\|X_n\|^p_{L_2^\Sigma}-\|X_m\|^p_{L_2^\Sigma}\big|\mid \Omega_t\Big]\Bigg]
\overset{\textbf{Prop.\ref{prop_propert_cond_exp},1)}}{=}\E\Big[\big|\|X_n\|^p_{L_2^\Sigma}-\|X_m\|^p_{L_2^\Sigma}\big|\Big]$

\hfill$
\overset{\textbf{1)}}{\leq}\widetilde{C}\cdot\Bigg(\E\Big[\|X_n-X_m\|^p_{L_2^\Sigma}\Big]\Bigg)^{1/p}
\xrightarrow[n,m\to\infty]{}0$, 

since the sequence $(X_n)$ is convergent in ${^\Hs\! L_G^p\big( \Omega \big)}$.
\\
So, we have that \ $\lim\limits_{n\to\infty}TX_n$ \ exists, because \ $L_G^1(\Omega)$ \ is complete.

If we take another sequence $(Y_n)$ of elements from $Lip(\Omega)$ such that $(TY_n)$ converges to $TX$, then we have

 $\E\Big[\big|TX_n-TY_n\big|\Big]
\leq\E\Big[\big|TX_n-TX\big|\Big]+\E\Big[\big|TY_n-TX\big|\Big]
\xrightarrow[n\to\infty]{}0
$.

And by 
\textbf{Lm.\ref{lm_X0_qs}} we see that \ $TX_n=TY_n$ \ q.s.

Thus we have an extension of conditional $G$ expectation to \ ${^\Hs\! L_G^p\big( \Omega \big)}$:

$\E\big[\|X\|^p_{L_2^\Sigma}\mid \Omega_t\big]:=\lim\limits_{n\to\infty}\E\big[\|X_n\|^p_{L_2^\Sigma}\mid \Omega_t\big]$ \ in \ $L_G^1(\Omega)$ \ norm.

\end{proof}

\begin{rem}\label{rem_prop_cond_G_exp}
From the definition of the operator $T$ in the proof of \textbf{Prop.\ref{prop_extens_G_exp}} and by passing to the limit, one can easily seen that \textbf{Prop.\ref{prop_propert_cond_exp}} holds for extended conditional $G$-expectation on \ ${^\Hs\! L_G^p\big( \Omega \big)}$ \ for \ $\fee=\|\pdot\|^p_{L_2^\Sigma}$. 
\end{rem}

\subsection{G-expectation and upper expectation}

In this chapter we consider a notion of an upper expectation and compare it with a $G$-expectation in order to see how they are related to one with another.
Later on we deduce see that they coincide on a considered above Banach space ${^\Hs\! L_G^p\big( \Omega \big)}$. The material for this theory is taken mainly from \cite{denis_hu_peng1}, where was considered only one-dimensional case. So, we follow this reference in order to give detailed description of the material in infinite dimensions. The proofs will be given only those ones that differ from the one-dimensional case, in the same time we will only point out those ones that can be just repeated.

In the next chapter dedicated to the construction of stochastic integral with respect to $G$-Brownian motion we widely use such a space ${^\Hs\! L_G^p\big( \Omega \big)}$ for it. There for a correct representation we shall be sure that ${^\Hs\! L_G^p\big( \Omega \big)}$ is not just an abstract completion of $Lip( \Omega )$ under the norm $\E\big[\|\pdot\|^p_{L_2^\Sigma}\big]$, but a space of random variables.
\\
Let $(\Omega,\Fc,P)$ be a probability space, 

where 
$\Omega=\big\{\omega_t:[0,+\infty)\rightarrow\Us\mid\omega_0=0,\ \omega_t {\mbox{--continuous}} \big\} $, \ the space of continuous trajectories as mentioned above.
$W_t$ is a cylindrical Wiener process in $\Us$ under the measure $P$.
$\Fc_t:=\sigma\{W_u,\ 0\leq u\leq t\}\vee \mathcal{N}$, where $\mathcal{N}$ is the collection of $P$-null subsets.

Fixing $\Sigma$-set we define a $G$-functional, i.e. \ $G(A)=\dfrac{1}{2}\sup\limits_{Q\in\Sigma}{\Tr}\big[A\cdot Q]$. It follows that there exists a bounded closed set $\Theta$ of Hilbert-Schmidt operators, such that \ 
$\Sigma=\big\{Q\,|\ Q=\gamma\cdot\gamma^T,\ \gamma\in\Theta\big\}$ \ and 

\hfill$G(A)=\dfrac{1}{2}\sup\limits_{\gamma\in\Theta}{\Tr}\big[\gamma\gamma^T\cdot A\big]$.

Let us define a following set of random processes \ 

$\At:=\big\{\theta_s:\Omega\to\Theta\,|\ s\in[t,T]\subset[0,\infty),\, \theta_s - (\Fc_s)\text{-adopted}\big\}$.

Since $\theta_s\in\Theta$ \ it follows that \ $\int\limits_t^T\Tr[\theta_s\,\theta_s^T]\,ds<\infty$.

For a $B$-continuous function $\fee\in{\mathbf{C}_{\,p.Lip}}(\Us^n)$ \  
let us consider such a function \ $v(t,x):=\sup\limits_{\theta\in\At}E_P\big[\fee(x+B_T^{t,\theta})\big]$, \ \ \ 
where $B_T^{t,\theta}:=\int\limits_t^T\theta_s\,dW_s,\ \ \theta_s\in\At$.
\begin{rem}
 Let $P_\theta$ be a law of the process $B_t^{0,\theta}=\int\limits_0^t\theta_s\,dW_s$. 
$P_\theta$ is a probability on $\Omega=\big\{\omega_t:[0,+\infty)\rightarrow\Us\mid\omega_0=0,\ \omega_t {\mbox{--continuous}} \big\} $. 

The elements of \ $\Omega$ \ are \ $B_t(\omega)=\omega_t$.

So, we have 

 $$E_P\big[\fee(B_{t_1}^{0,\theta},\ldots,B_{t_n}^{0,\theta})\big]
=E_{P_\theta}\big[\fee(B_{t_1},\ldots,B_{t_n})\big]\quad.$$

Note also that $B_t^{s,\theta}=B_t^{0,\theta}-B_s^{0,\theta}$ by definition.
Also in the same manner we denote that $B_t^s:=B_t-B_s$.
\end{rem}

\begin{teo}
 $v$ is a viscosity solution of the G-heat equation:
\begin{equation}
\begin{cases}
\partial_t v+G(D^2_{xx} v)=0 \,;\\
v(T,x)=\fee(x) \,.
\end{cases}
\end{equation}
\end{teo}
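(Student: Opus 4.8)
The plan is to regard $v(t,x)=\sup_{\theta\in\At}E_P[\fee(x+B_T^{t,\theta})]$ as the value function of a stochastic control problem with controls $\theta$ valued in $\Theta$, and to obtain the two viscosity inequalities from a dynamic programming principle (DPP) together with It\^o's formula. The crucial point is that the Hamiltonian of this control problem is exactly $\tfrac12\sup_{\gamma\in\Theta}\Tr[\gamma\gamma^{T}\,\pdot\,]=G(\pdot)$, so the $G$-nonlinearity appears automatically. The terminal condition is for free: since $B_T^{T,\theta}=0$ we get $v(T,x)=\fee(x)$, and the $B$-continuity and polynomial growth of $v$ are inherited from those of $\fee$ by the Cauchy--Bunyakovsky--Schwarz and moment estimates (\textbf{Prop.\ref{prop_moments}}) already used in the proof of \textbf{Th.\ref{th_exist_vs_p0}}.

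First I would prove the DPP: for every $(t,x)$ and every small $h>0$,
\begin{equation*}
v(t,x)=\sup_{\theta\in\mathcal{A}_{t,t+h}^{\Theta}}E_P\big[v(t+h,\,x+B_{t+h}^{t,\theta})\big].
\end{equation*}
This follows by writing $B_T^{t,\theta}=B_{t+h}^{t,\theta}+B_T^{t+h,\theta}$, where the two increments live on the disjoint intervals $[t,t+h]$ and $[t+h,T]$ and are therefore $\Fc_{t+h}$-conditionally independent, then conditioning on $\Fc_{t+h}$ and invoking the tower property of $E_P$; the only delicate point is a measurable-selection argument allowing one to concatenate an $\varepsilon$-optimal control on $[t+h,T]$ to an arbitrary control on $[t,t+h]$.

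For the subsolution property fix a test function $\psi\geq v$ with $\psi(t_0,x_0)=v(t_0,x_0)$. Since $v\le\psi$, the DPP gives $0\le\sup_{\theta}E_P[\psi(t_0+h,\,x_0+B_{t_0+h}^{t_0,\theta})-\psi(t_0,x_0)]$. Applying It\^o's formula to $s\mapsto\psi(t_0+s,\,x_0+B_{t_0+s}^{t_0,\theta})$---legitimate because the test function factors through the finite-rank projection $P_N$, so $D^2_{xx}\psi$ is compact and all the required derivatives are locally uniformly continuous---the stochastic integral has zero expectation and one is left with $E_P\!\int_0^h\!\big(\partial_t\psi+\tfrac12\Tr[\theta_r\theta_r^{T}D^2_{xx}\psi]\big)\,dr$. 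Bounding $\tfrac12\Tr[\gamma\gamma^{T}A]\le G(A)$ for each $\gamma\in\Theta$, this is dominated by $E_P\!\int_0^h\!\big(\partial_t\psi+G(D^2_{xx}\psi)\big)\,dr$; dividing by $h$ and letting $h\downarrow0$ yields $\big(\partial_t\psi+G(D^2_{xx}\psi)\big)(t_0,x_0)\ge0$. For the supersolution property fix $\psi\leq v$ with equality at $(t_0,x_0)$; now the DPP gives $0\ge E_P[\psi(t_0+h,\,x_0+B_{t_0+h}^{t_0,\gamma})-\psi(t_0,x_0)]$ for every \emph{constant} control $\theta\equiv\gamma\in\Theta$, and the same It\^o computation produces $0\ge E_P\!\int_0^h\!\big(\partial_t\psi+\tfrac12\Tr[\gamma\gamma^{T}D^2_{xx}\psi]\big)\,dr$. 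Dividing by $h$, letting $h\downarrow0$, and only then taking the supremum over $\gamma\in\Theta$ restores $G$ and gives $\big(\partial_t\psi+G(D^2_{xx}\psi)\big)(t_0,x_0)\le0$.

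The main obstacle is the rigorous justification of the two infrastructure steps in the infinite-dimensional setting: the DPP (the measurable selection needed to glue controls, and the interchange of $\sup_\theta$ with the conditional expectation) and It\^o's formula for the cylindrical stochastic integral $B_\cdot^{t,\theta}=\int_t^\cdot\theta_s\,dW_s$ applied to the test functions. The latter is eased by the special structure of test functions (compact, finite-rank second derivative factoring through $P_N$, locally uniformly continuous derivatives of polynomial growth), which effectively reduces It\^o's formula to a finite-dimensional computation plus a controlled remainder; once these are in place, the two one-sided inequalities together with the inherited regularity of $v$ combine to show that $v$ is a viscosity solution.
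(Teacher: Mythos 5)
Your proposal is correct and coincides in essence with the paper's own proof, which simply defers to the finite-dimensional argument of \cite[Th.47]{denis_hu_peng1}: there, too, the viscosity property is obtained from the dynamic programming principle (whose delicate control-concatenation/ess-sup interchange step is exactly what the paper records as \textbf{Lm.\ref{lm_indep_DHP}}) combined with Itô's formula, the pointwise bound $\tfrac12\Tr\big[\gamma\gamma^{T}A\big]\leq G(A)$ for the subsolution half, and constant controls $\theta\equiv\gamma\in\Theta$ for the supersolution half. Your sign conventions also agree with the paper's terminal-value definitions of sub- and supersolutions, so no further comment is needed.
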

\begin{proof}
$\absz$\\
The proof completely coincide with the finite dimensional case (see \cite[Th.47]{denis_hu_peng1}). 

\end{proof}

Here we have that: 

$v(0,x)=\sup\limits_{\theta\in\A0}E_P\big[\fee(x+B_T^{0,\theta})\big]
=\sup\limits_{\theta\in\At}E_{P_\theta}\big[\fee(x+B_T)\big]
\equiv\E\big[\fee(x+B_T)\big]$, since such a viscosity solution is unique by \textbf{Th.\ref{th_uniq_vs}}.

For the future work we need the following auxiliary result that generalizes dynamical programming principle:

\begin{lm}\label{lm_indep_DHP}
 Let $\zeta\in L^2(\Omega,\Fc_s,P;\Hs)\,,\ \ 0\leq s\leq t\leq T$.

Then $\forall\fee\in\Cp(\Hs\tdot\Us\tdot\Us\to\R)$:

$$ess \sup\limits_{\theta\in\mathcal{A}_{s,T}^\Theta}E_P\big[\fee(\zeta,B_t^{s,\theta},B_T^{t,\theta})\big|\Fc_s\big]=
ess \sup\limits_{\theta\in\mathcal{A}_{s,t}^\Theta}E_P\big[\psi(\zeta,B_t^{s,\theta})|\Fc_s\big]\quad,$$
where $\psi(x,y):=ess\sup\limits_{\overline{\theta}\in\mathcal{A}_{t,T}^\Theta}E_P\big[\fee(x,y,B_T^{t,\overline{\theta}})\big|\Fc_t\big]
=\sup\limits_{\overline{\theta}\in\mathcal{A}_{t,T}^\Theta}E_P\big[\fee(x,y,B_T^{t,\overline{\theta}})\big]$.
\end{lm}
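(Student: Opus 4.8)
The plan is to establish the two inequalities $\leq$ and $\geq$ separately, the backbone being the tower property of conditional expectation together with the \emph{stability under pasting} of the admissible control sets: if $\theta^1\in\mathcal{A}_{s,t}^\Theta$ and $\theta^2\in\mathcal{A}_{t,T}^\Theta$, then the concatenation $\theta_u:=\theta^1_u\ind_{[s,t]}(u)+\theta^2_u\ind_{(t,T]}(u)$ belongs to $\mathcal{A}_{s,T}^\Theta$, and conversely every $\theta\in\mathcal{A}_{s,T}^\Theta$ restricts to such a pair. I would also use throughout that $\zeta$ and $B_t^{s,\theta}$ are $\Fc_t$-measurable while $B_T^{t,\theta}=\int_t^T\theta_u\,dW_u$ is built from the control and the Wiener increments on $[t,T]$; the hypothesis that the inner ess sup defining $\psi$ collapses to a deterministic supremum (the second equality in the statement) lets me treat $\psi$ as a fixed, jointly measurable function in $\Cp(\Hs\tdot\Us)$.

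For the inequality $\leq$, I would fix $\theta\in\mathcal{A}_{s,T}^\Theta$ and condition first on $\Fc_t$. Since $\zeta,B_t^{s,\theta}$ are $\Fc_t$-measurable, freezing them gives
\begin{equation*}
E_P\big[\fee(\zeta,B_t^{s,\theta},B_T^{t,\theta})\mid\Fc_t\big]
=\Big(E_P\big[\fee(x,y,B_T^{t,\theta})\mid\Fc_t\big]\Big)\Big|_{x=\zeta,\,y=B_t^{s,\theta}}\leq\psi(\zeta,B_t^{s,\theta}),
\end{equation*}
the last bound being the definition of $\psi$ applied to the restriction $\theta|_{[t,T]}\in\mathcal{A}_{t,T}^\Theta$. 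Taking $E_P[\,\cdot\mid\Fc_s]$ and invoking the tower property, then dominating the contribution of $\theta|_{[s,t]}$ by the ess sup over $\mathcal{A}_{s,t}^\Theta$, yields the claim after a final ess sup over $\theta$ on the left.

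For the reverse inequality $\geq$, I would fix $\theta^1\in\mathcal{A}_{s,t}^\Theta$ and $\varepsilon>0$. Because $\psi(x,y)=\sup_{\bar\theta\in\mathcal{A}_{t,T}^\Theta}E_P[\fee(x,y,B_T^{t,\bar\theta})]$ is a genuine deterministic supremum, for each $(x,y)$ there is a near-optimal $\bar\theta$; the crux is to select one measurably in $(x,y)$ so that, after freezing $x=\zeta$ and $y=B_t^{s,\theta^1}$, the resulting process is still admissible. A measurable selection theorem produces $\bar\theta^\varepsilon=\bar\theta^\varepsilon(x,y)\in\mathcal{A}_{t,T}^\Theta$, measurable in $(x,y)$, with $E_P[\fee(x,y,B_T^{t,\bar\theta^\varepsilon(x,y)})]\geq\psi(x,y)-\varepsilon$. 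I then paste, defining $\theta\in\mathcal{A}_{s,T}^\Theta$ to equal $\theta^1$ on $[s,t]$ and $\bar\theta^\varepsilon(\zeta,B_t^{s,\theta^1})$ on $(t,T]$; stability under pasting guarantees $\theta\in\mathcal{A}_{s,T}^\Theta$. Conditioning on $\Fc_t$ and using the $\varepsilon$-optimality at the frozen data gives $E_P[\fee(\zeta,B_t^{s,\theta},B_T^{t,\theta})\mid\Fc_t]\geq\psi(\zeta,B_t^{s,\theta^1})-\varepsilon$; applying $E_P[\,\cdot\mid\Fc_s]$, taking the ess sup over $\theta^1\in\mathcal{A}_{s,t}^\Theta$, and letting $\varepsilon\downarrow0$ produces $\geq$.

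The main obstacle is precisely this measurable-selection-and-pasting step in the reverse inequality: one must produce an $\varepsilon$-optimal second-stage control depending measurably on the frozen data $(\zeta,B_t^{s,\theta^1})$ and verify that the pasted process is $(\Fc_u)$-adapted and $\Theta$-valued, hence genuinely admissible, so that the bound is attained inside $\mathcal{A}_{s,T}^\Theta$ and not merely in a larger class. The subsidiary technical point — that the ess sup defining $\psi$ is deterministic — follows from the translation invariance of the Wiener measure and is already recorded in the statement; with it in hand, the two inequalities combine to give the claimed identity.
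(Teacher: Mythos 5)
Your route is genuinely different from the paper's, and it is worth being clear about what each does. The paper does \emph{not} re-prove the dynamic programming identity at all: for bounded $\fee\in\Cbp$ it declares the finite-dimensional proof of Denis--Hu--Peng (their Th.~44) to carry over verbatim, and the only argument actually written is the extension from $\Cbp$ to $\Cp$, via the truncation $\fee_n:=\fee\cdot\ind_{\{|\fee|\leq n\}}+n\cdot\frac{\fee}{|\fee|}\cdot\ind_{\{|\fee|>n\}}$, the Gaussian integrability of $B_T^{t,\theta}$ (uniform over the bounded set $\Theta$), and dominated convergence applied to $E_P\big[|\fee|\cdot\ind_{\{|\fee|>n\}}\mid\Fc_s\big]$. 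You instead attack the identity head-on for polynomial-growth $\fee$, by the two inequalities, the tower property and stability under pasting --- i.e.\ you re-derive the cited Denis--Hu--Peng core, in infinite dimensions, and never need the truncation step. That is a legitimate plan and would even subsume the paper's proof, but it means you have taken on precisely the hard part the paper outsources, and your $\leq$ direction also needs a word it doesn't get: the freezing identity requires a version of $x\mapsto E_P\big[\fee(x,y,B_T^{t,\theta})\mid\Fc_t\big]$ jointly measurable (continuous, via the Lipschitz bounds) in the frozen variables before you may substitute $x=\zeta$, $y=B_t^{s,\theta}$, since $B_T^{t,\theta}$ is \emph{not} independent of $\Fc_t$ for adapted $\theta$.

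The genuine gap is the step you yourself flag: the appeal to ``a measurable selection theorem'' producing $(x,y)\mapsto\bar\theta^{\varepsilon}(x,y)\in\mathcal{A}_{t,T}^\Theta$. As stated this is not an off-the-shelf result: $\mathcal{A}_{t,T}^\Theta$ is a set of adapted $\Theta$-valued \emph{processes}, and to apply any selection theorem you would first have to equip it with a measurable structure under which the value map $(x,y,\bar\theta)\mapsto E_P\big[\fee(x,y,B_T^{t,\bar\theta})\big]$ has the required regularity --- none of which is set up. The standard repair (and the one used in the source you are implicitly reproducing) is more elementary and uses the Lipschitz structure you already have: since $\Theta$ is bounded, the functions $(x,y)\mapsto E_P\big[\fee(x,y,B_T^{t,\bar\theta})\big]$ satisfy a local Lipschitz estimate with constants uniform in $\bar\theta$, hence so does $\psi$; one then partitions $\Hs\tdot\Us$ (separable) into countably many Borel cells $D_i$ of small diameter, picks an $\varepsilon$-optimal $\bar\theta_i\in\mathcal{A}_{t,T}^\Theta$ at one point of each cell, and pastes $\theta^1$ on $[s,t]$ with $\sum_i\ind_{\{(\zeta,\,B_t^{s,\theta^1})\in D_i\}}\,\bar\theta_i(u)$ on $(t,T]$; the indicators are $\Fc_t$-measurable, so the pasted process is adapted and $\Theta$-valued, and $2\varepsilon$-optimality on cells follows from the uniform Lipschitz bound. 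Relatedly, you take the second equality in the statement (ess\,sup equals the deterministic sup) as given ``by translation invariance,'' but it is itself part of what must be proved --- in Denis--Hu--Peng it is a separate lemma, established by showing the family of conditional expectations is upward directed and reducing to second-stage controls independent of $\Fc_t$. With the partition argument in place of the selection theorem, and these two points made explicit, your direct proof closes; as written, the crux is asserted rather than proved.
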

 
\begin{proof}
$\absz$\\
If $\fee\in\Cbp$ then the proof of this result is completely according to the finite dimensional case, see \cite[ Th.44]{denis_hu_peng1}.

Let $\fee\in\Cp$. Hence, the truncation \ $\fee_n:=\fee\cdot\ind_{\{|\fee|\leq n\}}+n\cdot\dfrac{\fee}{|\fee|}\cdot\ind_{\{|\fee|>n\}}$\ \ satisfies the statement of the lemma.
Also, it is clear that $\fee\in L_1$ w.r.t.$P$ (because of the gaussianity of $B_t^{s,\theta}$).
\hfill$(\,\ast\,)$
\\
And it remains just to estimate the following expression:
\\
$\Big|E_P\big[\fee(\pdot)\big|\Fc_s\big]-E_P\big[\fee_n(\pdot)\big|\Fc_s\big]\Big|
\leq E_P\Big[\Big|\fee(\pdot)-\fee_n(\pdot)\Big|\,\big|\Fc_s\Big]\\
=E_P\Big[\big(|\fee|(\pdot)-n\big)\cdot\ind_{\{|\fee|>n\}}\big|\Fc_s\Big]
\leq E_P\Big[|\fee|(\pdot)\cdot\ind_{\{|\fee|>n\}}\big|\Fc_s\Big]
\xrightarrow[n\to\infty]{(\,\ast\,)}0$\,,\ \ \  by the dominated convergence theorem.

\end{proof}

\begin{prop}\label{prop_EG_Ebar}According to the imposed above notations there holds
\\
 $\E\big[\fee(B_{t_1}^0,\ldots,B_{t_n}^{t_n-1})\big]
=\sup\limits_{\theta\in\A0}E_P\big[\fee(B_{t_1}^{0,\theta},\ldots,B_{t_n}^{t_{n-1},\theta})\big]$

\hfill$
=\sup\limits_{\theta\in\A0}E_{P_\theta}\big[\fee(B_{t_1}^0,\ldots,B_{t_n}^{t_n-1})\big]$,

\end{prop}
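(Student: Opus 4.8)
The plan is to separate the two equalities. The rightmost one, $\sup_{\theta\in\A0}E_P[\fee(B_{t_1}^{0,\theta},\ldots,B_{t_n}^{t_{n-1},\theta})]=\sup_{\theta\in\A0}E_{P_\theta}[\fee(B_{t_1}^{0},\ldots,B_{t_n}^{t_{n-1}})]$, is purely definitional: since $P_\theta$ is by construction the law of the process $B^{0,\theta}$, the identity $E_P[\fee(B_{t_1}^{0,\theta},\ldots)]=E_{P_\theta}[\fee(B_{t_1}^{0},\ldots)]$ recorded in the preceding remark holds for each fixed $\theta$, hence also after taking suprema. All the work therefore goes into the leftmost equality $\E[\fee(\cdots)]=\sup_{\theta\in\A0}E_P[\fee(\cdots)]$, which I would prove by induction on the number of increments $n$.

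For the base case $n=1$ I would appeal to the viscosity-solution characterization established just above: the function $v(t,x)=\sup_{\theta\in\At}E_P[\fee(x+B_T^{t,\theta})]$ is a viscosity solution of the $G$-heat equation, and by the uniqueness result \textbf{Th.\ref{th_uniq_vs}} it must coincide with the $G$-expectation representation, so that $\E[\fee(x+B_T)]=\sup_{\theta\in\A0}E_P[\fee(x+B_T^{0,\theta})]$. By time-homogeneity of the $G$-Brownian increments the same identity holds for a single increment over any subinterval $[t_{n-1},t_n]$, namely $\E[\fee(x+B_{t_n}^{t_{n-1}})]=\sup_{\bar\theta}E_P[\fee(x+B_{t_n}^{t_{n-1},\bar\theta})]$.

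For the inductive step I would peel off the last increment. On the $G$-expectation side the tower property and the independence structure of \textbf{Prop.\ref{prop_propert_cond_exp}} give $\E[\fee(B_{t_1}^{0},\ldots,B_{t_n}^{t_{n-1}})]=\E[\psi(B_{t_1}^{0},\ldots,B_{t_{n-1}}^{t_{n-2}})]$, where $\psi(x_1,\ldots,x_{n-1}):=\E[\fee(x_1,\ldots,x_{n-1},B_{t_n}^{t_{n-1}})]$ is obtained by integrating out the final increment. The base case identifies this $\psi$ with $\sup_{\bar\theta}E_P[\fee(x_1,\ldots,x_{n-1},B_{t_n}^{t_{n-1},\bar\theta})]$, which is exactly the partially optimized function appearing in the dynamic programming identity \textbf{Lm.\ref{lm_indep_DHP}}. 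Applying the inductive hypothesis to $\psi$ (a function of $n-1$ increments) rewrites $\E[\psi(\cdots)]$ as $\sup_{\theta\in\mathcal{A}_{0,t_{n-1}}^\Theta}E_P[\psi(B_{t_1}^{0,\theta},\ldots,B_{t_{n-1}}^{t_{n-2},\theta})]$, and finally \textbf{Lm.\ref{lm_indep_DHP}}, applied with $s=0$ so that $\Fc_0$ is trivial and the essential suprema and conditional expectations reduce to ordinary suprema and expectations, collapses this nested optimization into the single supremum $\sup_{\theta\in\A0}E_P[\fee(B_{t_1}^{0,\theta},\ldots,B_{t_n}^{t_{n-1},\theta})]$, closing the induction.

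The step I expect to be the main obstacle is the regularity bookkeeping that makes the induction legitimate: to feed $\psi$ into the inductive hypothesis and into \textbf{Lm.\ref{lm_indep_DHP}} I must know that $\psi\in\Cp(\Us^{n-1},\R)$, i.e. that integrating out one increment under the $G$-expectation (equivalently, forming the partially optimized supremum) preserves the Lipschitz-with-polynomial-growth property with uniformly controlled constants. This is the stability already noted for the conditional $G$-expectation, but verifying the growth and Lipschitz bounds quantitatively requires the moment estimates of \textbf{Prop.\ref{prop_moments}} for the $G$-normal increments. A secondary delicate point, which is precisely what \textbf{Lm.\ref{lm_indep_DHP}} supplies, is the measurable selection of near-optimal tail controls together with the concatenation of an admissible control on $[0,t_{n-1}]$ with one on $[t_{n-1},t_n]$ into an element of $\A0$; this concatenation-closure of the admissible sets is what legitimizes replacing the two separate optimizations by the single supremum over $\A0$.
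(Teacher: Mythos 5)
Your route is, in substance, the intended one. The paper itself defers this proof to the author's thesis, but the two ingredients it lays out immediately before the proposition --- the identity $v(0,x)=\sup_{\theta\in\mathcal{A}_{0,T}^\Theta}E_P\big[\fee(x+B_T^{0,\theta})\big]=\E\big[\fee(x+B_T)\big]$, obtained from the viscosity theorem for $v$ together with uniqueness (\textbf{Th.\ref{th_uniq_vs}}), and \textbf{Lm.\ref{lm_indep_DHP}}, introduced expressly ``for the future work'' as a dynamic programming principle --- are exactly your base case and your induction engine, mirroring the finite-dimensional argument of Denis--Hu--Peng that the paper follows. Your reading of the rightmost equality as purely definitional (via the law $P_\theta$ of $B^{0,\theta}$), and your identification of the two delicate points (preservation of the $\Cp$ class under the one-step supremum, and the concatenation/measurable-selection content of the lemma) are also correct.

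There is, however, one mis-instantiation in your collapsing step. Applying \textbf{Lm.\ref{lm_indep_DHP}} with $s=0$ cannot close the induction once $n\geq3$: with $s=0$ the variable $\zeta$ must be $\Fc_0$-measurable, hence deterministic, and the lemma's middle slot carries only the single increment $B_t^{0,\theta}$, so there is no slot left for the vector $(B_{t_1}^{0,\theta},\ldots,B_{t_{n-1}}^{t_{n-2},\theta})$. The conditional, essential-supremum form of the lemma exists precisely so that it can be invoked at an interior time: take $s=t_{n-2}$, $t=t_{n-1}$, $T=t_n$, and let $\zeta$ be the vector of the first $n-2$ controlled increments, which is $\Fc_{t_{n-2}}$-measurable under each fixed control (the lemma's proof is unchanged for $\zeta$ valued in a product space); then take $E_P$ of both sides, using upward directedness of the family to interchange $E_P$ with the essential suprema, and optimize over the initial segment of the control. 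This yields $\sup_{\theta\in\mathcal{A}_{0,t_n}^\Theta}E_P\big[\fee(\cdots)\big]=\sup_{\theta\in\mathcal{A}_{0,t_{n-1}}^\Theta}E_P\big[\psi(\cdots)\big]$ and the induction closes; your $s=0$ shortcut is valid only for $n=2$, where it coincides with the interior application. One further regularity item beyond the $\Cp$ bookkeeping you flag: since the viscosity representation and \textbf{Th.\ref{th_uniq_vs}} are stated for $B$-continuous data, the induction also needs $\psi$ to remain $B$-continuous, which in infinite dimensions does not follow from norm-Lipschitz bounds alone and must be verified alongside the growth estimates of \textbf{Prop.\ref{prop_moments}}.
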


\begin{proof}
$\absz$\\
The proof you can find in the author's thesis \cite{ibragimov1}.

\end{proof}
 
\begin{prop}\label{prop_P_tight}
 The family of probability measures $\big\{P_\theta\,,\ \theta\in\mathcal{A}_{0,\infty}^\Theta\big\}$ is tight.
\end{prop}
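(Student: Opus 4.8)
The plan is to prove tightness on each finite-horizon path space $C([0,T];\Us)$, $T>0$, since tightness of $\{P_\theta,\ \theta\in\mathcal{A}_{0,\infty}^\Theta\}$ on $\Omega=C([0,+\infty);\Us)$ (with the topology of uniform convergence on compact time intervals) follows, by a standard diagonal argument, from tightness of the restrictions to every $[0,T]$, which lie in $\A0$. Fixing $T$, I would verify uniformly over $\theta$ the two hypotheses of the Arzel\`a--Ascoli characterization of relatively compact subsets of $C([0,T];\Us)$ --- a uniform modulus of continuity, and pointwise relative compactness of the ranges in $\Us$ --- and then assemble these into a single compact set carrying $P_\theta$-mass at least $1-\varepsilon$ for every $\theta$.

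\textbf{Equicontinuity.} Since each $\theta_s$ takes values in the bounded set $\Theta$, there is $M$ with $\|\theta_s\|^2_{L_2(\Us,\Us)}=\Tr[\theta_s\theta_s^T]\le M^2$ for all $s$. As $B_\cdot^{0,\theta}=\int_0^\cdot\theta_s\,dW_s$ is a $\Us$-valued $P$-martingale, the Burkholder--Davis--Gundy inequality (with the classical It\^o isometry as its quadratic case) gives, for $0\le s\le t\le T$,
\[
E_P\big[\|B_t^{0,\theta}-B_s^{0,\theta}\|_\Us^{4}\big]
\le C\,E_P\Big[\Big(\int_s^t\|\theta_u\|^2_{L_2(\Us,\Us)}\,du\Big)^{2}\Big]
\le C\,M^{4}\,|t-s|^{2}.
\]
The right-hand bound is independent of $\theta$, so Kolmogorov's continuity criterion yields a modulus of continuity for the paths that is uniform in $\theta$; equivalently $\lim_{\delta\downarrow0}\sup_\theta P_\theta\big(w_{[0,T]}(\cdot,\delta)>\eta\big)=0$ for each $\eta>0$, where $w_{[0,T]}(\cdot,\delta)$ denotes the oscillation modulus.

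\textbf{Marginal tightness.} Let $\pi_N$ be the orthogonal projection of $\Us$ onto $\mathrm{span}\{e_1,\dots,e_N\}$. By the It\^o isometry,
\[
E_P\big[\|(I-\pi_N)B_t^{0,\theta}\|_\Us^{2}\big]
=E_P\Big[\int_0^t\|(I-\pi_N)\theta_u\|^2_{L_2(\Us,\Us)}\,du\Big]
\le T\cdot a_N,\qquad a_N:=\sup_{\gamma\in\Theta}\|(I-\pi_N)\gamma\|^2_{L_2(\Us,\Us)}.
\]
For fixed $\gamma$ the quantity $\|(I-\pi_N)\gamma\|^2_{L_2(\Us,\Us)}$ decreases to $0$ as $N\to\infty$ (dominated convergence, $\gamma$ being Hilbert--Schmidt), and $\gamma\mapsto\|(I-\pi_N)\gamma\|_{L_2(\Us,\Us)}$ is continuous; hence $a_N\downarrow0$ by Dini's theorem, where the compactness of $\Theta$ in the Hilbert--Schmidt norm is used. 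Together with the uniform bound $E_P[\|B_t^{0,\theta}\|^2]\le M^2t$, a Chebyshev estimate shows that for each $t$ the laws $\{P_\theta\circ(B_t)^{-1}\}$ concentrate, uniformly in $\theta$, on fixed $\Us$-ellipsoids, which are compact. Finally, given $\varepsilon>0$, I would pick $\delta_m\downarrow0$ with $\sup_\theta P_\theta(w_{[0,T]}(\cdot,\delta_m)>1/m)\le\varepsilon2^{-m}$ and a compact ellipsoid $\mathcal K$ with $\sup_\theta P_\theta(B_q\notin\mathcal K)\le\varepsilon$ for all $q$ in a countable dense set of times; the paths meeting these oscillation and range constraints form a relatively compact subset of $C([0,T];\Us)$ by Arzel\`a--Ascoli, whose closure $K^T_\varepsilon$ satisfies $\inf_\theta P_\theta(K^T_\varepsilon)\ge1-C\varepsilon$, and intersecting the $K^T_\varepsilon$ over $T\in\N$ with geometrically decreasing budgets gives the required compact subset of $\Omega$.

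\textbf{Main obstacle.} The genuinely infinite-dimensional point --- absent from the one-dimensional treatment of Denis--Hu--Peng, where marginal tightness is automatic because closed balls of $\R^d$ are compact --- is the compact-containment step: bounded second moments alone do not give tightness in an infinite-dimensional $\Us$, and one must upgrade to the uniform tail decay $a_N\to0$. Obtaining $a_N\to0$ from the standing hypotheses on $\Theta$ (compactness in the Hilbert--Schmidt norm, via Dini) is the crux; once it is in hand, the equicontinuity estimate and the assembly of the compact sets are routine.
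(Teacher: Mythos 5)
Your overall architecture --- a uniform Kolmogorov/BDG modulus of continuity plus compact containment of the time marginals, assembled via Arzel\`a--Ascoli on each $C([0,T];\Us)$ and a diagonal argument over horizons --- is sound, and you have correctly located the one point where the finite-dimensional argument does not transfer. The paper itself offers no such analysis: its proof is a one-line deferral to Prop.~49 of Denis--Hu--Peng, asserting that the finite-dimensional proof applies unchanged. There the estimate $E_P\big[\|B_t^{0,\theta}-B_s^{0,\theta}\|^4\big]\le C M^4 |t-s|^2$ suffices, because in $C([0,T];\R^d)$ equicontinuity together with the fixed value $B_0=0$ already yields relative compactness; your observation that this breaks down in $C([0,T];\Us)$ and that a compact-containment ingredient must be added is exactly what the paper's citation glosses over.

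However, your crux step has a genuine gap relative to the paper's stated hypotheses. You obtain $a_N=\sup_{\gamma\in\Theta}\|(I-\pi_N)\gamma\|^2_{L_2(\Us,\Us)}\downarrow 0$ by Dini's theorem, invoking ``compactness of $\Theta$ in the Hilbert--Schmidt norm'' --- but the paper only provides a \emph{bounded, closed} set $\Theta$ of Hilbert--Schmidt operators, and in infinite dimensions bounded and closed does not imply compact. Without compactness (or an equivalent uniform tail condition) the Dini step fails, and indeed the conclusion itself can fail: take $\gamma_n = e_n\otimes u$ for a fixed unit vector $u$, so that $\{\gamma_n\}$ is bounded and closed in $L_2(\Us,\Us)$ (pairwise distances $\sqrt2$), $Q_n=\gamma_n\gamma_n^T=e_n\otimes e_n$, and the constant controls $\theta\equiv\gamma_n$ are admissible; under $P_{\theta_n}$ the marginal $B_t$ is $N(0,tQ_n)$, and since $\sup_{x\in K}|\langle x,e_n\rangle|\to0$ for every compact $K\subset\Us$, no compact set can carry mass uniformly over $n$, so the family is not tight. (This is compatible with $\Sigma$ being convex and closed in $C_1(\Hs)$, so nothing elsewhere in the paper excludes it.) Thus your route is the right one, but it silently imports an assumption the paper never states --- compactness of $\Theta$ in the Hilbert--Schmidt norm, equivalently a uniform trace-tail decay $\sup_{Q\in\Sigma}\sum_{i>N}\langle Qe_i,e_i\rangle\to0$ --- and you should flag this as an additional hypothesis rather than attribute it to the standing ones; as written, your Dini step and the paper's appeal to the finite-dimensional proof are both unjustified at precisely the same spot.
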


\begin{proof}
$\absz$\\
The proof is the same as in finite-dimensional case (see \cite[Prop.49]{denis_hu_peng1}).

\end{proof}

Now let us consider the following spaces:
\begin{itemize}
\item $L^0(\Omega)$: the space of $\mathcal{B}(\Omega)$-measurable real functions;
\item $L^0_{L_2^\Sigma}(\Omega)$: the space of $\mathcal{B}(L_2^\Sigma)/\mathcal{B}(\Omega)$-measurable $\Omega\to L_2^\Sigma$ mappings;
\item $B_{L_2^\Sigma}(\Omega)$: all bounded mappings in $L^0_{L_2^\Sigma}(\Omega)$;
\item $CB_{L_2^\Sigma}(\Omega)$: all bounded and continuous mappings in $L^0_{L_2^\Sigma}(\Omega)$.
\end{itemize}

\begin{df}
For a random variable $X\in L^0(\Omega)$ such that a linear (classical) expectation $E_P$ exists for all $P\in\mathcal{P}$ an \textbf{upper expectation} of $\mathcal{P}$ is defined as follows:
\begin{equation*}
 \bar{\E}[X]:=\sup\limits_{P\in\mathcal{P}}E_P[X].
\end{equation*}
\end{df}

Later on we consider such a set of probability measures

\hfill $\mathcal{P}:=\big\{P_\theta\,,\ \theta\in\mathcal{A}_{0,\infty}^\Theta\big\}$.

\begin{rem}
It is clear that on the expectation space $(\Omega, B_{L_2^\Sigma}(\Omega), \bar{\E})$ as well as on the $(\Omega, CB_{L_2^\Sigma}(\Omega), \bar{\E})$ the upper expectation $\bar{\E}[\pdot]$ is a sublinear expectation.
\end{rem}

\begin{rem}\label{rem_EG_Ebar_Lip}
For a $G$-expectation $\E$ by \textbf{Prop.\ref{prop_EG_Ebar}} we have that
$$\E[\psi(X)]=\bar{\E}[\psi(X)]\,, \ \ \ X\in Lip(\Omega)\,,\ \psi\in\Cp(L_2^\Sigma)\ .$$
For $X\in L^0_{L_2^\Sigma}(\Omega)$ we define the norm $\overline{\|X\|}^p_{\Sigma,\,p}:=\bar{\E}\big[\|X\|^p_{L_2^\Sigma}\big]$.
\\
Therefore \ $\|X\|^p_{\Sigma,\,p}:=\E\big[\|X\|^p_{L_2^\Sigma}\big]=\bar{\E}\big[\|X\|^p_{L_2^\Sigma}\big]=\overline{\|X\|}^p_{\Sigma,\,p}\ \,, \  X\in Lip(\Omega)$.
\end{rem}
 
Define the following spaces:
\begin{itemize}
\item $\mathcal{L}^p:=\big\{X\in L^0_{L_2^\Sigma}(\Omega)\mid\bar{\E}\|X\|^p_{L_2^\Sigma}<\infty\big\}$;
\item $\mathcal{N}:=\big\{X\in L^0_{L_2^\Sigma}(\Omega)\mid X=0\ \ \ c\text{ - q.s.}\big\}$;
\item $\L^p_{L_2^\Sigma}:=\mathcal{L}^p/\mathcal{N}$;
\item $\L^p_{B,L_2^\Sigma}$ is the completion of $B_{L_2^\Sigma}(\Omega)/\mathcal{N}$ under $\overline{\|\pdot\|}_{\Sigma,\,p}$;
\item $\L^p_{CB,L_2^\Sigma}$ is the completion of $CB_{L_2^\Sigma}(\Omega)/\mathcal{N}$ under $\overline{\|\pdot\|}_{\Sigma,\,p}$.
\end{itemize}

\begin{rem}
 Similar to the classical arguments of $L^p$-theory we can conclude that $(\L^p_{L_2^\Sigma},\,\overline{\|\pdot\|}_{\Sigma,\,p}\big)$ is a Banach space.

\end{rem}
 
Since\ \  $CB_{L_2^\Sigma}(\Omega)\subset B_{L_2^\Sigma}(\Omega)\subset \mathcal{L}^p$
$\ \ \ \Rightarrow\ \ \ CB_{L_2^\Sigma}(\Omega)/\mathcal{N}\subset B_{L_2^\Sigma}(\Omega)/\mathcal{N}\subset \L^p_{L_2^\Sigma}$.

So we have the following inclusions: $\L^p_{CB,L_2^\Sigma}\subset\L^p_{B,L_2^\Sigma}\subset\L^p_{L_2^\Sigma}$.
 \begin{prop}\label{prop_B}
 $$\L^p_{B,L_2^\Sigma}=
\big\{X\in \L^p_{L_2^\Sigma}(\Omega)\mid\lim\limits_{n\to\infty}\bar{\E}\big[\|X\|^p_{L_2^\Sigma}\cdot
\ind_{\{\|X\|_{L_2^\Sigma}>n\}}\big]=0\big\}.$$
\end{prop}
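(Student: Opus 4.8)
The plan is to establish the two set-inclusions separately. Write $J^p$ for the set on the right-hand side, so that the claim becomes $\L^p_{B,L_2^\Sigma}=J^p$; throughout I abbreviate the scalar random variable $\|X\|_{L_2^\Sigma}$ inside the indicators and recall that $\bar{\E}$ is a sublinear (monotone, subadditive, positively homogeneous) expectation and that $\L^p_{L_2^\Sigma}$ is complete, so that $\L^p_{B,L_2^\Sigma}$ is exactly the $\overline{\|\pdot\|}_{\Sigma,p}$-closure of $B_{L_2^\Sigma}(\Omega)/\mathcal N$.

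For the inclusion $J^p\subseteq\L^p_{B,L_2^\Sigma}$ I would argue by truncation. Given $X\in J^p$, set $X_n:=X\cdot\ind_{\{\|X\|_{L_2^\Sigma}\le n\}}$. Since $\|X\|_{L_2^\Sigma}$ is a real measurable function, each $X_n$ is measurable and bounded by $n$ in $L_2^\Sigma$-norm, hence lies in $B_{L_2^\Sigma}(\Omega)$. Then $\|X-X_n\|_{L_2^\Sigma}=\|X\|_{L_2^\Sigma}\ind_{\{\|X\|_{L_2^\Sigma}>n\}}$, so that $\overline{\|X-X_n\|}^{\,p}_{\Sigma,p}=\bar{\E}\big[\|X\|^p_{L_2^\Sigma}\ind_{\{\|X\|_{L_2^\Sigma}>n\}}\big]\to0$ precisely by the defining property of $J^p$. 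Thus $X$ is an $\overline{\|\pdot\|}_{\Sigma,p}$-limit of elements of $B_{L_2^\Sigma}(\Omega)/\mathcal N$, i.e. $X\in\L^p_{B,L_2^\Sigma}$.

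For the reverse inclusion $\L^p_{B,L_2^\Sigma}\subseteq J^p$ I would take $X\in\L^p_{B,L_2^\Sigma}$ together with bounded approximants $X_k\in B_{L_2^\Sigma}(\Omega)$, say $\|X_k\|_{L_2^\Sigma}\le c_k$, satisfying $\bar{\E}\big[\|X-X_k\|^p_{L_2^\Sigma}\big]\to0$. The core estimate combines two facts. First, the triangle inequality $\|X\|_{L_2^\Sigma}\le\|X_k\|_{L_2^\Sigma}+\|X-X_k\|_{L_2^\Sigma}$ gives the set inclusion $\{\|X\|_{L_2^\Sigma}>2n\}\subseteq\{\|X_k\|_{L_2^\Sigma}>n\}\cup\{\|X-X_k\|_{L_2^\Sigma}>n\}$. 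Second, the convexity bound $\|X\|^p_{L_2^\Sigma}\le 2^{p-1}\big(\|X_k\|^p_{L_2^\Sigma}+\|X-X_k\|^p_{L_2^\Sigma}\big)$. Applying $\bar{\E}$ with monotonicity and subadditivity produces four terms; for $n>c_k$ the term containing $\ind_{\{\|X_k\|_{L_2^\Sigma}>n\}}\|X_k\|^p_{L_2^\Sigma}$ vanishes, and the two terms carrying a factor $\|X-X_k\|^p_{L_2^\Sigma}$ are each bounded by $\bar{\E}\big[\|X-X_k\|^p_{L_2^\Sigma}\big]$. Letting $n\to\infty$ and then $k\to\infty$, and using that $\bar{\E}\big[\|X\|^p_{L_2^\Sigma}\ind_{\{\|X\|_{L_2^\Sigma}>m\}}\big]$ is monotone in $m$ (so the limit along $2n$ coincides with the limit along $n$), yields $\lim_n\bar{\E}\big[\|X\|^p_{L_2^\Sigma}\ind_{\{\|X\|_{L_2^\Sigma}>n\}}\big]=0$, i.e. $X\in J^p$.

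The step I expect to be the main obstacle is the remaining cross term $\bar{\E}\big[\|X_k\|^p_{L_2^\Sigma}\ind_{\{\|X-X_k\|_{L_2^\Sigma}>n\}}\big]$, the one place where a crude bound by $\bar{\E}\big[\|X-X_k\|^p_{L_2^\Sigma}\big]$ is unavailable. Here I would use exactly the boundedness of the approximant, $\|X_k\|^p_{L_2^\Sigma}\le c_k^p$, combined with a Chebyshev-type inequality for the capacity $c=c_{\mathcal P}$, namely $c\big(\{\|X-X_k\|_{L_2^\Sigma}>n\}\big)\le\bar{\E}\big[\|X-X_k\|^p_{L_2^\Sigma}\big]/n^p$, which holds because $P(\|X-X_k\|_{L_2^\Sigma}>n)\le E_P\big[\|X-X_k\|^p_{L_2^\Sigma}\big]/n^p\le\bar{\E}\big[\|X-X_k\|^p_{L_2^\Sigma}\big]/n^p$ for every $P\in\mathcal P$. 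This bounds the cross term by $c_k^p\,\bar{\E}\big[\|X-X_k\|^p_{L_2^\Sigma}\big]/n^p$, which tends to $0$ as $n\to\infty$ for each fixed $k$. All other manipulations are routine consequences of the sublinear-expectation axioms recorded earlier.
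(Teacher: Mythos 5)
Your proof is correct and is essentially the argument the paper itself relies on: the paper's ``proof'' of this proposition is only a citation of the finite-dimensional case (Denis--Hu--Peng, Prop.~18), whose standard argument --- truncation for the inclusion $J^p\subseteq\L^p_{B,L_2^\Sigma}$, and for the converse the triangle-inequality splitting, the boundedness $\|X_k\|_{L_2^\Sigma}\le c_k$ of the approximant, and the Chebyshev-type capacity bound $c\big(\{\|X-X_k\|_{L_2^\Sigma}>n\}\big)\le\bar{\E}\big[\|X-X_k\|^p_{L_2^\Sigma}\big]/n^p$ for the cross term --- is exactly what you reproduce. The only adaptation the infinite-dimensional setting requires, replacing scalar clipping by the vector-valued truncation $X\cdot\ind_{\{\|X\|_{L_2^\Sigma}\le n\}}$ (equivalently the radial map $\Trunc(X,n)$ used elsewhere in the paper), you handle correctly, including the final passage from the limit along $2n$ to the limit along $n$ via monotonicity of the tail expectation.
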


\begin{proof}
 $\absz$\\
There is no changes with a proof for a finite-dimensional case (see \cite[Prop.18]{denis_hu_peng1}).

\end{proof}

\begin{df}
 A mapping $X$ on $\Omega$ with values in a topological space is said to be \textbf{quasi-continuous} if for every \ $\varepsilon>0$ \ there exists \ an open set \ $O$ with capacity \ $c(O)<\varepsilon$, \ such that the restriction of mapping $X$ to the complement $O^c$ is continuous.
\end{df}

\begin{df}
 We say that mapping $X$ on $\Omega$ has a quasi-continuous version if there exists a quasi-continuous mapping $Y$ on $\Omega$, such that $X=Y$ quasi surely. 
\end{df}
 
\begin{prop}\label{prop_CB_qcv}
If $X\in\L^p_{CB,L_2^\Sigma}$ \ then \ $X$ has a quasi-continuous version.
\end{prop}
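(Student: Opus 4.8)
The plan is to realize the abstract completion concretely through a rapidly converging sequence of genuinely continuous mappings, and then to construct the quasi-continuous version as their uniform limit off an open set of arbitrarily small capacity. First I would use the definition of $\L^p_{CB,L_2^\Sigma}$ as the completion of $CB_{L_2^\Sigma}(\Omega)/\mathcal{N}$ to pick a sequence $X_n\in CB_{L_2^\Sigma}(\Omega)$ with $\overline{\|X-X_n\|}_{\Sigma,\,p}\to0$. After passing to a subsequence, which I relabel, I arrange the fast decay $\overline{\|X_{n+1}-X_n\|}^{\,p}_{\Sigma,\,p}\le 2^{-2np}$, so that the telescoping differences are summable in capacity.

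The workhorse is a Chebyshev inequality for the capacity. Since $c(\,\pdot\,)=\sup_{P\in\mathcal{P}}P(\,\pdot\,)$ and $\bar{\E}[\,\pdot\,]=\sup_{P\in\mathcal{P}}E_P[\,\pdot\,]$, applying the ordinary Markov inequality under each fixed $P$ and taking the supremum gives, for any $Y\in L^0_{L_2^\Sigma}(\Omega)$ and $\varepsilon>0$, the bound $c\big(\{\|Y\|_{L_2^\Sigma}>\varepsilon\}\big)\le\varepsilon^{-p}\,\bar{\E}\big[\|Y\|^p_{L_2^\Sigma}\big]$. Applying this to $Y=X_{n+1}-X_n$ with $\varepsilon=2^{-n}$ yields $c(A_n)\le 2^{-np}$ for $A_n:=\{\|X_{n+1}-X_n\|_{L_2^\Sigma}>2^{-n}\}$; here it is crucial that $A_n$ is \emph{open}, which holds precisely because $X_n,X_{n+1}\in CB_{L_2^\Sigma}(\Omega)$ are continuous, so $\omega\mapsto\|X_{n+1}(\omega)-X_n(\omega)\|_{L_2^\Sigma}$ is continuous.

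Next I would set $O_N:=\bigcup_{n\ge N}A_n$, which is open, and use countable subadditivity of the capacity to obtain $c(O_N)\le\sum_{n\ge N}2^{-np}\xrightarrow[N\to\infty]{}0$. On the complement $O_N^c$ one has $\|X_{n+1}-X_n\|_{L_2^\Sigma}\le 2^{-n}$ for all $n\ge N$, so the telescoping series converges uniformly there; being a uniform limit of continuous mappings, the pointwise limit $Y:=\lim_n X_n$ is continuous on each $O_N^c$. Since $c(O_N)\to0$, this exhibits $Y$ as quasi-continuous, and moreover $X_n\to Y$ pointwise off $\bigcap_N O_N$, a polar set, i.e.\ quasi surely.

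It remains to identify $Y$ with $X$, and this step is the main obstacle, since the convergences $X_n\to X$ (in norm) and $X_n\to Y$ (quasi surely) are of different types and must be reconciled without a single reference measure. Here I would invoke Fatou's lemma under each fixed $P\in\mathcal{P}$ (for which quasi-sure convergence is in particular $P$-a.s.): for fixed $n$, $E_P\big[\|X_n-Y\|^p_{L_2^\Sigma}\big]\le\varliminf_{m}E_P\big[\|X_n-X_m\|^p_{L_2^\Sigma}\big]\le\varliminf_{m}\bar{\E}\big[\|X_n-X_m\|^p_{L_2^\Sigma}\big]$; taking the supremum over $P$ and then letting $n\to\infty$ (using that $(X_n)$ is Cauchy in $\overline{\|\,\pdot\,\|}_{\Sigma,\,p}$) gives $\overline{\|X_n-Y\|}_{\Sigma,\,p}\to0$. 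Combined with $X_n\to X$, uniqueness of limits forces $\overline{\|X-Y\|}_{\Sigma,\,p}=0$, and the capacity--Chebyshev bound above (the analogue of \textbf{Lm.\ref{lm_X0_qs}}) then yields $X=Y$ quasi surely. Thus $Y$ is a quasi-continuous version of $X$, as required.
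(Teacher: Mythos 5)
Your proposal is correct and follows essentially the same route as the paper, which for this proposition simply defers to the finite-dimensional proof of Denis--Hu--Peng (Prop.~24 of \cite{denis_hu_peng1}): that proof is exactly your scheme of a capacity--Chebyshev (Markov under each $P$, then supremum) bound on the open sets $A_n=\{\|X_{n+1}-X_n\|_{L_2^\Sigma}>2^{-n}\}$ for a rapidly convergent subsequence, countable subadditivity to make $c(O_N)$ small, uniform convergence of the telescoping series on $O_N^c$ to produce the quasi-continuous limit $Y$, and the Fatou-under-each-$P$ argument giving $\bar{\E}\big[\|X-Y\|^p_{L_2^\Sigma}\big]=0$, whence $X=Y$ q.s.\ by \textbf{Lm.\ref{lm_X0_qs}}. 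Your added observations (openness of $A_n$ via continuity of $\omega\mapsto\|X_{n+1}(\omega)-X_n(\omega)\|_{L_2^\Sigma}$ into the Banach space $L_2^\Sigma$) are precisely the routine adjustments that justify the paper's remark that nothing changes in infinite dimensions.
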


\begin{proof}
 $\absz$\\
There is also no changes with a proof for a finite-dimensional case (see \cite[Prop.24]{denis_hu_peng1}).

\end{proof}

\begin{prop}\label{prop_CB}
 $$\L^p_{CB,L_2^\Sigma}=
\big\{X\in \L^p_{L_2^\Sigma}(\Omega)\mid X \text{ has a q.c. vers., }
\lim\limits_{n\to\infty}\bar{\E}\big[\|X\|^p_{L_2^\Sigma}\cdot\ind_{\{\|X\|_{L_2^\Sigma}>n\}}\big]=~\!\!0\big\}.$$
\end{prop}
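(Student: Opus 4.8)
The statement is a two-sided set equality, so the plan is to prove the two inclusions separately; write $J$ for the set on the right-hand side. The inclusion $\L^p_{CB,L_2^\Sigma}\subseteq J$ I would obtain directly from results already at hand. Taking $X\in\L^p_{CB,L_2^\Sigma}$, \textbf{Prop.\ref{prop_CB_qcv}} gives that $X$ has a quasi-continuous version, while the chain $\L^p_{CB,L_2^\Sigma}\subseteq\L^p_{B,L_2^\Sigma}$ combined with the characterization of $\L^p_{B,L_2^\Sigma}$ in \textbf{Prop.\ref{prop_B}} yields the uniform-integrability condition $\lim_{n\to\infty}\bar{\E}\big[\|X\|^p_{L_2^\Sigma}\ind_{\{\|X\|_{L_2^\Sigma}>n\}}\big]=0$. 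Hence $X\in J$, and this inclusion is settled.

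For the reverse inclusion $J\subseteq\L^p_{CB,L_2^\Sigma}$ I would fix $X\in J$ and proceed in two reduction steps. First I would reduce to the bounded case by truncating: set $X_n:=T_n\circ X$, where $T_n:L_2^\Sigma\to L_2^\Sigma$ is the radial truncation $T_n(v)=v$ for $\|v\|_{L_2^\Sigma}\le n$ and $T_n(v)=n\,v/\|v\|_{L_2^\Sigma}$ otherwise. Since $T_n$ is continuous, each $X_n$ is bounded and still quasi-continuous (a continuous map composed with a quasi-continuous one is quasi-continuous). From $\|v-T_n(v)\|_{L_2^\Sigma}\le\|v\|_{L_2^\Sigma}\ind_{\{\|v\|_{L_2^\Sigma}>n\}}$ I would get $\overline{\|X-X_n\|}^p_{\Sigma,\,p}=\bar{\E}\big[\|X-X_n\|^p_{L_2^\Sigma}\big]\le\bar{\E}\big[\|X\|^p_{L_2^\Sigma}\ind_{\{\|X\|_{L_2^\Sigma}>n\}}\big]\to0$ by the uniform-integrability condition. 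As $\L^p_{CB,L_2^\Sigma}$ is closed (being a completion), it then suffices to prove that every \emph{bounded} quasi-continuous $X$ lies in $\L^p_{CB,L_2^\Sigma}$.

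The second step would use quasi-continuity directly. Fixing $\varepsilon>0$ and a bound $M$ for $\|X\|_{L_2^\Sigma}$, quasi-continuity furnishes an open set $O$ with $c(O)<\varepsilon$ on whose closed complement $O^c$ the restriction $X|_{O^c}$ is continuous. I would extend $X|_{O^c}$ to a continuous map $Y:\Omega\to L_2^\Sigma$ with $\|Y\|_{L_2^\Sigma}\le M$ everywhere, so that $Y\in CB_{L_2^\Sigma}(\Omega)$. Since $X=Y$ on $O^c$, the function $\|X-Y\|^p_{L_2^\Sigma}$ is supported in $O$ and bounded by $(2M)^p$, whence $\overline{\|X-Y\|}^p_{\Sigma,\,p}=\bar{\E}\big[\|X-Y\|^p_{L_2^\Sigma}\ind_{O}\big]\le(2M)^p\,\bar{\E}[\ind_{O}]=(2M)^p\,c(O)<(2M)^p\varepsilon$. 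Letting $\varepsilon\downarrow0$ exhibits $X$ as a $\overline{\|\cdot\|}_{\Sigma,\,p}$-limit of elements of $CB_{L_2^\Sigma}(\Omega)/\mathcal{N}$, i.e.\ $X\in\L^p_{CB,L_2^\Sigma}$, completing the inclusion.

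The main obstacle is precisely the continuous, norm-bounded extension of $X|_{O^c}$ in this last step: in the one-dimensional setting of \cite{denis_hu_peng1} the ordinary Tietze extension theorem suffices, but here the functions are $L_2^\Sigma$-valued, so Tietze is unavailable. The remedy I would invoke is the Dugundji extension theorem, which for a closed subset of a metric space produces a continuous extension whose range lies in the closed convex hull of $X(O^c)$; as that hull is contained in the ball of radius $M$, the extension $Y$ automatically satisfies $\|Y\|_{L_2^\Sigma}\le M$. This replacement of Tietze by Dugundji is the only point at which the argument genuinely departs from the finite-dimensional proof.
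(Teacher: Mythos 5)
Your proposal is correct and follows essentially the same route as the paper's proof: both directions rest on \textbf{Prop.\ref{prop_CB_qcv}} and \textbf{Prop.\ref{prop_B}} for the easy inclusion, and on radial truncation followed by the Dugundji extension theorem (in place of Tietze, with the convex ball $\{\|Y\|_{L_2^\Sigma}\le n\}$ guaranteeing the norm bound) plus the capacity estimate $\bar{\E}\big[\|X^t_n-\widetilde X_n\|^p_{L_2^\Sigma}\big]\lesssim n^p\,c(A_n^c)$ for the converse. Your two-step packaging (first reduce to the bounded quasi-continuous case via closedness of the completion, then extend with an arbitrary $\varepsilon$) is only a cosmetic reorganization of the paper's single sequence with $c(A_n^c)<n^{-(p+1)}$, so there is nothing substantive to flag.
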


\begin{proof}
 $\absz$\\
For this proof we follows \cite[Prop.25]{denis_hu_peng1}, but here there are some difficulties when we pass to infinite-dimensions. So here we give the whole proof with all details.

Let us denote a set \  

$A:=\big\{X\in \L^0_{L_2^\Sigma}(\Omega)\mid X \text{ has a q.c. vers., }
\lim\limits_{n\to\infty}\bar{\E}\big[\|X\|^p_{L_2^\Sigma}\cdot\ind_{\{\|X\|_{L_2^\Sigma}>n\}}\big]=0\big\}$.

\textbf{1)} If $X\in \L^p_{CB,L_2^\Sigma}$ \ then by \textbf{Prop.\ref{prop_CB_qcv}} we have that  $X$ has a quasi-continuous version.

Also  for $X\in \L^p_{B,L_2^\Sigma}$ \ by \textbf{Prop.\ref{prop_B}} we have \ $
\lim\limits_{n\to\infty}\bar{\E}\big[\|X\|^p_{L_2^\Sigma}\cdot\ind_{\{\|X\|_{L_2^\Sigma}>n\}}\big]=0$.

So we can conclude that \ $ X\in A$.
\\
\textbf{2)} Every $X\in A$ is quasi-continuous.

Take the truncation 

$X^t_n:=\Trunc(X,n)\equiv X\cdot\ind_{\{\|X\|_{L_2^\Sigma}\leq n\}}+n\cdot\dfrac{X}{\phantom{x,}\|X\|_{L_2^\Sigma}}\cdot\ind_{\{\|X\|_{L_2^\Sigma}>n\}}$.
\\\parbox{140mm}{
Hence \ $\bar{\E}\big[\|X-X^t_n\|^p_{L_2^\Sigma}\big]
=\bar{\E}\Big[\|X\|^p_{L_2^\Sigma}\cdot\big(1-\dfrac{n}{\phantom{x,}\|X\|_{L_2^\Sigma}}\big)^p\cdot\ind_{\{\|X\|_{L_2^\Sigma}>n\}}\Big]$

\hfill$
\phantom{\dfrac{1}{1}}\leq\bar{\E}\big[\|X\|^p_{L_2^\Sigma}\cdot\ind_{\{\|X\|_{L_2^\Sigma}>n\}}\big]
\xrightarrow[n\to\infty]{}0$.}

Since $X^t_n$ is quasi-continuous then there exists a closed set $A_n$, such that $c(A_n^c)<\dfrac{1}{n^{p+1}}$\ \ \ and $X_n$
is continuous on $A_n$.

For the following step we need to use the Dugundji theorem (the infinite-dimensional version of the Tietze extension theorem): 

\begin{teo}[Dugundji]\label{th_Dugundji}\cite[Th.1.2.2.]{mill1}.\\
Let $L$ be a locally convex linear space and let $C\subseteq L$ be convex. Then for every space $\Omega$ with closed subspace $A$, every continuous function \mbox{$f:A\to C$} can be extended to a continuous function $\bar{f}:\Omega\to C$.
\\
As is known that every normable space is locally convex it follows that the Banach space $L_2^\Sigma$ is also locally convex.

\end{teo}

We have \  $\|X^t_n\|_{L_2^\Sigma}\leq \|X\|_{L_2^\Sigma}\cdot\ind_{\{\|X\|_{L_2^\Sigma}\leq n\}}+n\cdot\ind_{\{\|X\|_{L_2^\Sigma}>n\}}\leq n$.

Define a set \ $C_n:=\big\{Y\in L_2^\Sigma\mid \|Y\|_{L_2^\Sigma}\leq n\big\}$.

So that  \ $ X^t_n\big|_{A_n}:A_n\to C_n$ is continuous.

By the Dugundji theorem (\textbf{Th.\ref{th_Dugundji}}) it follows that there exists a continuous extension $\widetilde{X}_n:\Omega\to C_n$, which satisfies the following:

$\widetilde{X}_n\in CB_{L_2^\Sigma}(\Omega)$,

$X^t_n= \widetilde{X}_n$ on $A_n$,

$\|\widetilde{X}_n\|_{L_2^\Sigma}\leq n$.
\\
Hence \ $\bar{\E}\big[\|X^t_n- \widetilde{X}_n\|^p_{L_2^\Sigma}\big]
\leq \bar{\E}\Big[\Big(\|X^t_n\|^p_{L_2^\Sigma}+\|\widetilde{X}_n\|^p_{L_2^\Sigma}\Big)\cdot\ind_{\{C_n\setminus A_n\}}\Big]$

\hfill$
\leq 2n^p\cdot c\big(A_n^c\big)=\dfrac{2n^p}{n^{p+1}}=\dfrac{2}{n}\xrightarrow[n\to\infty]{}0$.
\\
And we have \ 

\hfill$\bar{\E}\big[\|X- \widetilde{X}_n\|^p_{L_2^\Sigma}\big]
\leq2^{p-1}\Big(\bar{\E}\big[\|X- X^t_n\|^p_{L_2^\Sigma}\big]+\bar{\E}\big[\|X^t_n- \widetilde{X}_n\|^p_{L_2^\Sigma}\big]\Big)
\xrightarrow[n\to\infty]{}0$.

So that \ $X\in \L^p_{CB,L_2^\Sigma}$.

\end{proof}

\newpage
\section{Stochastic Integral with respect to G-Brownian motion}\label{sec_stoch_int}

\subsection[Definition of the stochastic integral]{Definition of the stochastic integral for elementary integrated processes}\label{subs_def_stoch_int}

Recall that 

$Lip( \Omega_T ):=\Big\{ \fee(B_{t_1},..,B_{t_n})\mid n\geq1,\ 
t_1,..,t_n\in[0,T],\  \fee\in \Cp\,(\Us^n, L_2^\Sigma)\Big\} $.

For $p\geq1 $ \ the space ${^\Hs\! L_G^p\big( \Omega_T \big)}$ is a completion of $Lip( \Omega_T )$ 
under the norm  $\|X\|^p_{\Sigma,\,p}=\E\big[\|X\|^p_{L_2^\Sigma}\big]$.

Hold the notation $\|\pdot\|_\Sigma:=\|X\|_{\Sigma,\,2}$.

Since  ${^\Hs\! L_G^p\big( \Omega_T \big)}$ is an abstract completion we need to show that it is a space of random variables what provides the following theorem:

\begin{rem}
Let us denote the following subspace of $Lip( \Omega_T )$:

$BLip( \Omega_T ):=\Big\{ \fee(B_{t_1},..,B_{t_n})\mid n\geq1,\ 
t_1,..,t_n\in[0,T],\  \fee\in \Cbp\,(\Us^n, L_2^\Sigma)\Big\} $.

And the space ${^\Hs\! BL_G^p\big( \Omega_T \big)}$ \ be a completion of $BLip( \Omega_T )$ 
under the norm $\|X\|^p_{\Sigma,\,p}=\E\big[\|X\|^p_{L_2^\Sigma}\big]$.

Then we have that ${^\Hs\! BL_G^p\big( \Omega_T \big)}={^\Hs\! L_G^p\big( \Omega_T \big)}$.
\end{rem}

\begin{proof}
 $\absz$\\
If $X\in {^\Hs\! L_G^p\big( \Omega_T \big)}$ \ then there exists a sequence \ $(X_k)\subset Lip( \Omega_T )$, \ such that \ $\E\big[\|X-X_k\|^p_{L_2^\Sigma}\big]
\xrightarrow[n\to\infty]{}0$ \ and \ $\E\big[\|X_k\|^p_{L_2^\Sigma}\big]<\infty$.

Take $X_k^n:=\Trunc(X_k,n)\in BLip( \Omega_T )$.
\\
Therefore \ $\|X-X_k^n\|^p_{\Sigma,\,p}=\E\big[\|X-X_k^n\|^p_{L_2^\Sigma}\big]
\leq\E\big[\|X\|^p_{L_2^\Sigma}\cdot\ind_{\{\|X\|_{L_2^\Sigma}>n\}}\big]
\xrightarrow[n\to\infty]{}0$.

So we can deduce that $\mathrm{compl}\Big( BLip( \Omega_T ),\,\|\pdot\|_{\Sigma,\,p}\Big)
={^\Hs\! L_G^p\big( \Omega_T \big)}
$.

\end{proof}

\begin{teo}
$\absz$\\
  ${^\Hs\! L_G^p\big( \Omega_T \big)}=
\big\{X\in L^0_{L_2^\Sigma}(\Omega_T)\mid X \text{ has a q.c. vers., }$

\hfill$\lim\limits_{n\to\infty}\bar{\E}\big[\|X\|^p_{L_2^\Sigma}\cdot\ind_{\{\|X\|_{L_2^\Sigma}>n\}}\big]=0\big\}$.

Moreover, if $X\in{^\Hs\! L_G^p\big( \Omega_T \big)}$ \ then \ $\E\big[\|X\|^p_{L_2^\Sigma}\big]=\bar{\E}\big[\|X\|^p_{L_2^\Sigma}\big]$.

\end{teo}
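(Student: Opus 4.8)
The plan is to recognise that the set on the right-hand side is precisely the space $\L^p_{CB,L_2^\Sigma}$ characterised in \textbf{Prop.\ref{prop_CB}} (read over $\Omega_T$), so that the assertion reduces to the identification
$${^\Hs\! L_G^p\big( \Omega_T \big)}=\L^p_{CB,L_2^\Sigma}.$$
First I would make the two spaces comparable inside one ambient Banach space. By \textbf{Rem.\ref{rem_EG_Ebar_Lip}} the $G$-norm $\|\pdot\|_{\Sigma,\,p}$ and the upper-expectation norm $\overline{\|\pdot\|}_{\Sigma,\,p}$ coincide on $Lip(\Omega_T)$, and each element of $Lip(\Omega_T)$ is a genuine random variable of finite norm (\textbf{Rem.\ref{rem_finite_norm_LpGH}}). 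Hence the identity on $Lip(\Omega_T)$ extends to an isometry carrying the abstract completion ${^\Hs\! L_G^p\big( \Omega_T \big)}$ onto the $\overline{\|\pdot\|}_{\Sigma,\,p}$-closure of $Lip(\Omega_T)$ inside $\big(\L^p_{L_2^\Sigma},\,\overline{\|\pdot\|}_{\Sigma,\,p}\big)$. Through this identification it suffices to show that this closure equals the closure of $CB_{L_2^\Sigma}(\Omega_T)/\mathcal{N}$, i.e. $\L^p_{CB,L_2^\Sigma}$.

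The inclusion $\subseteq$ is the easy half. A bounded Lipschitz cylinder function $\fee(B_{t_1},\dots,B_{t_n})$ with $\fee\in\Cbp(\Us^n,L_2^\Sigma)$ is bounded and continuous as a map $\Omega_T\to L_2^\Sigma$, so $BLip(\Omega_T)\subset CB_{L_2^\Sigma}(\Omega_T)$; since the preceding remark gives ${^\Hs\! BL_G^p\big( \Omega_T \big)}={^\Hs\! L_G^p\big( \Omega_T \big)}$, the closures of $Lip(\Omega_T)$ and of $BLip(\Omega_T)$ coincide, and both lie in $\L^p_{CB,L_2^\Sigma}$.

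The reverse inclusion is where the real work, and the main obstacle, lies: every $X\in CB_{L_2^\Sigma}(\Omega_T)$ must be approximated in $\overline{\|\pdot\|}_{\Sigma,\,p}$ by Lipschitz cylinder functions. I would proceed in three stages. Truncation reduces matters to bounded $X$, for which the uniform-integrability tail vanishes automatically (\textbf{Prop.\ref{prop_B}}). Next I invoke tightness (\textbf{Prop.\ref{prop_P_tight}}): for $\varepsilon>0$ there is a compact $K\subset\Omega_T$ with $c(K^c)<\varepsilon$; being a compact family of continuous $\Us$-valued paths it is equicontinuous (Arzel\`a--Ascoli), so replacing $\omega$ by its piecewise-linear sampling $\omega^{(m)}$ on a fine grid $t_1,\dots,t_n$ gives $\omega^{(m)}\to\omega$ uniformly on $K$, whence $X(\omega^{(m)})\to X(\omega)$ uniformly on $K$ by uniform continuity of $X$ there. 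The map $\omega\mapsto X(\omega^{(m)})$ is a cylinder function of $(\omega_{t_1},\dots,\omega_{t_n})\in\Us^n$ with continuous symbol $\Us^n\to L_2^\Sigma$; I would render this symbol Lipschitz and defined on all of $\Us^n$ by a local Lipschitz approximation extended via the Dugundji theorem \textbf{Th.\ref{th_Dugundji}}, which is exactly the tool handling the infinite-dimensional target $L_2^\Sigma$. The error off $K$ is dominated by the uniform bound on $X$ together with $c(K^c)<\varepsilon$, producing an estimate $\overline{\|X-Y\|}^{\,p}_{\Sigma,\,p}\le o(1)+C\,\varepsilon$ for the resulting cylinder function $Y\in Lip(\Omega_T)$; refining the grid and then sending $\varepsilon\downarrow0$ places $X$ in the closure of $Lip(\Omega_T)$. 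The delicate point throughout is keeping the sampling approximation uniform on the tight set while the state space $\Us$ and the value space $L_2^\Sigma$ are both infinite-dimensional, which is precisely why tightness and Dugundji are needed.

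Finally, the ``moreover'' is immediate from the identification: $\E[\|\pdot\|^p_{L_2^\Sigma}]=\|\pdot\|^p_{\Sigma,\,p}$ and $\bar{\E}[\|\pdot\|^p_{L_2^\Sigma}]=\overline{\|\pdot\|}^{\,p}_{\Sigma,\,p}$ are both continuous on ${^\Hs\! L_G^p\big( \Omega_T \big)}$ and agree on the dense subset $Lip(\Omega_T)$ by \textbf{Prop.\ref{prop_EG_Ebar}} and \textbf{Rem.\ref{rem_EG_Ebar_Lip}}, hence agree on the whole space.
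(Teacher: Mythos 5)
Your proposal follows essentially the same route as the paper's proof: identify the right-hand side with $\L^p_{CB,L_2^\Sigma}$ via \textbf{Prop.\ref{prop_CB}}, establish the sandwich $BLip(\Omega_T)\subset CB_{L_2^\Sigma}(\Omega_T)\subset{^\Hs\! L_G^p\big( \Omega_T \big)}$, prove the nontrivial inclusion by tightness of $\mathcal{P}$ (\textbf{Prop.\ref{prop_P_tight}}), uniform continuity on a compact $K$ via Arzel\`a--Ascoli, piecewise-linear sampling $\pi_n\omega$ with the error off $K$ controlled by the uniform bound times capacity, and obtain the ``moreover'' by density of $Lip(\Omega_T)$ where $\E$ and $\bar{\E}$ agree. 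The one step that would fail as written is your use of Dugundji: that theorem extends continuous maps continuously but does not preserve Lipschitz constants, so your extended symbol is merely bounded continuous --- the resulting cylinder function lands back in $CB_{L_2^\Sigma}(\Omega_T)$ rather than in $BLip(\Omega_T)$, which is circular. The repair is standard: on the compact image of $K$ under $\omega\mapsto(\omega_{t_1},\dots,\omega_{t_n})$ take a finite $\delta$-net $\{x_i\}$ and a Lipschitz partition of unity $\{\rho_i\}$ built from distance functions on $\Us^n$, and set $\Psi:=\sum_i\rho_i\,\widetilde{\Phi}(x_i)$; this is globally Lipschitz, bounded, uniformly close to the sampled symbol on the compact set, and so yields an approximant genuinely in $BLip(\Omega_T)$. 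It is worth noting that the paper glosses over this exact point, asserting $\Phi(\pi_n\omega)=\widetilde{\Phi}(\cdot)\in BLip(\Omega_T)$ outright even though $\Phi$ is only continuous, so your instinct to Lipschitz-ify the symbol is more careful than the source --- only the tool you named for it is the wrong one.
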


\begin{proof}
 $\absz$

Let $\Phi\in CB_{L_2^\Sigma}(\Omega_T)$, i.e. $\Phi:\Omega_T\to L_2^\Sigma$ is bounded and continuous \mbox{$L_2^\Sigma$-valued} random variable.

Take a compact set $K\subset \Omega_T=\big\{\omega_t:[0,T]\to\Us\mid\omega_0=0,\ \omega_t {\mbox{--continuous}} \big\} $.
\\
We claim that $\Phi$ be uniformly continuous on $K$.

In fact, since $\Phi$ is bounded and continuous on the compact set $K$ \ then for every \ $\omega\in K$ \ there exists a subsequence \ $\omega_{n_j}\xrightarrow[j\to\infty]{}\omega$, \ such that 
\phantom{xxxxdddddd}\\  $\Phi(\omega_{n_j})\xrightarrow[j\to\infty]{\|\, \cdot\, \|_{L^\Sigma_2}}\Phi(\omega)$.\hfill$(\,\ast\,)$

If $\Phi$ is not uniformly continuous on $K$ then 
$\exists\varepsilon>0\ \ \ \forall\delta_n\downarrow0\ \ \ \mbox{$\forall\omega_n,\,\omega'_n\in\Omega_T$}$,\ such that \  $\|\omega_n-\omega'_n\|_\Omega<\delta$ \ and it follows \  
$\|\Phi(\omega_n)-\Phi(\omega'_n)\|_{L^\Sigma_2}<\varepsilon$.

Let us take two different subsequences:
 $\omega_{n_j}\xrightarrow[j\to\infty]{}\omega$ \ and \ $\omega'_{n_j}\xrightarrow[j\to\infty]{}\omega$.

So that \ $\exists\  \bar{j}\ \ \ \forall j>\bar{j}:\ \ \ \|\omega_{n_j}-\omega'_{n_j}\|_\Omega<\delta_n$,
\\
Hence \ $\varepsilon<\|\Phi(\omega_{n_j})-\Phi(\omega'_{n_j})\|_{L^\Sigma_2}
$

\hfill$\leq \|\Phi(\omega_{n_j})-\Phi(\omega)\|_{L^\Sigma_2}
+\|\Phi(\omega'_{n_j})-\Phi(\omega)\|_{L^\Sigma_2}\xrightarrow[j\to\infty]{(\,\ast\,)}0$, \ a contradiction.
\\
Therefore \ $\forall\varepsilon>0\ \ \ \exists\delta>0\ \ \ \forall\omega,\,\omega'\in\Omega_T$,\ such that \ $\|\omega-\omega'\|_\Omega<\delta$  \ and it follows \ $\|\Phi(\omega)-\Phi(\omega')\|_{L^\Sigma_2}<\varepsilon$.
\\
Take a partition $\pi_n=\big\{0=t_1^n<\ldots<t_{N_n}^n=T\big\}$,\ \ \ $|\pi_n|:=\max\limits_{1\leq i\leq N_n}|t_{i+1}^n-t_i^n|$.

Then for every $\omega\in K\ \ \ \pi_n\omega$ is a linear approximation of $\omega$ at the points of the given partion $\pi_n$, 
i.e.\ \ \ $\pi_n\omega(t_i^n)=\omega(t_i^n)$.

Define $\Phi_n(\omega):=\Phi(\pi_n\omega)=\widetilde{\Phi}(t_1^n,\ldots,t_{N_n}^n)\in BLip( \Omega_T )$.
\\
Owning to the Arzelà-Ascolì theorem we have that $K$ is an equicontinuous set: \ 
$\forall\omega\in K\ \ \ \forall\delta>0\ \ \ \exists\varkappa>0\ \ \ \forall t,\,t'\in[0,T]$,\ such that \  $|t-t'|<\varkappa$  \ and it follows \ $\|\omega(t)-\omega(t')\|_\Us<\delta$.\hfill$(\,\#\,)$
\\
Now for a fixed $t\in(t_i,t_{i+1})$ \ we have \ 

$\pi_n\omega(t)
=\dfrac{t-t_i}{t_{i+1}-t_i}\cdot\omega(t_{i+1})+\dfrac{t_{i+1}-t}{t_{i+1}-t_i}\cdot\omega(t_i)$.

Let \ \ \ $|\pi_n|:=\eta<\varkappa$ \ then let us calculate 
\\
$\|\pi_n\omega(t)-\omega(t)\|_\Us$

\hfill$
=\Big\|\dfrac{t-t_i}{t_{i+1}-t_i}\cdot\omega(t_{i+1})+\dfrac{t_{i+1}-t}{t_{i+1}-t_i}\cdot\omega(t_i)
-\dfrac{t-t_i}{t_{i+1}-t_i}\cdot\omega(t)-\dfrac{t_{i+1}-t}{t_{i+1}-t_i}\cdot\omega(t)\Big\|_\Us\\
\leq\dfrac{t-t_i}{t_{i+1}-t_i}\cdot\|\omega(t_{i+1})-\omega(t)\|_\Us
+\dfrac{t_{i+1}-t}{t_{i+1}-t_i}\cdot\|\omega(t_i)-\omega(t)\|_\Us$

\hfill$
\overset{(\,\#\,)}{<}\dfrac{t-t_i}{t_{i+1}-t_i}\cdot\delta
+\dfrac{t_{i+1}-t}{t_{i+1}-t_i}\cdot\delta
=\delta$.
\\
Hence \ $\|\pi_n\omega-\omega\|_\Omega
=\max\limits_{\substack{t\in(t_i,t_{i+1})\\ 1\leq i\leq N_n}}\|\pi_n\omega(t)-\omega(t)\|_\Us
<\delta$.

It means, $\ \ \forall\varepsilon>0\ \ \ \exists\eta>0$,\  such that \ $ |\pi_n|<\eta$  \ and it follows \ 

\hfill$
\|\Phi(\pi_n\omega)-\Phi(\omega)\|_{L^\Sigma_2}<\varepsilon$.

Since $|\pi_n|\xrightarrow[n\to\infty]{}0$ \ then \ $
\sup\limits_{\omega\in K}\|\Phi(\omega)-\Phi_n(\omega)\|_{L^\Sigma_2}\xrightarrow[n\to\infty]{}0$.
\\
Recall that $\bar{\E}[\pdot]:=\sup\limits_{P\in\mathcal{P}}E_P[\pdot]$\ \ \ and\ \ \ $\mathcal{P}$ is tight (\textbf{Prop.\ref{prop_P_tight}}).

It follows that for every \ $ n\geq1$ \ there exists a compact set $K_n\subset\Omega_n$, \ such that \ \ $c(K_n^c)<\dfrac{1}{n}$.

Therefore \ $\forall \Phi\in CB_{L_2^\Sigma}(\Omega_T) \ \ \ \exists\Phi_n\in BLip( \Omega_T )$, \ such that 

\hfill$\sup\limits_{\omega\in K_n}\|\Phi(\omega)-\Phi_n(\omega)\|_{L^\Sigma_2}<\dfrac{1}{n}$.

So that \  $\bar{\E}\big[\|\Phi-\Phi_n\|^p_{L_2^\Sigma}\big]
\leq\sup\limits_{\omega\in \Omega}\underbrace{\|\Phi(\omega)-\Phi_n(\omega)\|^p_{L^\Sigma_2}}_{<\infty}\cdot c(K_n^c)+\dfrac{1}{n}\cdot c(K_n)$

\hfill$
\leq \dfrac{1}{n}\cdot(C_0+1)
\xrightarrow[n\to\infty]{}0
$.

Hence \ $\Phi\in{^\Hs\! L_G^p \big( \Omega_T \big)}$.
\\
It is clear that $BLip( \Omega_T )\subset CB_{L_2^\Sigma}(\Omega_T)$.
\\
So we have the following inclusions:
\\
 \hspace*{50mm}$BLip( \Omega_T )\subset CB_{L_2^\Sigma}(\Omega_T)\subset{^\Hs\! L_G^p \big( \Omega_T \big)}$.
\hfill$(\,\circ\,)$
\\
Recall that\ \ \  $\L^p_{CB,L_2^\Sigma}=\mathrm{compl}\big(CB_{L_2^\Sigma}/\mathcal{N},\,\overline{\|\pdot\|}_{\Sigma,\,p}\big)$

$\overset{\textbf{Prop.\ref{prop_CB}}}{=}\big\{X\in \L^p_{L_2^\Sigma}(\Omega)\mid X \text{ has a q.c. vers., }
\lim\limits_{n\to\infty}\bar{\E}\big[\|X\|^p_{L_2^\Sigma}\cdot\ind_{\{\|X\|_{L_2^\Sigma}>n\}}\big]=~\!\!0\big\}
$;
\\
$\L^p_{L_2^\Sigma}=\mathcal{L}^p/\mathcal{N}
=\big\{X\in L^0_{L_2^\Sigma}(\Omega)\mid\bar{\E}\|X\|^p_{L_2^\Sigma}<\infty\big\}/\mathcal{N}$.
\\
But\ \ \ $\bar{\E}\big[\|X\|^p_{L_2^\Sigma}\cdot\ind_{\{\|X\|_{L_2^\Sigma}>n\}}\big]\xrightarrow[n\to\infty]{}0$ \ so that \ $
\bar{\E}\|X\|^p_{L_2^\Sigma}<\infty$.
\\
Then by $(\,\circ\,)$ we have \ 
${^\Hs\! L_G^p\big( \Omega_T \big)}
\equiv\mathrm{compl}\big(CB_{L_2^\Sigma}/\mathcal{N},\,\overline{\|\pdot\|}_{\Sigma,\,p}\big)\\
=\big\{X\in \mathcal{L}^p\mid X \text{ has q.c. version, }
\lim\limits_{n\to\infty}\bar{\E}\big[\|X\|^p_{L_2^\Sigma}\cdot\ind_{\{\|X\|_{L_2^\Sigma}>n\}}\big]=0\big\}\\
=\big\{X\in L^0_{L_2^\Sigma}(\Omega_T)\mid X \text{ has q.c. version, }
\lim\limits_{n\to\infty}\bar{\E}\big[\|X\|^p_{L_2^\Sigma}\cdot\ind_{\{\|X\|_{L_2^\Sigma}>n\}}\big]=0\big\}$.
\\
If $X\in Lip(\Omega)$ \ then \ $\E\big[\|X\|^p_{L_2^\Sigma}\big]=\bar{\E}\big[\|X\|^p_{L_2^\Sigma}\big]$.

Since\ \ \ ${^\Hs\! L_G^p\big( \Omega_T \big)}
=\mathrm{compl}\Big( BLip( \Omega_T ),\,\E\big[\|\pdot\|^p_{L_2^\Sigma}\big]\Big)$

\hfill$
=\mathrm{compl}\Big( BLip( \Omega_T ),\,\bar{\E}\big[\|\pdot\|^p_{L_2^\Sigma}\big]\Big)
$,

It follows that for every \ $X\in{^\Hs\! L_G^p\big( \Omega_T \big)}:\ \ \ \E\big[\|X\|^p_{L_2^\Sigma}\big]=\bar{\E}\big[\|X\|^p_{L_2^\Sigma}\big]$.

\end{proof}

Now we need speak a couple of words about convergence in ${^\Hs\! L_G^p\big( \Omega_T \big)}$ under the $G$-expectation. The biggest problem is that a dominated convergence theorem is not true in a given framework. But there is a result of a monotone convergence theorem (see \cite[Th.31]{denis_hu_peng1}). It has a trivial infinite dimensional extension which we will represent as the following theorem:

\begin{teo}\label{th_mon_conv_g_exp}
 Let a sequence $\big\{X_n\big\}\subset{^\Hs\! L_G^p\big( \Omega_T \big)}$ be such that  $X_n\downarrow X$ q.s. 
Then also holds \ $\E\big[\|X_n\|^p_{L_2^\Sigma}\big]\downarrow\E\big[\|X\|^p_{L_2^\Sigma}\big]$.
\end{teo}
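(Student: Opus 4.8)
The plan is to reduce the statement to the real-valued monotone convergence theorem for the upper expectation $\bar{\E}$, which is available in finite dimensions (\cite[Th.31]{denis_hu_peng1}), and then to verify that the ingredients of its proof are exactly the ones already secured in the infinite-dimensional setting. First I would set $Y_n := \|X_n\|^p_{L_2^\Sigma}$ and $Y := \|X\|^p_{L_2^\Sigma}$; these are nonnegative real random variables, the monotonicity $X_n\downarrow X$ being read at the level of the functionals $\|\cdot\|^p_{L_2^\Sigma}$, so that $Y_n\downarrow Y$ q.s. Since $X_n, X\in{^\Hs\! L_G^p\big( \Omega_T \big)}$, each $Y_n$ and $Y$ inherits a quasi-continuous version (compose the q.c. version supplied by \textbf{Prop.\ref{prop_CB}} with the continuous map $\|\cdot\|^p_{L_2^\Sigma}\colon L_2^\Sigma\to\R$) and has finite upper expectation. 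By the identity $\E\big[\|Z\|^p_{L_2^\Sigma}\big]=\bar{\E}\big[\|Z\|^p_{L_2^\Sigma}\big]$ for $Z\in{^\Hs\! L_G^p\big( \Omega_T \big)}$ established in the theorem immediately above, it suffices to show $\bar{\E}[Y_n]\downarrow\bar{\E}[Y]$.

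Next I would reduce to a null limit. Monotonicity of $\bar{\E}$ gives $\bar{\E}[Y_n]\ge\bar{\E}[Y]$, while sub-additivity gives $\bar{\E}[Y_n]\le\bar{\E}[Y_n-Y]+\bar{\E}[Y]$; hence $0\le\bar{\E}[Y_n]-\bar{\E}[Y]\le\bar{\E}[Z_n]$ with $Z_n:=Y_n-Y\ge0$, $Z_n\downarrow0$ q.s., and each $Z_n$ quasi-continuous (a difference of quasi-continuous functions). Thus everything comes down to proving $\bar{\E}[Z_n]\to0$.

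The core of the argument is a tightness-plus-Dini estimate. Fix $\varepsilon>0$. By \textbf{Prop.\ref{prop_P_tight}} the family $\mathcal{P}=\{P_\theta\}$ is tight, so there is a compact $K\subset\Omega_T$ with $c(K^c)<\varepsilon$; removing from $K$ an open set of small capacity on which the chosen q.c. versions fail to be continuous, I may assume every $Z_n$ is continuous on $K$. On $K$ the continuous functions $Z_n$ decrease pointwise to $0$, so Dini's theorem yields $\sup_{\omega\in K}Z_n(\omega)\to0$. Splitting across $K$ and $K^c$,
\[
\bar{\E}[Z_n]\le\sup_{\omega\in K}Z_n(\omega)+\sup_{P\in\mathcal{P}}E_P\big[Z_1\,\ind_{K^c}\big],
\]
where the second term is made uniformly small using the uniform-integrability characterization of ${^\Hs\! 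L_G^p\big( \Omega_T \big)}$ (\textbf{Prop.\ref{prop_B}}, \textbf{Prop.\ref{prop_CB}}), since $c(K^c)<\varepsilon$. Letting $n\to\infty$ and then $\varepsilon\downarrow0$ gives $\bar{\E}[Z_n]\to0$, which is the claim.

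I expect the only genuine obstacle to be the passage from pointwise decrease to uniform control of the tail $\sup_{P}E_P\big[Z_1\,\ind_{K^c}\big]$: this is precisely where the tightness of $\mathcal{P}$ and the quasi-continuity of the integrands are indispensable. Because both facts are already available in infinite dimensions — tightness in \textbf{Prop.\ref{prop_P_tight}} and the $\L^p_{CB}$-description in \textbf{Prop.\ref{prop_CB}} — the infinite-dimensional proof is a faithful transcription of the finite-dimensional one in \cite[Th.31]{denis_hu_peng1}, which is why the statement can honestly be called a trivial extension.
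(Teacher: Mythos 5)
Your overall strategy — reduce to $\bar{\E}[Z_n]\to 0$, control the tail off a compact set by tightness (\textbf{Prop.\ref{prop_P_tight}}) plus the uniform-integrability characterization (\textbf{Prop.\ref{prop_B}}, \textbf{Prop.\ref{prop_CB}}), and get uniform smallness on the compact set itself — assembles the right ingredients, but the central Dini step has two genuine gaps. First, you claim $Y=\|X\|^p_{L_2^\Sigma}$ "inherits a quasi-continuous version": nothing supplies this. The theorem does not assume $X\in{^\Hs\! L_G^p\big( \Omega_T \big)}$; $X$ is only a q.s. decreasing limit of such elements, and a decreasing q.s.-limit of quasi-continuous functions is in general only quasi upper semicontinuous, not quasi-continuous. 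Consequently $Z_n=Y_n-Y$ need not be quasi-continuous, and the compact set on which "every $Z_n$ is continuous" need not exist. Second, even granting continuity of all $Z_n$ on a compact $K'$, Dini's theorem requires pointwise monotone convergence at \emph{every} point of $K'$, whereas q.s. convergence leaves a polar exceptional set $N$. To excise $N$ while preserving compactness you would need an open set $O\supset N$ with $c(O)$ small, i.e.\ outer regularity of the upper capacity $c=\sup_{P\in\mathcal P}P$ at polar sets — a property neither established in the paper nor true in general for capacities of this type; and on the non-compact set $K'\setminus N$ the supremum of $Z_n$ has no reason to tend to $0$.

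The repair is to run the compactness argument on the family $\mathcal{P}$ rather than on $\Omega$, which is exactly what the proof of \cite[Th.31]{denis_hu_peng1} (the result the paper invokes without reproducing a proof) does. Tightness makes (the weak closure of) $\mathcal{P}=\{P_\theta\}$ weakly compact; choose $P_n$ with $E_{P_n}[Y_n]\ge\bar{\E}[Y_n]-\tfrac1n$ and extract $P_{n_k}\Rightarrow Q$. For each fixed $m$ and $n_k\ge m$ one has $E_{P_{n_k}}[Y_{n_k}]\le E_{P_{n_k}}[Y_m]\to E_Q[Y_m]$, where the passage to the limit uses only the quasi-continuity and uniform integrability of the \emph{fixed} $Y_m$ (available from the characterization of ${^\Hs\! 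L_G^p\big( \Omega_T \big)}$, via Tietze/Dugundji approximation by bounded continuous functions); then the classical monotone convergence theorem under the single measure $Q$ gives $E_Q[Y_m]\downarrow E_Q[Y]\le\bar{\E}[Y]$, and the reverse inequality is trivial. In this arrangement quasi-continuity is never demanded of the limit $Y$, and the polar set is harmless because it is $Q$-null; equivalently, one can phrase it as a Dini-type argument for the decreasing functions $P\mapsto E_P[Y_n]$ on the compact space $\mathcal{P}$. As written, your proof on $\Omega$ does not close, so the statement is not the "faithful transcription" you describe until the Dini step is replaced by this weak-compactness argument.
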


So we are ready to define a space of the integrated processes:

Let ${^\Hs\!M^{p,0}_G\big( 0,T \big)}:=\Bigg\{\Phi(t)=\sum\limits_{k=0}^{N-1}\Phi_k(\omega) \ind_{[t_k,t_{k+1})}(t)\mid$

\hfill$
\Phi_k (\omega)\in {^\Hs}L_G^p( \Omega_{t_k} ),\ 0=t_0<t_1\ldots<t_N=T\Bigg\}$.

For elementary process $\Phi\in {^\Hs\!M^{p,0}_G\big( 0,T \big)}$ the stochastic integral we define as follows:
$$I_T(\Phi):=\int\limits_0^T \Phi(t) dB_t=\sum\limits_{k=0}^{N-1}\Phi_k ( B_{t_{k+1}}-B_{t_k})\ \ .$$

\begin{rem}\label{rem_st_int_shift}
It is clear that $\ \ \ \int\limits_0^T \Phi(t) dB_t=\int\limits_0^T \Phi(t) dB_{t-a}\,,\ \ a\in\R$.
\end{rem}

\begin{rem}
 For a fixed $t$ we have that \ $\Phi(t)\in L^0_{L_2^\Sigma}(\Omega)$ then we can see that $I_T(\Phi)$ is an $H$-valued random variable.
\end{rem}

\subsection{Itô's isometry and Burkholder–Davis–Gundy inequalities}

Let $B_t$ is a given $G$-Brownian motion with correspondent $G$-expectation $\E$ which coincides on the space $L^0_{L^\Sigma_2}$ with an upper expectation defined for the family of gaussian probability measures $\mathcal{P}$:
 $$\E[X]=\bar\E[X]:=\sup\limits_{P\in\mathcal{P}}E_P[X]\ .$$

If we fix a measure $P\in\mathcal{P}$ then we can define
$\E_P[X]:=\sup\limits_{P\in\mathcal{P}_P}E_P[X]$,\ \ where $\mathcal{P}_P:=\big\{P\big\}$. 

Actually, $\E_P[X]$ is a classical linear expectation, but we could treat it also as sublinear with the all properties for a sublinear expectation.

So, we can define the $G$-functional for this expectation:

$$G_P(A):=\dfrac{1}{2}\,\E_P\big[\langle  AX,X \rangle\big]=\dfrac{1}{2}\Tr[A\cdot Q_P],$$ 

for some non-negative symmetric trace-class operator $Q_P$, because functional $\E_P[X]$ is linear.

On the other hand $G_P(A)=\dfrac{1}{2}\sup\limits_{Q\in\Sigma_P}\Tr[A\cdot Q]$,\ \  where $\Sigma_P\subset\Sigma$.

Therefore \ $\Sigma_P\equiv\big\{Q_P\big\}$\ \  and\ \  $B_t$ is a classical $Q_P$-Wiener process under $P\in\mathcal{P}\,,\ Q_P\in\Sigma$.

\begin{teo}[Itô's isometry inequality]\label{th_ito_is_inq}
$\absz$\\
Let $\Phi\in{^\Hs\!M^{2,0}_G\big( 0,T \big)}$ then
\begin{equation}\label{eq_ito_is_inq}
\E\Big[\big\|\int\limits_0^T \Phi(t)\,dB_t\big\|^2_\Hs\Big]
\leq  \E\Big[\int\limits_0^T \|\Phi(t)\|^2_{L_2^\Sigma}\,dt\Big].
\end{equation}
\end{teo}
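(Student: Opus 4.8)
The plan is to exploit the identification $\E=\bar{\E}=\sup_{P\in\mathcal{P}}E_P$ established in the previous chapter, together with the fact that under each fixed $P\in\mathcal{P}$ the process $B_t$ is a classical $Q_P$-Wiener process in $\Us$ with $Q_P\in\Sigma$. This reduces the inequality \eqref{eq_ito_is_inq} to an application of the \emph{classical} Itô isometry under each measure $P$, followed by a pointwise domination and a supremum.

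First I would fix $P\in\mathcal{P}$ and prove the classical isometry as an equality. Writing $I_T(\Phi)=\sum_{k=0}^{N-1}\Phi_k(B_{t_{k+1}}-B_{t_k})$ and expanding the squared $\Hs$-norm, the cross terms with $k<l$ vanish: since $\Phi_k,\Phi_l$ and $B_{t_{k+1}}-B_{t_k}$ are $\Fc_{t_l}$-measurable while $B_{t_{l+1}}-B_{t_l}$ is a centred increment independent of $\Fc_{t_l}$ under $P$, conditioning on $\Fc_{t_l}$ makes the term vanish. For the diagonal terms, conditioning on $\Fc_{t_k}$ and using that $B_{t_{k+1}}-B_{t_k}$ is $Q_P$-Gaussian with covariance $(t_{k+1}-t_k)Q_P$ gives $E_P[\|\Phi_k(B_{t_{k+1}}-B_{t_k})\|^2_\Hs\mid\Fc_{t_k}]=(t_{k+1}-t_k)\,\Tr[\Phi_k Q_P\Phi_k^*]$. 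Summing yields $E_P[\|I_T(\Phi)\|^2_\Hs]=E_P\big[\int_0^T\Tr[\Phi(t)Q_P\Phi(t)^*]\,dt\big]$, which is exactly the Hilbert-space Itô isometry for the $Q_P$-Wiener process (see \cite[4.2]{da_prato_zabcz3}). Here the integrand is well defined because $\Phi(t)Q_P^{1/2}\in L_2(\Us,\Hs)$ for $Q_P\in\Sigma$, and each $\Phi_k\in{^\Hs}L_G^2(\Omega_{t_k})$ is $\Fc_{t_k}$-measurable, which legitimises the conditional cancellations.

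Next, for every $t$ and $\omega$ one has the pointwise bound $\Tr[\Phi(t)Q_P\Phi(t)^*]\leq\sup_{Q\in\Sigma}\Tr[\Phi(t)Q\Phi(t)^*]=\|\Phi(t)\|^2_{L_2^\Sigma}$, since $Q_P\in\Sigma$. By monotonicity of the linear expectation $E_P$ this gives, for each fixed $P$,
\begin{equation*}
E_P\Big[\big\|I_T(\Phi)\big\|^2_\Hs\Big]\leq E_P\Big[\int_0^T\|\Phi(t)\|^2_{L_2^\Sigma}\,dt\Big].
\end{equation*}
Taking the supremum over $P\in\mathcal{P}$ on both sides and using $\E=\sup_{P}E_P$ on ${^\Hs\!M^{2,0}_G\big( 0,T \big)}$ delivers \eqref{eq_ito_is_inq}.

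The step I expect to require the most care is precisely the passage from an equality for each $P$ to the stated inequality after taking the supremum: the classical isometry is exact with the $P$-dependent Hilbert--Schmidt norm $\Tr[\Phi Q_P\Phi^*]$, but different measures $P$ select different covariances $Q_P$, so replacing $Q_P$ by the $\Sigma$-supremum inside the integral is the only way to obtain a $P$-independent right-hand side; this is where the isometry genuinely degrades into an inequality, and one cannot in general expect equality in \eqref{eq_ito_is_inq} because the supremum over $P$ of $\|I_T(\Phi)\|^2_\Hs$ need not be attained by the same $P$ that maximises the integrated $\Sigma$-norm.
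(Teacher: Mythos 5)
Your overall engine --- representing $\E$ as $\sup_{P\in\mathcal{P}}E_P$, invoking the classical Hilbert-space It\^o isometry under each fixed measure, and then dominating the $P$-dependent covariance by the $\Sigma$-supremum so that the isometry degrades into an inequality --- is exactly the mechanism of the paper's proof, and your closing remark about why equality is lost matches the paper's situation. But one step is wrong as written. The family is $\mathcal{P}=\{P_\theta,\ \theta\in\mathcal{A}_{0,\infty}^{\Theta}\}$, i.e.\ laws of $B^{0,\theta}_t=\int_0^t\theta_s\,dW_s$ with \emph{adapted, time-varying} $\theta_s\in\Theta$; under such a $P_\theta$ the increment $B_{t_{k+1}}-B_{t_k}$ is a stochastic integral $\int_{t_k}^{t_{k+1}}\theta_s\,dW_s$, which is neither Gaussian nor of covariance $(t_{k+1}-t_k)Q_P$ for a single fixed $Q_P\in\Sigma$ --- that holds only when $\theta$ is constant. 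So your diagonal-term identity $E_P\big[\|\Phi_k(B_{t_{k+1}}-B_{t_k})\|^2_\Hs\mid\Fc_{t_k}\big]=(t_{k+1}-t_k)\Tr[\Phi_kQ_P\Phi_k^*]$ fails for general $P\in\mathcal{P}$. (The paper's own preamble to this section loosely asserts the ``$Q_P$-Wiener'' claim, but its proof never uses it.) The repair is what the paper actually does: under $P$ rewrite the integral as $\int_0^T\Phi(t)\theta_t\,dW_t$, apply the classical isometry to get $E_P\big[\int_0^T\|\Phi(t)\theta_t\|^2_{L_2(\Us,\Hs)}\,dt\big]$, and then bound pointwise in $(t,\omega)$ via $\|\Phi(t)\theta_t\|^2_{L_2(\Us,\Hs)}=\Tr[\Phi(t)\,\theta_t\theta_t^T\,\Phi(t)^*]\leq\|\Phi(t)\|^2_{L_2^\Sigma}$, using $\theta_t\theta_t^T\in\Sigma$. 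Your cross-term cancellation survives unchanged, by the martingale property under each $P_\theta$.

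There is a second, structural gap: you apply $\E=\bar{\E}=\sup_P E_P$ directly to $\|I_T(\Phi)\|^2_\Hs$ for general coefficients $\Phi_k\in{^\Hs}L_G^2(\Omega_{t_k})$, but the identification of $\E$ with the upper expectation is established only for cylinder functionals (\textbf{Prop.\ref{prop_EG_Ebar}}) and, through the quasi-continuity characterization, on the completion ${^\Hs}L_G^p(\Omega_T)$; it is not licensed a priori for $\|I_T(\Phi)\|^2_\Hs$. The paper resolves this by an approximation layer which you are missing: it first proves a ``weaker'' inequality $\E\big[\|I_T(\Phi)\|^2_\Hs\big]\leq\int_0^T\E\big[\|\Phi(t)\|^2_{L_2^\Sigma}\big]\,dt$ entirely inside the sublinear calculus (conditional $G$-expectation, independence and measurability of the pieces, the moment identity of \textbf{Prop.\ref{prop_moments}}), then approximates each $\Phi_k$ by $\Phi_k^{(n)}\in Lip(\Omega_{t_k})$, runs your supremum argument for the resulting cylinder integrals --- where the $\sup_\theta E_{P_\theta}$ representation is legitimate --- and finally uses the weak inequality (plus Minkowski) to pass both sides to the limit. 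Without this step the supremum identity at the heart of your argument is unjustified. Both gaps are fixable, and the repaired proof coincides with the paper's.
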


\begin{proof}
$\absz$\\
\textbf{1)} Firstly, we are going to prove a ``weaker''  version of the Itô's isometry inequality, namely that
\begin{equation}\label{eq_weak_iii}
 \E\Big[\big\|\int\limits_0^T \Phi(t)\,dB_t\big\|^2_\Hs\Big]
\leq  \int\limits_0^T\E\Big[ \|\Phi(t)\|^2_{L_2^\Sigma}\Big]\,dt.
\end{equation}
In fact,

$\E\Big[\big\|\int\limits_0^T \Phi(t)\,dB_t\big\|^2_\Hs\Big]=
\E\Big[\|\sum\limits_{k=0}^{N-1} \Phi_k\,\Delta B_{t_k}\|^2_\Hs\Big]$

\hfill$\leq\sum\limits_{k=0}^{N-1} \E\|\Phi_k\,\Delta B_{t_k}\|^2_\Hs+
2\!\!\!\sum\limits_{k<n=1}^{N-1} \E\langle\Phi_k\,\Delta B_{t_k},\Phi_n\,\Delta B_{t_n}\rangle_\Hs=:K$.
\\
\hspace*{6mm}\textbf{(a)}\parbox[t]{130mm}{$\mathbf{k<n}$:\\
Note that in this case for fixed $n$ and $k$ we have that $\Phi_k\,\Delta B_{t_k}$ and $\Delta B_{t_n}$ are independent from $\Omega_{t_n}$, but 
$\Phi_n$ is $\Omega_{t_n}$-measurable.\\
Therefore \ $\E\langle\Phi_k\,\Delta B_{t_k},\Phi_n\,\Delta B_{t_n}\rangle_\Hs=
\E\Big[\E\langle\Phi_k\,\Delta B_{t_k},\Phi_n\,\Delta B_{t_n}\rangle_\Hs\big|\,\Omega_{t_n}\Big]
$

\hfill$\phantom{\dfrac{1}{1}}
=\E\langle\Phi_k\,\Delta B_{t_k},\Phi_n\cdot\underbrace{\E[\Delta B_{t_n}]}_{=0}\rangle_\Hs=0.$
}

\hspace*{6mm}\textbf{(b)}\parbox[t]{130mm}{$\mathbf{k=n}$:\\
Analogously, $\Delta B_{t_k}$ is independent out of $\Omega_{t_k}$, but $\Phi_k$ is $\Omega_{t_k}$ measurable.\\
Therefore \ $\E\big[\|\Phi_k\,\Delta B_{t_k}\|^2_\Hs\big]=
\E\Big[\E\big[\|\Phi_k\,\Delta B_{t_k}\|^2_\Hs\big|\,\Omega_{t_k}\big]\Big]$

\hfill$
\overset{\substack{\textbf{Prop.\ref{prop_propert_cond_exp},1)}\\ \textbf{Rem.\ref{rem_prop_cond_G_exp}} \\ \textbf{Prop.\ref{prop_ar_op_gnd},\,3)} }}{=}
\E\Big[(t_{k+1}-t_{k})\cdot\sup\limits_{Q\in\Sigma}\Tr\big[\Phi_k Q \Phi_k^*\big]\Big].$
}

So we have \ $K=\sum\limits_{k=0}^{N-1} \E\Big[(t_{k+1}-t_{k})\cdot \|\Phi_k\|^2_{L_2^\Sigma}\Big]=
\int\limits_0^T \E\Big[\|\Phi(t)\|^2_{L_2^\Sigma}\Big] dt
$.
\\
\textbf{2)} If $\Phi\in{^\Hs\!M^{2,0}_G\big( 0,T \big)}$ \ then \ $ \Phi(t)=\sum\limits_{k=0}^{N-1}\Phi_k(\omega) \ind_{[t_k,t_{k+1})}(t),$

\hfill$ 0=t_0<t_1\ldots<t_N=T$ 

and for every $k\ \ \ \Phi_k \in {^\Hs}L_G^2( \Omega_{t_k} )$ \ then there exists a sequence

 $\big\{\Phi_k^{(n)}\big\}\subset Lip( \Omega_{t_k} )$, \ such that \ $\big\|\Phi_k-\Phi_k^{(n)}\big\|_\Sigma\xrightarrow[n\to\infty]{}0$.

Define $\Phi^{(n)}(t):=\sum\limits_{k=0}^{N-1}\Phi^{(n)}_k(\omega) \ind_{[t_k,t_{k+1})}(t)$.

Since $\Phi_k^{(n)}\in Lip( \Omega_{t_k} )$ \ then we can consider 

\hfill$\Phi_k^{(n)}=\fee_k^{(n)}(B_{u_1^k},..,B_{{u_{m_k}^k}})\in L_2^\Sigma\,,\ \ 
0\leq u_1^k\leq\ldots\leq u_{m_k}^k=t_k$.

Therefore \ $I_T(\Phi_k^{(n)})=\int\limits_0^T \Phi^{(n)} (t)\,dB_t=\sum\limits_{k=0}^{N-1}\Phi_k^{(n)} ( B_{t_{k+1}}-B_{t_k})
$

\hfill$=\fee(B_{u_1^0},..,B_{{u_{m_N}^N}})\in\Hs\,,\ \ 
\fee\in \Cp\,(\Us^{m_N}, \Hs)$.
\\
So, we have

$ \E\Big[\big\|\int\limits_0^T \Phi^{(n)}(t)\,dB_t\big\|^2_\Hs\Big]
=\E\Big[\|\fee\|^2_\Hs(B_{u_1^0},..,B_{{u_{m_N}^N}})\Big]
=\sup\limits_{\theta\in\A0}E_{P_\theta}\Big[\|\fee\|^2_\Hs(B_{u_1^0},..,B_{{u_{m_N}^N}})\Big]\\
=\sup\limits_{\theta\in\A0}E_{P_\theta}\Big[\big\|\int\limits_0^T \Phi^{(n)}(t)\,dB_t\big\|^2_\Hs\Big]\\
=\sup\limits_{\theta\in\A0}E_{P_\theta}\Big[\big\| \sum\limits_{k=0}^{N-1}\fee_k^{(n)}(B_{u_1^k},..,B_{{u_{m_k}^k}})\cdot( B_{t_{k+1}}-B_{t_k})\big\| ^2_\Hs\Big]\\
=\sup\limits_{\theta\in\A0}E_P\Big[\big\| \sum\limits_{k=0}^{N-1}\fee_k^{(n)}(B^{0,\theta}_{u_1^k},..,B^{0,\theta}_{{u_{m_k}^k}})\cdot( B^{0,\theta}_{t_{k+1}}-B^{0,\theta}_{t_k})\big\| ^2_\Hs\Big]$

\hfill$
=\Big\lfilet\Phi_*^{(n)}(t):=\sum\limits_{k=0}^{N-1}\fee_k^{(n)}(B^{0,\theta}_{u_1^k},..,B^{0,\theta}_{{u_{m_k}^k}})\cdot( t_{k+1}-{t_k})\Big\lfilet\\
=\sup\limits_{\theta\in\A0}E_P\Big[\big\|\int\limits_0^T \Phi_*^{(n)}(t)\,dB^{0,\theta}_t\big\|^2_\Hs\Big]
=\sup\limits_{\theta\in\A0}E_P\Big[\big\|\int\limits_0^T \Phi_*^{(n)}(t)\theta_t\,dW_t\big\|^2_\Hs\Big]\\
\overset{\text{classical Itô's isometry}}{=}
\sup\limits_{\theta\in\A0}E_P\Big[\int\limits_0^T\big\| \Phi_*^{(n)}(t)\cdot\theta_t\big\|^2_{L_2(\Us,\Hs)} dt\Big]\\
=\sup\limits_{\theta\in\A0}E_{P_\theta}\Big[\int\limits_0^T\big\| \Phi^{(n)}(t)\cdot\theta_t\big\|^2_{L_2(\Us,\Hs)} dt\Big]
\leq \sup\limits_{\theta\in\A0}E_{P_\theta}\Big[\int\limits_0^T
\sup\limits_{\theta\in\A0}\big\| \Phi^{(n)}(t)\cdot \theta\big\|^2_{L_2(\Us,\Hs)} dt\Big]\\
=\sup\limits_{\theta\in\A0}E_{P_\theta}\Big[\int\limits_0^T
\sup\limits_{\gamma\in\Theta}\big\| \Phi^{(n)}(t)\cdot \gamma\big\|^2_{L_2(\Us,\Hs)} dt\Big]\\
=\sup\limits_{\theta\in\A0}E_{P_\theta}\Big[\int\limits_0^T
\sup\limits_{Q\in\Sigma}\big\| \Phi^{(n)}(t)\cdot Q^{1/2}\big\|^2_{L_2(\Us,\Hs)} dt\Big]
=\bar{\E}\Big[\int\limits_0^T \|\Phi^{(n)}(t)\|^2_{L_2^\Sigma}\,dt\Big]$

\hfill$
=\E\Big[\int\limits_0^T \|\Phi^{(n)}(t)\|^2_{L_2^\Sigma}\,dt\Big]$.

For the finishing of proof we need to pass to the limit:

\textbf{(a):}$
\Bigg|\Bigg(\E\Big[\big\|\int\limits_0^T \Phi(t)\,dB_t\big\|^2_\Hs\Big]\Bigg)^{\frac{1}{2}}-
\Bigg(\E\Big[\big\|\int\limits_0^T \Phi^{(n)}(t)\,dB_t\big\|^2_\Hs\Big]\Bigg)^{\frac{1}{2}}\Bigg|$

\hfill$
\overset{\text{Minkowski ineq.}}{\leq}
\Bigg(\E\Big[\big\|\int\limits_0^T \Phi(t)\,dB_t-\int\limits_0^T \Phi^{(n)}(t)\,dB_t\big\|^2_\Hs\Big]\Bigg)^{\frac{1}{2}}\\
=\Bigg(\E\Big[\big\|\int\limits_0^T \big(\Phi(t)-\Phi^{(n)}(t)\big)\,dB_t\big\|^2_\Hs\Big]\Bigg)^{\frac{1}{2}}
\overset{\eqref{eq_weak_iii}}{\leq}\Bigg(\int\limits_0^T\E\Big[\big\| \Phi(t)-\Phi^{(n)}(t)\big\|^2_{L_2^\Sigma}\Big]\,dt\Bigg)^{\frac{1}{2}}\\
=\Bigg(\sum\limits_{k=0}^{N-1}\Big[\underbrace{\big\| \Phi_k-\Phi^{(n)}_k\big\|^2_\Sigma}_{\searrow0}\cdot (t_{k+1}-t_{k})\Big]\Bigg)^{\frac{1}{2}}
\xrightarrow[n\to\infty]{}0$.

\textbf{(b):}
$
\Bigg|\Bigg(\E\Big[\int\limits_0^T \|\Phi(t)\|^2_{L_2^\Sigma}\,dt\Big]\Bigg)^{\frac{1}{2}}-
\Bigg(\E\Big[\int\limits_0^T \|\Phi^{(n)}(t)\|^2_{L_2^\Sigma}\,dt\Big]\Bigg)^{\frac{1}{2}}\Bigg|\\
\overset{\text{Minkowski ineq.}}{\leq}
\Bigg(\E\Bigg[\Big|\Big(\int\limits_0^T \|\Phi(t)\|^2_{L_2^\Sigma}\,dt\Big)^{\frac{1}{2}}-
\Big(\int\limits_0^T \|\Phi^{(n)}(t)\|^2_{L_2^\Sigma}\,dt\Big)^{\frac{1}{2}} \Big|^2\Bigg]\Bigg)^{\frac{1}{2}}\\
\overset{\text{Minkowski ineq.}}{\leq}
\Bigg(\E\Big[\int\limits_0^T \|\Phi(t)-\Phi^{(n)}(t)\|^2_{L_2^\Sigma}\,dt \Big]\Bigg)^{\frac{1}{2}}
\leq\Bigg(\int\limits_0^T \E\Big[\|\Phi(t)-\Phi^{(n)}(t)\|^2_{L_2^\Sigma} \Big]\,dt\Bigg)^{\frac{1}{2}}\\
=\Bigg(\int\limits_0^T \|\Phi(t)-\Phi^{(n)}(t)\|^2_\Sigma\,dt\Bigg)^{\frac{1}{2}}
=\Bigg(\sum\limits_{k=0}^{N-1}\Big[\underbrace{\big\| \Phi_k-\Phi^{(n)}_k\big\|^2_\Sigma}_{\searrow0}\cdot (t_{k+1}-t_{k})\Big]\Bigg)^{\frac{1}{2}}
\xrightarrow[n\to\infty]{}0
$.

\end{proof}

The following result (so called the Burkholder-Davis-Gundy inequality) is a generalization of \textbf{Th.\ref{th_ito_is_inq}}.

\begin{teo}[BDG inequality]\label{th_BDG_inq}
$\absz$\\
Let $\Phi\in{^\Hs\!M^{p,0}_G\big( 0,T \big)}$ then
\begin{equation}\label{eq_BDG_iq}
\E\Big[\|\int\limits_0^T \Phi(t)\,dB_t\|^p_\Hs\Big]
\leq C_p\cdot\E\Bigg[\Big(\int\limits_0^T \|\Phi(t)\|^2_{L_2^\Sigma} \, dt\Big)^\frac{p}{2}\Bigg],
\end{equation}

\hfill where $C_p>0$.
\end{teo}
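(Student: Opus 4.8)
The plan is to follow exactly the architecture of the proof of Theorem \ref{th_ito_is_inq}, replacing the classical Itô isometry with the classical Burkholder--Davis--Gundy inequality for $\Hs$-valued stochastic integrals (see \cite{da_prato_zabcz3}). First I would reduce to elementary processes with Lipschitz coefficients. Writing $\Phi(t)=\sum_{k=0}^{N-1}\Phi_k\,\ind_{[t_k,t_{k+1})}(t)$ with $\Phi_k\in{^\Hs}L_G^p(\Omega_{t_k})$, I choose approximating $\Phi_k^{(n)}\in Lip(\Omega_{t_k})$ with $\|\Phi_k-\Phi_k^{(n)}\|_{\Sigma,p}\to0$ and set $\Phi^{(n)}(t):=\sum_{k=0}^{N-1}\Phi_k^{(n)}\,\ind_{[t_k,t_{k+1})}(t)$. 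The inequality \eqref{eq_BDG_iq} will first be proved for each $\Phi^{(n)}$, and then transferred to $\Phi$ by a limiting argument.

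For a fixed $n$, I would convert the $G$-expectation into an upper expectation via \textbf{Prop.\ref{prop_EG_Ebar}}, writing $\E[\,\pdot\,]=\sup_{\theta\in\A0}E_{P_\theta}[\,\pdot\,]=\sup_{\theta\in\A0}E_P[\,\pdot\,]$, under which $B^{0,\theta}_t=\int_0^t\theta_s\,dW_s$ is a classical integral against a cylindrical Wiener process $W$, so that $\int_0^T\Phi^{(n)}(t)\,dB_t^{0,\theta}=\int_0^T\Phi^{(n)}(t)\,\theta_t\,dW_t$, just as in the isometry proof. Applying the classical $\Hs$-valued BDG inequality under each fixed $P$ gives
$$E_P\Big[\big\|\int_0^T\Phi^{(n)}(t)\,\theta_t\,dW_t\big\|^p_\Hs\Big]\leq C_p\,E_P\Big[\Big(\int_0^T\|\Phi^{(n)}(t)\,\theta_t\|^2_{L_2(\Us,\Hs)}\,dt\Big)^{p/2}\Big].$$
Since $\theta_t\in\Theta$ for each $t$, the pointwise bound $\|\Phi^{(n)}(t)\,\theta_t\|^2_{L_2(\Us,\Hs)}\leq\sup_{\gamma\in\Theta}\|\Phi^{(n)}(t)\,\gamma\|^2_{L_2(\Us,\Hs)}=\sup_{Q\in\Sigma}\|\Phi^{(n)}(t)\,Q^{1/2}\|^2_{L_2(\Us,\Hs)}=\|\Phi^{(n)}(t)\|^2_{L_2^\Sigma}$ lets me replace the integrand. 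Taking $\sup_{\theta\in\A0}$ and using $\E=\bar\E$ then yields \eqref{eq_BDG_iq} for $\Phi^{(n)}$.

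Finally I would pass to the limit $n\to\infty$. The differences $\Phi^{(n)}-\Phi^{(m)}$ are again elementary processes with Lipschitz coefficients, so the inequality just proved applies to them and, together with $\int_0^T\|\Phi^{(n)}-\Phi^{(m)}\|^2_{L_2^\Sigma}\,dt=\sum_k\|\Phi_k^{(n)}-\Phi_k^{(m)}\|^2_{L_2^\Sigma}(t_{k+1}-t_k)\to0$, shows that $\big(\int_0^T\Phi^{(n)}\,dB_t\big)$ is Cauchy in the $\|\cdot\|_{\Sigma,p}$-norm; its limit is identified with $\int_0^T\Phi\,dB_t$ using \textbf{Lm.\ref{lm_L_Sigma_2_L_U_Sigma_H_cont_embed}} to estimate $\E\big[\|\sum_k(\Phi_k-\Phi_k^{(n)})\Delta B_{t_k}\|^p\big]$. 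The hard part will be exactly this limiting step: unlike the quadratic case $p=2$, the outer $p$-th power and the inner $L^{p/2}$ structure must be handled by the two-sided Minkowski estimates of parts (a) and (b) of Theorem \ref{th_ito_is_inq}, first establishing that $\Phi^{(n)}\to\Phi$ in the norm $X\mapsto\big(\E\big[(\int_0^T\|X(t)\|^2_{L_2^\Sigma}\,dt)^{p/2}\big]\big)^{1/p}$ and then combining it, via Minkowski, with the already-proven BDG bound applied to $\Phi^{(n)}-\Phi^{(m)}$ to drive both sides of \eqref{eq_BDG_iq} for $\Phi^{(n)}$ to their counterparts for $\Phi$.
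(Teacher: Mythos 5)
Your core estimate is the paper's own: reduce to elementary processes with $Lip(\Omega_{t_k})$ coefficients $\Phi^{(n)}_k$, rewrite $\E$ as $\sup_{\theta\in\mathcal{A}_{0,T}^\Theta}E_{P_\theta}$ via \textbf{Prop.\ref{prop_EG_Ebar}}, express the integral as $\int_0^T\Phi^{(n)}(t)\theta_t\,dW_t$, apply the classical $\Hs$-valued BDG inequality under each fixed $P$, and majorize $\|\Phi^{(n)}(t)\theta_t\|_{L_2(\Us,\Hs)}$ by $\sup_{Q\in\Sigma}\|\Phi^{(n)}(t)Q^{1/2}\|_{L_2(\Us,\Hs)}=\|\Phi^{(n)}(t)\|_{L_2^\Sigma}$ before taking the supremum in $\theta$ — this matches the paper line by line. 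Where you genuinely diverge is the limit passage. The paper never proves $L^p$-convergence of $I_T^{(n)}$ to $I_T$: it first establishes the a priori bounds $\E\big[\|I_T\|^m_\Hs\big]<\infty$ for all $m$ (via \textbf{Prop.\ref{prop_moments}} and \textbf{Prop.\ref{prop_ar_op_gnd}}), and then controls $\big|\E\|I_T\|^p_\Hs-\E\|I_T^{(n)}\|^p_\Hs\big|$ by the factorization $|a^p-b^p|\le|a-b|\,(a^{p-1}+\dots+b^{p-1})$, the CBS inequality (\textbf{Prop.\ref{prop_cb_ineq}}), and only the \emph{quadratic} Itô isometry (\textbf{Th.\ref{th_ito_is_inq}}) applied to $\Phi-\Phi^{(n)}$. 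Your alternative — apply the just-proved BDG bound to the Lipschitz differences $\Phi^{(n)}-\Phi^{(m)}$ to get Cauchyness of $I_T^{(n)}$ in the norm $\big(\E\|\cdot\|^p_\Hs\big)^{1/p}$ — is legitimate (the differences do have $Lip(\Omega_{t_k})$ coefficients, and the right-hand side tends to $0$ coefficientwise), and it even spares you the a priori moment bounds, since convergence of the left-hand sides then follows from plain Minkowski.

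However, your identification of the Cauchy limit with $I_T(\Phi)$ is broken as stated. You invoke \textbf{Lm.\ref{lm_L_Sigma_2_L_U_Sigma_H_cont_embed}}, i.e. the pathwise bound $\|(\Phi_k-\Phi_k^{(n)})\Delta B_{t_k}\|_\Hs\le\|\Phi_k-\Phi_k^{(n)}\|_{L_2^\Sigma}\cdot\|\Delta B_{t_k}\|_{\Us_\Sigma}$, which presupposes $\Delta B_{t_k}\in\Us_\Sigma$ quasi-surely. That fails: under each $P_\theta$ the increment is a Gaussian stochastic integral, and already in the case $\Sigma=\{Q\}$ one has $\Us_\Sigma=Q^{1/2}(\Us)$, the Cameron–Martin space, which carries the increment's law with probability zero; nor does the paper supply any moment of $\|\Delta B_{t_k}\|_{\Us_\Sigma}$. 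The repair is immediate and stays inside the paper: identify the limit in the weaker $2$-norm instead. Since $\E\big[\|X\|^2_{L_2^\Sigma}\big]\le\big(\E\big[\|X\|^p_{L_2^\Sigma}\big]\big)^{2/p}$ (classical Hölder under each $E_{P_\theta}$ together with the representation \textbf{Th.\ref{th_sublinear_F_sup_linear_f}}), the coefficients $\Phi_k-\Phi_k^{(n)}$ lie in ${^\Hs}L_G^2(\Omega_{t_k})$, so \textbf{Th.\ref{th_ito_is_inq}} gives $\E\big[\|I_T-I_T^{(n)}\|^2_\Hs\big]\to0$; the same Hölder bound shows your $p$-norm Cauchy limit also converges in the $2$-norm, whence the two limits coincide quasi-surely by \textbf{Lm.\ref{lm_X0_qs}}. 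With that substitution your Minkowski arguments for both sides of \eqref{eq_BDG_iq} go through unchanged.
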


\begin{proof}
 $\absz$\\
The proof of the theorem is based on the proof of \textbf{Th.\ref{th_ito_is_inq}} and BDG inequality in the classical case, described for instance in 
\cite[ Lm.7.2]{da_prato_zabcz3}).

When $p=2$ and $C_p=1$ we just have the Itô isometry inequality.
\\
As in the proof of \textbf{Th.\ref{th_ito_is_inq}} we hold the same notations:

$\Phi\in{^\Hs\!M^{p,0}_G\big( 0,T \big)}$ \ then \ $ \Phi(t)=\sum\limits_{k=0}^{N-1}\Phi_k(\omega) \ind_{[t_k,t_{k+1})}(t),$

\hfill$ 0=t_0<t_1\ldots<t_N=T$ 

and for every $k\ \ \ \Phi_k \in {^\Hs}L_G^p( \Omega_{t_k} )$ \ then there exists a sequence

 $\big\{\Phi_k^{(n)}\big\}\subset Lip( \Omega_{t_k} )$, \ such that \ $\big\|\Phi_k-\Phi_k^{(n)}\big\|_{\Sigma,\,p}\xrightarrow[n\to\infty]{}0$.

$\Phi^{(n)}(t):=\sum\limits_{k=0}^{N-1}\Phi^{(n)}_k(\omega) \ind_{[t_k,t_{k+1})}(t)$.

Denote $I_T:=\int\limits_0^T \Phi(t)\,dB_t$ \ and \ 
$I^{(n)}_T:=\int\limits_0^T \Phi^{(n)}(t)\,dB_t$.

So, we need to show that \ \ \ 
$\E\Big[\|I_T\|^p_\Hs\Big]\leq C_p\cdot\E\Big(\int\limits_0^T \|\Phi(t)\|^2_{L_2^\Sigma} \, dt\Big)^\frac{p}{2}$.\hfill$(\,\ast\,)$

Firstly we show that\ \ \  $\forall m\geq2\ \ \ \ \E\Big[\|I_T\|^m_\Hs\Big]<\infty$:

$\E\Big[\|I_T\|^m_\Hs\Big]
=\E\Big[\big\|\int\limits_0^T \Phi(t)\,dB_t\big\|^m_\Hs\Big]
=\E\Big[\|\sum\limits_{k=0}^{N-1} \Phi_k\,\Delta B_{t_k}\|^m_\Hs\Big]\\
\leq C_m^1\cdot\sum\limits_{k=0}^{N-1} \E\Big[\|\Phi_k\,\Delta B_{t_k}\|^m_\Hs\Big]
=C_m^1\cdot\sum\limits_{k=0}^{N-1} \E\Big[\E\|\Phi_k\,\Delta B_{t_k}\|^m_\Hs\big|\,\Omega_{t_k}\Big]\\
\overset{\substack{\textbf{Prop.\ref{prop_moments}}\\ \textbf{Prop.\ref{prop_ar_op_gnd},\,2)} }}{\leq}
C_m^1\cdot\sum\limits_{k=0}^{N-1} \E\Big[(t_{k+1}-t_{k})^{\frac{m}{2}}\cdot 
C_m^2\cdot\sup\limits_{Q\in\Sigma}\Big(\Tr\big[\Phi_k Q \Phi_k^*\big]\Big)^{\frac{m}{2}}\Big]$

\hfill$
\leq C_m^0\cdot\sum\limits_{k=0}^{N-1} \E\Bigg[\Big((t_{k+1}-t_{k})\cdot\|\Phi(t)\|^2_{L_2^\Sigma}\Big)^{\frac{m}{2}}\Bigg]
<\infty
$.

It is clear also that \  $\E\Big[\|I^{(n)}_T\|^m_\Hs\Big]<\infty$.
\\
Then according to the proof of \textbf{Th.\ref{th_ito_is_inq}} we can get that

$
\E\Big[\big\|\int\limits_0^T \Phi^{(n)}(t)\,dB_t\big\|^p_\Hs\Big]
=\E\Big[\|\fee\|^p_\Hs(B_{u_1^0},..,B_{{u_{m_N}^N}})\Big]\\
=\sup\limits_{\theta\in\A0}E_{P_\theta}\Big[\|\fee\|^p_\Hs(B_{u_1^0},..,B_{{u_{m_N}^N}})\Big]
=\sup\limits_{\theta\in\A0}E_{P_\theta}\Big[\big\|\int\limits_0^T \Phi^{(n)}(t)\,dB_t\big\|^p_\Hs\Big]\\
=\sup\limits_{\theta\in\A0}E_{P_\theta}\Big[\big\| \sum\limits_{k=0}^{N-1}\fee_k^{(n)}(B_{u_1^k},..,B_{{u_{m_k}^k}})\cdot( B_{t_{k+1}}-B_{t_k})\big\| ^p_\Hs\Big]\\
=\sup\limits_{\theta\in\A0}E_P\Big[\big\| \sum\limits_{k=0}^{N-1}\fee_k^{(n)}(B^{0,\theta}_{u_1^k},..,B^{0,\theta}_{{u_{m_k}^k}})\cdot( B^{0,\theta}_{t_{k+1}}-B^{0,\theta}_{t_k})\big\| ^p_\Hs\Big]$

\hfill$
=\Big\lfilet\Phi_*^{(n)}(t):=\sum\limits_{k=0}^{N-1}\fee_k^{(n)}(B^{0,\theta}_{u_1^k},..,B^{0,\theta}_{{u_{m_k}^k}})\cdot( t_{k+1}-{t_k})\Big\lfilet\\
=\sup\limits_{\theta\in\A0}E_P\Big[\big\|\int\limits_0^T \Phi_*^{(n)}(t)\,dB^{0,\theta}_t\big\|^p_\Hs\Big] 
=\sup\limits_{\theta\in\A0}E_P\Big[\big\|\int\limits_0^T \Phi_*^{(n)}(t)\theta_t\,dW_t\big\|^p_\Hs\Big]\\
\overset{\text{classical BDG inequality}}{=}
C_p\cdot\sup\limits_{\theta\in\A0}E_P\Big[\Big(\int\limits_0^T\big\| \Phi_*^{(n)}(t)\cdot\theta_t\big\|^2_{L_2(\Us,\Hs)} dt\Big)^{\frac{p}{2}}\Big]\\
=C_p\cdot\sup\limits_{\theta\in\A0}E_{P_\theta}\Big[\Big(\int\limits_0^T\big\| \Phi^{(n)}(t)\cdot\theta_t\big\|^2_{L_2(\Us,\Hs)} dt\Big)^{\frac{p}{2}}\Big]\\
\leq C_p\cdot\sup\limits_{\theta\in\A0}E_{P_\theta}\Big[\Big(\int\limits_0^T\sup\limits_{\theta\in\A0}\big\| \Phi^{(n)}(t)\cdot\theta\big\|^2_{L_2(\Us,\Hs)} dt\Big)^{\frac{p}{2}}\Big]\\
\leq C_p\cdot\sup\limits_{\theta\in\A0}E_{P_\theta}\Big[\Big(\int\limits_0^T\sup\limits_{\gamma\in\Theta}\big\| \Phi^{(n)}(t)\cdot\gamma\big\|^2_{L_2(\Us,\Hs)} dt\Big)^{\frac{p}{2}}\Big]\\
=C_p\cdot\sup\limits_{\theta\in\A0}E_{P_\theta}\Big[\Big(\int\limits_0^T\sup\limits_{Q\in\Sigma}\big\| \Phi^{(n)}(t)\cdot Q^{1/2}\big\|^2_{L_2(\Us,\Hs)} dt\Big)^{\frac{p}{2}}\Big]\\
=C_p\cdot\bar{\E}\Big[\Big(\int\limits_0^T \|\Phi^{(n)}(t)\|^2_{L_2^\Sigma}\,dt\Big)^{\frac{p}{2}}\Big]
=C_p\cdot\E\Big[\Big(\int\limits_0^T \|\Phi^{(n)}(t)\|^2_{L_2^\Sigma}\,dt\Big)^{\frac{p}{2}}\Big]$.
\\
And now we pass to the limit.
\\
\textbf{(a):}$
\Bigg|\E\Big[\big\|I_T\big\|^p_\Hs\Big]-\E\Big[\big\|I^{(n)}_T\big\|^p_\Hs\Big]\Bigg|
\leq \E\Bigg[\Big|\big\|I_T\big\|^p_\Hs-\big\|I^{(n)}_T\big\|^p_\Hs\Big|\Bigg]\\
\leq \E\Bigg[\big\|I_T-I^{(n)}_T\big\|_\Hs\cdot\Big(\big\|I_T\big\|^{p-1}_\Hs+\big\|I_T\big\|^{p-2}_\Hs\cdot\big\|I^{(n)}_T\big\|_\Hs+\ldots+\big\|I^{(n)}_T\big\|^{p-1}_\Hs\Big)\Bigg]\\
\overset{\textbf{Prop.\ref{prop_cb_ineq}}}{\leq}
\Bigg(\E\Big[\big\|I_T-I^{(n)}_T\big\|^2_\Hs\Big]\Bigg)^{\frac{1}{2}}\times$

\hfill$\times
\underbrace{\Bigg(\E\Bigg[\Big(\big\|I_T\big\|^{p-1}_\Hs+\big\|I_T\big\|^{p-2}_\Hs\cdot\big\|I^{(n)}_T\big\|_\Hs+\ldots+\big\|I^{(n)}_T\big\|^{p-1}_\Hs\Big)^2\Bigg]\Bigg)^{\frac{1}{2}}}_{<\infty}\\
\overset{\textbf{Th.\ref{th_ito_is_inq}}}{\leq}C\cdot\vvvert\Phi-\Phi^{(n)}\vvvert_T
=C\cdot\Bigg(\E\Big[\int\limits_0^T \|\Phi(t)-\Phi^{(n)}(t)\|^2_{L_2^\Sigma}\,dt \Big]\Bigg)^{\frac{1}{2}}\\
\leq C\cdot\Bigg(\int\limits_0^T \E\Big[\|\Phi(t)-\Phi^{(n)}(t)\|^2_{L_2^\Sigma} \Big]\,dt\Bigg)^{\frac{1}{2}}
=C\cdot\Bigg(\int\limits_0^T \|\Phi(t)-\Phi^{(n)}(t)\|^2_\Sigma\,dt\Bigg)^{\frac{1}{2}}$

\hfill$
=C\cdot\Bigg(\sum\limits_{k=0}^{N-1}\Big[\underbrace{\big\| \Phi_k-\Phi^{(n)}_k\big\|^2_\Sigma}_{\searrow0}\cdot (t_{k+1}-t_{k})\Big]\Bigg)^{\frac{1}{2}}
\xrightarrow[n\to\infty]{}0$.
\\
\textbf{(b):}
$
\Bigg|\Bigg(\E\Big[\Big(\int\limits_0^T \|\Phi(t)\|^2_{L_2^\Sigma}\,dt\Big)^{\frac{p}{2}}\Big]\Bigg)^{\frac{1}{p}}-
\Bigg(\E\Big[\Big(\int\limits_0^T \|\Phi^{(n)}(t)\|^2_{L_2^\Sigma}\,dt\Big)^{\frac{p}{2}}\Big]\Bigg)^{\frac{1}{p}}\Bigg|\\
\overset{\text{Minkowski ineq.}}{\leq}
\Bigg|\Bigg(\E\Bigg[\Bigg(\Big(\int\limits_0^T \|\Phi(t)\|^2_{L_2^\Sigma}\,dt\Big)^{\frac{1}{2}}-
\Big(\int\limits_0^T \|\Phi^{(n)}(t)\|^2_{L_2^\Sigma}\,dt\Big)^{\frac{1}{2}}\Bigg)^p\Bigg]\Bigg)^{\frac{1}{p}}\Bigg|\\
\overset{\text{Minkowski ineq.}}{\leq}
\Bigg(\E\Big[\Big(\int\limits_0^T \|\Phi(t)-\Phi^{(n)}(t)\|^2_{L_2^\Sigma}\,dt \Big)^{\frac{p}{2}}\Big]\Bigg)^{\frac{1}{p}}\\
\leq\Bigg(\E\Big[\int\limits_0^T \|\Phi(t)-\Phi^{(n)}(t)\|^p_{L_2^\Sigma}\,dt \Big]\Bigg)^{\frac{1}{p}}
\leq\Bigg(\int\limits_0^T \E\Big[\|\Phi(t)-\Phi^{(n)}(t)\|^p_{L_2^\Sigma} \Big]dt\Bigg)^{\frac{1}{p}}\\
=\Bigg(\int\limits_0^T \|\Phi(t)-\Phi^{(n)}(t)\|^p_{\Sigma,\,p}\,dt\Bigg)^{\frac{1}{p}}$

\hfill$
=\Bigg(\sum\limits_{k=0}^{N-1}\Big[\underbrace{\big\| \Phi_k-\Phi^{(n)}_k\big\|^p_{\Sigma,\,p}}_{\searrow0}
\cdot (t_{k+1}-t_{k})\Big]\Bigg)^{\frac{1}{p}}
\xrightarrow[n\to\infty]{}0
$.

\end{proof}

Define  ${^\Hs\!M^p_G\big( 0,T \big)}$  as a completion of ${^\Hs\!M^{p,0}_G\big( 0,T \big)}$
under the norm 

\hfill$ \vvvert \Phi\vvvert^p_{T,\,p}:=\E\Bigg[\Big(\int\limits_0^T \|\Phi(t)\|^2_{L_2^\Sigma} \, dt\Big)^\frac{p}{2}\Bigg]
$.

Actually, such a norm is finite on ${^\Hs\!M^{p,0}_G\big( 0,T \big)}$, because

\parbox{140mm}{$ \vvvert \Phi\vvvert^p_{T,\,p}=\E\Bigg[\Big(\sum\limits_{k=0}^{N-1}  (t_k-t_{k+1})\cdot\|\Phi(t)\|^2_{L_2^\Sigma}\Big)^\frac{p}{2}\Bigg]$

\hfill$
\leq C_{p,N}\cdot\sum\limits_{k=0}^{N-1}\Big((t_k-t_{k+1})^\frac{p}{2}\cdot\underbrace{\E\Big[ \|\Phi(t)\|^p_{L_2^\Sigma}\Big]}_{<\infty}\Big)
<\infty
$.}

Also we denote \ \ $ \vvvert \Phi\vvvert^2_{T,\,2}:=\vvvert \Phi\vvvert^2_T=\E\Big[\int\limits_0^T \|\Phi(t)\|^2_{L_2^\Sigma}\,dt\Big]$.

\begin{teo}\label{th_ito_BDG_general}
 $I_T=\int\limits_0^T \Phi(t)\,dB_t$ can be extended on ${^\Hs\!M^p_G\big( 0,T \big)}$.

And for $\Phi\in{^\Hs\!M^p_G\big( 0,T \big)}$ the BDG inequality \eqref{eq_BDG_iq} holds.

In particular, if $p=2$ the Itô isometry inequality \eqref{eq_ito_is_inq} holds.
\end{teo}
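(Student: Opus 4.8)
The plan is to define the extended integral by continuity, invoking the a priori estimate already established for elementary processes in Theorem \ref{th_BDG_inq}. On the dense subspace ${^\Hs\!M^{p,0}_G(0,T)}$ the map $\Phi\mapsto I_T(\Phi)$ is linear, and the BDG inequality \eqref{eq_BDG_iq} reads
$$\left(\E\big[\|I_T(\Phi)\|^p_\Hs\big]\right)^{1/p}\leq C_p^{1/p}\cdot\vvvert\Phi\vvvert_{T,p}.$$
Thus $I_T$ is a bounded (hence Lipschitz) linear operator from $\big({^\Hs\!M^{p,0}_G(0,T)},\vvvert\cdot\vvvert_{T,p}\big)$ into the space of $\Hs$-valued random variables equipped with the norm $\big(\E[\|\cdot\|^p_\Hs]\big)^{1/p}$, the codomain being the $\Hs$-valued analogue of ${^\Hs\! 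L_G^p(\Omega_T)}$ built by the same completion procedure (recall $I_T(\Phi)$ is $\Hs$-valued). Since ${^\Hs\!M^p_G(0,T)}$ is by definition the completion of ${^\Hs\!M^{p,0}_G(0,T)}$ under $\vvvert\cdot\vvvert_{T,p}$ and the codomain is complete, the bounded linear transformation theorem furnishes a unique continuous linear extension of $I_T$ to all of ${^\Hs\!M^p_G(0,T)}$.

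Concretely, given $\Phi\in{^\Hs\!M^p_G(0,T)}$ I would choose by density a sequence $\Phi^{(n)}\in{^\Hs\!M^{p,0}_G(0,T)}$ with $\vvvert\Phi-\Phi^{(n)}\vvvert_{T,p}\to0$. Applying \eqref{eq_BDG_iq} to the elementary process $\Phi^{(n)}-\Phi^{(m)}$ and using linearity of $I_T$ gives
$$\E\big[\|I_T(\Phi^{(n)})-I_T(\Phi^{(m)})\|^p_\Hs\big]=\E\big[\|I_T(\Phi^{(n)}-\Phi^{(m)})\|^p_\Hs\big]\leq C_p\cdot\vvvert\Phi^{(n)}-\Phi^{(m)}\vvvert^p_{T,p}\xrightarrow[n,m\to\infty]{}0,$$
so $\big(I_T(\Phi^{(n)})\big)$ is Cauchy and converges in the complete codomain to an element I define to be $I_T(\Phi)$. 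A standard interlacing of two approximating sequences shows the limit does not depend on the chosen sequence, and linearity passes to the limit.

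It then remains to transport the estimate across the limit. Writing \eqref{eq_BDG_iq} for $\Phi^{(n)}$ and letting $n\to\infty$, continuity of $X\mapsto\big(\E[\|X\|^p_\Hs]\big)^{1/p}$ and of $\Phi\mapsto\vvvert\Phi\vvvert_{T,p}$ yields
$$\E\big[\|I_T(\Phi)\|^p_\Hs\big]=\lim_{n\to\infty}\E\big[\|I_T(\Phi^{(n)})\|^p_\Hs\big]\leq\lim_{n\to\infty}C_p\cdot\vvvert\Phi^{(n)}\vvvert^p_{T,p}=C_p\cdot\vvvert\Phi\vvvert^p_{T,p},$$
which is precisely \eqref{eq_BDG_iq} for $\Phi\in{^\Hs\!M^p_G(0,T)}$; specializing to $p=2$, where Theorem \ref{th_ito_is_inq} gives $C_2=1$, recovers the Itô isometry \eqref{eq_ito_is_inq}. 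I do not expect a serious analytic obstacle at this stage: the substantive work — the a priori bound for elementary integrands — was already carried out in Theorem \ref{th_BDG_inq}, so the extension reduces to the density/continuity argument above. The only points requiring genuine care are the completeness of the $\Hs$-valued codomain, which rests on the earlier identification of these $G$-expectation completions with honest spaces of random variables admitting quasi-continuous versions, and the continuity of the two $p$-norms along the approximating sequences; both are routine.
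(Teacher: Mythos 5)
Your proposal is correct and follows essentially the same route as the paper: approximate $\Phi$ by elementary processes, use the Theorem \ref{th_BDG_inq} bound on differences to show $\big(I_T(\Phi^{(n)})\big)$ is Cauchy in the $\E\big[\|\cdot\|_\Hs^p\big]^{1/p}$-norm, define $I_T(\Phi)$ as the limit, and pass the BDG estimate through the limit by continuity of both norms. Your added remarks on completeness of the $\Hs$-valued codomain and independence of the approximating sequence make explicit points the paper leaves implicit, but the argument is the same.
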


\begin{proof}
 $\absz$\\
We have if   $\Phi\in{^\Hs\!M^p_G\big( 0,T \big)}$ \ then there exists a sequence 

$
\big\{\Phi^{(n)},\ n\geq1\big\}\subset{^\Hs\!M^{p,0}_G\big( 0,T \big)}$, \ such that \  $\vvvert\Phi-\Phi^{(n)}\vvvert_{T,\,p}\xrightarrow[n\to\infty]{}0$.

Let us define a norm \  $ \|I_T\|^p_{\Omega_T,\,p}:=\E\big[\|I_T\|_\Hs^p\big]  $.

And we have \  
$\|I_T(\Phi^{(n)})-I_T(\Phi^{(m)})\|_{\Omega_T,\,p}=
\|I_T(\Phi^{(n)}-\Phi^{(m)})\|_{\Omega_T,\,p}$

\hfill$
\overset{\textbf{Th.\ref{th_BDG_inq}}}{\leq}
C_p\cdot\vvvert \Phi^{(n)}-\Phi^{(m)}\vvvert_{T,\,p}$.
\\
So that, we can extend $I_T$ on ${^\Hs\!M^p_G\big( 0,T \big)}$ as a continuous mapping.
\\
Define $I_T(\Phi):=\lim\limits_{n\to\infty}I_T(\Phi^{(n)})$. 
\\
Letting $m\to\infty\ $ \ yields \ $0\leq\|I_T(\Phi^{(n)})-I_T(\Phi)\|_{\Omega_T,\,p}$

\hfill$
\leq C_p\cdot\vvvert \Phi^{(n)}-\Phi\vvvert_{T,\,p}
\xrightarrow[n\to\infty]{}0$.
\\
Therefore \ $\|I_T(\Phi^{(n)})\|_{\Omega_T,\,p}\xrightarrow[n\to\infty]{}\|I_T(\Phi)\|_{\Omega_T,\,p}$ \ 
and \ $\vvvert \Phi^{(n)}\vvvert_{T,\,p}
\xrightarrow[n\to\infty]{}\vvvert \Phi\vvvert_{T,\,p}$.
\\
So we can conclude that $\|I_T(\Phi)\|_{\Omega_T,\,p}
\leq C_p\cdot\vvvert \Phi\vvvert_{T,\,p}$.

\end{proof}

\subsection[Characterization of the space of integrand processes ${^\Hs\!M^{2}_G}$]{Characterization of the space of integrand processes ${^\Hs\!M^{2}_G\big( 0,T \big)}$}

\begin{prop}\label{prop_m2gh}
 $\Phi\in{^\Hs\!M^{2}_G\big( 0,T \big)}\ \ \ \Leftrightarrow\ \ \ $
\parbox[t]{100mm}{
1)$\vvvert \Phi\vvvert_T<\infty$;\\
2)$\Phi(t)\in{^\Hs\!L^{2}_G\big( \Omega_t \big)}$ for almost all $ t$.
}
\end{prop}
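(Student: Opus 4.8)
The statement is a two-sided characterization, so the plan is to prove the two implications separately. Throughout I would work with the identification $\E=\bar{\E}=\sup_{P\in\mathcal P}E_P$, which is available on ${^\Hs}L_G^p$, and with the elementary representation $\Phi(t)=\sum_{k}\Phi_k\,\ind_{[t_k,t_{k+1})}(t)$, $\Phi_k\in{^\Hs}L_G^2(\Omega_{t_k})$. One point is immediate: for a process in the completion the seminorm $\vvvert\Phi\vvvert_T$ is finite by construction, so condition 1) is free in the forward direction and is an assumption in the backward one; the whole content lies in condition 2).

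For sufficiency ($\Leftarrow$), I would build elementary approximants. First I would reduce to bounded integrands by truncating via $\Trunc(\cdot,N)$ and controlling the tail $\bar{\E}\big[\|\Phi\|_{L_2^\Sigma}^2\,\ind_{\{\|\Phi\|_{L_2^\Sigma}>N\}}\big]$ in the $\vvvert\cdot\vvvert_T$ norm, using the representation of ${^\Hs}L_G^p$ as a space of random variables together with Th.\ref{th_mon_conv_g_exp}. Then, choosing partition points $t_k$ inside the full-measure set on which $\Phi(t_k)\in{^\Hs}L_G^2(\Omega_{t_k})$, I would form the left-point step process $\Phi^\pi(t):=\sum_k\Phi(t_k)\,\ind_{[t_k,t_{k+1})}(t)\in{^\Hs}M_G^{2,0}(0,T)$, which is admissible because each $\Phi(t_k)$ is $\Omega_{t_k}$-adapted. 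It remains to prove $\vvvert\Phi^\pi-\Phi\vvvert_T\to0$ as the mesh shrinks, i.e. $\sup_{P\in\mathcal P}E_P\big[\int_0^T\|\Phi(t_k)-\Phi(t)\|_{L_2^\Sigma}^2\,dt\big]\to0$; for each fixed $P$ this is the standard $L^2(dt\otimes P)$ continuity of translations, and the work is to make it uniform over $\mathcal P$.

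For necessity ($\Rightarrow$), take elementary $\Phi^{(n)}\to\Phi$ in $\vvvert\cdot\vvvert_T$ and pass to a subsequence with $\sum_n\vvvert\Phi^{(n+1)}-\Phi^{(n)}\vvvert_T<\infty$. Setting $H(t,\omega):=\sum_n\|\Phi^{(n+1)}(t)-\Phi^{(n)}(t)\|_{L_2^\Sigma}$ and using the triangle inequality for $\vvvert\cdot\vvvert_T$ (with Th.\ref{th_mon_conv_g_exp} for the infinite sum) gives $\bar{\E}\big[\int_0^T H^2\,dt\big]<\infty$, whence $\int_0^T H(t)^2\,dt<\infty$ quasi surely; this fixes a representative $\Phi(t):=\lim_n\Phi^{(n)}(t)$ with quasi-sure pointwise convergence for a.e. $t$. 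Since each $\Phi^{(n)}(t)$ lies in ${^\Hs}L_G^2(\Omega_{t_k})\subset{^\Hs}L_G^2(\Omega_t)$, the limit $\Phi(t)$ is $\Omega_t$-adapted; to place it in ${^\Hs}L_G^2(\Omega_t)$ I would invoke the characterization of that space as the random variables admitting a quasi-continuous version with $\lim_N\bar{\E}[\|X\|_{L_2^\Sigma}^2\,\ind_{\{\|X\|_{L_2^\Sigma}>N\}}]=0$: quasi-continuity passes to the quasi-uniform limit, while finiteness $\bar{\E}[\|\Phi(t)\|_{L_2^\Sigma}^2]<\infty$ and the uniform-integrability tail follow for a.e. $t$ from Fatou for $\bar{\E}$ along the q.s. convergent subsequence.

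The crux in both directions is the same, and it is the main obstacle: the norm $\vvvert\Phi\vvvert_T^2=\bar{\E}\big[\int_0^T\|\Phi\|_{L_2^\Sigma}^2\,dt\big]=\sup_P E_P\big[\int_0^T\|\Phi\|^2\,dt\big]$ is strictly smaller than $\int_0^T\sup_P E_P[\|\Phi(t)\|^2]\,dt$, since for elementary data only the subadditive bound $\bar{\E}[\sum_k a_kX_k]\le\sum_k a_k\bar{\E}[X_k]$ holds, i.e. $\sup_{P}$ does not commute with the time integral. Consequently fixed-time $L_G^2(\Omega_t)$ control cannot be read off termwise from the global seminorm, and a Cauchy sequence in ${^\Hs}M_G^2(0,T)$ need not be $\|\cdot\|_\Sigma$-Cauchy at a.e. fixed $t$. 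I therefore expect to circumvent this by never demanding such fixed-time strong convergence: in the sufficiency direction the uniformity over $\mathcal P$ is to be extracted from tightness of $\{P_\theta\}$ (Prop.\ref{prop_P_tight}) after reducing to bounded, time-uniformly-continuous integrands, while in the necessity direction the quasi-sure pointwise limit together with the random-variable characterization of ${^\Hs}L_G^2(\Omega_t)$ substitutes for the missing strong convergence.
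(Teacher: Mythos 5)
Your necessity direction ($\Rightarrow$) contains a genuine gap at the point where you pass from ``$\int_0^T H(t)^2\,dt<\infty$ quasi surely'' to a quasi-surely defined representative $\Phi(t):=\lim_n\Phi^{(n)}(t)$ for almost every \emph{fixed} $t$. What the quasi-sure statement gives is: for quasi-every $\omega$ there is a full-measure set of times, \emph{depending on $\omega$}, on which the series converges. What you need is the transposed statement: for almost every fixed $t$, convergence holds quasi surely. That interchange is a Fubini argument between $dt$ and the capacity $c=\sup_{P\in\mathcal P}P$, and a capacity is not a measure: Fubini applies to each $P\in\mathcal P$ separately and produces a $P$-dependent Lebesgue-null set of bad times; since $\mathcal P$ is uncountable, these null sets need not have Lebesgue-null union, and no single full-measure set of $t$ works for all $P$ simultaneously. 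Moreover, even at a time $t$ where the q.s.\ limit exists, your concluding step does not place $\Phi(t)$ in ${^\Hs\!L^{2}_G(\Omega_t)}$: Fatou for $\bar{\E}$ yields $\bar{\E}\big[\|\Phi(t)\|^2_{L_2^\Sigma}\big]<\infty$ but not the tail condition $\lim_{N}\bar{\E}\big[\|\Phi(t)\|^2_{L_2^\Sigma}\ind_{\{\|\Phi(t)\|_{L_2^\Sigma}>N\}}\big]=0$ (uniform integrability does not follow from Fatou plus q.s.\ convergence --- there is no dominated convergence under $\bar{\E}$), and quasi-continuity passes to \emph{quasi-uniform} limits, which at fixed $t$ would require exactly the $\|\cdot\|_\Sigma$-norm convergence of $\Phi^{(n)}(t)$ that you set out to avoid. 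Your diagnosis of the underlying difficulty is correct --- $\E\big[\int_0^T\cdot\,dt\big]\le\int_0^T\E[\cdot]\,dt$ goes the wrong way, so $\vvvert\cdot\vvvert_T$-convergence gives no fixed-time control --- and it is fair to note that the paper's own proof of this direction simply asserts $\|\Phi(t)-\Phi_n(t)\|_\Sigma\to0$ for a.a.\ $t$ and concludes by completeness of ${^\Hs\!L^{2}_G(\Omega_t)}$; but your proposed repair does not close that hole either.

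On sufficiency ($\Leftarrow$) your route is broadly viable but one ingredient is misdirected. The paper proceeds by a backward mollification $\Phi_\varepsilon(t)=\int_{-1}^{1}\rho(v)\,\Phi(t-\varepsilon v-\varepsilon)\,dv$ (which preserves adaptedness), reduces to time-continuous integrands via density of $C\big([0,T],B\big)$ in $L_2\big([0,T],B\big)$ for the Banach space $B$ carrying the norm $\|\cdot\|_\Sigma$, and only then discretizes; every smallness estimate is run through the dominating quantity $\int_0^T\|\cdot\|^2_\Sigma\,dt\geq\vvvert\cdot\vvvert_T^2$, valid by sub-additivity of $\E$. This also answers your ``uniformity over $\mathcal P$'' worry: no uniform-in-$P$ translation continuity is needed, because classical translation continuity in $L_2\big([0,T],B\big)$ already dominates the $\vvvert\cdot\vvvert_T$-error; by contrast, tightness of $\{P_\theta\}$ (\textbf{Prop.\ref{prop_P_tight}}) concerns the measures on path space and gives no handle on time regularity of the integrand, so that appeal would not go through as stated. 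Your preliminary truncation is sound and in fact supplies precisely the finiteness $\int_0^T\|\cdot\|^2_\Sigma\,dt<\infty$ (since $\|\Trunc(\Phi,N)(t)\|_\Sigma\le N$) that the dominating-norm argument needs and that the statement's hypotheses alone do not guarantee; but with the truncation in hand you should follow the mollification/density path rather than raw left-point sampling, whose convergence for a merely measurable $t\mapsto\Phi(t)$ requires either an averaged-partition argument or the time-continuity you have not yet established.
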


\begin{proof}
$\absz$\\
$(\Rightarrow)$\\
  If $\Phi\in{^\Hs\!M^{2}_G\big( 0,T \big)}$ \ then \ $\vvvert \Phi\vvvert_T<\infty$ \ and there exists a sequence 

 $\big\{\Phi_n,\ n\geq1\big\}\subset{^\Hs\!M^{2,0}_G\big( 0,T \big)}$, \ such that \ $\vvvert\Phi-\Phi_n\vvvert_T\xrightarrow[n\to\infty]{}0$.

It follows that for almost all $t$ \ we have \ $\|\Phi-\Phi_n\|_\Sigma\xrightarrow[n\to\infty]{}0$.

(Recall that the norm $\|\pdot\|_\Sigma$ is introduced in \ref{subs_def_stoch_int}).
\\
For such a fixed $t=t'$ \ implies that \ $\Phi_n(t')=Const\in Lip\big( \Omega_t \big)$.

Since $\Big({^\Hs\!L^{2}_G\big( \Omega_t \big)},\|\pdot\|_\Sigma\Big)$ is a Banach space, then for almost all $t$ we have that \ $\Phi(t)\in{^\Hs\!L^{2}_G\big( \Omega_t \big)}$.
\\
$(\Leftarrow)$\\
\textbf{1)} Let for almost all $t\ \ \Phi(t)\in Lip\big( \Omega_t \big)$ be continuous.

Take partition of $[0,T]:\ \lambda_n=\{0=t_0^n<t_1^n<\ldots<t_N^n=T\}\,,$

\hfill$d(\lambda_n)\xrightarrow[n\to\infty]{}0\,,\ N=N(n)\xrightarrow[n\to\infty]{}0$.

Define $\Phi_n:=\sum\limits_{k=0}^{N-1}\Phi(t_k^n)(\omega) \ind_{[t^n_k,t^n_{k+1})}(t)\in{^\Hs\!M^{2,0}_G\big( 0,T \big)}$.

We have \ $\Phi_n(t)\xrightarrow[n\to\infty]{}\Phi(t)$ \ for all $t$. 

Let us calculate \ 

$\vvvert \Phi-\Phi_n\vvvert_T^2
=\E\Big[\big\|\int\limits_0^T \big(\Phi(t)-\Phi_n(t)\big)\,dB_t\big\|^2_\Hs\Big]
\leq  \E\Big[\int\limits_0^T \big\|\big(\Phi(t)-\Phi_n(t)\big)\big\|^2_{L_2^\Sigma}\,dt\Big]$\,,

\hfill using Itô's inequality according to \textbf{Th.\ref{th_ito_BDG_general}}.
\\
Then there exists a point $t_\theta\in[0,T]$\ such that

$\E\Big[\int\limits_0^T \big\|\big(\Phi(t)-\Phi_n(t)\big)\big\|^2_{L_2^\Sigma}\,dt\Big]
=T\cdot\E\Big[ \big\|\big(\Phi(t_\theta,\omega)-\Phi_n(t_\theta,\omega)\big)\big\|^2_{L_2^\Sigma}\Big]$.

But the last term tends to $0$ according to \textbf{Th.\ref{th_mon_conv_g_exp}}.

So, we have that \ $\vvvert \Phi-\Phi_n\vvvert_T\xrightarrow[n\to\infty]{}0$.
\\
\textbf{2)} Let for almost all $t\ \ \Phi(t)\in Lip\big( \Omega_t \big)$ (it is not necessary continuous).

Define \ $\Phi_\varepsilon(t):=\dfrac{1}{\varepsilon}\int\limits_{-\infty}^{+\infty}\rho(\frac{t-s-\varepsilon}{\varepsilon})\Phi(s)ds=
\dfrac{1}{\varepsilon}\int\limits_{t-2\varepsilon}^{t}\rho(\frac{t-s-\varepsilon}{\varepsilon})\Phi(s)ds$

\hfill$=
\int\limits_{-1}^{1}\rho(v)\Phi(t-\varepsilon v-\varepsilon)dv$.
\\
Hence for all $t\ \  \Phi_\varepsilon(t+)=\Phi_\varepsilon(t-)=\Phi_\varepsilon(t)$, \ so that \ $\Phi_\varepsilon(\pdot)$ is continuous.

Consider $A_\varepsilon:=\vvvert \Phi-\Phi_\varepsilon\vvvert_T^2=\int\limits_0^T \|\Phi(t)-\Phi_\varepsilon(t)\|^2_\Sigma \, dt\\
\hspace*{28,5mm}=
\int\limits_0^T \|\int\limits_{-1}^{1}\rho(v)\big(\Phi(t)-\Phi(t-\varepsilon v-\varepsilon)\big)dv\|^2_\Sigma \, dt\\
\hspace*{28,5mm}
 \leq
\int\limits_0^T \Big(\int\limits_{-1}^{1}\rho(v)\|\Phi(t)-\Phi(t-\varepsilon v-\varepsilon)\|_\Sigma\, dv\Big)^2 dt\\
\\
\hspace*{28,5mm} \leq
\int\limits_0^T \Big(\int\limits_{-1}^{1}\rho(v)dv\cdot\int\limits_{-1}^{1}\rho(v)\|\Phi(t)-\Phi(t-\varepsilon v-\varepsilon)\|^2_\Sigma\, dv\Big) dt$.
\\
But for every separable Banach space $B$ we have such a dense inclusion: 
$C\big([0,T],\,B\big)\subset L_2\big([0,T],\,B\big) $,

because every $f\in L_2\big([0,T],\,B\big)$ can be approximated by $\sum\limits_{i=1}^m f_i(t) b_i$, 

\hfill where $(b_i)_{i\geq1}\subset B$ - densely, $f_i\in L_2\big([0,T],\,B\big)$;

and such $f_i\in L_2\big([0,T],\,B\big)$ can be approximated by $g_i\in C\big([0,T],\,B\big)$, because it is well-known that 
$C\big([0,T],\,B\big)\subset L_2\big([0,T],\,B\big) $ densely.

So, $f$ can be approximated by $\sum\limits_{i=1}^m g_i(t) b_i\in C\big([0,T],\,B\big)$.

Using this fact we have that:

For $\Phi:[0,T]\to{^\Hs\!L^{2}_G\big( \Omega_T \big)}=:B$, where $\big(B,\|\pdot\|_B\big)$ is a Banach separable space
and \ $\int\limits_0^T \|\Phi(t)\|^2_B dt<\infty$, \ there exists \ 
$\Psi_\delta\in C\big([0,T],\,B\big)$, \ such that \ $\int\limits_0^T \|\Phi(t)-\Psi_\delta(t)\|^2_B dt<\delta$.
\\
Therefore \ 
$\big|A_\varepsilon\big|\leq
\int\limits_0^T \int\limits_{-1}^{1}\rho(v)\|\Phi(t)-\Phi(t-\varepsilon v-\varepsilon)\|^2_\Sigma\, dv dt
\\
\leq
3\int\limits_0^T \int\limits_{-1}^{1}\rho(v)\|\Phi(t)-\Psi_\delta(t)\|^2_\Sigma\, dv dt
+
3\int\limits_0^T \int\limits_{-1}^{1}\rho(v)\|\Phi(t-\varepsilon v-\varepsilon)$

\hfill$-\Psi_\delta(t-\varepsilon v-\varepsilon)\|^2_\Sigma\, dv dt+
3\int\limits_0^T \int\limits_{-1}^{1}\rho(v)\|\Psi_\delta(t)-\Psi_\delta(t-\varepsilon v-\varepsilon)\|^2_\Sigma\, dv dt$

\hfill$ \leq
6\delta+3\int\limits_0^T \int\limits_{-1}^{1}\rho(v)\|\Psi_\delta(t)-\Psi_\delta(t-\varepsilon v-\varepsilon)\|^2_\Sigma\, dv dt$.

Hence \ $\overline{\lim\limits_{\varepsilon\to0}}\big|A_\varepsilon\big|\leq6\delta$.

So that \ $\vvvert \Phi-\Phi_\varepsilon\vvvert_T\xrightarrow[\varepsilon\to0]{}0$.
\\
\textbf{3)} Let for almost all $t \ \ \Phi(t)\in{^\Hs\!L^{2}_G\big( \Omega_t \big)}$.

By the definition there exists a sequence \ $\big\{\Phi_m,\ m\geq1\big\}\subset Lip\big( \Omega_t \big)$, \ such that \ $ \|\Phi-\Phi_m\|_\Sigma\xrightarrow[n\to\infty]{}0$ 
for almost all $t$.

As in the \textbf{1)} part we can get that 

\hfill $\vvvert \Phi-\Phi_m\vvvert_T^2
\leq T\cdot\E\Big[ \big\|\big(\Phi(t_\theta,\omega)-\Phi_n(t_\theta,\omega)\big)\big\|^2_{L_2^\Sigma}\Big]\xrightarrow[n\to\infty]{}0$.

Hence \ $\vvvert \Phi-\Phi_m\vvvert_T\leq\vvvert \Phi-(\Phi_m)^n\vvvert_T+\vvvert \Phi_m-(\Phi_m)^n\vvvert_T$,

\hfill
where $(\Phi_m)^n\in{^\Hs\!M^{2,0}_G\big( 0,T \big)}$.

From \textbf{2)} it follows that \ $\vvvert \Phi-\Phi_m\vvvert_T\xrightarrow[m, n\to\infty]{}0$
.

So that \ $\Phi_m\in{^\Hs\!M^{2,0}_G\big( 0,T \big)}$.

\end{proof}

\begin{rem}\label{rem_m2gh}
 If $\Phi(t)$ is nonrandom then condition  \textbf{2)} of \textbf{Prop.\ref{prop_m2gh}} can be omitted:
$$\vvvert \Phi\vvvert_T<\infty\ \ \Leftrightarrow\ \ \Phi\in{^\Hs\!M^{2}_G\big( 0,T \big)}.$$
\end{rem}

\begin{proof}
$\absz$\\
$\vvvert \Phi\vvvert_T=
\int\limits_0^T \|\Phi(t)\|^2_\Sigma \, dt =
\int\limits_0^T \|\Phi(t)\|^2_{L^\Sigma_2} \, dt<\infty$ \ then \ $\|\Phi(t)\|^2_{L^\Sigma_2}<\infty$ 

\hfill for almost all $t$.

From here we can conclude that \ 
$\Phi(t)\in L^\Sigma_2$, \ hence that \ 
$\Phi(t)\in Lip\big( \Omega_t \big)$ (since $\Phi(t)$ is nonrandom) and finally that \
$\Phi(t)\in{^\Hs\!L^{2}_G\big( \Omega_t \big)}$ 

\hfill for almost all $t$.

And by \textbf{Prop.\ref{prop_m2gh}} we get that \ 
$\Phi(t)\in{^\Hs\!M^{2}_G\big( 0,T \big)}$.
\\
\end{proof}

\subsection{Fubini theorem}

Let $(\mathcal{X},\mathcal{E},\mu)$ is a measurable space, $\mu(\mathcal{X})<\infty$.

Consider 
${^\Hs\!M^{2,0}_G\big( 0,T ;\mathcal{X}\big)}
:=\Bigg\{\Phi(t,x)=\!\!\sum\limits_{k=0}^{N-1}\sum\limits_{j=0}^{M-1}\Phi_{kj}(\omega) \ind_{[t_k,t_{k+1})}(t)\ind_{A_j}(x)\ \Bigg|\ $

\hfill$ 
\Phi_k (\omega)\in {^\Hs}L_G^2( \Omega_{t_k} ),\ 0=t_0<t_1\ldots<t_N=T,\ A_j\in\mathcal{E}\Bigg\}$.

Let ${^\Hs\!M^{2}_G\big( 0,T ;\mathcal{X}\big)}$ be a completion of ${^\Hs\!M^{2,0}_G\big( 0,T ;\mathcal{X}\big)}$ under the norm 

$ \vvvert \Phi\vvvert_{T,\mathcal{X}}:=\int\limits_\mathcal{X}\vvvert\Phi(\pdot,x)\vvvert_T\, \mu(dx)$.

\begin{teo}\label{th_fubini}
 If $\Phi(t,x)\in{^\Hs\!M^{2}_G\big( 0,T ;X\big)}$, then:
$$\int\limits_\mathcal{X}\int\limits_0^T \Phi(t,x)\, dB_t\,  \mu(dx)=\int\limits_0^T\int\limits_\mathcal{X} \Phi(t,x)\, \mu(dx)\, dB_t  \ \ \ \text{q.s.}$$

\end{teo}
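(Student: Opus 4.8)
**The plan is to prove the Fubini-type identity for the stochastic integral against $G$-Brownian motion by first establishing it on the elementary class $\Hs M_G^{2,0}(0,T;\mathcal X)$, where everything reduces to finite sums, and then extending by density using the Itô isometry of Theorem \ref{th_ito_is_inq}.**

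First I would verify the identity on the elementary integrands. If
$$
\Phi(t,x)=\sum_{k=0}^{N-1}\sum_{j=0}^{M-1}\Phi_{kj}(\omega)\,\ind_{[t_k,t_{k+1})}(t)\,\ind_{A_j}(x),
$$
then both sides are finite sums and the computation is purely algebraic. On the left, $\int_0^T\Phi(t,x)\,dB_t=\sum_{k,j}\Phi_{kj}\,\ind_{A_j}(x)\,(B_{t_{k+1}}-B_{t_k})$, and integrating in $x$ against $\mu$ gives $\sum_{k,j}\mu(A_j)\,\Phi_{kj}\,(B_{t_{k+1}}-B_{t_k})$. On the right, $\int_{\mathcal X}\Phi(t,x)\,\mu(dx)=\sum_{k,j}\mu(A_j)\,\Phi_{kj}\,\ind_{[t_k,t_{k+1})}(t)$, which is again an elementary integrand in $\Hs M_G^{2,0}(0,T)$, and its stochastic integral is the same sum. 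So the two sides coincide identically on the elementary class, with no limiting argument required. I would also remark that the inner $x$-integral of an elementary process stays elementary in $t$, so the right-hand iterated integral is well defined.

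Next I would pass to the limit. Given $\Phi\in\Hs M_G^2(0,T;\mathcal X)$, by definition there is a sequence $\Phi^{(n)}\in\Hs M_G^{2,0}(0,T;\mathcal X)$ with $\vvvert\Phi-\Phi^{(n)}\vvvert_{T,\mathcal X}\to0$. For the left-hand side I would control the $x$-integral of the stochastic integrals: writing $I_T(\Phi(\cdot,x))$ for the inner integral, the Itô isometry from Theorem \ref{th_ito_BDG_general} gives, for each fixed $x$, the bound $\big(\E\|I_T(\Phi(\cdot,x))-I_T(\Phi^{(n)}(\cdot,x))\|_\Hs^2\big)^{1/2}\le\vvvert\Phi(\cdot,x)-\Phi^{(n)}(\cdot,x)\vvvert_T$; integrating in $x$ and using sub-additivity of $\E$ together with the definition of $\vvvert\cdot\vvvert_{T,\mathcal X}$ yields convergence of $\int_{\mathcal X}I_T(\Phi^{(n)}(\cdot,x))\,\mu(dx)$ to $\int_{\mathcal X}I_T(\Phi(\cdot,x))\,\mu(dx)$ in the $\E[\|\cdot\|_\Hs^2]^{1/2}$-norm. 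For the right-hand side I would show that the inner integral operator $\Phi\mapsto\int_{\mathcal X}\Phi(\cdot,x)\,\mu(dx)$ is a contraction from $\Hs M_G^2(0,T;\mathcal X)$ into $\Hs M_G^2(0,T)$—again a Jensen/Minkowski estimate applied to the $G$-expectation norm—so that $\int_{\mathcal X}\Phi^{(n)}(\cdot,x)\,\mu(dx)\to\int_{\mathcal X}\Phi(\cdot,x)\,\mu(dx)$ in $\vvvert\cdot\vvvert_T$, and then one more application of the Itô isometry transfers this to convergence of the outer stochastic integrals. Since both sides of the identity are limits (in $\E[\|\cdot\|_\Hs^2]^{1/2}$) of the equal elementary expressions, they agree q.s., the final passage to ``quasi surely'' being justified by Lemma \ref{lm_X0_qs}.

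**The main obstacle I anticipate is the measurability and integrability of the $x$-parametrized family $x\mapsto I_T(\Phi(\cdot,x))$, i.e.\ making sense of $\int_{\mathcal X}I_T(\Phi(\cdot,x))\,\mu(dx)$ as a genuine Bochner integral of $\Hs$-valued (quasi-continuous) random variables.** One must check that $x\mapsto\Phi(\cdot,x)$ lands in $\Hs M_G^2(0,T)$ for $\mu$-a.e.\ $x$ and is $\mu$-measurable into that Banach space, which for elementary $\Phi$ is immediate but in the limit requires extracting a subsequence converging for a.e.\ $x$ in $\vvvert\cdot\vvvert_T$, combined with the completeness of $\Hs M_G^2(0,T)$. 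Commuting the $x$-integral with $\E$ rests on sub-additivity and the finiteness $\mu(\mathcal X)<\infty$; these interchanges, rather than the algebra of the elementary step, are where the care lies.
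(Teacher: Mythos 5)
Your proposal is correct and follows essentially the same route as the paper's own proof: the identity is checked directly on the elementary class where both sides reduce to the same finite sum, and the extension to ${^\Hs\!M^{2}_G\big(0,T;\mathcal{X}\big)}$ is carried out by density using the Itô isometry of \textbf{Th.\ref{th_ito_BDG_general}}, the Minkowski-type bound $\vvvert\int_\mathcal{X}\Phi(\pdot,x)\,\mu(dx)\vvvert_T\leq\int_\mathcal{X}\vvvert\Phi(\pdot,x)\vvvert_T\,\mu(dx)$, and \textbf{Lm.\ref{lm_X0_qs}} to upgrade the vanishing expectation to quasi-sure equality. The only cosmetic differences are that the paper estimates the first moment $\E\big[\|\pdot\|_\Hs\big]$ via the Cauchy--Bunyakovsky--Schwarz inequality (\textbf{Prop.\ref{prop_cb_ineq}}) rather than working in the $\big(\E\big[\|\pdot\|^2_\Hs\big]\big)^{1/2}$-norm, and that it leaves implicit the measurability of $x\mapsto I_T\big(\Phi(\pdot,x)\big)$ which you rightly flag as the delicate point.
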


\begin{proof}
 $\absz$\\
Let  $\big\{\Phi_n,\ n\geq1\big\}\subset{^\Hs\!M^{2,0}_G\big( 0,T;\mathcal{X} \big)}$.

 Then in the same way as in \textbf{Prop.\ref{prop_m2gh}} using  \textbf{Th.\ref{th_ito_BDG_general}} and  \textbf{Th.\ref{th_mon_conv_g_exp}} we can conclude that \ 
 $\vvvert\Phi(\pdot,x)-\Phi_n(\pdot,x)\vvvert_T\xrightarrow[n\to\infty]{}0$.

And due to the dominated convergence theorem have that

\hfill $\int\limits_\mathcal{X}\vvvert\Phi(\pdot,x)-\Phi_n(\pdot,x)\vvvert_T\, \mu(dx)\xrightarrow[n\to\infty]{}0$.
\\
For every $ n$ \ set \ $\Phi_n(t,x):=\!\!\sum\limits_{k=0}^{N_n-1}\sum\limits_{j=0}^{M_n-1}\Phi^n_{kj}(\omega) \ind_{[t^n_k,t^n_{k+1})}(t)\ind_{A^n_j}(x)$.

And define the following random variables:

$\xi_n(x):=\int\limits_0^T \Phi_n(t,x)\, dB_t\,,\ \  \xi(x):=\int\limits_0^T \Phi(t,x)\, dB_t$.

$\eta_n(t):=\int\limits_\mathcal{X} \Phi_n(t,x)\,\mu(dx)\,,\ \  \eta(t):=\int\limits_\mathcal{X} \Phi(t,x)\,\mu(dx)$.

Then \ $\int\limits_\mathcal{X} \xi_n(x)\,\mu(dx)=\int\limits_0^T \eta_n(t)\, dB_t=
\!\!\sum\limits_{k=0}^{N_n-1}\sum\limits_{j=0}^{M_n-1}\Phi^n_{kj} ( B_{t^n_{k+1}}-B_{t^n_k})\mu(A^n_j)$.

For the later calculations we will use a Cauchy-Schwarz-Bunyakovsky inequality (\textbf{Prop.\ref{prop_cb_ineq}}) in a following form \  $\E[X]\leq\big(\E[X^2]\big)^\frac{1}{2}$:

\begin{enumerate}[(a)]
 \item $\E\Big[\big\|\int\limits_\mathcal{X} \xi(x)\,\mu(dx)-\int\limits_\mathcal{X} \xi_n(x)\,\mu(dx)\big\|_\Hs\Big]
\\
=
\E\Big[\big\| \int\limits_\mathcal{X}\int\limits_0^T\big(\Phi(t,x)-\Phi_n(t,x)\big)\,dB_t\,\mu(dx)\big\|_\Hs\Big]\\ \leq 
\int\limits_\mathcal{X}\E\Big[\big\| \int\limits_0^T\big(\Phi(t,x)-\Phi_n(t,x)\big)\,dB_t\big\|_\Hs\Big]\mu(dx)
\\
\leq
\int\limits_\mathcal{X}\Big(\E\Big[\big\| \int\limits_0^T\big(\Phi(t,x)-\Phi_n(t,x)\big)\,dB_t\big\|^2_\Hs\Big]\Big)^\frac{1}{2}\mu(dx)\\=
\int\limits_\mathcal{X}\vvvert\Phi(\pdot,x)-\Phi_n(\pdot,x)\vvvert_T\, \mu(dx)\xrightarrow[n\to\infty]{}0$.
  \item $\E\Big[\big\|\int\limits_0^T \eta(t)\, dB_t-\int\limits_\mathcal{X} \xi_n(x)\,\mu(dx)\big\|_\Hs\Big]
\\
=
\E\Big[\big\| \int\limits_0^T \eta(t)\, dB_t-\int\limits_0^T \eta_n(t)\, dB_t\big\|_\Hs\Big]\\ =
\E\Big[\big\| \int\limits_0^T\int\limits_\mathcal{X}\big(\Phi(t,x)-\Phi_n(t,x)\big)\,\mu(dx)\,dB_t\big\|_\Hs\Big]
\\
 \leq 
\vvvert\int\limits_\mathcal{X}\big(\Phi(\pdot,x)-\Phi_n(\pdot,x)\big)\vvvert_T\, \mu(dx)\\ \leq
\int\limits_\mathcal{X}\vvvert\Phi(\pdot,x)-\Phi_n(\pdot,x)\vvvert_T\, \mu(dx)\xrightarrow[n\to\infty]{}0$.
\end{enumerate}

Therefore \ $\E\big\|\int\limits_\mathcal{X} \xi(x)\,\mu(dx)-\int\limits_0^T \eta(t)\, dB_t\big\|_\Hs$.

And by \textbf{Lm.\ref{lm_X0_qs}} we get that \ $\int\limits_\mathcal{X} \xi(x)\,\mu(dx)=\int\limits_0^T \eta(t)\, dB_t$ \ quasi surely.

\end{proof}

\subsection[Distribution of the stochastic integral]{Distribution of the stochastic integral with nonrandom integrand}

Let us consider a stochastic integral \ $I(\Phi)=\int\limits_0^T \Phi(t)dB_t$, with respect to $G$-Brownian motion \ 
$B_t\sim N_G\big(0,t\cdot\Sigma\big)$.

Assume that  $\Phi$ is non-random, then  $\Phi:[0,T]\to L_2^\Sigma$.
\\
Note that also in such a case $\ \ \ {^\Hs\!L^{2}_G\big( \Omega\big)}\equiv Lip(\Omega)\equiv L_2^\Sigma$.

\textbf{1)} Firstly we consider a case with elementary processes:

If $\Phi\in{^\Hs\!M^{2,0}_G\big( 0,T \big)}$ \ then \ $\Phi(t)=\sum\limits_{k=0}^{N-1}\Phi_k\ind_{[t_k,t_{k+1})}(t)\,,\ \ 
\Phi_k\in L_2^\Sigma$, and the norm
$\vvvert\Phi\vvvert_T^2=\int\limits_0^T\|\Phi(t)\|^2_{L_2^\Sigma}dt<\infty$.

By the definition we have \ $ I(\Phi)=\sum\limits_{k=0}^{N-1}\Phi_k( B_{t_{k+1}}-B_{t_k})$.

We know that a random variable $\dfrac{B_{t_{k+1}}-B_{t_k}}{\sqrt{t_{k+1}-t_k}}\sim B_1\sim N_G\big(0,\Sigma\big)$.

Then using \textbf{Prop.\ref{prop_ar_op_gnd}} we get that \ $ I(\Phi)\sim N_G\big(0,\Sigma_I\big)$,

\hfill where$
\Sigma_I=\Big\{\sum\limits_{k=0}^{N-1}(t_{k+1}-t_k)\Phi_k Q \Phi_k^*\big|\, Q\in\Sigma\Big\}$.

Note that in this case \ $\sum\limits_{k=0}^{N-1}(t_{k+1}-t_k)\Phi_k Q \Phi_k^*=\int\limits_0^T\Phi(t) Q \Phi^*(t)dt$.
\\
\textbf{2)} In a general case we have that for
$\Phi\in{^\Hs\!M^2_G\big( 0,T \big)}$ \ there exists a sequence \ $
\big\{\Phi_n,\ n\geq1\big\}\subset{^\Hs\!M^{2,0}_G\big( 0,T \big)}$, \ such that  \ $\vvvert\Phi-\Phi_n\vvvert_T\xrightarrow[n\to\infty]{}0$.

And by \textbf{Th.\ref{th_ito_BDG_general}} we get that \ $\|I(\Phi)-I(\Phi_n)\|_{\Omega_T}\leq \vvvert \Phi-\Phi_n\vvvert_T$.

Since $\Phi\in L_2^\Sigma$ \ then for \ $p>0\ \ \ \Phi\in{^\Hs\!M^p_G\big( 0,T \big)}$ \ and by \textbf{Th.\ref{th_ito_BDG_general}} it may be concluded that \ $\E\big[\|I\|^p_\Hs\big]
\leq C_p\cdot\Big(\int\limits_0^T \|\Phi(t)\|^2_{L_2^\Sigma} \, dt\Big)^\frac{p}{2}<\infty$.

\begin{lm}\label{lm_converg_cov_set_elements}
 $\int\limits_0^T\Phi_n(t) Q \Phi_n^*(t)dt\xrightarrow[n\to\infty]{}\int\limits_0^T\Phi(t) Q \Phi^*(t)dt$ \ in the trace-class topology.
\end{lm}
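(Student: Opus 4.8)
The plan is to fix $Q\in\Sigma$ and factor the integrand through the Hilbert--Schmidt class. Writing $S_n(t):=\Phi_n(t)Q^{1/2}$ and $S(t):=\Phi(t)Q^{1/2}$, both lie in $L_2(\Us,\Hs)$ by the defining property of $L_2^\Sigma$, and since $Q^{1/2}$ is symmetric one has $\Phi_n Q\Phi_n^*=S_nS_n^*$ and $\Phi Q\Phi^*=SS^*$, which are positive trace-class operators on $\Hs$. The whole argument then rests on the Schatten--Hölder inequality $\|AB\|_{C_1}\leq\|A\|_{C_2}\|B\|_{C_2}$ (the $p=q=2$ case, cf.\ \cite{ringrose1}), applied to products of Hilbert--Schmidt operators, where I abbreviate $\|\pdot\|_{L_2}:=\|\pdot\|_{L_2(\Us,\Hs)}$.

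First I would record the telescoping identity $S_nS_n^*-SS^*=S_n(S_n-S)^*+(S_n-S)S^*$ and bound it pointwise in $t$: since $\|M^*\|_{L_2(\Hs,\Us)}=\|M\|_{L_2}$, the Hölder inequality gives $\|S_n(t)S_n(t)^*-S(t)S(t)^*\|_{C_1}\leq\big(\|S_n(t)\|_{L_2}+\|S(t)\|_{L_2}\big)\cdot\|S_n(t)-S(t)\|_{L_2}$. Integrating over $[0,T]$ and using that $C_1(\Hs)$ is a Banach space (so the $C_1$-norm of the integral is dominated by the integral of the $C_1$-norm), followed by the Cauchy--Schwarz inequality in the time variable, I obtain the bound $\big\|\int_0^T(S_nS_n^*-SS^*)\,dt\big\|_{C_1}\leq\big(\int_0^T(\|S_n\|_{L_2}+\|S\|_{L_2})^2\,dt\big)^{1/2}\big(\int_0^T\|S_n-S\|_{L_2}^2\,dt\big)^{1/2}$.

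Then I would control the two factors. For the second, the elementary inequality $\|(\Phi_n(t)-\Phi(t))Q^{1/2}\|_{L_2}\leq\|\Phi_n(t)-\Phi(t)\|_{L_2^\Sigma}$ (immediate from $\|\pdot\|_{L_2^\Sigma}^2=\sup_{Q\in\Sigma}\|\pdot\,Q^{1/2}\|_{L_2}^2$) together with the fact that $\Phi$ is non-random, so $\vvvert\pdot\vvvert_T^2=\int_0^T\|\pdot\|_{L_2^\Sigma}^2\,dt$, gives $\int_0^T\|S_n-S\|_{L_2}^2\,dt\leq\vvvert\Phi_n-\Phi\vvvert_T^2\xrightarrow[n\to\infty]{}0$. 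The first factor is bounded uniformly in $n$, since $\int_0^T(\|S_n\|_{L_2}+\|S\|_{L_2})^2\,dt\leq 2\big(\vvvert\Phi_n\vvvert_T^2+\vvvert\Phi\vvvert_T^2\big)$ and $\vvvert\Phi_n\vvvert_T\to\vvvert\Phi\vvvert_T<\infty$. Multiplying, the right-hand side tends to $0$, which is exactly convergence in the trace-class topology.

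The routine parts are the factorization and the two time-integral estimates. The one step that deserves care --- and which I regard as the main obstacle --- is justifying the Hilbert--Schmidt Hölder inequality together with the $C_1$-valued integration: one must check that $t\mapsto S_n(t)S_n(t)^*-S(t)S(t)^*$ is genuinely Bochner-integrable as a $C_1(\Hs)$-valued map, so that $\|\int\pdot\|_{C_1}\leq\int\|\pdot\|_{C_1}$ is legitimate; this follows from strong measurability and the pointwise $C_1$-bound just derived. Everything else is a direct application of the results already available from \cite{ringrose1} and of the definition of $\|\pdot\|_{L_2^\Sigma}$.
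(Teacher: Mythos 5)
Your proof is correct and follows essentially the same route as the paper's: the same telescoping decomposition of $S_nS_n^*-SS^*$ into two products, the Schatten--H\"older bound $\|AB\|_{C_1}\leq\|A\|_{C_2}\|B\|_{C_2}$ pointwise in $t$, Cauchy--Schwarz in the time variable, and control of both factors through $\vvvert\Phi_n\vvvert_T\to\vvvert\Phi\vvvert_T$ and $\vvvert\Phi-\Phi_n\vvvert_T\to0$. Your explicit attention to Bochner integrability of the $C_1$-valued map is a point the paper passes over silently, but it changes nothing of substance.
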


\begin{proof}
 $\absz$\\
$ \vvvert\Phi-\Phi_n\vvvert_T^2
=\int\limits_0^T\|\Phi(t)-\Phi_n(t)\|^2_{L_2^\Sigma}dt\\
=\int\limits_0^T\sup\limits_{Q\in\Sigma}\Tr\Big[\big(\Phi(t)-\Phi_n(t)\big)Q\big(\Phi(t)-\Phi_n(t)\big)^*\Big]dt$

\hfill$
=\int\limits_0^T\sup\limits_{Q\in\Sigma}\big\|\big(\Phi(t)-\Phi_n(t)\big)Q^{1/2}\big\|^2_{L_2(\Hs)}dt
\xrightarrow[n\to\infty]{}0
$.

Denote \  $A_n:=\Phi_n(t)\cdot Q^{1/2}$ \ and \ $ A:=\Phi(t)\cdot Q^{1/2}$.
\\
We have that \ \ \ 

 $\Big\|\int\limits_0^T\Phi_n(t) Q \Phi_n^*(t)dt-\int\limits_0^T\Phi(t) Q \Phi^*(t)dt\Big\|_{L_1(\Hs)}
=\Big\|\int\limits_0^T \big(A_n A_n^*-AA^*\big)dt\Big\|_{L_1(\Hs)}\\
\leq\int\limits_0^T \|A_n A_n^*-AA^*\|_{L_1(\Hs)}dt
=\int\limits_0^T \|A_n A_n^*-A A_n^*+A A_n^*-AA^*\|_{L_1(\Hs)}dt\\
\leq\int\limits_0^T\Big( \|A^*_n\|_{L_2(\Hs)} \cdot \|A-A_n\|_{L_2(\Hs)}+\|A\|_{L_2(\Hs)} \cdot \|(A-A_n)^*\|_{L_2(\Hs)}\Big)dt\\
\leq\Big(\int\limits_0^T \|A_n\|^2_{L_2(\Hs)}dt\Big)^{1/2} \cdot \Big(\int\limits_0^T \|A-A_n\|^2_{L_2(\Hs)}dt\Big)^{1/2}$

\hfill$
+\Big(\int\limits_0^T \|A\|^2_{L_2(\Hs)}dt\Big)^{1/2} \cdot \Big(\int\limits_0^T \|A-A_n\|^2_{L_2(\Hs)}dt\Big)^{1/2}\\
\leq\Bigg(\Big(\int\limits_0^T \sup\limits_{Q\in\Sigma}\big\|\big(\Phi_n(t)\big)Q^{1/2}\big\|^2_{L_2(\Hs)}dt\Big)^{1/2}
+\Big(\int\limits_0^T \sup\limits_{Q\in\Sigma}\big\|\big(\Phi(t)\big)Q^{1/2}\big\|^2_{L_2(\Hs)}dt\Big)^{1/2}\Bigg)\times$

\hfill$
\times\int\limits_0^T\sup\limits_{Q\in\Sigma}\big\|\big(\Phi(t)-\Phi_n(t)\big)Q^{1/2}\big\|^2_{L_2(\Hs)}dt\\
=\Big(\vvvert\Phi_n\vvvert_T+\vvvert\Phi\vvvert_T\Big)\cdot\vvvert\Phi-\Phi_n\vvvert_T
\xrightarrow[n\to\infty]{}0
$.

\end{proof}

\begin{teo}\label{th_dist_stoch_integr}
Stochastic integral $I(\Phi)=\int\limits_0^T \Phi(t)dB_t$
with nonrandom integrand $\Phi(t)$ is $G$-normal distributed, , where 
$B_t\sim N_G\big(0,t\cdot\Sigma\big)$ is a $G$-Brownian motion. I.e.,

\ \ \ \ \ \ \ $I(\Phi)\sim N_G\big(0,\Sigma_I\big)$\,,\ \ \ where 
$ 
\Sigma_I=\Big\{\int\limits_0^T\Phi(t) Q \Phi^*(t)dt\mid\, Q\in\Sigma\Big\}$.
\end{teo}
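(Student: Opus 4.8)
The plan is to deduce the general statement from the elementary case already settled just above, and then to pass to a limit. For $\Phi\in{^\Hs\!M^{2,0}_G\big( 0,T \big)}$ it was shown that $I(\Phi)\sim N_G(0,\Sigma_I)$, so for general $\Phi\in{^\Hs\!M^{2}_G\big( 0,T \big)}$ (nonrandom, with $\Phi:[0,T]\to L_2^\Sigma$) I would take an approximating sequence $\big\{\Phi_n\big\}\subset{^\Hs\!M^{2,0}_G\big( 0,T \big)}$ with $\Phi_n\to\Phi$ in ${^\Hs\!M^{2}_G\big( 0,T \big)}$ and transfer the conclusion to the limit. Throughout write $R_n(Q):=\int_0^T\Phi_n(t)Q\Phi_n^*(t)\,dt$ and $R(Q):=\int_0^T\Phi(t)Q\Phi^*(t)\,dt$, and fix a test function $\varphi\in\Cp(\Hs)$. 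The target is to prove that the distribution functional $\varphi\mapsto\E[\varphi(I(\Phi))]$ coincides with that of $\xi\sim N_G(0,\Sigma_I)$, which exists by \textbf{Th.\ref{th_exist_gd}} since $\Sigma_I=\{R(Q):Q\in\Sigma\}$ is convex, symmetric, non-negative and trace-class (each $R(Q)=(\Phi Q^{1/2})(\Phi Q^{1/2})^*$ integrated in $t$).

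First I would record the distribution functional of each approximant. Since $I(\Phi_n)\sim N_G(0,\Sigma_{I_n})$ with $\Sigma_{I_n}=\{R_n(Q):Q\in\Sigma\}$, the representation \textbf{Th.\ref{th_sublinear_F_sup_linear_f}} together with the Gaussian computation used in \textbf{Prop.\ref{prop_moments}} gives
$$\E\big[\varphi(I(\Phi_n))\big]=\sup_{Q\in\Sigma}\int_\Hs\varphi(x)\,N_{R_n(Q)}(dx),$$
i.e. the $G$-expectation of a $G$-normal variable is the supremum of the classical Gaussian integrals indexed by its covariance set. Next I would check continuity of the left-hand side along the approximation: using the elementary estimate for $\E$ (\textbf{Prop.\ref{prop_elem_prop}}), the polynomial-Lipschitz bound $|\varphi(a)-\varphi(b)|\le C(1+\|a\|_\Hs^m+\|b\|_\Hs^m)\|a-b\|_\Hs$, and the Cauchy--Bunyakovsky--Schwarz inequality (\textbf{Prop.\ref{prop_cb_ineq}}),
$$\big|\E[\varphi(I(\Phi))]-\E[\varphi(I(\Phi_n))]\big|\le C\Big(\E\big[(1+\|I(\Phi)\|_\Hs^m+\|I(\Phi_n)\|_\Hs^m)^2\big]\Big)^{1/2}\Big(\E\big[\|I(\Phi)-I(\Phi_n)\|_\Hs^2\big]\Big)^{1/2}.$$
The first factor is uniformly bounded by the moment estimate $\E[\|I\|_\Hs^p]\le C_p\big(\int_0^T\|\Phi\|^2_{L_2^\Sigma}\,dt\big)^{p/2}$ following from \textbf{Th.\ref{th_ito_BDG_general}}, and the second tends to $0$ by Itô's isometry (\textbf{Th.\ref{th_ito_is_inq}}); hence $\E[\varphi(I(\Phi_n))]\to\E[\varphi(I(\Phi))]$.

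The real work is in showing that the right-hand side also converges, namely that $\sup_{Q\in\Sigma}\int_\Hs\varphi\,dN_{R_n(Q)}\to\sup_{Q\in\Sigma}\int_\Hs\varphi\,dN_{R(Q)}$. Here I would use \textbf{Lm.\ref{lm_converg_cov_set_elements}}: the trace-norm convergence $R_n(Q)\to R(Q)$ it furnishes is in fact uniform in $Q\in\Sigma$, since the bound there, $(\vvvert\Phi_n\vvvert_T+\vvvert\Phi\vvvert_T)\cdot\vvvert\Phi-\Phi_n\vvvert_T$, does not depend on $Q$. Writing $N_{R}$ as the pushforward of a fixed cylindrical Gaussian under $R^{1/2}$, using continuity of the operator square root on the positive trace-class cone, the polynomial-Lipschitz bound on $\varphi$, and the uniform trace-norm boundedness of $\Sigma$ (which yields uniform moment control), one obtains $\sup_{Q\in\Sigma}\big|\int_\Hs\varphi\,dN_{R_n(Q)}-\int_\Hs\varphi\,dN_{R(Q)}\big|\to0$; this uniform estimate is precisely what permits interchanging the limit in $n$ with the supremum over $Q$.

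Combining the three steps yields, for every $\varphi\in\Cp(\Hs)$,
$$\E\big[\varphi(I(\Phi))\big]=\sup_{Q\in\Sigma}\int_\Hs\varphi(x)\,N_{R(Q)}(dx)=\E\big[\varphi(\xi)\big],\qquad \xi\sim N_G(0,\Sigma_I),$$
so the distributions coincide and $I(\Phi)\sim N_G(0,\Sigma_I)$ with $\Sigma_I=\{R(Q):Q\in\Sigma\}$; in particular $G$-normality is inherited in the limit, and the covariance set identity reproduces the boxed formula \eqref{eq_cov_gnd}. I expect the third step to be the main obstacle: establishing that the map "covariance operator $\mapsto$ Gaussian integral of $\varphi$" is continuous \emph{uniformly} over the covariance set, so that the supremum and the limit can be exchanged. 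The uniform-in-$Q$ trace-norm estimate built into \textbf{Lm.\ref{lm_converg_cov_set_elements}} is exactly the ingredient that makes this interchange legitimate, and it is the reason that lemma was proved beforehand.
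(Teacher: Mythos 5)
Your argument hinges on the representation $\E\big[\varphi(\xi)\big]=\sup_{Q\in\Sigma_I}\int_\Hs\varphi\,dN_{Q}$ for a $G$-normal variable $\xi$, applied both to the approximants $I(\Phi_n)$ and to the limit, and this identity is false for general $\varphi\in\Cp(\Hs)$. The distribution of a $G$-normal variable is the viscosity solution of the fully nonlinear equation $\partial_t u+G(D^2_{xx}u)=0$ (\textbf{Th.\ref{th_exist_vs_p0}}), equivalently the supremum over \emph{adapted} controls $\theta\in\mathcal{A}_{0,T}^\Theta$ of $E_P\big[\varphi(\int_0^T\theta_s\,dW_s)\big]$ (\textbf{Prop.\ref{prop_EG_Ebar}}); restricting to constant controls, i.e.\ to a fixed $Q\in\Sigma$, gives only a lower bound, which is strict in general because dynamic controls can react to the path. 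A concrete counterexample: pick $h\in\Hs$ with $\underline{\sigma}^2(h)<\overline{\sigma}^2(h)$ and take $\varphi(x)=\langle x,h\rangle^3\in\Cp(\Hs)$. Every centered Gaussian integral of $\varphi$ vanishes, so your right-hand side equals $0$, whereas $\langle \xi,h\rangle$ is a nondegenerate one-dimensional $G$-normal variable (\textbf{Prop.\ref{prop_proj_gn}}) and its third $G$-moment is strictly positive. Hence the two sides cannot be matched, and your third step, the uniform-in-$Q$ continuity of $Q\mapsto\int_\Hs\varphi\,dN_{R(Q)}$ (which is correct in itself), interchanges the limit with the wrong supremum; no amount of uniformity repairs the representation it feeds into.

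The paper avoids Gaussian representations entirely and routes everything through the PDE characterization of $G$-normality. It (i) proves the pointwise convergence of $G$-functionals $G_{I_n}(A)\to G_I(A)$ by the same CBS/It\^o-isometry estimates you use for $\E[\varphi(I(\Phi_n))]\to\E[\varphi(I(\Phi))]$ (these parts of your proposal are sound and coincide with the paper's); (ii) considers $u_n(t,x)=\E\big[f(x+\sqrt{T-t}\,I_n)\big]$, a viscosity solution of the equation with $G_{I_n}$ by \textbf{Th.\ref{th_exist_vs_p0}}, shows $u_n\to u$ together with continuity of $u$, and passes to the limit in the viscosity sense via perturbed test functions $\psi_n=\psi+u_n-u$, so that $u$ solves the equation with $G_I$; $G$-normality of $I(\Phi)$ then follows from the criterion \textbf{Th.\ref{th_criterion_gd}} and uniqueness \textbf{Th.\ref{th_uniq_vs}}; (iii) only afterwards identifies the covariance set, showing $\Sigma_I$ consists exactly of the $C_1(\Hs)$-limits of elements of $\Sigma_{I_n}$ and invoking \textbf{Lm.\ref{lm_converg_cov_set_elements}} — the very lemma you cite, but used there to identify $\Sigma_I$ as $\big\{\int_0^T\Phi(t)Q\Phi^*(t)\,dt\mid Q\in\Sigma\big\}$, not to represent the distribution. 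To repair your proof you would need to replace your step two by this PDE stability argument: convergence of the distribution functionals alone does not transfer $G$-normality to the limit, and the Gaussian-supremum shortcut is unavailable.
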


\begin{proof}
 $\absz$\\
\textbf{1)} From the first part of this section we have got that

 $I_n:=I(\Phi_n)\sim N_G\big(0,\Sigma_{I_n}\big)$, \
 where \ 
$ 
\Sigma_{I_n}=\Big\{\int\limits_0^T\Phi_n(t) Q \Phi_n^*(t)dt\mid\, Q\in\Sigma\Big\}$.

\textbf{2)} $G_{I_n}(A)=\dfrac{1}{2}\E\big[\langle AI_n,I_n\rangle\big]$;
$G_{I}(A)=\dfrac{1}{2}\E\big[\langle AI,I\rangle\big]$.
\\
Then \ $2\cdot\Big|G_{I_n}(A)-G_{I}(A)\Big|
=\Big|\E\big[\langle AI_n,I_n\rangle\big]-\E\big[\langle AI,I\rangle\big]\Big|$

\hfill$
=\Big|\E\big[\langle AI_n,I_n\rangle\big]-\E\big[\langle AI_n,I\rangle\big]+\E\big[\langle AI_n,I\rangle\big]-\E\big[\langle AI,I\rangle\big]\Big|\\
\leq\Big|\E\big[\langle AI_n,I_n\rangle-\langle AI_n,I\rangle\big]\Big|+\Big|\E\big[\langle AI_n,I\rangle-\langle AI,I\rangle\big]\Big|\\
\leq \E\Big[\big|\langle AI_n,I_n-I\rangle\big|\Big]+\E\Big[\big|\langle A(I_n-I),I\rangle\big|\Big]\\
\leq \Bigg(\E\Big[\big\|AI_n\big\|^2_\Hs\Big]\Bigg)^{1/2}\cdot\Bigg(\E\Big[\big\|I_n-I\big\|^2_\Hs\Big]\Bigg)^{1/2}
+ \Bigg(\E\Big[\big\|A(I_n-I)\big\|^2_\Hs\Big]\Bigg)^{1/2}\cdot\Bigg(\E\Big[\big\|I\big\|^2_\Hs\Big]\Bigg)^{1/2}\\
\leq \big\|A\big\|_{L(\Hs)}\cdot\Bigg[\Bigg(\E\Big[\big\|I_n\big\|^2_\Hs\Big]\Bigg)^{1/2}
+\Bigg(\E\Big[\big\|I\big\|^2_\Hs\Big]\Bigg)^{1/2}\Bigg]\cdot\Bigg(\E\Big[\big\|I_n-I\big\|^2_\Hs\Big]\Bigg)^{1/2}\\
=\big\|A\big\|_{L(\Hs)}\cdot\Big(\|I_n\|_{\Omega_T}+\|I\|_{\Omega_T}\Big)\cdot\|I_n-I\|_{\Omega_T}
\xrightarrow[n\to\infty]{}0
$.
\\
So that \ $G_{I_n}(A)\xrightarrow[n\to\infty]{}G_{I}(A)$.
\\
We know that $G_I$ defines a covariation set $\Sigma_I$ of the $I(\Phi)$.
\\
\textbf{3)} Now we are going to proof that \ $I(\Phi)\sim N_G\big(0,\Sigma_I\big)$.
\\
In order to show such a fact we consider $u_n(t,x):=\E\big[f(x+\sqrt{T-t}\,I_n)\big]$ \ and \ $u(t,x):=\E\big[f(x+\sqrt{T-t}\,I)\big]$, \ with a $B$-continuous function 

\hfill$f\in \Cp(\Hs)$;

Since\ \ $I_n\sim N_G\big(0,\Sigma_{I_n}\big)$ \ then by \textbf{Th.\ref{th_exist_vs_p0}} we have that \ 
$u_n$ is a unique viscosity solution of the equation
\ \ \ $\begin{cases}
\partial_t u+G_{I_n}(D^2_{xx} u)=0 \,;\\
u(T,x)=f(x) \,.
\end{cases}$\hfill $(\,\ast\,)$
\\
So, we need to show that $u$ is a viscosity solution of the equation
\\
$\begin{cases}
\partial_t u+G_{I}(D^2_{xx} u)=0 \,;\\
u(T,x)=f(x) \,.
\end{cases}$\hfill $(\,\#\,)$
\\
\textbf{(i)} We claim that for every fixed point $(t,x):\ \ \ u_n(t,x)\xrightarrow[n\to0]{}u(t,x)$.

In fact,\ \ \ $\big|u_n(t,x)-u(t,x)\big|
=\Big|\E\big[f(x+\sqrt{T-t}\,I_n)\big]-\E\big[f(x+\sqrt{T-t}\,I)\big]\Big|\\
\leq\E\Big[\big|f(x+\sqrt{T-t}\,I_n)-f(x+\sqrt{T-t}\,I)\big|\Big]\\
\leq\E\Big[C\cdot\big(1+\|x+\sqrt{T-t}\,I_n\|^m_\Hs+\|x+\sqrt{T-t}\,I\|^m_\Hs\big)\cdot\big\|\sqrt{T-t}(I_n-I)\big\|_\Hs\Big]\\
\leq C\cdot\Bigg(\E\Big[\Big(1+\|x+\sqrt{T-t}\,I_n\|^m_\Hs+\|x+\sqrt{T-t}\,I\|^m_\Hs\Big)^2\,\Big]\Bigg)^{1/2}\times$

\hfill$
\times\Bigg((T-t)\cdot\E\Big[\big\|I_n-I\big\|^2_\Hs\Big]\Bigg)^{1/2}\\
\leq \widetilde{C}\cdot\|I_n-I\|_{\Omega_T}
\xrightarrow[n\to\infty]{}0
$.
\\
\textbf{(ii)} Also we claim that $u,u_n$ are continuous at $(t,x)\in[0,T]\tdot\Hs$.
\\
In fact, let us show that \ \ $\E\big[f(x+\sqrt{T-t-\delta}\,I)\big]\xrightarrow[\delta\to0]{}\E\big[f(x+\sqrt{T-t}\,I)\big]$:
\\
$\Big|\E\big[f(x+\sqrt{T-t-\delta}\,I)\big]-\E\big[f(x+\sqrt{T-t}\,I)\big]\Big|\\
\leq\E\Big[\big|f(x+\sqrt{T-t-\delta}\,I)-f(x+\sqrt{T-t}\,I)\big|\Big]\\
\leq\E\Big[C\cdot\big(1+\|x+\sqrt{T-t-\delta}\,I)\|^m_\Hs+\|x+\sqrt{T-t}\,I)\|^m_\Hs\big)\times$

\hfill$\times\big\|(\sqrt{T-t-\delta}-\sqrt{T-t})I\big\|_\Hs\Big]\\
\leq C\cdot\Big(\sqrt{T-t-\delta}-\sqrt{T-t}\Big)\times$

\hfill$\times\Bigg(\E\Big[\Big(1+\|x+\sqrt{T-t-\delta}\,I)\|^m_\Hs+\|x+\sqrt{T-t}\,I)\|^m_\Hs\Big)^2\,\Big]\Bigg)^{1/2}\times
\Bigg(\E\Big[\big\|I\big\|^2_\Hs\Big]\Bigg)^{1/2}\\
\leq \widetilde{C}\cdot\Big(\sqrt{T-t-\delta}-\sqrt{T-t}\Big)\cdot\|I\|_{\Omega_T}
\xrightarrow[\delta\to0]{}0
$.
\\
So that $u$ is continuous. It is clear that $u_n$ is continuous too.
\\
\textbf{(iii)} Let $\psi$ be a test function, such that: \ 
\parbox[t][\height]{5cm}{$u(t,x)\leq \psi(t,x)\,;\\ 
\phantom{.}\!u(t_0,x_0)=\psi(t_0,x_0)\,.$}
\\
Since for every fixed point $(t,x):\ \ \ u_n(t,x)\xrightarrow[n\to0]{}u(t,x)$ \ then there exists a sequence of test functions $\big\{\psi_n\big\}$, such that: \ 
\parbox[t][\height]{5cm}{$u_n(t,x)\leq \psi_n(t,x)\,;$\\
$u_n(t_0,x_0)=\psi_n(t_0,x_0)\,;$\\
$\psi_n(t,x)\xrightarrow[n\to0]{}\psi(t,x)\,.$}
\\
In order to show it we can take\ \ $\widetilde{\psi}_n:=\psi+u_n-u$ \ \ that satisfies above written required properties, and in the points where it is not enough smooth we need to alter it in the proper way to get the test function $\psi_n$.
\\
We know that $u_n$ is a viscosity sub- (and super-) solution of equation $(\,\ast\,)$.

So that \ $u_n(T,x)\leq f(x)$;

\hspace{18mm}$\Big[\partial_t\psi_n+G_{I_n}(D^{2}_{xx}\psi_n)\Big](t_0,x_0)\geq0$.
\\
Hence \ $u(T,x)\leq f(x)$\,,\ \ \  since $u_n(t,x)\xrightarrow[n\to0]{}u(t,x)$.

And \ $\Big[\partial_t\psi_n+G_{I_n}(D^{2}_{xx}\psi_n)\Big](t_0,x_0)
\xrightarrow[n\to0]{\textbf{2)}}\Big[\partial_t\psi+G_I(D^{2}_{xx}\psi)\Big](t_0,x_0)$.
\\
Therefore \ $\Big[\partial_t\psi+G_I(D^{2}_{xx}\psi)\Big](t_0,x_0)\geq0$.
\\
So, we have that $u$ is a viscosity subsolution of equation $(\,\#\,)$.

And in the same way we can show that $u$ is a viscosity supersolution.

So, we can conclude that  $u$ is a viscosity solution of equation $(\,\#\,)$.
\\
\textbf{3)} Now we are going to describe the structure of the covariation set $\Sigma_I$.
\\
Let us define a set \  $\Sigma:=\overline{\mathrm{conv}(\Sigma_I^\prime)}$\ \ in the trace-class topology, where 

$\Sigma_I^\prime:=
\Big\{B\in C_1(\Hs)\,,\ \  B=B^*\geq0 \mid \,\forall\, \varepsilon>0$

\hfill$
\dfrac{1}{2}\Tr\big[A B\big]\leq G_I(A)<\dfrac{1}{2}\Tr\big[A B\big]+\varepsilon\Big\}.
$

Therefore \ $G_I(A)=\dfrac{1}{2}\sup\limits_{B\in\Sigma_I}\Tr\big[A B\big]
=\dfrac{1}{2}\sup\limits_{B\in\Sigma_I^\prime}\Tr\big[A B\big]$.

Analogously, $G_{I_n}(A)=\dfrac{1}{2}\sup\limits_{B\in\Sigma_{I_n}}\Tr\big[A B\big]
=\dfrac{1}{2}\sup\limits_{B\in\Sigma_{I_n}^\prime}\Tr\big[A B\big]$.

Let us show that:

\hspace{10mm}\textbf{(a)} If $\{B_n\}\subset\Sigma_{I_n}^\prime$, \ such that \ $B_n\xrightarrow[n\to\infty]{}B$ \ then \ $B\in\Sigma_I^\prime$.

\hspace{10mm}\textbf{(b)} For every $B\in\Sigma_{I}^\prime$ \ there exists a sequence \ $\{B_n\}\subset\Sigma_{I_n}^\prime$, 

\hfill such that \ $B_n\xrightarrow[n\to\infty]{}B$.

In fact, let us fix an operator $A$ then:

\textbf{(a)} If $\{B_n\}\subset\Sigma_{I_n}^\prime$, \ such that \ $B_n\xrightarrow[n\to\infty]{}B$ \ then we have:

$\dfrac{1}{2}\Tr\big[A B_n\big]\leq G_{I_n}(A)<\dfrac{1}{2}\Tr\big[A B_n\big]+\varepsilon$;

Letting $n\to\infty$ \ yields \ $\dfrac{1}{2}\Tr\big[A B\big]\leq G_I(A)<\dfrac{1}{2}\Tr\big[A B\big]+\varepsilon$.
\\
Hence \ $B\in\Sigma_I^\prime$.

\textbf{(b)} Let there exists $B\in\Sigma_{I}^\prime$, and a sequence \ $\{B_n\}\subset\Sigma_{I_n}^\prime$ converges to the operator $C\neq B: \ \ \ B_n\xrightarrow[n\to\infty]{}C\in\Sigma_{I}^\prime$. Then we have:

$\dfrac{1}{2}\Tr\big[A B\big]\leq G_I(A)<\dfrac{1}{2}\Tr\big[A B\big]+\varepsilon$;
\\
$\dfrac{1}{2}\Tr\big[A C\big]\leq G_I(A)<\dfrac{1}{2}\Tr\big[A C\big]+\varepsilon$.
\\
Therefore for every $A: \ \ \ \Tr\big[A B\big]=\Tr\big[A C\big]$.
\\
So, it easy to check that \ $B=C$, \ a contradiction.
\\
So, from \textbf{(a)} and \textbf{(b)} we have that

$\Sigma_{I}^\prime=\big\{B\mid \,\ B_n\xrightarrow[n\to\infty]{C_1(\Hs)}B\,,\ \ B_n\in\Sigma_{I_n}^\prime\big\}$,\ \ \ 
where $B_n=\int\limits_0^T\Phi_n(t) Q \Phi_n^*(t)dt$.

Hence \ $\Sigma_{I}=\big\{B\mid \,\ B_n\xrightarrow[n\to\infty]{C_1(\Hs)}B\,,\ \ B_n\in\Sigma_{I_n}\big\}$.

Applying \textbf{Lm.\ref{lm_converg_cov_set_elements}} we can conclude that \ $
\Sigma_I=\Big\{\int\limits_0^T\Phi(t) Q \Phi^*(t)dt\mid\, Q\in\Sigma\Big\}$.

\end{proof}

\subsection{The continuity property of stochastic convolution}

Define a \textbf{stochastic convolution} as the integral 
$$I_t:=\int\limits_0^t e^{(t-s)A}dB_s\,,$$
where $A:D(A)\to\Hs$ is the infinitesimal generator of $C_0$-semigroup $\big( e^{tA}\big)$.
 \begin{teo}
The integral \ $I_t:=\int\limits_0^t e^{(t-s)A}dB_s$ \ is continuous for quasi every $\omega\ $
if there exists $\beta>0$, \ such that \ 
$$\int\limits_0^T \big\|e^{tA}\|^2_{L_2^\Sigma}\cdot t^{-\beta} dt<\infty\ .$$
\end{teo}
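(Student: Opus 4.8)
The plan is to adapt the factorization method of Da Prato and Zabczyk to the present $G$-framework. Fix an exponent $\alpha\in(0,1)$ with $2\alpha\le\beta$, and later choose $p$ large enough that $\alpha p>1$; both constraints are compatible since $\beta>0$. Introduce the auxiliary process
\[
Y_\sigma:=\int\limits_0^\sigma (\sigma-s)^{-\alpha}e^{(\sigma-s)A}\,dB_s,\qquad \sigma\in[0,T].
\]
The heart of the method is the deterministic beta-integral identity $\int_s^t (t-\sigma)^{\alpha-1}(\sigma-s)^{-\alpha}\,d\sigma=\pi/\sin(\pi\alpha)$ for $0\le s<t$, which together with the semigroup law $e^{(t-\sigma)A}e^{(\sigma-s)A}=e^{(t-s)A}$ yields the factorization formula
\[
I_t=\frac{\sin(\pi\alpha)}{\pi}\int\limits_0^t (t-\sigma)^{\alpha-1}e^{(t-\sigma)A}Y_\sigma\,d\sigma\qquad\text{q.s.}
\]
To justify the interchange of the $d\sigma$- and $dB_s$-integrations that produces this identity I would invoke the stochastic Fubini theorem (\textbf{Th.\ref{th_fubini}}), applied on $\mathcal{X}=[0,t]$ with the finite measure $\mu(d\sigma)=(t-\sigma)^{\alpha-1}\ind_{[0,t]}(\sigma)\,d\sigma$ (finite because $\alpha>0$) and the nonrandom integrand $(\sigma-s)^{-\alpha}e^{(t-s)A}\ind_{\{s\le\sigma\}}$.

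The second step is to control $Y$. Since the integrand $(\sigma-s)^{-\alpha}e^{(\sigma-s)A}$ is nonrandom, the BDG inequality (\textbf{Th.\ref{th_BDG_inq}}, \textbf{Th.\ref{th_ito_BDG_general}}) lets me pull the $G$-expectation out of a purely deterministic time-integral:
\[
\E\big[\|Y_\sigma\|^p_\Hs\big]\le C_p\Big(\int\limits_0^\sigma (\sigma-s)^{-2\alpha}\|e^{(\sigma-s)A}\|^2_{L_2^\Sigma}\,ds\Big)^{p/2}=C_p\Big(\int\limits_0^\sigma r^{-2\alpha}\|e^{rA}\|^2_{L_2^\Sigma}\,dr\Big)^{p/2}.
\]
Because $2\alpha\le\beta$, the last integral is dominated by $\int_0^T r^{-\beta}\|e^{rA}\|^2_{L_2^\Sigma}\,dr$, which is finite by hypothesis; hence $\sup_{\sigma\in[0,T]}\E[\|Y_\sigma\|^p_\Hs]=:M_p<\infty$. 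Integrating in $\sigma$ and using sub-additivity of $\E$ (together with a Fatou/Riemann approximation) gives $\E\big[\int_0^T\|Y_\sigma\|^p_\Hs\,d\sigma\big]\le T M_p<\infty$, so that, arguing as in \textbf{Lm.\ref{lm_X0_qs}} (the set where the integral is infinite is polar), the path $\sigma\mapsto Y_\sigma(\omega)$ belongs to $L^p\big(0,T;\Hs\big)$ for quasi every $\omega$.

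Finally I would invoke the classical deterministic regularity lemma: if $f\in L^p(0,T;\Hs)$ and $\alpha p>1$, then $t\mapsto\int_0^t (t-\sigma)^{\alpha-1}e^{(t-\sigma)A}f(\sigma)\,d\sigma$ is continuous on $[0,T]$, the proof being a H\"older estimate against the integrable kernel $(t-\sigma)^{(\alpha-1)q}$ (with $1/p+1/q=1$, the integrability being exactly $\alpha q>q-1$, i.e. $\alpha p>1$) combined with the strong continuity of $\big(e^{tA}\big)$ and dominated convergence. Applying this pathwise to $f=Y_\cdot(\omega)$ for quasi every $\omega$, the right-hand side of the factorization formula is continuous in $t$, and therefore so is $I_t$, q.s. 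The main obstacle I anticipate is not the deterministic step but the rigorous application of the stochastic Fubini theorem with the two singular kernels $(\sigma-s)^{-\alpha}$ and $(t-\sigma)^{\alpha-1}$: one must check that the relevant integrands genuinely lie in ${^\Hs\!M^{2}_G\big( 0,T;\mathcal{X}\big)}$ (here \textbf{Rem.\ref{rem_m2gh}} reduces this to finiteness of the deterministic $M_G^2$-norm) and that the resulting $G$-expectation bounds upgrade to quasi-sure statements, which is where the finiteness condition $\int_0^T\|e^{tA}\|^2_{L_2^\Sigma}t^{-\beta}\,dt<\infty$ is used decisively.
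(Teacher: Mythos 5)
Your proposal is correct and follows essentially the same route as the paper: the Da Prato--Zabczyk factorization with the beta-integral identity $\int_\sigma^t(t-s)^{\alpha-1}(s-\sigma)^{-\alpha}\,ds=\pi/\sin\pi\alpha$, the stochastic Fubini theorem (\textbf{Th.\ref{th_fubini}}), uniform moment bounds on the auxiliary process $Y$ yielding $Y\in L^{p}(0,T;\Hs)$ quasi surely, and the deterministic H\"older-estimate lemma giving continuity of $t\mapsto\int_0^t(t-s)^{\alpha-1}e^{(t-s)A}y(s)\,ds$. The only (harmless) variation is in the moment step: you bound $\E\big[\|Y_\sigma\|^p_\Hs\big]$ directly by the BDG inequality (\textbf{Th.\ref{th_ito_BDG_general}}), whereas the paper uses the It\^o isometry for the second moment and then upgrades to higher moments via the $G$-normality of stochastic integrals with nonrandom integrands (\textbf{Th.\ref{th_dist_stoch_integr}}) combined with \textbf{Prop.\ref{prop_moments}}.
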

 \begin{proof}
 $\absz$\\
We shall use the factorization method (see \cite{da_prato_kwapien_zabcz1}).

For this reason we will use the following elementary inequality:

Let $\alpha\in(0,1)$ \ then \ 

$\int\limits_0^1(1-r)^{\alpha-1}\cdot r^{-\alpha}\,dr
=B(\alpha,1-\alpha)
=\dfrac{\Gamma(\alpha)\Gamma(1-\alpha)}{\Gamma(1)}
=\dfrac{\pi}{\sin\pi\alpha}$.

It follows that
\begin{equation}\label{eq_fact_meth}
\int\limits_\sigma^t(t-s)^{\alpha-1}\cdot (s-\sigma)^{-\alpha}\,ds=\dfrac{\pi}{\sin\pi\alpha}\ \ ,
\end{equation}
\hfill $0\leq\sigma\leq s\leq t$,\ \ \ where $s:=r(t-\sigma)+\sigma$,

because
 
$\int\limits_\sigma^t(t-s)^{\alpha-1}\cdot (s-\sigma)^{-\alpha}\,ds
=\int\limits_\sigma^t\big((1-r)(t-\sigma)\big)^{\alpha-1}\cdot \big(r(t-\sigma)\big)^{-\alpha}\,d[r(t-\sigma)+\sigma]\\
=\int\limits_0^1(1-r)^{\alpha-1}\cdot r^{-\alpha}\cdot(t-\sigma)^{\alpha-1}\cdot(t-\sigma)^{-\alpha}\cdot(t-\sigma)\,dr$

\hfill$
=\int\limits_0^1(1-r)^{\alpha-1}\cdot r^{-\alpha}\,dr
=\dfrac{\pi}{\sin\pi\alpha}
$.
\\
Let $\alpha\in(0,\frac{1}{2})$ be fixed, and $m>\dfrac{1}{2\alpha}$ then we have

$I_t
=\dfrac{\sin\pi\alpha}{\pi}\ \int\limits_0^t e^{(t-s)A}\ \int\limits_\sigma^t(t-s)^{\alpha-1}\cdot (s-\sigma)^{-\alpha}\,ds\ dB_s$.
\\
From the Fubini theorem (\textbf{Th.\ref{th_fubini}})  we get that 

$I_t
=\dfrac{\sin\pi\alpha}{\pi}\ \int\limits_0^t e^{(t-s)A}\cdot (t-s)^{\alpha-1}Y(s)\,ds$\ \ \ quasi surely,

\hfill where $Y(s)=\int\limits_0^s e^{(s-\sigma)A}\, (s-\sigma)^{-\alpha}\,dB_\sigma$.

Then by \textbf{Th.\ref{th_dist_stoch_integr}} we have for every $s \ \ \ Y(s)\sim N_G(0,\Sigma_{I_s})$
\\

\hfill where $\Sigma_{I_s}
=\Big\{\int\limits_0^s e^{(s-\sigma)A}\,Q\, e^{(s-\sigma)A^*}\,(s-\sigma)^{-2\alpha} d\sigma\,,\ Q\in\Sigma\Big\}$.
\\
Therefore 
\\
$\E\Big[\big\|Y(s)\big\|_\Hs^2\Big]
\overset{\textbf{Th.\ref{th_ito_is_inq}}}{\leq}\int\limits_0^s\big\|Y(\sigma)\|^2_{L_2^\Sigma}\,d\sigma
=\int\limits_0^s\sup\limits_{Q\in\Sigma}\Tr\big[e^{(s-\sigma)A}\,Q\, e^{(s-\sigma)A^*}\,(s-\sigma)^{-2\alpha}\Big]d\sigma\\
=\int\limits_0^s\big\|e^{(s-\sigma)A}\|^2_{L_2^\Sigma}\,(s-\sigma)^{-2\alpha}\,d\sigma
=\Big\lfilet s-\sigma=t\in(s,0)\Big\lfilet
=\int\limits_0^s\big\|e^{tA}\|^2_{L_2^\Sigma}\,t^{-2\alpha}\,dt<\infty
$.

Then by \textbf{Prop.\ref{prop_moments}} it follows \ $\E\Big[\big\|Y(s)\big\|_\Hs^{2m}\Big]\leq C_m\,, \ \ s\in[0,T].$

Hence  \ $\E\Big[\int\limits_0^T\big\|Y(s)\big\|_\Hs^{2m}\,ds\Big]\leq C_m\cdot T$, \
so that \ $Y\in L^{2m}(0,T;\Hs)$.
\\
Let us consider \ $z(t)=\int\limits_0^t e^{(t-s)A}\, (t-s)^{\alpha-1}y(s)\,ds$.

Set $z_\varepsilon(t):=\int\limits_0^{t-\varepsilon} e^{(t-s)A}\, (t-s)^{\alpha-1}y(s)\,ds$, \ for a small enough $\varepsilon>0$.

So, we have \ $\big|z(t)-z_\varepsilon(t)\big|
=\int\limits_{t-\varepsilon}^t e^{(t-s)A}\, (t-s)^{\alpha-1}y(s)\,ds\\
\overset{\textbf{Hölder ineq.}}{\leq} 
\Big(\int\limits_{t-\varepsilon}^t \big\|e^{(t-s)A}\big\|_{L(\Hs)}^{\frac{2m}{2m-1}}\cdot(t-s)^{\frac{2m(\alpha-1)}{2m-1}}\,ds\Big)^{\frac{2m-1}{2m}}
\cdot\Big(\int\limits_{t-\varepsilon}^t \big\|y(s)\big\|_{\Hs}^{2m}\,ds\Big)^{\frac{1}{2m}}\\
\leq M\cdot e^{\frac{2m\varepsilon a}{2m-1}}\cdot
\Big(\int\limits_0^\varepsilon r^{\frac{2m(\alpha-1)}{2m-1}}\,dr\Big)^{\frac{2m-1}{2m}}\cdot\|y\|_{L^{2m}(0,\,t;\,\Hs)}
\leq K_\varepsilon\cdot\|y\|_{L^{2m}(0,\,t;\,\Hs)}$,

\hfill$K_\varepsilon\xrightarrow[\varepsilon\to0]{}0\ .
$
\\
So that \ $z(\pdot)$ is continuous if $y(\pdot)\in L^{2m}(0,T;\Hs)$.
\\
And we have that $I_t$ is continuous for quasi every $\omega$.\\[-3mm]
\end{proof}

\newpage
\section[Viscosity solution for other parabolic PDEs]{Existence of viscosity solution for parabolic PDE with a linear unbounded first order term}

\subsection{Ornstein-Uhlenbeck process}

Consider the following SDE:
\begin{equation}\tag{S}\label{eq_sde_s}
\begin{cases}
d X_\tau=AX_\tau\,d\tau +dB_\tau\ ,\ \ \ \ \ \ \tau\in[t,T]\subset[0,T]\\
X_t=x\ .
\end{cases}
\end{equation}

where $X_t:[0,T]\tdot\Omega\rightarrow\Hs;$

$B_t$ is a $G$-Brownian motion in $\Hs;$ 

$A:D(A)\to\Hs$ is an infinitesimal generator of $C_0$-semigroup $\big( e^{tA}\big)$.

\begin{df}
 The process
$$X_\tau:=X_\tau^{t,\,x}=e^{(\tau-t)A}x+\int\limits_t^\tau e^{(\tau-s)A}dB_s$$
will be called a mild solution of \eqref{eq_sde_s}.
\end{df}

\begin{df}
Stochastic process \begin{equation}I_t=\int\limits_0^t e^{(t-s)A}dB_s\end{equation} will be called Ornstein-Uhlenbeck process (or a stochastic convolution as we have already mentioned above). 

Ornstein-Uhlenbeck process is well defined under the condition
\begin{equation}\label{cond_OU}
 \int\limits_0^t\sup\limits_{Q\in\,\Sigma}\big\|e^{sA}Q^{1/2}\|^2_{L_2(\Hs)} ds<\infty.
\end{equation}
\end{df}

\begin{rem}\label{rem_cond_OU}
 The condition \eqref{cond_OU} holds true if\ \  $\sup\limits_{Q\in\,\Sigma}\Tr Q<\infty$.
\end{rem}

In other words the condition \eqref{cond_OU} (or the condition of \textbf{Rem.\ref{rem_cond_OU}}) implies that 
the map $s\mapsto e^{(t-s)A}$ belongs to the ${^\Hs\!M^{2}_G\big( 0,t \big)}$, what shows us the following proposition.

\begin{prop}
 If\ \  $\sup\limits_{Q\in\,\Sigma}\Tr Q<\infty\,,\ \Phi(s):=e^{(t-s)A}$ \ then \ 

\hfill$\Phi(s)\in {^\Hs\!M^{2}_G\big( 0,t \big)}$.
\end{prop}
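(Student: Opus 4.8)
The plan is to reduce the claim to a scalar integrability estimate and then dispatch it using only the local boundedness of the semigroup. Since $\Phi(s):=e^{(t-s)A}$ is nonrandom, \textbf{Rem.\ref{rem_m2gh}} applies, so membership $\Phi\in{^\Hs\!M^{2}_G\big( 0,t \big)}$ is equivalent to the single requirement $\vvvert\Phi\vvvert_t<\infty$. Thus I would only have to bound this seminorm, for which the hypothesis $\sup_{Q\in\Sigma}\Tr Q<\infty$ should be exactly what is needed.

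First I would write the seminorm out explicitly. Because $\Phi$ is nonrandom, the sublinear expectation acts trivially, and
\[
\vvvert\Phi\vvvert_t^2=\int\limits_0^t \|e^{(t-s)A}\|^2_{L_2^\Sigma}\,ds
=\int\limits_0^t \sup\limits_{Q\in\Sigma}\big\|e^{(t-s)A}Q^{1/2}\big\|^2_{L_2(\Hs)}\,ds .
\]
For each fixed $Q\in\Sigma$ I would use the ideal property of the Hilbert--Schmidt class under composition with a bounded operator, namely $\big\|e^{(t-s)A}Q^{1/2}\big\|_{L_2(\Hs)}\le \|e^{(t-s)A}\|_{L(\Hs)}\cdot\|Q^{1/2}\|_{L_2(\Hs)}$, together with the identity $\|Q^{1/2}\|^2_{L_2(\Hs)}=\Tr\big[Q^{1/2}(Q^{1/2})^*\big]=\Tr Q$, valid since $Q=Q^*\geq0$. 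Taking the supremum over $Q$ then gives $\sup_{Q\in\Sigma}\big\|e^{(t-s)A}Q^{1/2}\big\|^2_{L_2(\Hs)}\le \|e^{(t-s)A}\|^2_{L(\Hs)}\cdot\sup_{Q\in\Sigma}\Tr Q$.

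Next I would invoke the standard fact that a $C_0$-semigroup is uniformly bounded on compact time intervals: there is $M_t\ge1$ with $\|e^{rA}\|_{L(\Hs)}\le M_t$ for all $r\in[0,t]$. Combining the two estimates yields
\[
\vvvert\Phi\vvvert_t^2\le M_t^2\cdot\sup\limits_{Q\in\Sigma}\Tr Q\cdot\int\limits_0^t ds
=M_t^2\,t\,\sup\limits_{Q\in\Sigma}\Tr Q<\infty ,
\]
so $\vvvert\Phi\vvvert_t<\infty$ and, by \textbf{Rem.\ref{rem_m2gh}}, $\Phi\in{^\Hs\!M^{2}_G\big( 0,t \big)}$. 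This is in effect a combined reading of \textbf{Rem.\ref{rem_cond_OU}} and \textbf{Rem.\ref{rem_m2gh}}, since the integral above is precisely condition \eqref{cond_OU} after the substitution $s\mapsto t-s$.

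I do not expect a genuine obstacle here: the argument is essentially a one-line consequence of the listed remarks. The only two points requiring a word of justification are the Hilbert--Schmidt ideal inequality $\|SQ^{1/2}\|_{L_2}\le\|S\|_{L(\Hs)}\|Q^{1/2}\|_{L_2}$ and the uniform boundedness of $(e^{rA})$ on $[0,t]$; both are standard, the former following from the structure of the von Neumann--Schatten classes recalled after \textbf{Def.\ref{von_neumann-schatten_cl}} and the latter from elementary $C_0$-semigroup theory.
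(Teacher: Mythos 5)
Your proof is correct and follows essentially the same route as the paper's: reduction via \textbf{Rem.\ref{rem_m2gh}} to the finiteness of $\vvvert\Phi\vvvert_t$, then the Hilbert--Schmidt ideal estimate $\|e^{(t-s)A}Q^{1/2}\|_{L_2(\Hs)}\leq\|e^{(t-s)A}\|_{L(\Hs)}\,\|Q^{1/2}\|_{L_2(\Hs)}$ with $\|Q^{1/2}\|^2_{L_2(\Hs)}=\Tr Q$, and a bound on the semigroup norm over $[0,t]$. The only cosmetic difference is that the paper integrates the exponential growth bound $\|e^{sA}\|^2_{L(\Hs)}\leq M e^{as}$ to get the constant $M\,\frac{e^{at}-1}{a}\cdot\sup_{Q\in\Sigma}\Tr Q$, while you use the equivalent uniform bound $M_t$ on the compact interval.
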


\begin{proof}
 $\absz$\\
In order to prove the statement of proposition according to \textbf{Rem.\ref{rem_m2gh}} we need to show 
that $\vvvert \Phi\vvvert_t<\infty$.

According to theory of $C_0$-semigroup it is known that 

$e^{sA}\in L(\Hs)\,, \ \ \|e^{sA}\|^2_{L(\Hs)}\leq M\cdot e^{as}\, \ \ M\geq1\,,\ a\in\R.\phantom{///}$

Therefore \ $\vvvert \Phi\vvvert_t^2=\E\Big[\int\limits_0^t \sup\limits_{Q\in\,\Sigma}\|\Phi(s)Q^{1/2}\|^2_{L_2(\Hs)} \, ds\Big]=
\int\limits_0^t \sup\limits_{Q\in\,\Sigma}\|e^{(t-s)A}Q^{1/2}\|^2_{L_2(\Hs)} \, ds
\\=
\underbrace{\int\limits_0^t \sup\limits_{Q\in\,\Sigma}\|e^{sA}Q^{1/2}\|^2_{L_2(\Hs)} \, ds}_{\text{condition} \eqref{cond_OU}} \leq 
\int\limits_0^t \sup\limits_{Q\in\,\Sigma}\Big[\|ne^{sA}\|^2_{L(\Hs)}\cdot\|Q^{1/2}\|^2_{L_2(\Hs)}\Big] \, ds$

\hfill$ \leq 
\int\limits_0^t M\cdot e^{as}\cdot\sup\limits_{Q\in\,\Sigma}\Tr Q \, ds\leq 
M\cdot \dfrac{e^{at}-1}{a}\cdot\sup\limits_{Q\in\,\Sigma}\Tr Q<\infty$.

\end{proof}

\begin{prop}\label{prop_ch_tm_OU}
 $X_\tau^{t,\,x}=X_\tau^{s,\,X_s^{t,\,x}},\ \ 0\leq t\leq s\leq \tau\leq T$.
\end{prop}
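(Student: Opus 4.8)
The plan is to expand both sides of the claimed identity using the definition of the mild solution and to reduce the equality to three elementary facts about the $G$-stochastic integral: the semigroup law of $(e^{tA})$, that a fixed bounded operator commutes with the integral, and additivity of the integral over adjacent time intervals.

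First I would write out the right-hand side. By definition,
$$X_s^{t,x}=e^{(s-t)A}x+\int\limits_t^s e^{(s-\sigma)A}\,dB_\sigma,$$
and applying the definition once more with initial datum $X_s^{t,x}$ at time $s$,
$$X_\tau^{s,\,X_s^{t,x}}=e^{(\tau-s)A}X_s^{t,x}+\int\limits_s^\tau e^{(\tau-\sigma)A}\,dB_\sigma.$$
Substituting the first line into the second and using the semigroup identity $e^{(\tau-s)A}e^{(s-t)A}=e^{(\tau-t)A}$ on the deterministic term yields
$$X_\tau^{s,\,X_s^{t,x}}=e^{(\tau-t)A}x+e^{(\tau-s)A}\int\limits_t^s e^{(s-\sigma)A}\,dB_\sigma+\int\limits_s^\tau e^{(\tau-\sigma)A}\,dB_\sigma.$$

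The core step is to move the bounded operator $L:=e^{(\tau-s)A}\in L(\Hs)$ inside the first stochastic integral, i.e. to justify
$$e^{(\tau-s)A}\int\limits_t^s e^{(s-\sigma)A}\,dB_\sigma=\int\limits_t^s e^{(\tau-s)A}e^{(s-\sigma)A}\,dB_\sigma=\int\limits_t^s e^{(\tau-\sigma)A}\,dB_\sigma,$$
where the last equality is again the semigroup law. For an elementary integrand $\Phi=\sum_k\Phi_k\ind_{[t_k,t_{k+1})}$ this is immediate, since $L\sum_k\Phi_k(B_{t_{k+1}}-B_{t_k})=\sum_k(L\Phi_k)(B_{t_{k+1}}-B_{t_k})$; the general case follows by approximation, noting that $\|L\Phi\|_{L_2^\Sigma}\leq\|L\|_{L(\Hs)}\cdot\|\Phi\|_{L_2^\Sigma}$, so that $L\Phi\in{^\Hs\!M^2_G}$ whenever $\Phi$ does, and then passing to the limit via the continuity of the extended integral (\textbf{Th.\ref{th_ito_BDG_general}}). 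Combining this with additivity over adjacent intervals, $\int_t^s+\int_s^\tau=\int_t^\tau$ (verified first for elementary processes and extended by the same limiting argument), gives
$$X_\tau^{s,\,X_s^{t,x}}=e^{(\tau-t)A}x+\int\limits_t^\tau e^{(\tau-\sigma)A}\,dB_\sigma=X_\tau^{t,x}.$$
Since the stochastic integrals involved are defined only up to quasi-sure equality (cf. \textbf{Lm.\ref{lm_X0_qs}}), the identity is to be read q.s.

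The main obstacle is not the algebra but the two integral manipulations: verifying that each integrand lies in ${^\Hs\!M^2_G}$ (which holds under the standing trace condition $\sup_{Q\in\Sigma}\Tr Q<\infty$, exactly as in the preceding proposition) and that both ``pulling the operator inside'' and ``splitting the interval'' survive the passage from elementary processes to the completion. Both reduce to the continuity of $I$ established in \textbf{Th.\ref{th_ito_BDG_general}}, so no genuinely new estimate is required.
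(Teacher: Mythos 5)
Your proof is correct and follows essentially the same route as the paper: the paper's own proof is exactly this computation (split the integral at $s$, apply the semigroup law $e^{(\tau-s)A}e^{(s-t)A}=e^{(\tau-t)A}$, and factor $e^{(\tau-s)A}$ out of the integral over $[t,s]$), merely run from the left-hand side instead of the right. The only difference is that you explicitly justify the two integral manipulations (commuting the fixed bounded operator with the $G$-stochastic integral via elementary integrands and the continuity of $I_T$ from \textbf{Th.\ref{th_ito_BDG_general}}, and additivity over adjacent intervals, with the identity read quasi-surely), which the paper's proof takes for granted; this is a legitimate tightening, not a different approach.
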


\begin{proof}
$\absz$\\
$
X_\tau^{t,\,x}=e^{(\tau-t)A}x+\int\limits_t^\tau e^{(\tau-\sigma)A}dB_\sigma
=e^{(\tau-s)A}\cdot e^{(s-t)A} x+\int\limits_t^s e^{(s-\sigma)A}\cdot e^{(\tau-s)A}dB_\sigma\\
+
\int\limits_s^\tau e^{(\tau-\sigma)A}dB_\sigma
=e^{(\tau-s)A}\Big(e^{(s-t)A}x+\int\limits_t^s e^{(s-\sigma)A}dB_\sigma\Big)+\int\limits_s^\tau e^{(\tau-\sigma)A}dB_\sigma$

\hfill$
=e^{(\tau-s)A}\cdot X_s^{t,\,x}+\int\limits_s^\tau e^{(\tau-\sigma)A}dB_\sigma
=X_\tau^{s,\,X_s^{t,\,x}}.
$

\end{proof}

\subsection{Solving the equation \eqref{eq_p}}

\begin{lm}\label{lm_conv_g_f}
Let $B_t$ \ be a $G$-Brownian motion and $A$ \  be an infinitesimal generator of $C_0$ semigroup.
A mapping \ $\psi:\R,\Hs\to\R$ \  is twice Fréchet differentiable by $x$.

For the small $\delta>0$ define the following random variable as:

$L_\delta:=\dfrac{1}{\delta}\Big\langle D^{2}_{xx}\psi(t,x)[\int\limits_0^{\delta} e^{(\delta -s)A}dB_s],
 \int\limits_0^{\delta} e^{(\delta -s)A}dB_s\Big\rangle$.

Then\qquad $\E[L_\delta]\xrightarrow[\delta\to0]{}\E\Big[\big\langle  D^{2}_{xx}\psi(t,x)B_1,B_1 \big\rangle\Big]
\equiv2\,G\big(D^{2}_{xx}\psi(t,x)\big)$.
\end{lm}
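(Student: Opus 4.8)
The plan is to reduce the statement to a purely deterministic computation about the covariance operators and then to control a Cesàro average. Throughout write $T:=D^{2}_{xx}\psi(t,x)$, which is a symmetric operator belonging to the domain $K_S(\Hs)$ of the $G$-functional, hence \emph{compact}; set $\xi_\delta:=\int_0^\delta e^{(\delta-s)A}\,dB_s$, so that $L_\delta=\frac1\delta\langle T\xi_\delta,\xi_\delta\rangle$. I work under the standing Ornstein--Uhlenbeck condition \eqref{cond_OU} (equivalently $\sup_{Q\in\Sigma}\Tr Q<\infty$, see \textbf{Rem.\ref{rem_cond_OU}}), which guarantees that $s\mapsto e^{(\delta-s)A}$ lies in ${}^{\Hs}\!M^{2}_G(0,\delta)$ and hence that $\xi_\delta$ is well defined.

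First I would apply \textbf{Th.\ref{th_dist_stoch_integr}} to the nonrandom integrand $\Phi(s)=e^{(\delta-s)A}$: this gives $\xi_\delta\sim N_G(0,\Sigma_{\xi_\delta})$ with $\Sigma_{\xi_\delta}=\{\int_0^\delta e^{(\delta-s)A}Q\,e^{(\delta-s)A^*}\,ds:Q\in\Sigma\}$, and after the change of variable $u=\delta-s$ the elements read $\int_0^\delta e^{uA}Q\,e^{uA^*}\,du$. Using positive homogeneity of $\E$ together with the covariance identity \eqref{eq_cov_gnd} (in the form $\E[\langle TX,X\rangle]=\sup_{Q\in\Sigma}\Tr[QT]=2G(T)$, valid for any $N_G(0,\Sigma)$-variable), I obtain the exact expression
\begin{equation*}
\E[L_\delta]=\sup_{Q\in\Sigma}\Tr\big[\Gamma_\delta(Q)\,T\big],\qquad \Gamma_\delta(Q):=\frac1\delta\int_0^\delta e^{uA}Q\,e^{uA^*}\,du .
\end{equation*}
Since the target value is $2G(T)=\sup_{Q\in\Sigma}\Tr[QT]$, it remains to prove $\sup_Q\Tr[\Gamma_\delta(Q)T]\to\sup_Q\Tr[QT]$.

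The second step is the estimate. From $|\sup_Q f-\sup_Q g|\le\sup_Q|f-g|$, the cyclicity of the trace and the bound $|\Tr[QS]|\le\|Q\|_{C_1}\|S\|_{L(\Hs)}$ I would write
\begin{equation*}
\big|\E[L_\delta]-2G(T)\big|
\le\frac1\delta\int_0^\delta\sup_{Q\in\Sigma}\big|\Tr[\,Q\,(e^{uA^*}Te^{uA}-T)\,]\big|\,du
\le K\cdot\frac1\delta\int_0^\delta\big\|e^{uA^*}Te^{uA}-T\big\|_{L(\Hs)}\,du,
\end{equation*}
where $K:=\sup_{Q\in\Sigma}\Tr Q<\infty$. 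The whole argument then rests on the claim that $\|e^{uA^*}Te^{uA}-T\|_{L(\Hs)}\to0$ as $u\to0$. Writing $e^{uA^*}Te^{uA}-T=(e^{uA^*}-I)Te^{uA}+T(e^{uA}-I)$, uniform boundedness of the semigroup on $[0,1]$ reduces this to showing $\|(e^{uA^*}-I)T\|_{L(\Hs)}\to0$ (the second summand is its adjoint, as $T=T^*$). This is exactly where compactness of $T$ is essential: $e^{uA^*}-I\to0$ strongly (the adjoint semigroup is strongly continuous on the reflexive space $\Hs$) and is uniformly bounded, and strong convergence is uniform on the relatively compact set $\{Tx:\|x\|_\Hs\le1\}$, whence $(e^{uA^*}-I)T\to0$ in operator norm.

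Finally, since the integrand $\|e^{uA^*}Te^{uA}-T\|_{L(\Hs)}$ tends to $0$ as $u\to0$ and stays bounded on $[0,\delta]$, its Cesàro average over $[0,\delta]$ tends to $0$; this yields $\E[L_\delta]\to2G(T)=\E[\langle TB_1,B_1\rangle]$. I expect the main obstacle to be precisely the interchange of $\lim_{\delta\to0}$ with the supremum over $Q\in\Sigma$: a naive comparison of $\xi_\delta$ with $B_\delta$ via Itô's isometry (\textbf{Th.\ref{th_ito_is_inq}}) would instead require $\sup_{Q}\|(e^{uA}-I)Q^{1/2}\|_{L_2(\Hs)}\to0$, which \emph{fails} in general, since high spectral modes of $A$ keep this quantity bounded away from $0$. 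The compactness of $D^{2}_{xx}\psi(t,x)$ --- guaranteed in this framework because $G$ only acts on $K_S(\Hs)$ and the test functions are built with compact second derivative --- is what repairs this non-uniformity and makes the argument go through.
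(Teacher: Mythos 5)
Your proposal is correct, and it takes a genuinely different route from the paper's. The paper compares $\xi_\delta=\int_0^\delta e^{(\delta-s)A}\,dB_s$ with $B_\delta$ pathwise: it introduces the cross terms $K_\delta$ and $M_\delta$, bounds them via \textbf{Prop.\ref{prop_cb_ineq}} and the It\^o isometry (\textbf{Th.\ref{th_ito_BDG_general}}), and reduces everything to the limit $\max_{0\le\theta\le\delta}\sup_{Q\in\Sigma}\|e^{\theta A}Q^{1/2}-Q^{1/2}\|_{L_2(\Hs)}\to0$, i.e.\ to strong continuity of the semigroup holding \emph{uniformly} over $\{Q^{1/2}:Q\in\Sigma\}$; compactness of $D^2_{xx}\psi$ is never used. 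You avoid the pathwise comparison entirely: \textbf{Th.\ref{th_dist_stoch_integr}} and \eqref{eq_cov_gnd} give the exact identity $\E[L_\delta]=\sup_{Q\in\Sigma}\Tr[\Gamma_\delta(Q)T]$ with $\Gamma_\delta(Q)=\frac1\delta\int_0^\delta e^{uA}Qe^{uA^*}du$, and the problem becomes the deterministic limit $\|e^{uA^*}Te^{uA}-T\|_{L(\Hs)}\to0$, where uniformity in $Q$ costs only $\sup_{Q\in\Sigma}\Tr Q<\infty$ and the burden shifts to compactness of $T$; your adjoint-semigroup argument for that limit is sound ($\Hs$ is reflexive, and $[T(e^{uA}-I)]^*=(e^{uA^*}-I)T$ handles the second summand). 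Your closing criticism of the direct comparison is well taken: for merely closed, bounded, convex $\Sigma\subset C_1(\Hs)$ the paper's uniform limit can fail --- with $Ae_n=-ne_n$ and $\Sigma$ the closed convex hull of $\{e_n\otimes e_n\}$ one has $\sup_{Q\in\Sigma}\|(e^{\theta A}-I)Q^{1/2}\|_{L_2(\Hs)}\ge\sup_n(1-e^{-\theta n})=1$ for every $\theta>0$ --- so the paper's final step implicitly needs extra structure, e.g.\ $\Sigma$ compact in $C_1(\Hs)$, which would restore it through $\|P^{1/2}-Q^{1/2}\|_{L_2(\Hs)}^2\le\|P-Q\|_{C_1}$. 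Conversely, your proof needs $D^2_{xx}\psi(t,x)\in K_S(\Hs)$, which the lemma's wording does not state but which is implicit in writing $G(D^2_{xx}\psi)$ (in \textbf{Th.\ref{th_exist_vs_p}} $G$ is defined on $K_S(\Hs)$, and the test functions are built through finite-dimensional projections); you should make that hypothesis explicit. Within the paper's own framework yours is the more robust argument, and it yields an exact formula for $\E[L_\delta]$ rather than a one-sided estimate.
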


\begin{proof}
 $\absz$\\
Let $L_0:=\big\langle  D^{2}_{xx}\psi(t,x)B_1,B_1 \big\rangle$.
\\
Note that 
\begin{multline}\label{eq_expect_ineq}
 \E\Big[\langle AX,Y\rangle\Big]\leq\E\Big[\|AX\|_\Hs\cdot\|Y\|_\Hs\Big]\leq\E\Big[\|A\|_{L(\Hs)}\cdot\|X\|_\Hs\cdot\|Y\|_\Hs\Big]\\
\overset{\textbf{Prop.\ref{prop_cb_ineq}}}{\leq}
\|A\|_{L(\Hs)}\Big(\E\Big[\|X\|_\Hs^2\Big]\Big)^{\frac{1}{2}}\Big(\E\Big[\|Y\|_\Hs^2\Big]\Big)^{\frac{1}{2}}.
\end{multline}

Consider $K_\delta:=\dfrac{1}{\delta}\big\langle D^{2}_{xx}\psi(t,x)\int\limits_0^{\delta} (e^{(\delta -s)A}-I)dB_s,B_\delta \big\rangle$ 

\hspace{.8cm} and $M_\delta:=\dfrac{1}{\delta}\big\langle D^{2}_{xx}\psi(t,x)\int\limits_0^{\delta} e^{(\delta -s)A}dB_s,\int\limits_0^{\delta} (e^{(\delta -s)A}-I)dB_s \big\rangle$.
\\
Then we have:

$0\leq\E\big[\big|K_\delta\big|\big]\overset{\textbf{\eqref{eq_expect_ineq}}}{\leq}
\dfrac{1}{\delta}\|D^{2}_{xx}\psi(t,x)\|_{L(\Hs)}
\Big(\E\Big[\|\int\limits_0^{\delta} (e^{(\delta -s)A}-I)dB_s\|_{\Hs}^2\Big]\Big)^{\frac{1}{2}}
\Big(\E\Big[\|B_\delta\|_{\Hs}^2\Big]\Big)^{\frac{1}{2}}\\
\overset{\textbf{Th.\ref{th_ito_BDG_general}}}{\leq}
\dfrac{1}{\delta}\|D^{2}_{xx}\psi(t,x)\|_{L(\Hs)}\cdot C 
\Big(\E\Big[\int\limits_0^{\delta} \|e^{(\delta -s)A}-I\|^2_{L^\Sigma_2} ds\Big]\Big)^{\frac{1}{2}}
\Big(\E\Big[\|B_\delta\|_{\Hs}^2\Big]\Big)^{\frac{1}{2}}\\
\overset{\textbf{Rem.\ref{rem_lin_gf_gbm}}}{=}
\dfrac{1}{\delta}\|D^{2}_{xx}\psi(t,x)\|_{L(\Hs)}\cdot C 
\Big(\int\limits_0^{\delta} \underset{Q\in\,\Sigma}{\sup}\|e^{sA}Q^{1/2}-Q^{1/2}\|^2_{L_2(\Hs)} ds\Big)^{\frac{1}{2}}
\cdot\sqrt{\delta G(I)}\\
\leq\|D^{2}_{xx}\psi(t,x)\|_{L(\Hs)}\cdot C \cdot\delta\cdot
\max\limits_{0\leq\theta\leq\delta}\underset{Q\in\,\Sigma}{\sup}\|e^{\theta A}Q^{1/2}-Q^{1/2}\|_{L_2(\Hs)}\cdot\sqrt{G(I)}
\xrightarrow[\delta\to0]{}0$.
\parbox{155mm}{
In much the same way:

$0\leq\E\big[\big|M_\delta\big|\big]
\overset{\textbf{\eqref{eq_expect_ineq}}}{\leq}
\dfrac{1}{\delta}\|D^{2}_{xx}\psi(t,x)\|_{L(\Hs)}
\Big(\E\Big[\|\int\limits_0^{\delta} e^{(\delta -s)A}dB_s\|_{\Hs}^2\Big]\Big)^{\frac{1}{2}}
\Big(\E\Big[\|\int\limits_0^{\delta} (e^{(\delta -s)A}-I)dB_s\|_{\Hs}^2\Big]\Big)^{\frac{1}{2}}\\
\overset{\textbf{Th.\ref{th_ito_BDG_general}}}{\leq}
\dfrac{1}{\delta}\|D^{2}_{xx}\psi(t,x)\|_{L(\Hs)}\cdot C 
\Big(\E\Big[\int\limits_0^{\delta} \|e^{(\delta -s)A}\|^2_{L^\Sigma_2} ds\Big]\Big)^{\frac{1}{2}}
\Big(\E\Big[\int\limits_0^{\delta} \|e^{(\delta -s)A}-I\|^2_{L^\Sigma_2} ds\Big]\Big)^{\frac{1}{2}}\\
=\dfrac{1}{\delta}\|D^{2}_{xx}\psi(t,x)\|_{L(\Hs)}\cdot C 
\Big(\int\limits_0^{\delta} \underset{Q_1\in\,\Sigma}{\sup}\|e^{sA}Q_1^\frac{1}{2}\|_{L_2(\Hs)}^2 ds\Big)^{\frac{1}{2}}
\Big(\int\limits_0^{\delta} \underset{Q_2\in\,\Sigma}{\sup}\|e^{sA}Q_2^\frac{1}{2}-Q_2^\frac{1}{2}\|_{L_2(\Hs)}^2 ds\Big)^{\frac{1}{2}}\\
\leq\|D^{2}_{xx}\psi(t,x)\|_{L(\Hs)}\cdot C \cdot
\delta\cdot\max\limits_{0\leq\theta_1\leq\delta}\underset{Q_1\in\,\Sigma}{\sup}\|e^{\theta_1 A}Q_1^\frac{1}{2}\|_{L_2(\Hs)}\cdot\delta
\max\limits_{0\leq\theta_2\leq\delta}\underset{Q_2\in\,\Sigma}{\sup}\|e^{\theta_2 A}Q_2^\frac{1}{2}-Q_2^\frac{1}{2}\|_{L_2(\Hs)}$

\hfill$
\xrightarrow[\delta\to0]{}0$.}
\\
Therefore 

$0\leq\Big|\E[L_\delta]-\E[L_0]\Big|
\overset{\textbf{Rem.\ref{rem_lin_gf_gbm}}}{=}
\Bigg|\dfrac{1}{\delta}\,\E\Big[\Big\langle D^{2}_{xx}\psi(t,x)\Big[\int\limits_0^{\delta} e^{(\delta -s)A}dB_s\Big],
 \int\limits_0^{\delta} e^{(\delta -s)A}dB_s\Big\rangle\Big]$

\hfill$
-\dfrac{1}{\delta}\,\E\Big[\big\langle  D^{2}_{xx}\psi(t,x)B_\delta,B_\delta \big\rangle\Big]\Bigg|\\
\overset{\textbf{Prop.\ref{prop_elem_prop},\,2)}}{\leq}
\dfrac{1}{\delta}\,\E\Bigg[\Big|\Big\langle D^{2}_{xx}\psi(t,x)\Big[\int\limits_0^{\delta} e^{(\delta -s)A}dB_s\Big],
 \int\limits_0^{\delta} e^{(\delta -s)A}dB_s\Big\rangle$

\hfill$
-\big\langle  D^{2}_{xx}\psi(t,x)B_\delta,B_\delta \big\rangle\Big|\Bigg]\\
\leq\dfrac{1}{\delta}\,\E\Bigg[\Big|\Big\langle D^{2}_{xx}\psi(t,x)\Big[\int\limits_0^{\delta} e^{(\delta -s)A}dB_s\Big],
 \int\limits_0^{\delta} e^{(\delta -s)A}dB_s\Big\rangle$

\hfill$
-\big\langle  D^{2}_{xx}\psi(t,x)\Big[\int\limits_0^{\delta} e^{(\delta -s)A}dB_s\Big],B_\delta \big\rangle\Big|\Bigg]\\
+\dfrac{1}{\delta}\,\E\Bigg[\Big|\Big\langle D^{2}_{xx}\psi(t,x)\Big[\int\limits_0^{\delta} e^{(\delta -s)A}dB_s\Big],
 B_\delta\Big\rangle
-\big\langle  D^{2}_{xx}\psi(t,x)B_\delta,B_\delta \big\rangle\Big|\Bigg]$

\hfill$
=\E\big[\big|K_\delta\big|\big]+\E\big[\big|M_\delta\big|\big]\xrightarrow[\delta\to0]{}0$.

\end{proof}

Now let us turn back to equation \eqref{eq_p}:

\begin{equation}\tag{P}\label{eq_pp}
\hspace{20mm}\begin{cases}
\partial_t u+\langle Ax,D_x u\rangle+G(D^2_{xx} u)=0 \,, \ \ \ t\in[0,T)\,, \ x\in\Hs;\\
u(T,x)=f(x) \,.
\end{cases}
\end{equation}

$u:[0,T]\times\Hs\to\R$;

$f\in\Cp(\Hs)$;

$G:K_S(\Hs)\rightarrow\R$ \ is a \ $G$-functional;

$A:D(A)\to\Hs$ is a generator of $C_0$-semigroup $\big(e^{tA}\big)$.

$B_t$ is a $G$-Brownian motion with correspondent $G$-functional $G(\pdot)$,\ \ \ \ 

\hfill i.e. $G(A)=\dfrac{1}{2t}\,\E\Big[\langle A B_t, B_t\rangle\Big]$;

$X_\tau^{t,\,x}=e^{(\tau-t)A}x+\int\limits_t^\tau e^{(\tau-s)A}dB_s$\ \ \  be a mild solution of equation \eqref{eq_sde_s}:
\begin{equation}\tag{S}\label{eq_sde_ss}
\begin{cases}
d X_\tau=AX_\tau\,d\tau +dB_\tau\ ,\ \ \ \ \ \ \tau\in[t,T]\subset[0,T]\\
X_t=x\ .
\end{cases}
\end{equation}
 
\begin{teo} \label{th_exist_vs_p}
Let $f$ is a $B$-continuous of \ $\Cp(\Hs)$-class real function. Then $u(t,x):=\E \big[f(X_T^{t,\,x})\big]$ is a unique viscosity solution of equation \eqref{eq_pp}:
\begin{equation}\tag{P}\label{eq_ppp}
\hspace{20mm}\begin{cases}
\partial_t u+\langle Ax,D_x u\rangle+G(D^2_{xx} u)=0 \,, \ \ \ t\in[0,T)\,, \ x\in\Hs;\\
u(T,x)=f(x) \,.
\end{cases}
\end{equation}

\end{teo}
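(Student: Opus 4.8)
The plan is to follow the scheme of \textbf{Th.\ref{th_exist_vs_p0}} (the case $A=0$), replacing the elementary increment $x+B_{T-t}$ by the mild solution $X_\tau^{t,x}$ and compensating the unbounded drift through the adjoint semigroup. The first step is a dynamic programming principle: for small $\delta>0$,
\[
u(t,x)=\E\big[u(t+\delta,X_{t+\delta}^{t,x})\big].
\]
This follows from the flow property \textbf{Prop.\ref{prop_ch_tm_OU}}, which gives $X_T^{t,x}=X_T^{t+\delta,\,X_{t+\delta}^{t,x}}$, together with the fact that the increment $\int_{t+\delta}^T e^{(T-s)A}dB_s$ is built from increments of $B$ over $[t+\delta,T]$ and is therefore independent of the $\Omega_{t+\delta}$-measurable $X_{t+\delta}^{t,x}$; one then applies the independence identity of \textbf{Df.\ref{df_idd_ind}} exactly as in the $A=0$ case.

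For the subsolution property, fix a test function $\psi=\fee+\chi$ with $u\le\psi$ and $u(t_0,x_0)=\psi(t_0,x_0)$, put $(t,x)=(t_0,x_0)$, and note that the DPP and $u\le\psi$ give $0\le\E[\psi(t+\delta,X_{t+\delta}^{t,x})-\psi(t,x)]$. I would decompose the increment as $\Delta x=\Delta x_1+\Delta x_2$ with deterministic drift $\Delta x_1:=(e^{\delta A}-I)x$ and stochastic convolution $\Delta x_2:=\int_t^{t+\delta}e^{(t+\delta-s)A}dB_s$, and apply Taylor's formula (\textbf{Lm.\ref{lm_taylor_formula}}). Then I treat the surviving terms separately: (i) the time derivative gives $\delta\,\partial_t\psi(t,x)+o(\delta)$; (ii) the linear drift term is moved to the adjoint side, $\langle D_x\fee,\Delta x_1\rangle=\langle x,(e^{\delta A^*}-I)D_x\fee\rangle$, and since $D_x\fee\in D(A^*)$ (\textbf{Rem.\ref{rem_add_cond_test_func}}) strong continuity of the adjoint semigroup yields $\tfrac1\delta\langle x,(e^{\delta A^*}-I)D_x\fee\rangle\to\langle x,A^*D_x\fee\rangle$ for \emph{every} $x$ — this is the device that absorbs the unboundedness of $A$; (iii) the linear stochastic term has vanishing $G$-expectation because $\langle D_x\psi,\Delta x_2\rangle$ is a centred one-dimensional $G$-normal variable (\textbf{Prop.\ref{prop_proj_gn}}), so $\E[\pm\langle D_x\psi,\Delta x_2\rangle]=0$; and (iv) the quadratic stochastic term is exactly the object of \textbf{Lm.\ref{lm_conv_g_f}}: by time-shift invariance of the integral (\textbf{Rem.\ref{rem_st_int_shift}}) one has $\Delta x_2\sim\int_0^\delta e^{(\delta-s)A}dB_s$, whence $\tfrac{1}{2\delta}\E[\langle D^2_{xx}\psi\,\Delta x_2,\Delta x_2\rangle]=\tfrac12\E[L_\delta]\to G(D^2_{xx}\psi)(t,x)$. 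Dividing by $\delta$ and letting $\delta\to0$ gives $[\partial_t\psi+\langle x,A^*D_x\fee\rangle+G(D^2_{xx}\psi)](t,x)\ge0$, i.e. the subsolution inequality of \textbf{Df.\ref{df_test_func}}; the supersolution inequality follows symmetrically using $\E[-(\,\pdot\,)]$.

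The main obstacle is the control, after division by $\delta$, of the remaining error terms: the deterministic quadratic drift $\langle D^2_{xx}\psi\,\Delta x_1,\Delta x_1\rangle$ and the cross term $\langle D^2_{xx}\psi\,\Delta x_1,\Delta x_2\rangle$. For a general $x\notin D(A)$ the displacement $\|(e^{\delta A}-I)x\|$ is only $o(1)$, not $O(\delta)$, so these are not obviously negligible. The resolution exploits the structure of admissible test functions: $D^2_{xx}\psi$ factors through the finite-rank projection $P_N$ (the functions have the form $\tilde\fee(t,P_Nx)$, $\tilde\chi(t,P_Nx)$), and since $P_N$ is formed with the $\Hs_{-1}$-inner product one gets $P_N(e^{\delta A}-I)x=\sum_{j\le N}\tilde e_j\langle x,(e^{\delta A^*}-I)B\tilde e_j\rangle$ with $B\tilde e_j\in\im(B)\subset D(A^*)$; hence $\|P_N(e^{\delta A}-I)x\|=O(\delta)$ uniformly on bounded sets. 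Combined with the Itô isometry bound $\E[\|\Delta x_2\|^2]=O(\delta)$ (\textbf{Th.\ref{th_ito_is_inq}}) and the Cauchy--Bunyakovsky--Schwarz inequality (\textbf{Prop.\ref{prop_cb_ineq}}), the quadratic drift is $O(\delta^2)$ and the cross term is $O(\delta^{3/2})$, both $o(\delta)$. The same finite-rank structure shows that the $\chi$-contribution to the drift is of the controlled type and produces no new leading term.

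Finally I would verify the hypotheses of the uniqueness theorem. The terminal condition $u(T,x)=\E[f(x)]=f(x)$ holds; polynomial growth of $u$ follows from $f\in\Cp(\Hs)$ and the moment bounds of \textbf{Prop.\ref{prop_moments}} applied to the Gaussian-type convolution $\Delta x_2$, as in the $A=0$ case; and $B$-continuity of $u$ follows from $B$-continuity of $f$ together with continuity of $\tau\mapsto X_\tau^{t,x}$ and the semigroup estimates, repeating the Cauchy--Bunyakovsky--Schwarz computation from \textbf{Th.\ref{th_exist_vs_p0}} with the extra factor $e^{(T-t)A}$. Since $u$ is then a $B$-continuous viscosity solution of polynomial growth, \textbf{Th.\ref{th_uniq_vs}} yields uniqueness, completing the proof.
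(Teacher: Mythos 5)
Your proposal is correct and follows the paper's route in all essentials: the dynamic programming identity $u(t,x)=\E[u(t+\delta,X_{t+\delta}^{t,x})]$ via \textbf{Prop.\ref{prop_ch_tm_OU}} and independence of increments, the Taylor expansion of \textbf{Lm.\ref{lm_taylor_formula}} applied to $\Delta x_1=(e^{\delta A}-I)x$ and $\Delta x_2=\int_0^\delta e^{(\delta-s)A}dB_s$ (after the shift of \textbf{Rem.\ref{rem_st_int_shift}}), the identification of the quadratic stochastic term through \textbf{Lm.\ref{lm_conv_g_f}}, the passage of the drift to the adjoint side using $D_x\fee\in D(A^*)$, and uniqueness by \textbf{Th.\ref{th_uniq_vs}}. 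Where you genuinely diverge is in the disposal of the remainder terms, and the comparison is instructive in both directions. The paper collects \emph{every} term linear in $\Delta x_2$ — including the cross term $\langle D^2_{xx}\psi\,\Delta x_1,\Delta x_2\rangle$, whose coefficient is deterministic once $(t,x)$ is fixed — into a single centred variable $\langle w,\Delta x_2\rangle$ with $\E[\langle w,\Delta x_2\rangle]=\E[-\langle w,\Delta x_2\rangle]=0$, and deletes it \emph{exactly} by \textbf{Prop.\ref{prop_elem_prop}}, property 5.b; this is cleaner than your Cauchy--Bunyakovsky--Schwarz estimate and, unlike it, requires no rate on $\|\Delta x_1\|$ for the cross term. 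Conversely, for the purely deterministic quadratic drift $\tfrac{1}{2\delta}\langle D^2_{xx}\psi\,\Delta x_1,\Delta x_1\rangle$ no symmetry is available, a rate \emph{is} needed ($\|(e^{\delta A}-I)x\|=o(\sqrt\delta)$ fails for general $x\notin D(A)$), and the paper's proof simply writes this term and asserts the limit without comment; your argument via the factorization $\fee(t,x)=\widetilde{\fee}(t,P_Nx)$, giving $P_N(e^{\delta A}-I)x=\sum_{j\leq N}\widetilde{e}_j\langle x,(e^{\delta A^*}-I)B\widetilde{e}_j\rangle=O(\delta)$ since $B\widetilde{e}_j\in \im(B)\subset D(A^*)$, supplies exactly the missing justification, so on this point your write-up is more careful than the paper's. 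One caveat: your claim that "the same finite-rank structure" controls the $\chi$-contribution is only as good as \textbf{Rem.\ref{rem_add_cond_test_func}}'s assertion that $\chi$ too is of the form $\widetilde{\chi}(t,P_Nx)$; for a genuinely radial $\eta$ (as in the formal definition, $\eta(x)=\eta(y)$ when $|x|=|y|$) the second derivative does not factor through $P_N$, and one would instead exploit the sign of $\langle (e^{\delta A}-I)x, D\eta(x)\rangle$ for a dissipative $A$, noting also that the subsolution inequality in \textbf{Df.\ref{df_test_func}} involves only $D_x\fee$ in the first-order term. Subject to that reading of the test-function class, your proof is complete and marginally stronger than the one in the paper.
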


\begin{proof}
$\absz$\\
Let $\psi$ be a test function, and for every fixed point $(t,x)\in[0,T]\tdot\Hs$ we have:  
\parbox[t][\height]{5cm}{$u\leq \psi\,;\\ 
\phantom{|}\!u(t,x)=\psi(t,x)\,.$}
\\
Taking a small enough $\delta$ yields:
\\
$\psi(t,x)=u(t,x)
=\E[ f(X_T^{t,\,x})]
\overset{\textbf{Prop.\ref{prop_ch_tm_OU}}}{=}
\E[ f(X_T^{s,\,X_s^{t,\,x}})]$

\hfill$=
\E\Bigg[\E\Big[ f(X_T^{s,\,y})\Big]_{y=X_s^{t,\,x}}\Bigg]=
\E\Big[u(s,X_s^{t,\,x})\Big]
\leq\E\big[\psi(s,X_s^{t,\,x})\big]$.
\\
Then putting \ $s:=t+\delta,\ \delta>0$ \ by the Taylor formula (\textbf{Lm.\ref{lm_taylor_formula}}) we have:
\\
$\psi(s,X_s^{t,\,x})=\psi(t+\delta,X_{t+\delta}^{t,\,x})
=\psi(t+\delta,e^{\,\delta A}x+\int\limits_t^{t+\delta} e^{(t+\delta-s)A}dB_s)\\
\overset{\textbf{Rem.\ref{rem_st_int_shift}}}{=}\psi\Big(t+\delta,x+(e^{\,\delta A}x-x)+\int\limits_0^{\delta} e^{(\delta-s)A}dB_s\Big)
=\psi(t,x)+\delta\,\partial_t\psi(t,x)\\
+\Big\langle D_x\psi(t,x), e^{\,\delta A}x-x+\int\limits_0^{\delta} e^{(\delta -s)A}dB_s\Big\rangle
+\dfrac{1}{2}\,\delta^{\,2}\,\partial^{\,2}_{tt}\psi(t,x)\\
+\delta\cdot\partial_t\Big\langle D_x\psi(t,x),e^{\delta A}x-x+\int\limits_0^{\delta} e^{(\delta -s)A}dB_s\Big\rangle\\
+\dfrac{1}{2}\,\delta\Big\langle D^{2}_{xx}
\psi(t,x)\Big[e^{\delta A}x-x+\int\limits_0^{\delta} e^{(\delta -s)A}dB_s\Big],
e^{\delta A}x-x+\int\limits_0^{\delta} e^{(\delta -s)A}dB_s\Big\rangle$

\hfill$
+o\Big(\delta^2+\big\|e^{\,\delta A}x-x+\int\limits_0^{\delta} e^{(\delta-s)A}dB_s\big\|_{\Hs}^2\Big)$.
\\
We can say that \ \ $\E\Big[o\Big(\delta^2+\big\|e^{\,\delta A}x-x
+\int\limits_0^{\delta} e^{(\delta-s)A}dB_s\big\|_{\Hs}^2\Big)\Big]=o\Big(\delta^2\Big)$,
\\
because \ \ 
$\E\Big[\big\|e^{\,\delta A}x-x+\int\limits_0^{\delta} e^{(\delta-s)A}dB_s\big\|_{\Hs}^2\Big]
\leq 2\,\E\Big[\big\|e^{\,\delta A}x-x\big\|_{\Hs}^2\Big]$

\hfill$
+2\,\E\Big[\big\|\int\limits_0^{\delta} e^{(\delta-s)A}dB_s\big\|_{\Hs}^2\Big]\\
\overset{\textbf{Th.\ref{th_ito_BDG_general}}}{\leq}2\,\big\|e^{\,\delta A}x-x\big\|_{\Hs}^2
+2\,\int\limits_0^{\delta} \sup\limits_{Q\in\Sigma}\big\|e^{(\delta-s)A}\,Q^{1/2}\big\|_{L_2(\Hs)}^2 ds
\xrightarrow[\delta\to0]{}0$.
\\
Then we have
\\
$0\leq\dfrac{1}{\delta}\Big(\E\Big[\psi(t+\delta,X_{t+\delta}^{t,\,x})\Big]-\psi(t,x)\Big)
\leq\dfrac{1}{\delta}\E\Big[\psi(t+\delta,X_{t+\delta}^{t,\,x})-\psi(t,x)\Big]\\
=\E\Bigg[\partial_t\psi(t,x)
+\Big\langle D_x\psi(t,x), \dfrac{e^{\,\delta A}x-x}{\delta}\Big\rangle
+\dfrac{1}{\delta}\Big\langle D_x\psi(t,x), \int\limits_0^{\delta} e^{(\delta -s)A}dB_s\Big\rangle\\
+\dfrac{1}{2}\,\delta\,\partial^{\,2}_{tt}\psi(t,x)
+\delta\cdot\partial_t\Big\langle D_x\psi(t,x), \dfrac{e^{\,\delta A}x-x}{\delta}\Big\rangle
+\partial_t\Big\langle D_x\psi(t,x), \int\limits_0^{\delta} e^{(\delta -s)A}dB_s\Big\rangle$
\\
$
+\dfrac{1}{2}\Big\langle D^{2}_{xx}
\psi(t,x)\Big[\dfrac{e^{\,\delta A}x-x}{\delta}\Big],\int\limits_0^{\delta} e^{(\delta -s)A}dB_s\Big\rangle$

\hfill$
+\dfrac{1}{2}\Big\langle D^{2}_{xx}
\psi(t,x)\int\limits_0^{\delta} e^{(\delta -s)A}dB_s,\dfrac{e^{\,\delta A}x-x}{\delta}\Big\rangle\\
+\dfrac{\delta}{2}\Big\langle D^{2}_{xx}
\psi(t,x)\Big[\dfrac{e^{\,\delta A}x-x}{\delta}\Big],\dfrac{e^{\,\delta A}x-x}{\delta}\Big\rangle$

\hfill$
+\dfrac{1}{2\,\delta}\Big\langle D^{2}_{xx}
\psi(t,x)\int\limits_0^{\delta} e^{(\delta -s)A}dB_s,\int\limits_0^{\delta} e^{(\delta -s)A}dB_s\Big\rangle\Bigg]
+o\Big(\delta\Big)$
\\
$
\overset{\textbf{Prop.\ref{prop_elem_prop}.5).(a)-(b)}}{=}
\partial_t\psi(t,x)
+\Big\langle D_x\psi(t,x), \dfrac{e^{\,\delta A}x-x}{\delta}\Big\rangle
+\dfrac{1}{2}\,\delta\,\partial^{\,2}_{tt}\psi(t,x)\\
+\delta\cdot\partial_t\Big\langle D_x\psi(t,x), \dfrac{e^{\,\delta A}x-x}{\delta}\Big\rangle
+\dfrac{\delta}{2}\Big\langle D^{2}_{xx}
\psi(t,x)\Big[\dfrac{e^{\,\delta A}x-x}{\delta}\Big],\dfrac{e^{\,\delta A}x-x}{\delta}\Big\rangle\\
+\dfrac{1}{2\,\delta}\,\E\Bigg[\Big\langle D^{2}_{xx}
\psi(t,x)\int\limits_0^{\delta} e^{(\delta -s)A}dB_s,\int\limits_0^{\delta} e^{(\delta -s)A}dB_s\Big\rangle\Bigg]
+o\Big(\delta\Big)$

\hfill$
\xrightarrow[\delta\to0]{\textbf{Lm.\ref{lm_conv_g_f}}}
\partial_t\psi(t,x)+\big\langle x,\,A^* D_x\psi(t,x)\big\rangle+G\big(D^{2}_{xx}\psi(t,x)\big)
$.
\\
Letting $\delta\to0$ \ yields \ $\Big[\partial_t\psi+\big\langle x,\,A^* D_x\psi(t,x)\big\rangle+ G(D^{2}_{xx}\psi)\Big](t,x)\geq0$.
\\
Note, that $u$ is continuous at $(t,x)\in[0,T]\tdot\Hs$.
\\
In fact, let us show that $\ \  \E\big[f(x+B_s)\big]\xrightarrow[s\to t]{}\E\big[f(x+B_t)\big]\ ,\ \ \ t\in[0,T]\ :$

$0\leq\Big|\E[ f(X_T^{t+\delta,\,x})]-\E[ f(X_T^{t,\,x})]\Big|
\leq\E\Big[\big|f(X_T^{t+\delta,\,x})-f(X_T^{t,\,x})\big|\Big]\\
\leq C\cdot\E\Big[\big(1+\|X_T^{t+\delta,\,x}\|_{\Hs}^m+\|X_T^{t,\,x}\|_{\Hs}^m\big)\cdot\|X_T^{t+\delta,\,x}-X_T^{t,\,x}\|_{\Hs}\Big]\\
\leq C\cdot\Bigg(\E\Big[\big(1+\|X_T^{t+\delta,\,x}\|_{\Hs}^m+\|X_T^{t,\,x}\|_{\Hs}^m\big)^2\Big]
\cdot\E\Big[\|X_T^{t+\delta,\,x}-X_T^{t,\,x}\|_{\Hs}^2\Big]\Bigg)^{\frac{1}{2}}\\
\leq 2C\cdot\Bigg(\Big(1+\E\Big[\|X_T^{t+\delta,\,x}\|_{\Hs}^{2m}\Big]+\E\Big[\|X_T^{t,\,x}\|_{\Hs}^{2m}\Big]\Big)
\cdot\E\Big[\|X_T^{t+\delta,\,x}-X_T^{t,\,x}\|_{\Hs}^{2}\Big]\Bigg)^{\frac{1}{2}}
\xrightarrow[\delta\to 0]{}0,
$
\\
this convergence is true because:
\\
\textbf{(a)} $\E\Big[\|X_T^{t,\,x}\|_{\Hs}^{2m}\Big]= 
\E\Big[\|e^{(T-t)A}x+\int\limits_t^Te^{(T-s)A}dB_s\|_{\Hs}^{2m}\Big]\\
\leq 2m\cdot\Bigg(\|e^{(T-t)A}x\|_{\Hs}^{2m}+\E\Big[\|\int\limits_t^Te^{(T-s)A}dB_s\|_{\Hs}^{2m}\Big]\Bigg)\\
\overset{\textbf{Th.\ref{th_ito_BDG_general}}}{\leq}2m\cdot\Bigg(\|e^{(T-t)A}x\|_{\Hs}^{2m}
+C_m\cdot\Big(\int\limits_t^T\sup\limits_{Q\in\Sigma}\big\|e^{(T-s)A}\,Q^{1/2}\big\|_{L_2(\Hs)}^2 ds\Big)^m\Bigg)
<\infty
$.

\textbf{(b)} $\E\Big[\|X_T^{t+\delta,\,x}-X_T^{t,\,x}\|_{\Hs}^{2}\Big]
=\E\Big[\|e^{(T-t)A}\big(e^{-\delta A}x-x\big)+\int\limits_{t+\delta}^Te^{(T-s)A}dB_s
$

\hfill$
-\int\limits_t^Te^{(T-s)A}dB_s\|_{\Hs}^2\Big]\\
\leq\Big(\|e^{(T-t)A}\big(e^{-\delta A}x-x\big)\|^2_\Hs+\E\Big[\|\int\limits_t^{t+\delta}e^{(T-s)A}dB_s\|_{\Hs}^2\Big]\Big)\\
\overset{\textbf{Th.\ref{th_ito_BDG_general}}}{\leq}\Big(\|e^{(T-t)A}\big(e^{-\delta A}x-x\big)\|^2_\Hs
+\|\int\limits_t^{t+\delta}\sup\limits_{Q\in\Sigma}\big\|e^{(T-s)A}\,Q^{1/2}\big\|_{L_2(\Hs)}^2 ds\Big)
\xrightarrow[\delta\to 0]{}0$.
\\
Also \ $u(T,x)=\E\big[f(x)\big]=f(x)\leq f(x)$.
\\
So we see that $u$ is a viscosity subsolution of equation \eqref{eq_ppp}.

In the same way one can prove that $u$ is a viscosity supersolution, and the existence is proved.
\\
 It is clear that if $f$ is $B$-continuous and has a polynomial growth that $u$ is also $B$-continuous and has a polynomial growth, because a sublinear expectation $\E$ does not influence on it. So we can conclude that $u$ is a unique viscosity solution by \textbf{Th.\ref{th_uniq_vs}}.

\end{proof}

\newpage

\end{document}